\documentclass[reqno,11pt]{amsart}

\usepackage{xcolor}
\usepackage{graphicx}
\usepackage[hidelinks]{hyperref}
\usepackage{amscd}

\usepackage{amsmath}
\usepackage{amsthm}
\usepackage{amssymb}
\usepackage{latexsym,array}
\usepackage{amsfonts}
\usepackage{shadow}
\usepackage{amsbsy}
\usepackage{amssymb}
\usepackage{dsfont}
\usepackage{enumitem}
\usepackage{doi}

\usepackage{color}
\usepackage{graphicx}
\usepackage[hidelinks]{hyperref}
\usepackage{cleveref}
\usepackage{fullpage}
\usepackage{comment}
\usepackage{tikz-cd}
\usepackage{tikz}
\usepackage{float}

\usepackage{dsfont}

\usepackage{cleveref}

\newtheorem{theorem}{Theorem}[section]

\newtheorem{proposition}[theorem]{Proposition}

\newtheorem{lemma}[theorem]{Lemma}
\newtheorem{corollary}[theorem]{Corollary}

\theoremstyle{definition}
\newtheorem{definition}[theorem]{Definition}
\newtheorem{remark}[theorem]{Remark}

\def\R{\mathbb{R}}

\newcommand\blfootnote[1]{
	\begingroup
	\renewcommand\thefootnote{}\footnote{#1}
	\addtocounter{footnote}{-1}
	\endgroup
}

\title[Fractional $\frac{1}{2}$-Laplacian of holomorphic functions via Jacobi polynomials]
{Fractional $\frac{1}{2}$-Laplacian of holomorphic functions via Jacobi polynomials}
\oddsidemargin
0.2in \evensidemargin 0.2in \topmargin -0.5in \textwidth
15.5truecm \textheight 23truecm

\author[Fabrizio Colombo]{Fabrizio Colombo}
\address{(FC) Politecnico di
Milano\\Dipartimento di Matematica\\Via E. Bonardi, 9\\20133 Milano\\Italy}
\email{fabrizio.colombo@polimi.it}

\author[Antonino De Martino]{Antonino De Martino}
\address{(ADM) Politecnico di
	Milano\\Dipartimento di Matematica\\Via E. Bonardi, 9\\20133 Milano\\Italy}
\email{antonino.demartino@polimi.it}

\author[Alberto Debernardi Pinos]{Alberto Debernardi Pinos}
\address{(ADP)
	Universitat Aut\`onoma de Barcelona\\
	Departament de Matem\`atiques\\
	Campus de Bellaterra, Edifici C\\
	08193 Bellaterra (Barcelona)\\
	Spain}
\email{adebernardipinos@gmail.com}

\begin{document}

\maketitle

\begin{abstract}
We give a distributional definition of the $\frac{1}{2}$-fractional Laplacian for smooth functions with power-type singularities at the origin, including negative monomials and certain holomorphic functions with isolated singularities. The construction is based on a suitable space of test functions related to the Lizorkin space of test functions, for which both the test functions and their fractional Laplacians belong to the Schwarz class and vanish to infinite order at the origin, and thus compensate for any singularity of power type at the origin. This allows the $\frac{1}{2}$-fractional Laplacian to be defined by duality without requiring the underlying function to satisfy the usual integrability assumptions. The considered distributional framework is shown to be invariant under the $\frac{1}{2}$-fractional Laplacian, yielding a natural semigroup property.
By applying the construction term by term to Laurent series we obtain new series that, remarkably, involve Jacobi polynomials. With this procedure we define the fractional Laplacian for a class of holomorphic functions and derive several applications, with particular emphasis on the recently developed theory of the quaternionic fine structures  of the spectral theory on the $S$-spectrum, describing the functional calculi extending the classical holomorphic functional calculus in the various classes of holomorphic-type functions arising from the Fueter-Sce-Qian extension theorem.
\end{abstract}

\blfootnote{\textbf{Acknowledgments}: A. Debernardi Pinos was partially supported by
grant PID2023--150984NB--I00 from the Spanish Agencia Estatal de Investigaci\'on, funded by the Ministerio de Ciencia, Innovaci\'on y Universidades.

A. Debernardi Pinos is also grateful to Politecnico di Milano for kind support and  hospitality during the period in which part of this project was undertaken.
}

\medskip
\noindent AMS Classification: 30G35, 33C45, 46F10

\noindent Keywords: Fractional Laplacian, Jacobi polynomials, fractional fine structures, $s$-harmonic function spaces.

\tableofcontents

\section{Introduction}

It is well known that the fractional Laplacian of a function $f(x)$, denoted by $(-\Delta)^r f(x)$ with $r \in (0,1)$, can be defined in various ways. For instance, with integral kernels, as
\begin{equation}
	\label{EQfraclap}
(-\Delta)^r f(x) = \lim_{\varepsilon\to 0}\int_{\mathbb{R}^n\backslash B_\varepsilon(x)}\frac{f(x)-f(y)}{|x-y|^{n+2r}}\, dy,
\end{equation}
provided two key assumptions hold. The first is that the function $f$ must be sufficiently regular near $x$ to ensure the integral converges near the singularity (possibly after cancellation) and as a second condition $f$ must have controlled growth at infinity to guarantee the convergence of the tail of the integral. We refer to \cite{VALBOOK2,BV,DNPV} for a detailed discussion. Several equivalent definitions of the fractional Laplacian are given in \cite{kwa} (see also Chapter~1 of \cite{VALBOOK2}). We also mention \cite{CS, CSS}, where the fractional $r$-Laplacian (for smooth bounded functions) is related to solutions of certain extension problems and had a high impact in terms of applications.

If $f$ is a Schwarz function, one may also define $(-\Delta)^r f(x)$ as a pseudo-differential operator (i.e., via Fourier multipliers) as
\begin{equation}
	\label{EQlaplacianfourier}
(-\Delta)^r f(x) = \mathcal{F}^{-1}\big( (2\pi |\xi|)^{2r} \mathcal{F}f\big),
\end{equation}
generalizing the well-known identity $-\Delta f = \mathcal{F}^{-1}\big( (2\pi |\xi|)^2 \mathcal{F}f\big)$. In fact, for such well-behaved $f$ the two above definitions coincide (see, e.g., \cite{DNPV}).

The assumption of $f$ being a Schwarz function is rather restrictive, in sharp contrast with the non-fractional case, where differentiability assumptions are enough. The definition in \eqref{EQfraclap} extends naturally to broader classes of functions. For instance, it can be defined on fractional Sobolev spaces \cite{DNPV} or in H\"older spaces  \cite{SILDUE}, among others. See \cite{kwa} and the references therein for further settings.

\medskip

However, the most notable extension was recently given in \cite{POTENZEVALDIN}, where the authors give meaning to the fractional Laplacian applied to functions with polynomial growth, i.e., outside the integrability regime. Although the definition given in that paper is a natural extension of the definition via integral kernels (the authors truncate the function $u$ in bounded balls $B_R(0)$, making the integral in \eqref{EQfraclap} well defined, and then take the limit as $R\to\infty$), in this case the result of the operation $(-\Delta)^r f$ yields a whole class of equivalence of functions rather than a single function. More precisely, the authors show in \cite[Lemma~1.2]{POTENZEVALDIN} that if $k\in \mathbb{N}$ is the minimum for which  $(1+|x|^{n+2r+k})^{-1}f(x)\in L_1(\R^n)$, then $(-\Delta)^r f$ can be uniquely defined modulo polynomials up to degree $k-1$.

This showcases the difficulty of understanding the fractional Laplacian when integrability conditions are not available. Further applications of such a general fractional Laplacian are also given in \cite{POTENZEVALDIN}.

\medskip

One of the main goals of this paper is to introduce the notion of a fractional Laplacian for smooth functions that have at most polynomial growth at infinity and that may have a singularity of power type at the origin (i.e., without directly appealing to integrability of such functions, although some form of integrability is involved, as we discuss below). In particular, this includes monomials of negative powers. By applying such a definition to a Laurent series term by term, we extend the definition to certain holomorphic functions, which we then exploit for applications.

More precisely, given a function $f$ satisfying the above assumptions, we define $(-\Delta)^\frac{1}{2}f$ distributionally via the duality relation
\[
\big\langle (-\Delta)^\frac{1}{2} f,\varphi\big\rangle  = \big\langle f, (-\Delta)^\frac{1}{2} \varphi \big\rangle,
\]
where $\varphi$ belongs to an appropriate subspace of the Schwarz space of test functions $\mathcal{S}(\R^n)$ (and hence, $(-\Delta)^\frac{1}{2}\varphi$ is well defined by relation \eqref{EQlaplacianfourier}). Here a fundamental remark is in order. First of all, although no integrability properties are directly assumed on $f$, one should require that $f\cdot (-\Delta)^\frac{1}{2}\varphi\in L_1(\R^n)$, i.e., $(-\Delta)^\frac{1}{2}\varphi$ should compensate the growth of $f$ both at zero and at infinity. To this end, we show in Section~\ref{Lizorkin} that there are appropriate subspaces of $\mathcal{S}(\R^n)$ (denoted by $\Xi_\frac{1}{2}(\R^n)$) where this is the case. These are related to the Lizorkin space of test functions, the natural framework in fractional integro-differential operators \cite{SKM}. It is also worth mentioning that for defining $(-\Delta)^r$ for other fractional powers $r\neq \frac{1}{2}$ it would be necessary to construct the appropriate spaces of test functions.

Roughly speaking, the considered spaces of test functions $\Xi_\frac{1}{2}(\R^n)$ are characterized by the property that if $\varphi\in\Xi_\frac{1}{2}(\R^n)$, then both $\varphi$ and $(-\Delta)^\frac{1}{2}\varphi\in \mathcal{S}(\R^n)$ and moreover, all of their derivatives vanish at the origin. Note that such properties are rather strong since, in contrast, if $\varphi\in \mathcal{S}(\R^n)$ one can only guarantee that $(-\Delta)^\frac{1}{2} \varphi\in L_2(\R^n)$. Thus, the first question to answer is whether there are nontrivial functions in the space $\Xi_\frac{1}{2}(\R^n)$. One of the main difficulties of this work is to answer this question positively, as shown in Section~\ref{Lizorkin}.

Another important feature of the space $\Xi_\frac{1}{2}(\R^n)$ is that it is invariant under $(-\Delta)^\frac{1}{2}$, i.e., $(-\Delta)^\frac{1}{2}:\Xi_\frac{1}{2}(\R^n)\to \Xi_{\frac{1}{2}}(\R^n)$. This naturally yields a (distributional) semigroup property $(-\Delta)^\frac{1}{2}:\Xi'_\frac{1}{2}(\R^n)\to \Xi'_{\frac{1}{2}}(\R^n)$ in such a way that $(-\Delta)^\frac{1}{2}(-\Delta)^\frac{1}{2}f=-\Delta f$ for functions $f$ satisfying the above assumptions.

The definition of the space of test functions $\Xi_{\frac{1}{2}}(\R^n)$ naturally yields $P\in \Xi^\perp_{\frac{1}{2}}(\R^n) $ for any polynomial $P$ (we discuss this in detail in Remark~\ref{REMperp}; in fact, Dirac distributions and their derivatives are also identically zero in $\Xi'_\frac{1}{2}(\R^n)$). Thus, it is remarkable that, in line with \cite{POTENZEVALDIN}, we have $(-\Delta)^\frac{1}{2} f = (-\Delta)^\frac{1}{2} f + P$ in $\Xi'_\frac{1}{2}(\R^n)$ for any polynomial $P$. This suggests a particular interaction of polynomials with the fractional Laplacian, given any concievable definition of the latter.

\medskip

As an application of this distributional framework in the complex setting, we obtain explicit formulae for the $\frac{1}{2}$-Laplacian of  negative monomials, namely
\[
	(-\Delta)^\frac{1}{2}\big(z^{-k}\big)=\frac{ (-1)^{m+1}\bar{z}}{|z|^{2m+3}}\bigg((2m+1)z_0P_{m-1}^{\big(\frac{1}{2},1\big)}\bigg(1-\frac{2z_0^2}{|z|^2}\bigg)-(2m-1)zP_{m-1}^{\big(-\frac{1}{2},1\big)}\bigg(1-\frac{2z_0^2}{|z|^2}\bigg)\bigg)
\]
for $k=2m$, and
\[
	(-\Delta)^\frac{1}{2}\big(z^{-k}\big)= \frac{(-1)^m (2m+1)}{|z|^{2m+3}}\bigg(\bar{z}P_{m}^{\big(-\frac{1}{2},1\big)}\bigg(1-\frac{2z_0^2}{|z|^2}\bigg)+z_0P_{m-1}^{\big(\frac{1}{2},1\big)}\bigg(1-\frac{2z_0^2}{|z|^2}\bigg)\bigg)
\]
for $k=2m+1$, where $P_m^{(\beta,\gamma)}$ are the Jacobi polynomials  \cite{AS,STW,S39} (see Theorem~\ref{minuskcomplex}; a similar formula holds for quaternionic negative monomials, see Theorem~\ref{minusk}). This allows to formally define $\frac{1}{2}$-harmonic complex functions (those satisfying $(-\Delta)^\frac{1}{2}f\equiv 0$) via series of the form
\begin{equation}
	\label{EQseriesholomlap}
\sum_{k=1}^\infty a_k (-\Delta)^\frac{1}{2}\big(z^{-k}\big).
\end{equation}
Under this definition, it follows that  if $f(z)=\sum_{k=1}^\infty a_kz^{-k}$ is a holomorphic function converging on a domain $\mathcal{U}=\mathbb{C}\backslash B_\rho(0)$, the series \eqref{EQseriesholomlap} also converges in $\mathcal{U}$ (Theorem~\ref{main1complex}).

\medskip

The second main goal of this paper is to apply the definition of the fractional Laplacian  to holomorphic type functions in the quaternionic algebra
\[
\mathbb{H}:= \{q=q_0+q_1e_1+q_2e_2+q_3e_3 \,| \, q_0, q_1, q_2, q_3 \in \mathbb{R}\},
\]
where the imaginary units satisfy the relations
\[
e_1^2=e_2^2=e_3^2=-1, \quad \hbox{and} \quad e_1e_2=-e_2e_1=e_3, \, e_2e_3=-e_3e_2=e_1, \, e_3e_1=-e_1e_3=e_2.
\]
More precisely, the fractional Laplacian $(-\Delta)^\frac{1}{2}$ is applied in order to describe possible factorizations of the relevant operators involved in the Fueter--Sce--Qian extension theorem for quaternionic functions \cite{ColSabStrupSce, Q, Q1}, which, roughly speaking, generalizes the well-known fact that holomorphic functions are harmonic (which, in turn, trivially follows from the factorization $\Delta= D\overline{D}=\overline{D}D)$, where $D$ denotes the Cauchy--Riemann operator).

While in the complex case such a factorization does not yield new intermediate spaces (since for a holomorphic function $f$, $\overline{D}f$ is also holomorphic and $Df=0$), in the quaternionic setting the situation is significantly richer, which we now discuss.

For a domain $\mathcal{U}\subset \mathbb{C}$ (satisfying certain technical conditions; the details are given precisely in Section~\ref{Preliminary material}), define
\[
U:=\big\{q=q_0+ \underline{q} \ | \ (q_0, |\underline{q}|) \in \mathcal{U}\big\}\subset \mathbb{H}
\]
the open set induced by $\mathcal{U}$ in $ \mathbb{H}$ (here and in what follows $\underline{q}:=q_1e_1+q_2e_2+q_3e_3=\textrm{Im}(q)$). Given $f_0\in \text{Hol}(\mathcal{U})$, the extension of $f_0$ from $\mathcal{U}\subset \mathbb{C}$ to $U\subset\mathbb{H}$ given by
\begin{equation*}
	f(q)= T_{F1}(f_0)(q):= \alpha(q_0, |\underline{q}|)+ \frac{\underline{q}}{|\underline{q}|} \beta(q_0, |\underline{q}|)
\end{equation*}
is a so-called slice hyperholomorphic function on $U$ (denoted $f\in \mathcal{SH}(U)$; see Section~\ref{Preliminary material} for the precise definitions). Such functions, unlike classical holomorphic functions, satisfy
\begin{equation}
	\label{EQslicehyperoperator}
D\Delta f =0,
\end{equation}
rather than $\overline{D}f=0$, as asserted by the Fueter mapping theorem \cite{Fueter}. Here, $D$ and $\overline{D}$ denote the Fueter and conjugate Fueter operators, respectively, also known as the quaternionic Dirac and conjugate Dirac operators.
\begin{equation*}
	D := \frac{\partial}{\partial q_0} + \sum_{j=1}^3 e_j \frac{\partial}{\partial q_j} \quad \text{and} \quad \overline{D} := \frac{\partial}{\partial q_0} - \sum_{j=1}^3 e_j \frac{\partial}{\partial q_j},
\end{equation*}
The Laplace operator in four real variables admits a similar factorization $\Delta=D\overline{D}=\overline{D}D$ as in the complex case. However, in contrast with the classical case, such a factorization together with \eqref{EQslicehyperoperator} (which can be rewritten as $D\overline{D}Df=DD\overline{D}f=0$) yields new subclasses of slice hyperholomorphic functions depending on the order of application of such operators:
\begin{itemize}
	\item \emph{Axially harmonic functions}: \( \mathcal{AH} = \textrm{ker}(\Delta) = \textrm{ker}(D\overline{D})=\textrm{ker}(\overline{D}D)\);
	\item \emph{Axially polyanalytic functions of order 2}: \( \mathcal{AP}_2 =\textrm{ker}(D^2)\);
	\item \emph{Axially Fueter regular functions}: \( \mathcal{AM}=  \textrm{ker}(D) \).
\end{itemize}
In the form of diagram, these factorizations read as follows:
\[
\begin{CD}
	\textcolor{black}{\mathcal{SH}} @>\overline{D} >> \textcolor{black}{\mathcal{AP}_2} @> D >> \textcolor{black}{\mathcal{AM}}@>D>> \textcolor{black}{0},
\end{CD}
\]
\[
\begin{CD}
	\textcolor{black}{\mathcal{SH}} @>D>> \textcolor{black}{\mathcal{AH}} @> \overline{D} >> \textcolor{black}{\mathcal{AM}}@>D>> \textcolor{black}{0}.
\end{CD}
\]
By considering $\Delta=D\overline{D}=\overline{D}D$, these factorizations can also be rewritten as
\begin{equation}
	\label{EQstruct1}
\begin{CD}
	\textcolor{black}{\mathcal{SH}} @>\Delta >>  \textcolor{black}{\mathcal{AM}}@>D>> \textcolor{black}{0},
\end{CD}
\end{equation}
\begin{equation}
	\label{EQstruct2}
\begin{CD}
	\textcolor{black}{\mathcal{SH}} @>D>>  \textcolor{black}{\mathcal{AH}} @>\Delta >> \textcolor{black}{0}.
\end{CD}
\end{equation}

\medskip

The novelty of this paper in this respect lies in considering factorizations of the Fueter--Sce--Qian mapping theorem in terms of fractional Laplace operators, rather than only in terms of the Dirac operator and its conjugate. More precisely, with the newly given definition for the fractional Laplacian $(-\Delta)^\frac{1}{2}$ for holomorphic functions (and for slice hyperholomorphic functions), two new intermediate classes of fractional order arise  in  the factorization $D\Delta f =0$. In particular, the new classes are the following:
\begin{itemize}
	\item \emph{Axially analytic-harmonic functions of type $(1,\frac{1}{2})$}: \( \mathcal{AAH}_{(1,\frac{1}{2})}= \textrm{ker}\big[ D(-\Delta)^\frac{1}{2}\big] =\textrm{ker}\big[(-\Delta)^\frac{1}{2} D\big]\);
	\item \emph{Axially $\frac{1}{2}$-harmonic functions}: \( \mathcal{AH}_\frac{1}{2} =\textrm{ker}\big((-\Delta)^\frac{1}{2}\big)\).
\end{itemize}

In diagram form, the new classes arising from Fueter mapping theorem and the factorization $\Delta = -(-\Delta)^\frac{1}{2}(-\Delta)^\frac{1}{2}$ in \eqref{EQstruct1} and \eqref{EQstruct2} are
\begin{equation*}
	\label{EQstruct1*}
	\begin{CD}
		\textcolor{black}{\mathcal{SH}} @>(-\Delta)^\frac{1}{2} >> 	\textcolor{black}{\mathcal{AAH}_{(1,\frac{1}{2})}} @>(-\Delta)^\frac{1}{2} >>  \textcolor{black}{\mathcal{AM}}@>D>> \textcolor{black}{0},
	\end{CD}
\end{equation*}
\begin{equation*}
	\label{EQstruct2*}
	\begin{CD}
		\textcolor{black}{\mathcal{SH}} @>D>>  \textcolor{black}{\mathcal{AH}} @>(-\Delta)^\frac{1}{2} >> 	\textcolor{black}{\mathcal{AH}_\frac{1}{2}} @>(-\Delta)^\frac{1}{2} >>  \textcolor{black}{0}.
	\end{CD}
\end{equation*}

Such classes of functions are essential in the development of the so-called fine structures of the spectral theory on the $S$-spectrum, describing the pairs of function classes and integral kernels that allow the development of corresponding integral formulae (analogous to Cauchy's integral formula for holomorphic functions). This, in turn, allows one to define the functional calculus on any class of functions in the above factorizations by using appropriate kernels, generalizing the holomorphic functional calculus. We emphasize that, in this respect, the goal of this paper is  to show that the Laplacian allows a fractional factorization in the Fueter mapping theorem, leaving the problem of finding corresponding integral formulae for future investigation.

\medskip

The outline of the paper is as follows. In Section~\ref{Preliminary material} we introduce
the main notions and function classes related to the quaternionic slice hyperholomorphic functions and the Fueter mapping theorem. We also discuss the known results in the theory of fine structures of the spectral theory on the $S$-spectrum in detail. In Section~\ref{Jacobi} we find useful closed-form expressions for the partial derivatives of the functions $|x|^{-\alpha}$ in $\R^n$ in terms of Jacobi polynomials and discuss their main properties. Notably, such formulae allow to easily derive sharp estimates for such derivatives, which is of independent interest. Section~\ref{Lizorkin} is devoted to developing the distributional theory for the spaces $\Xi_\frac{1}{2}(\R^n)$ mentioned above, necessary for defining the fractional Laplacian $(-\Delta)^\frac{1}{2}$ applied to functions with at most polynomial growth at infinity and potentially with a singularity of power type at the origin.

The rest of the paper is devoted to applications of the fractional Laplacian in the theory fine structures. First of all, in Section~\ref{SECcomplex} we describe the complex case   and introduce the class of $\frac{1}{2}$-harmonic complex functions. This is done with expository purposes, since most of the results in that section follow from the subsequent ones. In Section~\ref{FRACLAPCIAN} we describe in detail the class of axially analytic-harmonic functions of type $(1,\frac{1}{2})$ via application of $(-\Delta)^\frac{1}{2}$ to a slice hyperholomorphic function. Section~\ref{Fractionalharmonic} is devoted to the study of axially $\frac{1}{2}$-harmonic functions via application of $D(-\Delta)^\frac{1}{2}$ to a slice hyperholomorphic function.

Finally, we have also included an appendix (Section~\ref{appendix}) with proofs of the most technical statements to avoid interrupting the flow of the main exposition.

\section{Holomorphic-type classes of quaternionic functions}\label{Preliminary material}

We now give some preliminary notions and statements concerning quaternionic functions and relevant classes for the sequel. Most of them may be found in \cite{CGK}, although we provide further corresponding references when needed.

We start by recalling the quaternion algebra
\[
\mathbb{H}:= \{q=q_0+q_1e_1+q_2e_2+q_3e_3 \,| \, q_0, q_1, q_2, q_3 \in \mathbb{R}\},
\]
where the imaginary units satisfy the relations
\[
e_1^2=e_2^2=e_3^2=-1, \quad \hbox{and} \quad e_1e_2=-e_2e_1=e_3, \, e_2e_3=-e_3e_2=e_1, \, e_3e_1=-e_1e_3=e_2.
\]
We denote by $ \hbox{Re}(q)=q_0$ the real part of a quaternion and by $ \underline{q}:= q_1e_1+q_2e_2+q_3e_3$ its imaginary part. The conjugate of a quaternion $q \in \mathbb{H}$ is defined as $ \bar{q}=q_0- \underline{q}$ and the conjugate of a product satisfies the relation $\overline{pq}=\bar{q} \bar{p}$, for $p$, $q \in \mathbb{H}$. The modulus of $q\in \mathbb{H}$ is given by
\[
|q|= \sqrt{q \bar{q}}= \sqrt{q_0^2+q_1^2+q_2^2+q_3^2}
\]
and if $q\neq 0$, its inverse is the so-called Kelvin inverse: $q^{-1}=\frac{\bar{q}}{|q|^2}$. The unit sphere of purely imaginary quaternions is defined as
\[
\mathbb{S}:= \big\{\underline{q}=q_1e_1+q_2e_2+q_3e_3\, | \, q_1^2+q_2^2+q_3^2=1\big\}.
\]
We observe that if $ J \in \mathbb{S}$ then $J^2=-1$. That is, $J$ behaves as a complex imaginary unit. In this respect, we denote by
\[
\mathbb{C}_J:= \{u+Jv \, | \, u,v \in \mathbb{R}\}
\]
an isomorphic copy of the complex numbers. For a non-real quaternion $q=q_0+ \underline{q}=q_0+ J_q |\underline{q}|$ (where $J_q:= \underline{q}/ |\underline{q}|$), we define the associated 2-sphere
\[
[q]:= \big\{q_0+J |\underline{q}| \, | \, J \in \mathbb{S}\big\}.
\]
A set $U \subset \mathbb{H}$ is said to be axially symmetric if, for every $u+Iv \in U$ with $I^2=-1$, all the elements $u+Jv$, $J \in \mathbb{S}$, belong to $U$. Furthermore, the set $U$ is called a slice domain if $U \cap \mathbb{R} \neq \emptyset$ and if $U \cap \mathbb{C}_J$ is a domain in $ \mathbb{C}_J$ for every $J \in \mathbb{S}$.

\medskip

The above defined axially symmetric sets are suitable domains of the following class of functions, see \cite{ColomboSabadiniStruppa2011, CSSentire}.

\begin{definition}[Slice functions]
\label{axial1}
	Let $U \subseteq \mathbb{H}$ be an axially symmetric domain and let
	\begin{equation}
		\label{set1}
		\mathcal{U}:= \big\{(q_0, |\underline{q}|)\in \mathbb{R}^2 \, : \, q_0+I |\underline{q}| \in U \ \text{for all} \ I \in \mathbb{S}\big\}.
	\end{equation}
	A function $f:U \to \mathbb{H}$ is called a left (resp. right) axial function (or slice function) if there exist two functions $A,B:\mathcal{U}\to\mathbb{H}$ satisfying the even-odd conditions
	\begin{equation}
		\label{co1}
		A(q_0,|\underline{q}|)=A(q_0,-|\underline{q}|), \qquad
		B(q_0,|\underline{q}|)=-B(q_0,-|\underline{q}|),
	\end{equation}
	such that $f$ admits the representation
	\begin{equation}
		\label{axial}
		f(q)=A(q_0,|\underline{q}|)+I\,B(q_0,|\underline{q}|),
		\quad
		\big(\text{resp. } f(q)=A(q_0,|\underline{q}|)+B(q_0,|\underline{q}|)\,I\big), \qquad 	I:=\frac{\underline{q}}{|\underline{q}|}.
	\end{equation}
\end{definition}

In the sequel we restrict ourselves to left slice functions and omit the terminology ``left''. The results can easily be adapted to left right functions.

\begin{remark}
In the literature, the terms axial function and slice function are used differently depending on the context in which these functions are considered.
\end{remark}

\begin{definition}[Slice hyperholomoprhic functions]
	Let $U \subset \mathbb{H}$ be an axially symmetric open set and define $ \mathcal{U}$ as in \eqref{set1}. A function $f:U \to \mathbb{H}$ is a slice hyperholomorphic function if it is a slice function and the functions $A, B: \mathcal{U} \to \mathbb{H}$ in the representation \eqref{axial} satisfy the Cauchy--Riemann equations
	\[
	\partial_u A(u,v)- \partial_v B(u,v)=0, \qquad \hbox{and} \qquad \partial_v A(u,v)+\partial_u B(u,v)=0.
	\]
	If furthermore the functions $A,B$ are real-valued, we say that $f$ is an intrinsic slice hyperholomorphic function.
\end{definition}
We denote the set of  slice hyperholomoprhic functions by  $\mathcal{SH}(U)$ and the class of  intrinsic slice hyperholomorphic functions by $\mathcal{N}(U)$.

\medskip

Another relevant class of hyperholomorphic functions is that of axially Fueter regular functions (see \cite{red, green}).
\begin{definition}[Axially Fueter regular functions]\label{DEFfueterregular}
Let $U \subset \mathbb{H}$ be an open set. A function $f:U\to\mathbb{H}$ of class $\mathcal{C}^1$
is said to be   axially Fueter regular (also called axially monogenic) if
\begin{align*}
	Df(q)&=\frac{\partial}{\partial{q_0}}f(q)+ \sum_{i=1}^3  \frac{\partial}{\partial{q_i}} e_if(q)= 0,\quad
	(\text{resp. }f(q)D&=\frac{\partial}{\partial{q_0}}f(q)+ \sum_{i=1}^3  \frac{\partial}{\partial{q_i}} f(q)e_i =0) ,
\end{align*}
and if $f$ is a slice function (i.e., of axial type; see Definition~\ref{axial1}).
\end{definition}
The class of  axially Fueter regular functions is denoted by  $\mathcal{AM}(U)$.

A classical example of an axially Fueter regular function is the so-called Cauchy--Fueter kernel
\begin{equation}
\label{Fueter}
E(q)=\frac{\bar q}{|q|^4}.
\end{equation}
This function is particularly relevant in the representations of axially Fueter regular functions in integral form; see, e.g., \cite[Ch. 2]{red}. Another important family of axially Fueter regular functions are the so-called Clifford--Appell polynomials, defined as
\begin{equation}
\label{ca}
Q_m(q)= \frac{2}{(m+1)(m+2)} \sum_{\ell=0}^{m}(m-\ell+1)q^{m-\ell} \bar{q}^{\ell},
\end{equation}
see \cite{CMF, CFM}. These polynomials have found applications in different areas, including the theory of Hardy and Fock spaces \cite{AKS3, DDG1} and Schur analysis \cite{ACDDS}.

The classes of slice hyperholomorphic and axially Fueter regular functions are connected by the well-known Fueter mapping theorem \cite{Fueter}, which reads as follows.

\begin{theorem}[Fueter mapping theorem]
\label{Fueter1}
	Let $f_0(z)=f_0(u+iv)= A(u,v)+i B(u,v)$ be a holomorphic function defined in a domain $\mathcal{U}$ contained in the upper-half complex plane and let
\[
U=\big\{q=q_0+ \underline{q} \ | \ (q_0, |\underline{q}|) \in \mathcal{U}\big\}\subset \mathbb{H}
\]
	be the axially symmetric open set induced by $\mathcal{U}$ in $ \mathbb{H}$. Then the slice operator defined by
\begin{equation*}
f(q)= T_{F1}(f_0):= A(q_0, |\underline{q}|)+ \frac{\underline{q}}{|\underline{q}|} B(q_0, |\underline{q}|)
\end{equation*}
	maps the set of holomorphic functions on $\mathcal{U}$ into the set of intrinsic slice hyperholomorphic functions on $U$.
	Furthermore, the function
	\[
	\breve{f}(q):= T_{F2} \Big(A(q_0, |\underline{q}|)+ \frac{\underline{q}}{|\underline{q}|} B(q_0, |\underline{q}|)\Big),
	\]
	where $ T_{F2}:= \Delta$ is the Laplace operator in four real variables, is an axially Fueter regular function.
\end{theorem}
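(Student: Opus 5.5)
The plan is to verify the two claims of Theorem~\ref{Fueter1} by direct computation on the slice representation. We write $r=|\underline{q}|$ and $I=\underline{q}/r$, and set $f(q)=A(q_0,r)+IB(q_0,r)$, where $A,B$ are real-valued and satisfy the Cauchy--Riemann equations $\partial_uA=\partial_vB$ and $\partial_vA=-\partial_uB$. Here $\mathcal{U}$ lies in the upper half-plane, so $r>0$ on $U$ and no even/odd extension issues arise.

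\textbf{First claim.} Restrict $f$ to a slice $\mathbb{C}_J$. For $q=u+Jv$ with $v>0$ we get $f(u+Jv)=A(u,v)+JB(u,v)$. Since $J^2=-1$, the map $u+iv\mapsto u+Jv$ is an algebra isomorphism $\mathbb{C}\to\mathbb{C}_J$, and $f|_{\mathbb{C}_J}$ is the image of $f_0$ under it. Hence
\[
\tfrac12\bigl(\partial_u+J\partial_v\bigr)f=0
\]
follows directly from the Cauchy--Riemann equations. Because $A$ and $B$ are real-valued, $f$ is intrinsic, so $f\in\mathcal{N}(U)$.

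\textbf{Second claim.} We compute $\Delta f$ in $\mathbb{R}^4$ using the axial structure. For a radial function $g(q_0,r)$ in the three imaginary variables,
\[
\Delta g=\partial_{q_0}^2g+\partial_r^2g+\tfrac{2}{r}\partial_rg .
\]
For $IB(q_0,r)=\underline{q}\,\tfrac{B}{r}$, each component $q_jB/r$ is a degree-one spherical harmonic times a radial function, so
\[
\Delta\bigl(IB\bigr)=I\Bigl(\partial_{q_0}^2B+\partial_r^2B+\tfrac{2}{r}\partial_rB-\tfrac{2}{r^2}B\Bigr).
\]
Since $A$ and $B$ are harmonic in $(u,v)$, this gives
\[
\breve f=\Delta f=\tfrac{2}{r}\,\partial_rA+I\Bigl(\tfrac{2}{r}\partial_rB-\tfrac{2}{r^2}B\Bigr)=:\widetilde A+I\widetilde B,
\]
so $\breve f$ is again axial.

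Next we need the action of $D$ on axial functions. Using $\sum_j e_j\partial_{q_j}(g(r))=I\,\partial_rg$ and $\sum_j e_j\partial_{q_j}(\underline{q}\,h(r))=-3h-r\,\partial_rh$, a short computation gives
\[
D\bigl(\widetilde A+I\widetilde B\bigr)=\bigl(\partial_{q_0}\widetilde A-\partial_r\widetilde B-\tfrac{2}{r}\widetilde B\bigr)+I\bigl(\partial_{q_0}\widetilde B+\partial_r\widetilde A\bigr).
\]
So axial Fueter regularity reduces to the Vekua-type system
\[
\partial_{q_0}\widetilde A-\partial_r\widetilde B=\tfrac{2}{r}\widetilde B,\qquad \partial_{q_0}\widetilde B=-\partial_r\widetilde A .
\]
Substituting $\widetilde A=2A_v/v$ and $\widetilde B=2(B_v/v-B/v^2)$, and using $A_u=B_v$, $A_v=-B_u$ together with harmonicity, both equations should reduce to identities.

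The main obstacle is bookkeeping in this last verification, especially the first equation, which involves $\partial_v\widetilde B$ and third derivatives of $B$. Here I would first rewrite everything in terms of $B$ and its derivatives, then use $B_{uu}=-B_{vv}$ to cancel terms. An alternative that avoids the reduction is to check the claim on monomials $f_0(z)=z^k$, where $\Delta q^k$ is known to be a multiple of the Clifford--Appell polynomials $Q_{k-2}$ in \eqref{ca}, and then extend by local power series. The direct computation is cleaner, though, and it also covers domains that do not meet the real axis.
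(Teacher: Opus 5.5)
The paper does not prove this statement. It quotes it as the classical Fueter theorem, citing Fueter's original work, and only uses it afterwards, so there is no internal argument to compare yours with.

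Your route is the standard direct proof, and it is correct:
\begin{itemize}
\item The slice condition follows at once from the Cauchy--Riemann equations.
\item Your formulas all check out: for $\Delta(IB)$, for $\sum_j e_j\partial_{q_j}(\underline{q}\,h(r))=-3h-rh'$, for $\Delta f$, and for $D(\widetilde A+I\widetilde B)$. The last is the Vekua system, where the term $\tfrac{2}{r}\widetilde B$ comes from the three imaginary directions.
\end{itemize}

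The step you flag as the main obstacle is in fact two lines. It involves no third derivatives and no further use of harmonicity. Differentiating $A_v=-B_u$ gives $A_{vv}=-B_{uv}$, hence
\[
\partial_v\widetilde A=\frac{2A_{vv}}{v}-\frac{2A_v}{v^2}=-\frac{2B_{uv}}{v}+\frac{2B_u}{v^2}=-\partial_u\widetilde B .
\]
Differentiating $A_u=B_v$ gives $\partial_u\widetilde A=2A_{uv}/v=2B_{vv}/v$, while
\[
\partial_v\widetilde B+\frac{2}{v}\widetilde B=\Bigl(\frac{2B_{vv}}{v}-\frac{4B_v}{v^2}+\frac{4B}{v^3}\Bigr)+\Bigl(\frac{4B_v}{v^2}-\frac{4B}{v^3}\Bigr)=\frac{2B_{vv}}{v}.
\]
Both Vekua equations therefore hold.

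The paper's definition of $\mathcal{AM}$ also requires $\breve f$ to be a slice function, so add one sentence for this. Extend $A$ evenly and $B$ oddly in $v$; this preserves the Cauchy--Riemann equations. Then $\widetilde A=2A_v/v$ is even and $\widetilde B=2(B_v/v-B/v^2)$ is odd, so the even--odd conditions hold.

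Your preference for the direct computation over the monomial/Clifford--Appell route is well founded. The latter relies on real-centred expansions, so it does not directly reach a $\mathcal{U}$ that misses the real axis.
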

\begin{remark}
 In the Fueter mapping theorem, the assumption that the holomorphic function is defined in the upper-half complex plane can be removed if the even-odd conditions (\ref{co1}) hold, see \cite{Q, Q2, TaoQian1}.
\end{remark}

The Fueter mapping theorem guarantees that for a slice hyperholomorphic function one has
\[
D\Delta f\equiv 0,
\]
in contrast with classical holomorphic functions in the complex plane, which satisfy $Df\equiv 0$. Moreover, it yields a two-step process for extending complex holomorphic functions into holomorphic-type quaternionic functions. In the first step, this process yields the class of slice hyperholomorphic functions (via the mapping $T_{F_1}$). These functions can be employed with the Cauchy formula to establish the so-called \( S \)-functional calculus, specifically designed for quaternionic or (or more generally, for Clifford operators) with noncommuting components and relying on the \( S \)-spectrum. The second step of this extension procedure (via the mapping $T_{F_2}$) produces axially Fueter regular functions, i.e., functions in the kernel of the Dirac operator. Such a class of functions also admits a Cauchy-type formula (with an approrpiate kernel) leading to the so-called $F$-functional calculus.

In view of the above, a natural step is to consider the Fueter--Sce--Qian construction
\begin{equation}
	\label{diagintro}
	\begin{CD}
		\textcolor{black}{\mathcal{H}(\mathcal{U})} @>T_{F1}>> \textcolor{black}{\mathcal{SH}(U)} @> \ T_{F2}=\Delta \ >> \textcolor{black}{\mathcal{AM}(U)}@> D \ >> 0,
	\end{CD}
\end{equation}
(here \(\mathcal{H}({\mathcal{U}})\) denotes class of holomorphic functions on a domain $\mathcal{U}\subset \mathbb{C}$ and $U$ is as in \eqref{set1}), one further factorize the Laplacian as $\Delta=D\overline{D}=\overline{D}D$, obtaining new intermediate classes of functions. The question now becomes whether  corresponding integral formulae exist in such classes, potentially leading to new functional calculi. This motivates the following definition.

\begin{definition}
	The classes of functions and the associated functional calculi induced by a factorization of the operator $T_{F2}$ from the Fueter mapping theorem are called the \emph{fine structures of the spectral theory on the $S$-spectrum} (or, for short, the quaternionic fine structures).
\end{definition}

The quaternionic fine structures  emerging from the Fueter--Sce--Qian extension theorem are well studied \cite{CDPS,AD, Polyf1,Polyf2}. As established in the mentioned literature, the classes of axially polyanalytic functions of order 2,  $ \mathcal{AP}_2 =\textrm{ker}(D^2)$, and of axially harmonic functions,  \( \mathcal{AH} = \textrm{ker}(\Delta)\), are obtained through factorizations of $\Delta$, yielding the extensions of \eqref{diagintro}
\begin{equation*}
	\begin{CD}
		\textcolor{black}{\mathcal{H}(\mathcal{U})} @>T_{F1}>> \textcolor{black}{\mathcal{SH}(U)} @>\overline{D} >> \textcolor{black}{\mathcal{AP}_2}(U) @> D >>  \textcolor{black}{\mathcal{AM}(U)}@> D \ >> 0,
	\end{CD}
\end{equation*}
and
\begin{equation*}
	\begin{CD}
		\textcolor{black}{\mathcal{H}(\mathcal{U})} @>T_{F1}>> \textcolor{black}{\mathcal{SH}(U)} @>D>> \textcolor{black}{\mathcal{AH}}(U) @> \overline{D} >> \textcolor{black}{\mathcal{AM}(U)}@> D \ >> 0.
	\end{CD}
\end{equation*}
In particular, the generalized Cauchy integral formulae in the classes  $\mathcal{AP}_2$ and $\mathcal{AH}$ lead to the $P_2$- \cite{Polyf1,Polyf2} and harmonic \cite{CDPS} functional calculus for bounded operators, respectively. This suggests that further (or different) factorizations of the Laplacian in \eqref{diagintro} can lead to other functional calculi for quaternionic functions.

The four different functional calculi mentioned above are well understood in the case of bounded quaternionic operators and in the case of unbounded operators for hyperholomorphic functions at infinity \cite{CDP23}. The $H^\infty$-versions of these calculi have also been described, see \cite{Hinfty, CPS1,DPS}.

\medskip

As already mentioned in the introduction, we aim to define a meaningful fractional Laplacian $(-\Delta)^\frac{1}{2}$ applied to certain holomorphic-type functions in order to obtain fractional factorizations of the operator $D\Delta$ from the Fueter mapping theorem. For describing the fractional fine structures one would need to derive corresponding generalizations of Cauchy's integral formulae in each class for developing the respective functional calculi, a task not undertaken in this paper. Definitions of the fractional function classes involved in the factorization are postponed to Section~\ref{Lizorkin} (Definitions~\ref{DEFaxiallyharmonic}~and~\ref{polyclif}), after the necessary theory has been developed.

\begin{remark}
	\begin{enumerate}[wide,label=(\textnormal{\roman{*}})]
		\item  In the general Clifford algebra setting (restricted to paravectors in dimension $n+1$), Sce \cite{ColSabStrupSce,Sce}
		proved that the Fueter mapping theorem holds with the operator $T_{F2}$ replaced by $\Delta_{n+1}^{\frac{n-1}{2}}$, where $\Delta_{n+1}$ is the Laplace operator in $n+1$ variables and $n$ is odd (in fact, this extension is known as the Fueter--Sce mapping theorem).
		In this case, $T_{F2}$ is a pointwise differential operator. On the other hand, if $n$ is even, $\Delta_{n+1}^{\frac{n-1}{2}}$ corresponds to a half-integer fractional Laplace operator. In \cite{Q}, the author was able to define the fractional Laplacian $\Delta_{n+1}^{\frac{n-1}{2}}$ (i.e., with fixed fractional power $\frac{n-1}{2}$), while the problem of constructing other fractional powers of the Laplacian remained open.
		\item The Clifford algebra fine structures are more intricate due to the fact that the operator to be factorized is the Fueter--Sce mapping $ \Delta_{n+1}^{\frac{n-1}{2}}$, which, evidently, have more possible factorizations as the dimeniosn $n$ increases. These factorizations were thoroughly studied for the case $n = 5 $ in \cite{Fivedim} and more generally in \cite{CDP25}.
	\end{enumerate}
\end{remark}

In relation to the above classes, the theory of Laurent expansions of slice hyperholomorphic functions (and related holomorphic-type functions) has been developed in \cite{CDSm, ColomboSabadiniStruppa2011}. In contrast with the complex case where the convergence of such series occurs in annuli, here the convergence occurs in 4-dimensional spherical shells of the form
\[
\mathcal{A}_{R_1,R_2}:= \{ q \in \mathbb{H} \, | \, R_1<|q|<R_2\},
\]
where $R_1,R_2 \in [0, \infty]$ and $R_1<R_2$. We state a few results neeeded in the sequel.
\begin{proposition}
	Let $ \{a_k\}_{k \in \mathbb{Z}}$ be a squence in $ \mathbb{H}$. Let
	\[
	R_1:= \limsup_{k \to \infty}|a_{-k}|^{\frac{1}{k}}, \quad \hbox{and} \quad \frac{1}{R_2}:= \limsup_{k \to \infty}|a_k|^{\frac{1}{k}},
	\]
	and  assume that $R_1<R_2$. Then the series
	\begin{equation}
		\label{series0}
		f(q)= \sum_{k=0}^{\infty} q^k a_k+\sum_{k=1}^{\infty}q^{-k}a_{-k},
	\end{equation}
	converges absolutely and uniformly on every compact subset of $\mathcal{A}_{R_1,R_2}$.
\end{proposition}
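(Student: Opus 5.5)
The plan is to treat the two halves of the series \eqref{series0} separately and reduce each to a comparison with a geometric series, using only the multiplicativity of the quaternionic modulus. The key observation is that $|q^k a_k| = |q|^k |a_k|$ for every $k\in\mathbb{Z}$. This holds because $|pq|=|p||q|$ in $\mathbb{H}$ (from $\overline{pq}=\bar q\bar p$ and $|q|^2=q\bar q$ being real, hence central), and because $|q^{-1}|=|\bar q|/|q|^2=|q|^{-1}$. The noncommutativity of $\mathbb{H}$ therefore plays no role in the estimates: every term is bounded in modulus by the corresponding real quantity.

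Fix a compact set $K\subset\mathcal{A}_{R_1,R_2}$. Since $q\mapsto|q|$ is continuous, it attains a minimum $r_1$ and a maximum $r_2$ on $K$, and $R_1<r_1\le r_2<R_2$. Choose $\rho_1,\rho_2$ with $R_1<\rho_1<r_1$ and $r_2<\rho_2<R_2$.

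For the regular part, the definition of $1/R_2$ as a $\limsup$ gives $|a_k|^{1/k}<1/\rho_2$ for all $k\ge k_0$. Hence, for $q\in K$,
\[
|q^k a_k|\le r_2^k\rho_2^{-k}=(r_2/\rho_2)^k .
\]
For the principal part, $|a_{-k}|^{1/k}<\rho_1$ for $k\ge k_1$, so
\[
|q^{-k}a_{-k}|\le r_1^{-k}\rho_1^k=(\rho_1/r_1)^k .
\]
Both ratios $r_2/\rho_2$ and $\rho_1/r_1$ lie in $(0,1)$. The Weierstrass M-test applied in the Banach space $(\mathbb{H},|\cdot|)\cong\mathbb{R}^4$ then yields absolute and uniform convergence of both series on $K$. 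The finitely many initial terms are continuous functions and do not affect convergence.

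The only delicate points are degenerate cases. If $R_2=\infty$, we simply pick any finite $\rho_2>r_2$. If $R_1=0$, we choose $\rho_1\in(0,r_1)$, which is possible since $r_1>0$ because $0\notin\mathcal{A}_{R_1,R_2}$. The one step worth writing out carefully is the multiplicativity of the modulus, including for negative powers, since that is where the quaternionic setting could in principle differ from the complex one; the rest is the classical Cauchy–Hadamard argument.
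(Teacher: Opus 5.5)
Your proof is correct. Multiplicativity of the quaternionic modulus, including $|q^{-1}|=|q|^{-1}$, gives $|q^{k}a_k|=|q|^{k}|a_k|$ for all $k\in\mathbb{Z}$, which reduces both halves of the series to real geometric comparisons. With $\rho_1\in(R_1,r_1)$ and $\rho_2\in(r_2,R_2)$ chosen from the extreme values of $|q|$ on the compact set, the Weierstrass M-test then gives absolute and uniform convergence. The paper gives no proof of this proposition; it quotes it as background from the literature on slice hyperholomorphic Laurent series, and the standard argument there is exactly this reduction to the classical Cauchy--Hadamard estimate.
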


On the other hand, a slice hyperholomorphic function in $\mathcal{A}_{R_1,R_2}$ can be written as the series in \eqref{series0} in a neighbourhood of the origin. In fact, we have
\begin{theorem}
	Let $0\leq R_1<R_2\leq \infty$ and $f: \mathcal{A}_{R_1,R_2} \to \mathbb{H}$ be a slice hyperholomorphic function. Then there exists a sequence $ \{a_k\}_{k \in \mathbb{Z}} \subset \mathbb{H}$ such that
	\[
	f(q)= \sum_{k=0}^{\infty}q^k a_k+\sum_{k=1}^{\infty} q^{-k} a_{-k}, \qquad q \in \mathcal{A}_{R_1,R_2}.
	\]
\end{theorem}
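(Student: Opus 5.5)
Since $f$ is slice hyperholomorphic, I will reduce to the complex Laurent theorem on each slice and then glue the slices using the splitting lemma and the representation formula.

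\emph{Step 1 (one slice).} Fix $I\in\mathbb{S}$ and choose $J\in\mathbb{S}$ orthogonal to $I$. On $\mathcal{A}_{R_1,R_2}\cap\mathbb{C}_I$, which is a planar annulus, the restriction $f_I$ is holomorphic in the sense $\frac12(\partial_u+I\partial_v)f_I=0$. By the splitting lemma I write $f_I=F+GJ$ with $F,G$ holomorphic $\mathbb{C}_I$-valued functions on that annulus. The classical Laurent theorem expands $F$ and $G$ in powers $z^k$, $k\in\mathbb{Z}$, with coefficients $\alpha_k,\beta_k\in\mathbb{C}_I$. Recombining gives $f_I(z)=\sum_{k\in\mathbb{Z}} z^k a_k$ with $a_k=\alpha_k+\beta_kJ\in\mathbb{H}$. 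The convergence is uniform on compact sub-annuli.

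\emph{Step 2 (all slices).} I define $g(q)=\sum_{k\ge0}q^ka_k+\sum_{k\ge1}q^{-k}a_{-k}$ on $\mathcal{A}_{R_1,R_2}$. By the Cauchy--Hadamard estimates inherited from the planar expansion, the coefficient radii contain $(R_1,R_2)$. The preceding Proposition then gives absolute and uniform convergence on compact subsets of the shell. Each term $q^ka_k$ is slice hyperholomorphic, so $g$ is slice hyperholomorphic, being a locally uniform limit. Moreover $g$ coincides with $f$ on $\mathcal{A}_{R_1,R_2}\cap\mathbb{C}_I$.

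\emph{Step 3 (identity).} The shell is axially symmetric, so I apply the representation formula to both $f$ and $g$:
\[
f(q_0+K r)=\tfrac12\big[(1-KI)f(q_0+Ir)+(1+KI)f(q_0-Ir)\big], \qquad K\in\mathbb{S}.
\]
Here $q_0\pm Ir$ both lie in $\mathbb{C}_I$, where $f=g$. Hence $f=g$ on the whole shell. If one prefers to avoid the representation formula, the identity principle for slice functions gives the same conclusion.

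\emph{Main obstacle.} Step 3 is the delicate point, because the representation formula is normally stated for slice domains meeting $\mathbb{R}$, and the shell does meet $\mathbb{R}$ (for instance at $q=\pm t$ with $R_1<t<R_2$). If that formula is not available for non-simply-connected axial sets, I would instead prove it directly. The direct route uses the slice-function structure $f=A+IB$ from Definition~\ref{axial1}, whose even-odd conditions are exactly what the formula encodes; this is automatic since $f$ is assumed slice. Justifying that the non-intrinsic coefficients $a_k$ act on the right is handled by the choice of right multiplication in the splitting.
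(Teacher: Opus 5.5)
Your proof is correct. The paper does not prove this theorem; it quotes it from the Laurent-series literature cited just before the statement, so there is no in-paper argument to compare with. What you give is the standard argument: splitting lemma on one slice, the classical Laurent theorem for the two $\mathbb{C}_I$-valued components, and extension off the slice. All three steps go through as written.

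\textbf{Step 3 is not a real obstacle.} Since $f$ is assumed to be a slice function in the sense of Definition~\ref{axial1}, its components are recovered from any single slice by
\[
A(q_0,r)=\tfrac12\big[f(q_0+Ir)+f(q_0-Ir)\big],\qquad B(q_0,r)=-\tfrac12\, I\big[f(q_0+Ir)-f(q_0-Ir)\big].
\]
So the representation formula is a purely algebraic identity on any axially symmetric set. It needs no connectedness or simple connectivity. In any case $\mathcal{A}_{R_1,R_2}$ is an axially symmetric slice domain: every slice is a (possibly degenerate) planar annulus, hence connected, and the shell meets $\mathbb{R}$. Your fallback argument is therefore simply the cleanest form of Step 3.

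\textbf{Step 2 needs less than you claim.} You only need $g$ to be a slice function, and this is immediate. Write $(q_0+Kr)^k=\alpha_k(q_0,r)+K\beta_k(q_0,r)$ with real $\alpha_k,\beta_k$ of the right parity in $r$ and $|\alpha_k|,|\beta_k|\le |q|^k$. Then the series $A_g=\sum_k\alpha_k a_k$ and $B_g=\sum_k\beta_k a_k$ converge absolutely on the shell, by the same Cauchy--Hadamard bounds you use. The claim that $g$ is slice hyperholomorphic as a locally uniform limit is true but not needed to conclude $f=g$.

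\textbf{Comparison with the other common route.} One can instead apply the slice Cauchy formula on $\mathbb{C}_I$ and expand the kernel in geometric series. That route also produces an explicit integral formula for the coefficients, $a_k=\frac{1}{2\pi}\int_{\partial B_r\cap\mathbb{C}_I}\zeta^{-k-1}\,(-I\,d\zeta)\,f(\zeta)$. Your route needs nothing beyond the classical Laurent theorem. It also gives uniqueness of the $a_k$, hence their independence of $I$ and $J$, by applying classical uniqueness to the components on any slice.
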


\begin{definition}
	Let $\{a_{-k}\}_{k \in \mathbb{Z}} \subset \mathbb{H}$. We define the principal part of the Laurent series of a slice hyperholomorphic function as the series
	\begin{equation}
		\label{laurent}
		\sum_{k=1}^{\infty}q^{-k}a_{-k}.
	\end{equation}
\end{definition}

The following statement naturally describes the convergence of  \eqref{laurent}.

\begin{proposition}
	\label{conve}
	Let $ \{a_{-k}\}_{k \in \mathbb{Z}} \subset \mathbb{H}$ and assume that
	\[
	R_1= \limsup_{k \to \infty}|a_{-k}|^{\frac{1}{k}}.
	\]
	is finite. Then the series \eqref{laurent} converges absolutely and uniformly on every compact subset of $\mathcal{A}_{R_1,\infty}$.
\end{proposition}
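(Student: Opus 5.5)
The plan is to reduce the statement to a scalar comparison with a geometric series via the multiplicativity of the quaternionic modulus. For $q\neq 0$ we have $q^{-1}=\bar q/|q|^2$, so $|q^{-1}|=|q|^{-1}$. Since $|pq|=|p||q|$ for all $p,q\in\mathbb{H}$, induction gives $|q^{-k}|=|q|^{-k}$ and hence
\[
\big|q^{-k}a_{-k}\big| = |q|^{-k}\,|a_{-k}| \qquad\text{for every } k\ge 1 .
\]
Each term of \eqref{laurent} is continuous on $\mathbb{H}\setminus\{0\}$, so uniform convergence will also yield continuity of the sum there.

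Next fix a compact $K\subset\mathcal{A}_{R_1,\infty}$. The function $q\mapsto|q|$ is continuous and positive on $K$, and $K$ lies in the open shell, so $r:=\min_{q\in K}|q|$ is attained and satisfies $r>R_1$. Choose $\rho$ with $R_1<\rho<r$; this is possible because $R_1$ is finite. By the definition of $\limsup$ there is $k_0$ with $|a_{-k}|^{1/k}\le\rho$ for all $k\ge k_0$. Consequently, for $q\in K$ and $k\ge k_0$,
\[
\big|q^{-k}a_{-k}\big|\le \Big(\frac{\rho}{r}\Big)^{k}.
\]

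Since $\rho/r<1$, the geometric series $\sum_k(\rho/r)^k$ converges. The Weierstrass M-test, applied in the Banach space $\mathbb{H}\cong\mathbb{R}^4$ with the Euclidean norm, then gives absolute and uniform convergence on $K$ of the tail $\sum_{k\ge k_0}$. The finitely many remaining terms are bounded on $K$, so the whole series converges absolutely and uniformly on $K$.

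There is no genuine obstacle here. The only points needing care are the multiplicativity of the norm, which is what replaces commutativity, and the strict separation $r>R_1$ obtained from compactness; the latter is where the open shell $\mathcal{A}_{R_1,\infty}$ is used. The case $R_1=0$ is covered by the same argument with any $\rho\in(0,r)$.
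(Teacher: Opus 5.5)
Your proof is correct. The identity $|q^{-k}a_{-k}|=|q|^{-k}|a_{-k}|$, the strict gap $r=\min_K|q|>R_1$ coming from compactness, and the Weierstrass M-test against $\sum_k(\rho/r)^k$ are exactly what is needed. The paper gives no proof of this proposition; it recalls it from the literature on Laurent expansions of slice hyperholomorphic functions, and your geometric-series comparison is the standard argument for it.
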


Further relevant classes within the fine structures are intrudoced in Section~\ref{Lizorkin}, once the fractional Laplacian for slice hyperholomorphic functions is appropriately defined.

\section{Jacobi polynomials and derivatives of power functions}\label{Jacobi}
The Jacobi polynomials are a well-known family of orthogonal polynomials in $[-1,1]$. Suprisingly, they turn out to be closely connected to iterated partial derivatives of the functions $|x|^{-\alpha}$, $\alpha>0$. In this section we introduce such a family of polynomials and derive all the necessary relations and estimates that are needed in the sequel.

For parameters $\beta, \gamma \geq -1$ and $y \in [-1,1]$, the Jacobi polynomial $P_m^{(\beta, \gamma)}$ is defined as
\begin{equation}
	\label{jacob}
	P_m^{(\beta, \gamma)}(y)= \frac{\Gamma(\beta+m+1)}{m! \, \Gamma(\beta+\gamma+m+1)} \sum_{k=0}^{m} \binom{m}{k} \frac{\Gamma(\beta+\gamma+m+k+1)}{\Gamma(\beta+k+1)} \bigg(\frac{y-1}{2}\bigg)^k
\end{equation}
(see \cite{AS,STW,S39} for further details). They satisfy the relation
\[
P_m^{(\beta, \gamma)}(-y) =(-1)^m P_m^{(\gamma,\beta)}(y),
\]
see \cite[eq. (4.1.3)]{S39}. Recalling that for $r\in \R$, $\binom{r}{k} := \frac{\Gamma(r+1)}{k!\, \Gamma(r+1-k)}$ (provided that $r-k$ is not a negative integer), we have, in particular,
\begin{equation}
	\label{EQjacobiextreme}
	P_m^{(\beta, \gamma)}(1) = \binom{m+\beta}{m},\qquad P_m^{(\beta, \gamma)}(-1) = (-1)^m\binom{m+\gamma}{m}.
\end{equation}
The Jacobi polynomials on $[-1,1]$ satisfy the following property.
\begin{theorem}[\textnormal{\cite[Thm. 7.32.1]{S39}}] Let $m\in \mathbb{N}\cup \{0\}$. Then
	\label{LEMjacobiestimate}
	\begin{equation*}
		\max_{y \in [-1,1]}|P_{m}^{(\beta, \gamma)}(y)|= \binom{m+\max\{\beta,\gamma\}}{m}=\frac{\Gamma(m+1+\max\{\beta,\gamma\})}{m!\,\Gamma(1+\max\{\beta,\gamma\})},
	\end{equation*}
	provided that $\beta,\gamma\geq -\frac{1}{2}$.
\end{theorem}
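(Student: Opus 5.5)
Since the identity $P_m^{(\beta,\gamma)}(-y)=(-1)^mP_m^{(\gamma,\beta)}(y)$ swaps the two parameters and preserves the maximum of the absolute value on $[-1,1]$, I may assume $\beta\geq\gamma$, so that $\max\{\beta,\gamma\}=\beta$. The case $m=0$ is trivial, because $P_0^{(\beta,\gamma)}\equiv 1$. For $m\geq 1$, by \eqref{EQjacobiextreme} the value $\binom{m+\beta}{m}=P_m^{(\beta,\gamma)}(1)$ is attained at $y=1$. It therefore remains to prove the upper bound $|P_m^{(\beta,\gamma)}(y)|\leq P_m^{(\beta,\gamma)}(1)$ on $[-1,1]$. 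I will do this with a Sonin-type auxiliary function built from the Jacobi differential equation
\[
(1-y^2)P''+\big(\gamma-\beta-(\beta+\gamma+2)y\big)P'+\lambda P=0,\qquad \lambda:=m(m+\beta+\gamma+1),
\]
where $P=P_m^{(\beta,\gamma)}$. I take this equation as standard (cf. \cite{S39}); it can also be checked directly from \eqref{jacob}. Note that $\lambda>0$ because $\beta+\gamma\geq -1$ and $m\geq 1$.

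Define
\[
f(y):=P(y)^2+\frac{1-y^2}{\lambda}P'(y)^2 .
\]
I differentiate $f$ and substitute $(1-y^2)P''$ from the differential equation. The terms $\pm 2PP'$ cancel, and I expect to obtain
\[
f'(y)=\frac{2}{\lambda}P'(y)^2\big((\beta-\gamma)+(\beta+\gamma+1)y\big).
\]
The bracket is a linear function of $y$ with nonnegative slope $\beta+\gamma+1$; this is where the hypothesis $\beta,\gamma\geq-\frac12$ enters. I consider two cases.

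\emph{Case $\beta+\gamma+1>0$.} The bracket vanishes at $y_0=\frac{\gamma-\beta}{\beta+\gamma+1}\leq 0$. Hence $f$ is nondecreasing on $[\max\{y_0,-1\},1]$ and nonincreasing on $[-1,\max\{y_0,-1\}]$.

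\emph{Case $\beta+\gamma+1=0$.} This forces $\beta=\gamma=-\frac12$, so the bracket is identically $0$ and $f$ is constant.

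In either case, since $P^2\leq f$ and $f(\pm1)=P(\pm1)^2$, we get
\[
P(y)^2\leq \max\{P(1)^2,P(-1)^2\}\quad\text{for all } y\in[-1,1].
\]

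The final step, which I expect to be the only delicate point, is to compare the two endpoint values given by \eqref{EQjacobiextreme}, namely $|P(-1)|=\big|\binom{m+\gamma}{m}\big|$ and $P(1)=\binom{m+\beta}{m}$. I write
\[
\binom{m+a}{m}=\prod_{j=1}^m\frac{a+j}{j}.
\]
For $a\geq-\frac12$ every factor is positive, and each factor is nondecreasing in $a$. Since $-\frac12\leq\gamma\leq\beta$, it follows that $0<\binom{m+\gamma}{m}\leq\binom{m+\beta}{m}$. This gives $|P(y)|\leq\binom{m+\beta}{m}$ on $[-1,1]$, with equality at $y=1$. Rewriting $\binom{m+\beta}{m}$ through Gamma functions gives the second form of the stated formula.
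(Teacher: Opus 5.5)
Your proof is correct, and it takes essentially the same approach as the source the paper relies on. The paper gives no proof of its own and simply cites Szeg\H{o}'s Theorem 7.32.1, whose proof is this Sonin-function argument: $f=P^2+\frac{1-y^2}{\lambda}(P')^2$ with $f'=\frac{2}{\lambda}(P')^2\big((\beta-\gamma)+(\beta+\gamma+1)y\big)$, followed by comparing the two endpoint values (Szeg\H{o}'s theorem even holds under the weaker hypothesis $\max\{\beta,\gamma\}\geq-\frac12$, while you use both parameters $\geq-\frac12$, which is exactly what the paper's statement assumes).
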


The $k$th derivative of the Jacobi polynomial $P_{m}^{(\beta, \gamma)}$ is
\begin{equation}
	\label{derJ}
	\frac{d^k}{d y^k} P_{m}^{(\beta, \gamma)}(y)= \frac{\Gamma(\beta+\gamma+m+1+k)}{2^k \Gamma(\beta+\gamma+m+1)}P_{m-k}^{(\beta+k, \gamma+k)}(y),
\end{equation}
see \cite[Eq. (3.101)]{STW}.

We now give a formula for the partial derivatives of the function $|x|^{-\alpha}$ in terms of Jacobi polynomials. Since the proof is rather lengthy and only requires appropriate algebraic manipulations, we postpone it to the Appendix (Section~\ref{appendix}).
\begin{proposition}\label{PROPdermodulus1}
	Let $x\in \R^n\backslash \{0\}$, $\alpha>0$, and $m\in \mathbb{N}\cup\{0\}$. Then, we have
	\begin{equation}
		\label{J11}
		\frac{\partial^{2m}}{\partial x_j^{2m}}\frac{1}{|x|^{\alpha}} = \frac{(-1)^m \sqrt{\pi} \Gamma\big( m+\frac{\alpha}{2}\big) (2m)! }{|x|^{2m+\alpha}\Gamma\big(\frac{\alpha}{2}\big)\Gamma\big( m+\frac{1}{2}\big)}P_m^{ \big(-\frac{1}{2},\frac{\alpha-1}{2}\big)}\bigg(1-\frac{2x_j^2}{|x|^2} \bigg),
	\end{equation}
	and
	\begin{equation}
		\label{J21}
		\frac{\partial^{2m+1}}{\partial x_j^{2m+1}}\frac{1}{|x|^{\alpha}} = \frac{(-1)^{m+1} \sqrt{\pi}\Gamma\big(m+\frac{\alpha}{2}+1\big)(2m+1)!x_j}{|x|^{2m+2+\alpha} \Gamma\big(\frac{\alpha}{2}\big)\Gamma\big(m+\frac{3}{2}\big)} P_m^{\big( \frac{1}{2},\frac{\alpha-1}{2}\big)}\bigg(1-\frac{2x_j^2}{|x|^2} \bigg),
	\end{equation}
	for any $j=1,\ldots,n$.
\end{proposition}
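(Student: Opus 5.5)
The plan is to reduce everything to a one–variable computation. Fix $j$ and write $t:=x_j$, $c:=|x|^2-x_j^2\ge 0$, $s:=\frac{\alpha}{2}$, so that $|x|^{-\alpha}=g(t^2+c)$ with $g(u)=u^{-s}$. For $N\in\mathbb{N}\cup\{0\}$, I will first establish the classical Faà di Bruno–type identity
\[
\frac{d^N}{dt^N}\, g(t^2+c)=\sum_{k=0}^{\lfloor N/2\rfloor}\frac{N!}{k!\,(N-2k)!}\,(2t)^{N-2k}\, g^{(N-k)}(t^2+c).
\]
It follows by computing the coefficient of $h^N/N!$ in the Taylor expansion of $g\big((t+h)^2+c\big)=g\big(u+2th+h^2\big)$, where $u=t^2+c$. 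Expanding $g$ in powers of $w=2th+h^2$ and then expanding $w^{p}$ by the binomial theorem, the terms contributing to $h^N$ are those with $p=N-k$, in which $h^2$ appears $k$ times. Inserting $g^{(p)}(u)=(-1)^p (s)_p\,u^{-s-p}$, where $(s)_p=\Gamma(s+p)/\Gamma(s)$, gives an explicit finite sum.

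Next I will treat the even case $N=2m$. In each term $t^{2m-2k}u^{-s-2m+k}$ I will factor out $u^{-s-m}=|x|^{-2m-\alpha}$, which leaves $(t^2/u)^{m-k}$. Reindexing with $l=m-k$ turns the sum into a polynomial in $y:=x_j^2/|x|^2$ of degree $m$. Since $\frac{1}{2}\big((1-2y)-1\big)=-y$, the Jacobi definition \eqref{jacob} with argument $1-2x_j^2/|x|^2$ is also a polynomial in $y$, namely $\sum_l \binom{m}{l}(\cdots)(-y)^l$. So it suffices to match the coefficients of $y^l$ for $l=0,\dots,m$, with $\beta=-\frac{1}{2}$ and $\gamma=\frac{\alpha-1}{2}$, so that $\beta+\gamma=s-1$. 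The Gamma factors line up as follows: $\Gamma(\beta+\gamma+m+l+1)=\Gamma(s+m+l)$ matches $(s)_{m+l}$; the term $\Gamma(\beta+l+1)=\Gamma(l+\frac{1}{2})$ pairs with $(2l)!$ through the duplication formula $(2l)!=4^l\, l!\,\Gamma(l+\frac{1}{2})/\sqrt{\pi}$; and the prefactor $\Gamma(\beta+m+1)/\Gamma(\beta+\gamma+m+1)=\Gamma(m+\frac{1}{2})/\Gamma(s+m)$ accounts for the constants in \eqref{J11}. A quick check at $y=0$ (that is, $x_j=0$) using \eqref{EQjacobiextreme} fixes the overall normalization.

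The odd case $N=2m+1$ runs the same way. Now every term carries an odd power $t^{2m+1-2k}$, so I will factor out $t=x_j$ together with $u^{-s-m-1}$ and reindex with $l=m-k$. Matching with \eqref{jacob} for $\beta=\frac{1}{2}$ and $\gamma=\frac{\alpha-1}{2}$ uses $\Gamma(l+\frac{3}{2})$ and the duplication formula in the form $(2l+1)!=4^l\, l!\,\Gamma(l+\frac{3}{2})\cdot 2/\sqrt{\pi}$. Alternatively, \eqref{J21} could be derived by differentiating \eqref{J11} once in $x_j$ and using \eqref{derJ}. However, the direct coefficient comparison seems cleaner, so I will use it.

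The main obstacle is the bookkeeping of constants in the coefficient comparison: the signs $(-1)^{N-k}$ against $(-1)^l$, the powers of $2$ from $(2t)^{N-2k}$ against the $4^l$ from duplication, and the Gamma ratios. I will handle this by writing both sides as $C\cdot\sum_l \frac{\Gamma(s+m+l)}{l!\,(m-l)!\,\Gamma(l+\frac{1}{2})}(-y)^l$ (respectively with $\Gamma(l+\frac{3}{2})$) and identifying $C$ in each case.
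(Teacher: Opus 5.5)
Your proposal is correct, and it takes a genuinely different route from the paper.

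The paper uses a leapfrogging induction on $m$. It differentiates the right-hand side of \eqref{J11} once in $x_j$, using \eqref{derJ} and the contiguous relation \eqref{relJ} with $\beta=-\frac{1}{2}$, to obtain \eqref{J21}. It then differentiates \eqref{J21} once more, using \eqref{relJ} with $\beta=\frac{1}{2}$ and the further identity \eqref{relJ3}, to recover \eqref{J11} at level $m+1$. Both \eqref{relJ} and \eqref{relJ3} have to be verified separately by manipulating the sums in \eqref{jacob}.

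You instead obtain $\partial_{x_j}^N|x|^{-\alpha}$ in closed form for all $N$ at once and compare coefficients of $(-y)^l$ directly with \eqref{jacob}. The bookkeeping closes. For $N=2m$ both sides equal
\[
\frac{(-1)^m\sqrt{\pi}\,(2m)!}{\Gamma(s)\,|x|^{2m+\alpha}}\sum_{l=0}^m\frac{\Gamma(s+m+l)}{l!\,(m-l)!\,\Gamma(l+\frac{1}{2})}\,(-y)^l .
\]
For $N=2m+1$ the prefactor is $(-1)^{m+1}\sqrt{\pi}\,(2m+1)!\,x_j/(\Gamma(s)|x|^{2m+2+\alpha})$. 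The common sum has $\Gamma(l+\frac{3}{2})$ in the denominator and also a shifted numerator $\Gamma(s+m+1+l)$, which your closing sentence does not mention. On your side this comes from $(s)_{m+1+l}$; on the Jacobi side it comes from $\beta+\gamma=s$. The factor $2$ from $(2t)^{2l+1}$ cancels the $2$ in your duplication formula for $(2l+1)!$.

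What your route buys:
\begin{itemize}
\item It needs no induction and no Jacobi identities beyond the definition, only the duplication formula.
\item It explains the parameters. The value $\beta=\mp\frac{1}{2}$ is exactly what absorbs $(2l)!$ or $(2l+1)!$, and $\gamma=\frac{\alpha-1}{2}$ is what makes $\beta+\gamma+m+l+1$ reproduce the Pochhammer index.
\end{itemize}

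The paper's route shows the even and odd formulas linked by a single differentiation and stays within standard Jacobi-polynomial calculus. The price is two auxiliary identities.

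Your normalization check at $y=0$ is redundant once all coefficients are matched, but harmless.
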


\begin{remark}
	The action of the partial derivatives $\partial_j^m$ (with $m \in \mathbb{N}$) on the function $\frac{1}{|x|^\alpha}$ was also considered in of \cite[Chapter~1]{DX}. However, the corresponding result is not formulated in terms of Jacobi polynomials. Furthermore, by exploiting a quadratic relation between Jacobi and Gegenbauer polynomials (see \cite{ESVG} and \cite{S39}), one can also rewrite the polynomials appearing in the previous result in terms of Gegenbauer polynomials.
\end{remark}

We are now in the position to obtain estimates for the derivatives in \eqref{J11} and \eqref{J21}.
\begin{proposition}\label{CORestimatemodulus}
	Let $x\in \R^n\backslash \{0\}$, $\alpha>0$, and  $m\in \mathbb{N}\cup\{0\}$. Then, we have
	\[
	\bigg|\frac{\partial^{2m}}{\partial x_j^{2m}}\frac{1}{|x|^{\alpha}}\bigg| \leq C_\alpha \frac{m^{\alpha-1} (2m)!}{|x|^{2m+\alpha}},
	\]
	and
	\[
	\bigg| \frac{\partial^{2m+1}}{\partial x_j^{2m+1}}\frac{1}{|x|^{\alpha}} \bigg|\leq C_\alpha \frac{m^{\max\{\frac{\alpha}{2},\alpha-1\}} (2m+1)!}{|x|^{2m+1+\alpha}},
	\]
	for any $j=1,\ldots ,n$. In particular, for any $\ell\in \mathbb{N}$ and $\alpha\geq 2$, we obtain
	\begin{equation}
		\label{EQuniversalestmodulus}
		\bigg| \frac{\partial^{\ell}}{\partial x_j^{\ell}}\frac{1}{|x|^{\alpha}} \bigg| \leq C_\alpha \frac{\ell^{\alpha-1}\ell!}{|x|^{\ell+\alpha}}.
	\end{equation}
\end{proposition}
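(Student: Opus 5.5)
Using Proposition~\ref{PROPdermodulus1}, the derivative is an explicit prefactor times a Jacobi polynomial evaluated at $y=1-\frac{2x_j^2}{|x|^2}\in[-1,1]$. So we bound the polynomial by its sup norm from Theorem~\ref{LEMjacobiestimate}, and then estimate the gamma-function prefactors by Stirling-type asymptotics. In \eqref{J21} the extra factor $x_j$ is bounded by $|x|$, which turns $|x|^{2m+2+\alpha}$ into $|x|^{2m+1+\alpha}$.

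\textbf{Even case.} The parameters are $\beta=-\frac12$ and $\gamma=\frac{\alpha-1}{2}\ge -\frac12$, so Theorem~\ref{LEMjacobiestimate} applies. For $\alpha\ge 0$ we have $\max\{\beta,\gamma\}=\frac{\alpha-1}{2}$, hence
\[
\max_{y\in[-1,1]}\big|P_m^{(-\frac12,\frac{\alpha-1}{2})}(y)\big| =\frac{\Gamma(m+\frac{\alpha+1}{2})}{m!\,\Gamma(\frac{\alpha+1}{2})}\lesssim m^{\frac{\alpha-1}{2}}.
\]
The prefactor ratio satisfies $\Gamma(m+\frac{\alpha}{2})/\Gamma(m+\frac12)\lesssim m^{\frac{\alpha-1}{2}}$, using $\Gamma(m+a)/\Gamma(m+b)\sim m^{a-b}$. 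Multiplying the two gives $m^{\alpha-1}(2m)!/|x|^{2m+\alpha}$, as claimed.

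\textbf{Odd case.} Here $\beta=\frac12$ and $\gamma=\frac{\alpha-1}{2}$, so $\max\{\beta,\gamma\}=\max\{\frac12,\frac{\alpha-1}{2}\}$. The polynomial is then bounded by $C m^{\max\{\frac12,\frac{\alpha-1}{2}\}}$. The prefactor ratio $\Gamma(m+\frac{\alpha}{2}+1)/\Gamma(m+\frac32)$ is $\lesssim m^{\frac{\alpha-1}{2}}$. The product is $m^{\max\{\frac{\alpha}{2},\alpha-1\}}$, which is the stated exponent.

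The only delicate point is $m=0$ and small $m$, where $m^{a}$ may vanish or be less than $1$ while the quantity is positive. I would handle this by reading $m$ as $\max\{m,1\}$ (equivalently $m+1$), with constants absorbing the finitely many small cases. This edge convention is likely the only "obstacle"; the Stirling bounds themselves are uniform in $m\ge1$ for fixed $\alpha$, by $\Gamma(m+a)/\Gamma(m+b)\le C_{a,b}\,m^{a-b}$.

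\textbf{Universal estimate \eqref{EQuniversalestmodulus}.} Take $\alpha\ge2$. Then $\frac{\alpha}{2}\le \alpha-1$, so both cases give the exponent $\alpha-1$.
- For $\ell=2m$ we have $m\le \ell$, so $m^{\alpha-1}\le \ell^{\alpha-1}$ because $\alpha-1>0$.
- For $\ell=2m+1$ we likewise have $m\le\ell$, and $(2m+1)!=\ell!$.

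Both cases therefore give $C_\alpha\,\ell^{\alpha-1}\ell!/|x|^{\ell+\alpha}$.
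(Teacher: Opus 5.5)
Your proposal is correct and follows the paper's own argument essentially step for step: bound the Jacobi factor from Proposition~\ref{PROPdermodulus1} by its sup norm via Theorem~\ref{LEMjacobiestimate}, use $|x_j|\le|x|$ in the odd case, and estimate the gamma ratios by Stirling's formula. Your caveat about $m=0$ is well taken. As literally stated, the bounds fail there: for instance, at $m=0$ the right-hand side of the odd estimate is $0$, whereas $\partial_{x_j}|x|^{-\alpha}=-\alpha x_j|x|^{-\alpha-2}$ is not identically zero. The paper's proof tacitly assumes $m\ge1$ when it writes $m!=m\Gamma(m)$, so reading $m$ as $\max\{m,1\}$ is the right repair, and it still yields \eqref{EQuniversalestmodulus} for all $\ell\ge1$.
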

\begin{proof}
	A straightforward application of Theorem~\ref{LEMjacobiestimate} in the identities given by Proposition~\ref{PROPdermodulus1} yields
	\begin{align*}
		\bigg|\frac{\partial^{2m}}{\partial x_j^{2m}}\frac{1}{|x|^{\alpha}}\bigg| &\leq \frac{ \sqrt{\pi} \Gamma\big( m+\frac{\alpha}{2}\big) (2m)! }{|x|^{2m+\alpha}\Gamma\big(\frac{\alpha}{2}\big)\Gamma\big( m+\frac{1}{2}\big)}\cdot \frac{\Gamma\big(m+ \frac{\alpha+1}{2}\big)}{m!\Gamma\big(\frac{\alpha+1}{2}\big)}  \\
		&= \frac{1}{|x|^{2m+\alpha}} \cdot \frac{\sqrt{\pi} (2m)!\Gamma\big( m+\frac{\alpha}{2}\big)  \Gamma\big(m+\frac{\alpha+1}{2}\big) }{m\Gamma\big(\frac{\alpha}{2}\big)\Gamma\big( \frac{\alpha+1}{2}\big)\Gamma\big( m+\frac{1}{2}\big)\Gamma(m)},
	\end{align*}
	and
	\begin{align*}
		\bigg| \frac{\partial^{2m+1}}{\partial x_j^{2m+1}}\frac{1}{|x|^{\alpha}} \bigg|&\leq \frac{|x_j| \sqrt{\pi} \Gamma\big(m+\frac{\alpha}{2}+1\big) (2m+1)!}{|x|^{2m+2+\alpha} \Gamma\big(\frac{\alpha}{2}\big)\Gamma\big(m+\frac{3}{2}\big) }\cdot \frac{\Gamma(m+1+\max\{\frac{1}{2},\frac{\alpha-1}{2}\})}{m!\Gamma(1+\max\{\frac{1}{2},\frac{\alpha-1}{2}\})} \\
		&\leq \frac{1}{|x|^{2m+1+\alpha}}\cdot \frac{\sqrt{\pi}(2m+1)! \Gamma\big(m+\frac{\alpha}{2}+1 \big)  \Gamma\big(m+\max\{\frac{1}{2},\frac{\alpha-1}{2}\}+1\big)}{m\Gamma\big( \frac{\alpha}{2}\big)\Gamma\big(\max\{\frac{1}{2},\frac{\alpha-1}{2}\}+1 \big)\Gamma\big(m+\frac{3}{2} \big)\Gamma(m)}.
	\end{align*}
Since, for large $m$ and $a>0$, Stirling's formula yields $\Gamma(m+a)\asymp m^a\Gamma(m)$,
we deduce that
	\[
	\bigg|\frac{\partial^{2m}}{\partial x_j^{2m}}\frac{1}{|x|^{\alpha}}\bigg|\lesssim \frac{1}{|x|^{2m+\alpha}} \cdot \frac{(2m)! m^{\frac{\alpha}{2}} m^{\frac{\alpha+1}{2}} \Gamma(m)^2}{m\cdot m^{\frac{1}{2}}\Gamma(m)^2} = \frac{m^{\alpha-1} (2m)!}{|x|^{2m+\alpha}},
	\]
	and
	\begin{align*}
		\bigg| \frac{\partial^{2m+1}}{\partial x_j^{2m+1}}\frac{1}{|x|^{\alpha}} \bigg|&\lesssim \frac{1}{|x|^{2m+1+\alpha}}\cdot \frac{(2m+1)! m^{\frac{\alpha}{2}+1} m^{\max\{\frac{1}{2},\frac{\alpha-1}{2}\}+1} \Gamma(m)^2}{m \cdot m^\frac{3}{2}\Gamma(m)^2}
		\\
		& = \frac{m^{\max\{\frac{\alpha}{2},\alpha-1\}}(2m+1)!}{|x|^{2m+1+\alpha}},
	\end{align*}
	as desired.
\end{proof}

\section{Distributional fractional Laplacian over generalized Lizorkin spaces}\label{Lizorkin}

In this section we develop a method for defining (distributionally) the fractional Laplacian of a function with a singularity at the origin. To this end, we introduce a proper subspace of the Lizorkin space of test functions $\Phi(\mathbb{R}^n)$ \cite[p. 487]{SKM}. Such a definition allows to introduce the fractional Laplacian of the principal part of a Laurent series.

Before proceeding further, we discuss some auxiliary notions and statements around the Fourier transform in the Schwarz space $\mathcal{S}(\R^n)$.

First, we recall the definition of fractional Laplacian of a function $f \in \mathcal{S}(\mathbb{R}^n)$.
\begin{definition}\label{DEFfraclapschwarz} Let $r\in(0,1)$ and $\varphi\in \mathcal{S}(\mathbb{R}^n)$. We define the fractional Laplacian $(-\Delta)^r$ by the relation
	\begin{equation}
		\label{delta}
		(-\Delta)^{r}\varphi =\mathcal{F}^{-1}\big((2 \pi  |\xi|)^{2 r} \mathcal{F}\varphi\big),
	\end{equation}
	where $\mathcal{F}$ and $\mathcal{F}^{-1}$ are the Fourier and inverse Fourier transforms on $\R^{n}$, respectively, defined by
	\[
	\mathcal{F}f(\xi)=\widehat{f}(\xi)=\int_{\R^{n}} f(x)e^{-2 \pi i(x,\xi)}\, dx,\qquad \text{and}\qquad \mathcal{F}^{-1}f(x)=\int_{\R^{n}} f(\xi)e^{ 2 \pi i(\xi,x)}\, d\xi.
	\]	
\end{definition}
Note that, while the classical Laplace operator $\Delta$ is a differential operator, its fractional counterpart \eqref{delta} corresponds to an integral operator. Moreover, for noninteger $r$, although  $\varphi\in \mathcal{S}(\mathbb{R}^n)$, in general the function $(2 \pi |\xi|)^{2 r} \mathcal{F}\varphi$ is not infinitely differentiable at the origin and hence $(-\Delta)^r$ does not map $\mathcal{S}(\R^n)$ to itself. However, one still has $(2 \pi |\xi|)^{2 r} \mathcal{F}\varphi \in L^2(\R^n)$; thus, the left-hand side of \eqref{delta} is well defined. As mentioned in the introduction, the definition of $(-\Delta)^r\varphi$ given in \eqref{delta} and that of \eqref{EQfraclap} are equivalent (in the Schwarz case).

We also recall a well-known result that allows to define the Fourier transform of certain power-type functions.

\begin{lemma}\cite[p. 73]{Stein}\label{LEMstein}
	Let $0 < \alpha <n$ and let $P_k$ be a homogeneous harmonic polynomial of degree $k\in \mathbb{N}$. Then, for every $\varphi \in \mathcal{S}(\R^n)$, we have
	\begin{equation*}
		\int_{\R^n} \frac{P_k(x)}{|x|^{n+k-\alpha}} \mathcal{F}[\varphi ](x) \, dx= \gamma_{k,\alpha}(n) \int_{\R^n} \frac{P_k(x)}{|x|^{k+\alpha}} \varphi(x)\,  dx,
	\end{equation*}
	where $\mathcal{F}$ is the Fourier transform and
	\begin{equation}
		\label{gammaRn}
		\gamma_{k, \alpha}(n)=i^k \pi^{\frac{n}{2}-\alpha} \frac{\Gamma \big(  \frac{k}{2}+ \frac{\alpha}{2}\big)}{\Gamma \big(\frac{k}{2}+\frac{n}{2}-\frac{\alpha}{2} \big)}.
	\end{equation}
\end{lemma}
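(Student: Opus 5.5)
We plan to prove Lemma~\ref{LEMstein} by the classical Gaussian subordination argument. Throughout we write $s:=n+k-\alpha$. The idea is to express the homogeneous weight $|x|^{-s}$ as a superposition of Gaussians, to transfer the Fourier transform onto each Gaussian factor by Parseval, and then to reassemble the result.

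The first and key auxiliary ingredient is the Hecke--Bochner identity for Gaussians: for $\delta>0$,
\[
\mathcal{F}\big[P_k(x)e^{-\pi\delta|x|^2}\big](\xi)=c_k\,\delta^{-k-\frac{n}{2}}P_k(\xi)e^{-\pi|\xi|^2/\delta},
\]
with $|c_k|=1$ and $c_k=\pm i^{\pm k}$. The case $\delta=1$ reduces to the statement that $P_k(x)e^{-\pi|x|^2}$ is an eigenfunction of $\mathcal{F}$. The general case then follows by dilation.

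To prove the eigenfunction statement we use that homogeneous harmonic polynomials of degree $k$ are spanned by the functions $(x\cdot z)^k$ with $z\in\mathbb{C}^n$ isotropic, i.e.\ $z\cdot z=0$. For such $z$ we consider
\[
\mathcal{F}\big[e^{-\pi|x|^2+2\pi t\,x\cdot z}\big](\xi)=e^{-\pi|\xi|^2-2\pi i t\,\xi\cdot z}.
\]
This follows by completing the square and by analytic continuation in $t$; the term $\pi t^2 z\cdot z$ vanishes because $z$ is isotropic. Comparing the coefficients of $t^k$ on both sides gives the eigenvalue $(-i)^k$. I expect this step to be the main obstacle. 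In particular, the precise power of $i$ must be tracked carefully against the convention of Definition~\ref{DEFfraclapschwarz} and the pairing used in the statement, so that it matches $i^k$ in \eqref{gammaRn}. Since $P_k(-x)=(-1)^kP_k(x)$, reflecting $\varphi$ switches between $i^k$ and $(-i)^k$, so the convention has to be fixed once and for all.

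Next we use the Gamma-function identity
\[
|x|^{-s}=\frac{\pi^{s/2}}{\Gamma(s/2)}\int_0^\infty e^{-\pi\delta|x|^2}\delta^{\frac{s}{2}-1}\,d\delta,\qquad s>0.
\]
We insert it into $\int P_k(x)|x|^{-s}\mathcal{F}\varphi(x)\,dx$ and apply Fubini. This is justified because $|P_k(x)|\,|x|^{-s}\lesssim|x|^{\alpha-n}$ is locally integrable (here $\alpha>0$ is used) and $\mathcal{F}\varphi$ decays rapidly at infinity.

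For each fixed $\delta$ we then apply the multiplication formula $\int g\,\mathcal{F}\varphi=\int\mathcal{F}g\,\varphi$ together with the Gaussian identity above. This produces
\[
\frac{c_k\,\pi^{s/2}}{\Gamma(s/2)}\int_{\R^n}P_k(\xi)\varphi(\xi)\int_0^\infty\delta^{\frac{s}{2}-k-\frac{n}{2}-1}e^{-\pi|\xi|^2/\delta}\,d\delta\,d\xi .
\]
The second use of Fubini is justified because the final integrand is $O(|\xi|^{-\alpha})$ times a Schwarz function, and this is integrable since $\alpha<n$. This is exactly where both inequalities $0<\alpha<n$ enter.

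Finally we substitute $u=1/\delta$. The inner integral becomes $\int_0^\infty u^{\frac{k+\alpha}{2}-1}e^{-\pi u|\xi|^2}\,du=\Gamma\big(\tfrac{k+\alpha}{2}\big)(\pi|\xi|^2)^{-\frac{k+\alpha}{2}}$. Collecting the powers of $\pi$ gives $\pi^{\frac{n+k-\alpha}{2}-\frac{k+\alpha}{2}}=\pi^{\frac{n}{2}-\alpha}$. The remaining factor is $\Gamma\big(\tfrac{k+\alpha}{2}\big)\big/\Gamma\big(\tfrac{n+k-\alpha}{2}\big)$, and the prefactor $c_k$ becomes $i^k$ once the sign convention is fixed as discussed above. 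Together these give exactly $\gamma_{k,\alpha}(n)$ in \eqref{gammaRn}.
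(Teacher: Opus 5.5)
Your argument is the standard one. The paper gives no proof of this lemma and only cites Stein, and Stein's proof is exactly this: Gaussian subordination of $|x|^{-(n+k-\alpha)}$, the Hecke identity for $P_k(x)e^{-\pi\delta|x|^2}$, two applications of Fubini, and the substitution $u=1/\delta$. The following steps are all correct:
\begin{itemize}
\item the first Fubini step, which uses local integrability of $|x|^{\alpha-n}$ and hence $\alpha>0$;
\item the second Fubini step, which uses integrability of $|\xi|^{-\alpha}$ near $0$ and hence $\alpha<n$;
\item the dilation step;
\item the bookkeeping of the powers of $\pi$ and of the Gamma factors.
\end{itemize}
Proving the Hecke identity through the generating function $e^{2\pi t\,x\cdot z}$ with isotropic $z$ is a legitimate alternative to the usual argument (completing the square, then the mean-value property on spheres). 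You do need to quote the spanning lemma for $n\ge 2$. For $n=1$ there are no nonzero isotropic vectors, so the only case there, $P_1(x)=x$, has to be checked directly.

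The last step, however, does not go through. Under the paper's convention $\mathcal{F}f(\xi)=\int f(x)e^{-2\pi i(x,\xi)}\,dx$ you correctly found the eigenvalue $(-i)^k$, and there is no convention left to fix. The identity is invariant under $\varphi\mapsto\varphi(-\,\cdot\,)$: since $P_k(-x)=(-1)^kP_k(x)$, both sides pick up the same factor $(-1)^k$. So your remark about reflecting $\varphi$ does not change the constant.

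What you have actually proved is the identity with $(-i)^k=i^{-k}$ in place of $i^k$ in \eqref{gammaRn}, and for odd $k$ that is the true statement. The printed $i^k$ belongs to the opposite convention $\int f(x)e^{+2\pi i(x,\xi)}\,dx$, which is the one in the cited source. Equivalently, the printed identity holds with $\mathcal{F}^{-1}$ in place of $\mathcal{F}$.

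A one-line check with $n=k=1$ confirms this. The Fourier transform of $x|x|^{\alpha-2}$ in the paper's convention is $-2i\int_0^\infty x^{\alpha-1}\sin(2\pi x\xi)\,dx$. For $\xi>0$ and $0<\alpha<1$ this is $-i$ times a positive number, whereas the printed $\gamma_{1,\alpha}(1)$ is $+i$ times a positive number.

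You should therefore conclude with $\gamma_{k,\alpha}(n)=i^{-k}\pi^{\frac{n}{2}-\alpha}\Gamma\big(\frac{k+\alpha}{2}\big)/\Gamma\big(\frac{k+n-\alpha}{2}\big)$ and state the sign discrepancy for odd $k$ explicitly, rather than assert agreement with the printed constant. Nothing changes for even $k$. The sign also cancels wherever two such constants with the same $k$ appear as a quotient.
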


We need an adaptation of Lemma~\ref{LEMstein} to the (complex and) quaternionic context. To this end, we stress that integrals of complex (resp. quaternionic) functions $f$ over $\R^2\cong\mathbb{C}$ (resp. $\R^4\cong \mathbb{H}$) are understood componentwise. In particular, for $f:\mathbb{C}\to \mathbb{C}$
\[
f(z)=f(z_0,z_1) = f_0(z) + If_1(z),
\]
the integral is understood as
\begin{equation}
	\label{EQcomplex}
	\int_{\mathbb{C}} f(z)\, dz  = \int_{\R^2} f(z)\, dz = \int_{\R^2} f_0(z)\, dz  + I\int_{\R^2} f_1(z)\, dz,
\end{equation}
whenever the right-hand side exists, while for $g:\mathbb{H}\to \mathbb{H}$
\[
g(q)=g(q_0,q_1,q_2,q_3) = g_0(q) + \sum_{\ell=1}^3 g_\ell(q) e_\ell,
\]
the integral is understood as
\begin{equation}
	\label{EQquaternioniccomponent}
	\int_{\mathbb{H}} g(q)\, dq  = \int_{\R^4} g(q)\, dq = \int_{\R^4} g_0(q)\, dq  + \sum_{\ell=1}^3 \bigg( \int_{\R^4} g_\ell(q) \, dq \bigg) e_\ell,
\end{equation}
whenever the right-hand side exists. We also emphasize that in the complex case, the imaginary unit is written $I$ in order to differentiate it from the imaginary unit arising in the Fourier transform (written $i$). Under this interpretation of integrals of complex functions such units $i$ and $I$ are independent.

\begin{corollary}\label{CORsteincomplex}
	Let $0 < \alpha <2$. Let  $f(z)$ be a complex function that is a homogeneous harmonic complex-valued polynomial of degree $k\in \mathbb{N}$ in the variables $x_\ell$, $\ell=0,1$ \textnormal{(}i.e., $f(z)=F_k(z_0,z_1)$\textnormal{)}, where $x_\ell$, $\ell=0,1$  are the real components of the complex number $z$. Then, for every $\varphi \in \mathcal{S}(\mathbb{C})$, we have
	\begin{equation}
		\label{integralcomplex}
		\int_{\mathbb{C}} \frac{F_k(x_0,x_1)}{|x|^{2+k-\alpha}} \mathcal{F}[\varphi ](x) \, dx= \gamma_{k,\alpha} \int_{\mathbb{C}} \frac{F_k(x_0,x_1)}{|x|^{k+\alpha}} \varphi(x)\,  dx,
	\end{equation}
	with $\gamma_{\alpha,k}$ as in \eqref{gammaRn} (with $n=2$).
\end{corollary}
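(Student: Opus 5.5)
The corollary is Lemma~\ref{LEMstein} with $n=2$, read componentwise as in \eqref{EQcomplex}. The plan is to split $F_k$ into real and imaginary parts, apply the real-valued lemma to each part, and recombine.

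Write $F_k = F_k^{(0)} + I F_k^{(1)}$, where $F_k^{(0)}$ and $F_k^{(1)}$ are real-valued polynomials in $(x_0,x_1)$. The Laplacian is a real operator and homogeneity of degree $k$ is preserved by taking real and imaginary parts, so each $F_k^{(\ell)}$ is a homogeneous harmonic polynomial of degree $k$ on $\R^2$. Hence both are admissible choices of $P_k$ in Lemma~\ref{LEMstein} with $n=2$ and $0<\alpha<2=n$.

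Next, for each $\ell=0,1$, the lemma gives
\[
\int_{\R^2} \frac{F_k^{(\ell)}(x)}{|x|^{2+k-\alpha}}\,\mathcal{F}[\varphi](x)\,dx=\gamma_{k,\alpha}(2)\int_{\R^2}\frac{F_k^{(\ell)}(x)}{|x|^{k+\alpha}}\,\varphi(x)\,dx .
\]
Both integrals converge absolutely. Near the origin the integrands are $O(|x|^{\alpha-2})$ and $O(|x|^{-\alpha})$ respectively, which are integrable in $\R^2$ since $0<\alpha<2$. At infinity, $\varphi$ and $\mathcal{F}\varphi$ are Schwartz and so decay faster than any polynomial.

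Finally, multiply the $\ell=1$ identity by $I$ and add it to the $\ell=0$ identity. By \eqref{EQcomplex} the left side becomes the left side of \eqref{integralcomplex} and the right side becomes $\gamma_{k,\alpha}$ times the right side of \eqref{integralcomplex}.

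The one point needing care is that $\gamma_{k,\alpha}$ contains the Fourier unit $i^k$, while $\varphi$ may itself be complex-valued with respect to $i$. The convention stated before the corollary makes $i$ and $I$ independent, commuting units. Lemma~\ref{LEMstein} is linear over the $i$-scalars, so it applies to complex-valued $\varphi$ for each real polynomial $F_k^{(\ell)}$. Because $i$ and $I$ commute, the factor $\gamma_{k,\alpha}$ can be pulled out of the $I$-combination unchanged. Beyond checking this bookkeeping there is no genuine obstacle.
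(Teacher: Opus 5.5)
Your proposal is correct and is the paper's argument: the paper likewise writes the integrals componentwise as in \eqref{EQcomplex} and applies Lemma~\ref{LEMstein} with $n=2$ to each component. Your extra checks make explicit what the paper leaves implicit. These are that the real and imaginary parts of $F_k$ stay homogeneous harmonic of degree $k$, that both integrals converge absolutely for $0<\alpha<2$, and that the factor $i^k$ in $\gamma_{k,\alpha}$ commutes with $I$.
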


For the quaternionic case, the statements are as follows.

\begin{corollary}
	Let $0 < \alpha <4$. Let  $g(q)$ be a quaternionic function that is a homogeneous harmonic quaternionic-valued polynomial of degree $k\in \mathbb{N}$ in the variables $x_\ell$, $\ell=0,1,2,3$ \textnormal{(}i.e., $g(q)=G_k(x_0,x_1,x_2,x_3)$\textnormal{)}, where $x_\ell$, $\ell=0,1,2,3$  are the real components of the quaternion $q$. Then, for every $\varphi \in \mathcal{S}(\mathbb{H})$, we have
	\begin{equation}
		\label{integral}
		\int_{\mathbb{H}} \frac{G_k(x_0,x_1,x_2,x_3)}{|x|^{4+k-\alpha}} \mathcal{F}[\varphi ](x) \, dx= \gamma_{k,\alpha} \int_{\mathbb{H}} \frac{G_k(x_0,x_1,x_2,x_3)}{|x|^{k+\alpha}} \varphi(x)\,  dx,
	\end{equation}
	with $\gamma_{\alpha,k}$ as in \eqref{gammaRn} (with $n=4$).
\end{corollary}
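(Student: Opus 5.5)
The plan is to reduce the statement to Lemma~\ref{LEMstein} with $n=4$, applied separately to each real component, using the componentwise convention \eqref{EQquaternioniccomponent} for integrals of quaternionic functions. Write
\[
G_k(x_0,x_1,x_2,x_3)=\sum_{\ell=0}^3 G_{k,\ell}(x)\,e_\ell, \qquad e_0:=1,
\]
where each $G_{k,\ell}$ is real-valued. Since $\Delta$ acts on $G_k$ componentwise and homogeneity of degree $k$ is also a componentwise property, each $G_{k,\ell}$ is a real homogeneous harmonic polynomial of degree $k$ on $\R^4$.

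Next, the test function $\varphi\in\mathcal{S}(\mathbb{H})$ has to be handled. If $\varphi$ is real- or complex-valued (in the Fourier unit $i$), then Lemma~\ref{LEMstein} with $n=4$ and $0<\alpha<4$ applies directly to each pair $(G_{k,\ell},\varphi)$. This gives
\[
\int_{\R^4}\frac{G_{k,\ell}(x)}{|x|^{4+k-\alpha}}\mathcal{F}[\varphi](x)\,dx=\gamma_{k,\alpha}(4)\int_{\R^4}\frac{G_{k,\ell}(x)}{|x|^{k+\alpha}}\varphi(x)\,dx.
\]
If $\varphi$ is itself quaternion-valued, I would expand it as $\varphi=\sum_m \varphi_m e_m$ with $\varphi_m\in\mathcal{S}(\R^4)$. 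The Fourier transform is taken componentwise, so $\mathcal{F}[\varphi]=\sum_m\mathcal{F}[\varphi_m]e_m$. Applying the lemma to every pair $(G_{k,\ell},\varphi_m)$ gives the identity for each product $G_{k,\ell}e_\ell\,\varphi_m e_m$.

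It remains to multiply by $e_\ell$ (and $e_m$) and sum. This uses linearity of the componentwise integral, and uses that the constant $\gamma_{k,\alpha}=i^k\pi^{2-\alpha}\Gamma(\frac{k+\alpha}{2})/\Gamma(\frac{k}{2}+2-\frac{\alpha}{2})$ is a scalar in the Fourier unit $i$. That unit is independent of, and commutes with, the quaternionic units, by the convention stated before Corollary~\ref{CORsteincomplex}. Hence $\gamma_{k,\alpha}$ can be moved through all $e_\ell$, and summing reassembles \eqref{integral}.

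The main, though mild, obstacle is bookkeeping rather than analysis. One must check that every integral converges absolutely, so that splitting into components is legitimate. Near the origin this holds because $|G_k(x)|\lesssim|x|^k$ makes the integrands $O(|x|^{-(4-\alpha)})$ and $O(|x|^{-\alpha})$, both locally integrable when $0<\alpha<4$. At infinity the Schwartz decay of $\varphi$ and $\mathcal{F}[\varphi]$ handles everything. One must also keep the ordering of noncommuting units consistent, with $G_k$ on the left and $\varphi$ on the right as in the statement. Once these two points are settled, the result is immediate from Lemma~\ref{LEMstein}.
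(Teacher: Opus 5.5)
Your proposal is correct and follows the paper's own route: split the integrals into real components as in \eqref{EQquaternioniccomponent}, apply Lemma~\ref{LEMstein} with $n=4$ to each component, and reassemble by linearity. Your extra checks make explicit what the paper leaves implicit, namely absolute convergence near the origin for $0<\alpha<4$ and the fact that the Fourier unit $i$ in $\gamma_{k,\alpha}$ commutes with the $e_\ell$, with the order of $G_k$ and $\varphi$ preserved.
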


The proof of these results simply follow by rewriting the integrals in \eqref{integralcomplex} (resp. \eqref{integral}) in components as in \eqref{EQcomplex} (resp. \eqref{EQquaternioniccomponent}) and then using Lemma~\ref{LEMstein} with $n=2$ (resp. $n=4$) repeatedly in each component.

\begin{remark}
	Note that \eqref{integralcomplex} allows one to write, in the distributional sense,
	\begin{equation}
		\label{zeroonecomplex}
		\mathcal{F} \bigg[ \frac{F_k(x_0,x_1)}{|x|^{2+k-\alpha}}\bigg](\xi)= \gamma_{k,\alpha} \frac{F_k(\xi_0,\xi_1)}{|\xi|^{k+\alpha}},
	\end{equation}
	or equivalently,
	\begin{equation}
		\label{zeroone1complex}
		\frac{F_k(x_0,x_1)}{|x|^{2+k-\alpha}}=\gamma_{k,\alpha} \mathcal{F}^{-1} \bigg[ \frac{F_k(\xi_0,\xi_1)}{|\xi|^{k+\alpha}} \bigg](x).
	\end{equation}
	while for the quaternionic case, \eqref{integral} yields, in the distributional sense,
	\begin{equation}
		\label{zeroone}
		\mathcal{F} \bigg[ \frac{G_k(x_0,x_1,x_2,x_3)}{|x|^{4+k-\alpha}}\bigg](\xi)= \gamma_{k,\alpha} \frac{G_k(\xi_0,\xi_1,\xi_2,\xi_3)}{|\xi|^{k+\alpha}},
	\end{equation}
	or equivalently,
	\begin{equation}
		\label{zeroone1}
		\frac{G_k(x_0,x_1,x_2,x_3)}{|x|^{4+k-\alpha}}=\gamma_{k,\alpha} \mathcal{F}^{-1} \bigg[ \frac{G_k(\xi_0,\xi_1,\xi_2,\xi_3)}{|\xi|^{k+\alpha}} \bigg](x).
	\end{equation}
	We also remark that the right-hand sides of \eqref{zeroonecomplex} and \eqref{zeroone} as well as the left-hand sides of \eqref{zeroone1complex} and \eqref{zeroone1} are locally integrable for the admissible parameters $\alpha$. Hence, both integrals in \eqref{integral} converge absolutely.
\end{remark}

\subsection{Generalized Lizorkin spaces of test functions}
Recall that the space $\Phi(\mathbb{R}^n)$ is defined as the subspace of Schwartz functions with vanishing moments of all orders. These spaces arise naturally in the analysis of singular integral operators such as the Riesz transforms and higher-order Calderón-Zygmund operators, where the vanishing moment condition ensures well-behaved integrals and in fact makes these spaces invariant under fractional integration and differentiation (see the discussion in \cite[p. 146]{SKM}).
\begin{definition}
	The Lizorkin space of test functions $\Phi(\mathbb{R}^{n})$ is the subspace of $\mathcal{S}(\mathbb{R}^{n})$ consisting of the functions $\varphi\in \mathcal{S}(\mathbb{R}^{n})$ for which all moments vanish, i.e.,
	\[
	\int_{\mathbb{R}^{n}} x^s \varphi (x)\, dx=0, \qquad |s|=0,1,2,\ldots ,
	\]
	where $s=(s_1,\ldots ,s_n)$ is a multiindex (the term $x^s$ in the integral represents $x_1^{s_1} \cdots x_n^{s_n}$). We denote by $\Phi'(\mathbb{R}^n)$ the dual space of the Lizorkin space $\Phi(\mathbb{R}^n)$.
\end{definition}
\begin{definition}
	The space $\Psi(\R^n)$ is the subspace of $\mathcal{S}(\R^n)$ consisting of the functions $\psi$ such that
	\begin{equation*}
		\psi^{(s)}(0)=0\qquad \text{for every multiindex }s \in (\mathbb{N}\cup \{0\})^n.
	\end{equation*}
\end{definition}

The space $\Phi(\R^n)$ was first introduced by Lizorkin in \cite{Liz1963} (inspired by the earlier article \cite{Sem}, where such a space is already implicitly considered). Some of its basic properties were studied in \cite{Liz1963,Yos}.

The following well-known relation between the spaces $\Phi(\mathbb{R}^n)$ and $\Psi(\R^n)$ readily follows from the definitions:
\[
\Phi(\R^n) =\{ \varphi\in \mathcal{S}(\R^n): \varphi = \widehat{\psi}\text{ for some }\psi \in \Psi(\R^n)\}.
\]
In other words, $\Phi(\R^n)$ is the image of $\Psi(\R^n)$ under the Fourier transform. Hence, $\varphi \in \Phi(\mathbb{R}^n)$ if and only if the derivatives of all orders of $\mathcal{F}\varphi$  vanish at the origin (we use this fact repeatedly in the sequel). Moreover, the Fourier transform operator $\mathcal{F}$ is an isometry between the spaces $\Phi(\R^n)$ and $\Psi(\R^n)$.

We now introduce a subspace of the Lizorkin space of test functions that is suitable for our purposes.
\begin{definition}
\label{space0}
	We define the space of test functions $\Xi(\R^n)\subset \mathcal{S}(\R^n)$ as \[
	\Xi(\R^n):=\Phi(\R^n)\cap \Psi(\R^n).
	\]
\end{definition}

The following result is folklore. We do not include a proof, since it is just a particular case of Theorem~\ref{THMxirnonempty} below, which we fully prove in the appendix.

\begin{theorem}
	The space $\Xi(\R^n)$ is nontrivial.
\end{theorem}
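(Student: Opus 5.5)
We need a nonzero $\varphi\in\mathcal{S}(\R^n)$ such that $\varphi$ and all its derivatives vanish at the origin, and also $\widehat{\varphi}$ and all its derivatives vanish at the origin. The second condition is equivalent to $\varphi$ having all moments zero, by the characterization $\Phi(\R^n)=\mathcal{F}(\Psi(\R^n))$. The plan is to build $\varphi$ as a product of a function vanishing to infinite order at the origin and a function whose Fourier transform vanishes to infinite order there. Equivalently, $\widehat{\varphi}$ will be a convolution of two suitable Schwartz functions.

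\textbf{Construction.} Take $\psi\in C_c^\infty(\R^n)$, not identically zero, with $\operatorname{supp}\psi\subset\{1\le|\xi|\le 2\}$. Then $\psi\in\Psi(\R^n)$, so $g:=\mathcal{F}^{-1}\psi\in\Phi(\R^n)$ has all moments zero. Next take $h\in C^\infty(\R^n)$ with $h\equiv 0$ on $B_1(0)$ and $h\equiv1$ outside $B_2(0)$. Define
\[
\varphi:=h\cdot g.
\]
Since $g\in\mathcal{S}$ and $h$ has bounded derivatives, $\varphi\in\mathcal{S}(\R^n)$. Since $\varphi$ vanishes on $B_1(0)$, we get $\varphi\in\Psi(\R^n)$.

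\textbf{Moments.} The remaining point is that $\varphi$ need not have vanishing moments, because multiplying by $h$ destroys the moment cancellation. To repair this, write $\varphi=g-(1-h)g$. The function $g$ already has all moments zero, so it suffices to handle $(1-h)g$, which is compactly supported. Its moments are generally nonzero, so this simple choice fails, and this is the main obstacle.

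\textbf{Better construction.} Work instead with functions supported in an annulus in both variables, exploiting radial/angular structure. Choose
\[
\varphi(x)=u(|x|)\qquad\text{or}\qquad \varphi(x)=\Delta^N\text{-type expressions}.
\]
The cleanest route is one-dimensional. Take $n=1$ first, and look for $\varphi\in C^\infty$ with $\varphi$ flat at $0$ and $\widehat\varphi$ flat at $0$. Choose $\varphi=\chi\cdot g$ with $g\in\Phi(\R)$ and $\chi$ flat at the origin. We need $\int x^k\chi g=0$ for all $k$, i.e.\ $\widehat{\chi g}=\widehat\chi*\widehat g$ flat at $0$.

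Instead, use oddness/evenness plus dilation. Take $\varphi(x)=e^{-x^2-1/x^2}$ times a phase factor: the function $F(x)=\exp(-x^{1/4}\ldots)$. The classical Stieltjes-type example works: for $x>0$,
\[
f(x)=e^{-x^{1/4}}\sin(x^{1/4}),
\]
has all moments $\int_0^\infty x^k f\,dx=0$. Substitute $x=t^4$ to get a function on $\R$ with vanishing moments, and then multiply by $e^{-1/t^2}$-type factors. That multiplication again breaks the moments.

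\textbf{Robust approach.} The real plan is a Baire/dimension argument. Consider the Fréchet space $\Psi_K$ of functions in $C_c^\infty(\{1<|x|<2\})$. The moment maps $\varphi\mapsto\int x^s\varphi$ are countably many linear functionals. Their common kernel is nonzero, because by Stone–Weierstrass the polynomials do not separate enough to force injectivity. More concretely, pick any $\eta\in C_c^\infty$ supported in the annulus and set
\[
\varphi:=\eta * \text{(dual)}\ldots
\]
Simplest of all: set $\varphi:=\widehat{\psi_1}\cdot\widehat{\psi_2}$ type products fail too. So I would settle on the following. Take $\varphi:=\Delta^{N}$ applied to $\eta$ in Fourier form, namely $\widehat{\varphi}(\xi)=e^{-1/|\xi|^2}\,\widehat{\eta}(\xi)$ with $\eta\in C_c^\infty$ supported away from $0$. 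Then verify that $\varphi=\eta*\mathcal{F}^{-1}(e^{-1/|\xi|^2})$ is flat at $0$, using the support of $\eta$ and the decay of the kernel. This verification is the hardest step.
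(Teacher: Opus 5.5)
Your proposal does not produce an element of $\Xi(\R^n)$, and the two claims that carry its last part are wrong.

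The ``Baire/Stone--Weierstrass'' step is backwards. Density of polynomials in $C(K)$ shows that a compactly supported continuous function with all moments zero vanishes identically. So the common kernel of the moment functionals on $C_c^\infty(\{1<|x|<2\})$ is $\{0\}$. In fact every nonzero $\varphi\in\Xi(\R^n)$, and also its Fourier transform, must have noncompact support. This is also why your cutoff repair $\varphi=g-(1-h)g$ cannot work: the compactly supported piece $(1-h)g$ would need vanishing moments, hence would be zero.

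In your final candidate $\widehat\varphi=e^{-1/|\xi|^2}\widehat\eta$, flatness of $\widehat\varphi$ at the origin is fine, but the step you defer is false in general. Since $\varphi^{(s)}(0)=\int_{\R^n}(2\pi i\xi)^s e^{-1/|\xi|^2}\widehat\eta(\xi)\,d\xi$, flatness of $\varphi$ at $0$ is exactly the vanishing of all moments of $e^{-1/|\xi|^2}\widehat\eta$. That is the original problem moved to the Fourier side. The support of $\eta$ only makes $\eta*\mathcal{F}^{-1}(e^{-1/|\xi|^2})$ smooth near $0$; it does not kill its Taylor coefficients. Concretely, take $\eta=\theta(\cdot-a)$ with $0\neq\theta\in C_c^\infty(B_1(0))$. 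Then $\varphi(0)=\mathcal{F}\big(e^{-1/|\xi|^2}\widehat\theta\big)(a)$, and this cannot vanish for all $|a|>1$. Otherwise this Schwartz function would be compactly supported, so its Fourier transform $e^{-1/|\xi|^2}\widehat\theta(-\xi)$ would be real-analytic and flat at $0$, forcing $\theta=0$.

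The missing idea is one you wrote down and then discarded. The Stieltjes function $f(\rho)=e^{-\rho^{1/4}}\sin(\rho^{1/4})$ for $\rho>0$, extended by $f(\rho)=0$ for $\rho\le 0$, already has all moments zero on $\R$ as it stands. The substitution $x=t^4$ would keep only the moments of order $4k+3$. Rather than multiplying $f$ by a flat factor, which destroys the moments, convolve it. For $\phi\in C_c^\infty((a,b))$ with $0<a<b$, the function $g=\phi*f$ is Schwartz and vanishes on $(-\infty,a]$, and by Fubini
\[
\int_{\R}x^m g(x)\,dx=\sum_{k=0}^{m}\binom{m}{k}\int_{\R}y^{m-k}\phi(y)\,dy\int_0^\infty t^k f(t)\,dt=0.
\]
To ensure $g\neq 0$, take $\phi\geq 0$ with $\int\phi=1$ and support in $(a,a+\varepsilon)$. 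Then $g\to f(\cdot-a)$ in $L^1$ as $\varepsilon\to 0$, so $g\neq 0$ for small $\varepsilon$. Now set $\varphi(x):=g(|x|)$. It is a Schwartz function vanishing on $B_a(0)$, hence in $\Psi(\R^n)$, and $\int_{\R^n}x^s\varphi(x)\,dx=C_s\int_0^\infty\rho^{n-1+|s|}g(\rho)\,d\rho=0$. So $0\neq\varphi\in\Xi(\R^n)$. This is essentially the paper's argument: the statement is deduced from Theorem~\ref{THMxirnonempty}, whose proof rests on exactly this construction (Lemma~\ref{LEMradialmomentsvanish}).
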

We skip the proof, since it follows from the fact that there exists a nontrivial function $\varphi\in \Xi_{\frac{1}{2}}(\R^n)\subset \Xi(\R^n)$ (Theorem~\ref{THMxirnonempty}, proved in the Appendix).

\begin{proposition}\label{PROPinvertibility}
	The Fourier transform is an isometry of $\Xi(\R^n)$ onto itself with respect to the $L_2$ norm (and hence an isomorphism).
\end{proposition}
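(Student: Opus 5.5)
We need to show that $\mathcal{F}$ maps $\Xi(\R^n)$ into itself, that it is onto, and that it preserves the $L_2$ norm. The norm statement is immediate from Plancherel's theorem, since $\Xi(\R^n)\subset\mathcal{S}(\R^n)\subset L_2(\R^n)$ and $\|\mathcal{F}\varphi\|_{L_2}=\|\varphi\|_{L_2}$ with the normalization of Definition~\ref{DEFfraclapschwarz}. Injectivity is then automatic. The content of the proposition is therefore the invariance $\mathcal{F}(\Xi(\R^n))\subseteq \Xi(\R^n)$ together with surjectivity.

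For invariance, let $\varphi\in\Xi(\R^n)=\Phi(\R^n)\cap\Psi(\R^n)$. We verify the two defining conditions separately.

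\emph{$\mathcal{F}\varphi\in\Psi(\R^n)$.} Since $\varphi\in\Phi(\R^n)$, the characterization recalled above shows that all derivatives of $\mathcal{F}\varphi$ vanish at the origin. Concretely, $\partial^s\mathcal{F}\varphi(0)=\int(-2\pi i x)^s\varphi(x)\,dx=0$ for every multiindex $s$.

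\emph{$\mathcal{F}\varphi\in\Phi(\R^n)$.} We compute the moments of $\mathcal{F}\varphi$. For each multiindex $s$,
\[
\int_{\R^n}\xi^s\,\mathcal{F}\varphi(\xi)\,d\xi=(2\pi i)^{-|s|}\,\partial^s\big(\mathcal{F}^{-1}\mathcal{F}\varphi\big)(0)=(2\pi i)^{-|s|}\,\partial^s\varphi(0)=0,
\]
because $\varphi\in\Psi(\R^n)$. The differentiation under the integral sign is justified by the rapid decay of $\mathcal{F}\varphi\in\mathcal{S}(\R^n)$.

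Together with $\mathcal{F}\varphi\in\mathcal{S}(\R^n)$, these two facts give $\mathcal{F}\varphi\in\Xi(\R^n)$.

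For surjectivity, the same argument with $\mathcal{F}^{-1}$ in place of $\mathcal{F}$ (only the sign in the exponent changes) shows $\mathcal{F}^{-1}(\Xi(\R^n))\subseteq\Xi(\R^n)$. Alternatively, one can use $\mathcal{F}^{-1}\psi(x)=\mathcal{F}\psi(-x)$ and note that $\Xi(\R^n)$ is invariant under the reflection $x\mapsto -x$. Hence every $\psi\in\Xi(\R^n)$ can be written as $\psi=\mathcal{F}(\mathcal{F}^{-1}\psi)$ with $\mathcal{F}^{-1}\psi\in\Xi(\R^n)$. So $\mathcal{F}$ is a linear bijection of $\Xi(\R^n)$ onto itself, isometric in $L_2$, hence an isomorphism.

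There is no real obstacle here. The only point requiring care is the moment/derivative duality $\int \xi^s\mathcal{F}\varphi = (2\pi i)^{-|s|}\partial^s\varphi(0)$, which relies on Fourier inversion on $\mathcal{S}(\R^n)$. The nontriviality of $\Xi(\R^n)$ is not needed for this statement.
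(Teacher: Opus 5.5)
Your proof is correct and follows the paper's argument. The paper uses Plancherel on $\mathcal{S}(\R^n)$ for the isometry, and invertibility of $\mathcal{F}$ on $\mathcal{S}(\R^n)$ together with the fact that $\varphi\in\Xi(\R^n)$ if and only if $\mathcal{F}\varphi\in\Xi(\R^n)$; it merely asserts that equivalence, whereas you verify it explicitly via $\partial^s\mathcal{F}\varphi(0)=\int(-2\pi i x)^s\varphi(x)\,dx$ and $\int\xi^s\mathcal{F}\varphi(\xi)\,d\xi=(2\pi i)^{-|s|}\partial^s\varphi(0)$.
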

\begin{proof}
	The fact that $\|\mathcal{F}\varphi\|_2=\|\varphi\|_2$ for $\varphi\in \Xi(\R^n)$ simply follows from the corresponding equality in $\mathcal{S}(\R^n)$. The invertibility of $\mathcal{F}$ in $\Xi(\R^n)$ also follows from the invertibility in $\mathcal{S}(\R^n)$ together with the fact that $\varphi\in \Xi(\R^n)$ if and only if $\mathcal{F}\varphi\in \Xi(\R^n)$.
\end{proof}

As mentioned above, the fractional Laplacian $(-\Delta)^\frac{1}{2}$ is well-defined as an operator acting on $\mathcal{S}(\R^n)$, although it does not map $\mathcal{S}(\R^n)$ to $\mathcal{S}(\R^n)$ (cf. Definition~\ref{DEFfraclapschwarz} and the corresponding discussion). The following lemma is a technical step that allows to define the fractional Laplacian between certain subspaces of $\mathcal{S}(\R^n)$ (in fact, invariant subspaces of $\mathcal{S}(\R^n)$ under the action of $(-\Delta)^\frac{1}{2}$).

\begin{lemma}\label{LEMproduct}
	Let $\varphi\in C^\infty(\R^n)$ such that $\varphi^{(k)}(0)=0$ for every multiindex $k\in (\mathbb{N}\cup\{0\})^n$ and let $f\in C^\infty(\R^n\backslash \{0\})$ be such that for every $k\in (\mathbb{N}\cup\{0\})^n$ there exist constants  $C_k>0$ and $\ell_k\in \mathbb{N}$ such that
	\begin{equation}
		\label{EQgbound}
		|f^{(k)}(x)|\leq C_k |x|^{-\ell_k}
	\end{equation}
	in a neighborhood of the origin. Then, for every $k\in (\mathbb{N}\cup\{0\})^n$,
	\[
	\lim_{x\to 0} (f\varphi)^{(k)}(x)=0.
	\]
	Hence, $f\varphi$ admits a $C^\infty(\R^n)$ extension by setting $(f\varphi)^{(k)}(0)=0$.
\end{lemma}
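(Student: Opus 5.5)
The idea is to apply the Leibniz rule to $(f\varphi)^{(k)}$ and control each resulting term by a Taylor estimate for $\varphi$ at the origin. For a multiindex $k$, on $\R^n\backslash\{0\}$ near the origin we have
\[
(f\varphi)^{(k)}(x)=\sum_{j\le k}\binom{k}{j} f^{(j)}(x)\,\varphi^{(k-j)}(x).
\]
Since there are finitely many terms, it suffices to show that $f^{(j)}(x)\varphi^{(k-j)}(x)\to 0$ as $x\to 0$ for each $j\le k$.

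The key step is the flatness estimate. All derivatives of $\varphi$ vanish at $0$, and this holds in particular for those of $\psi:=\varphi^{(k-j)}$. Fix $N\in\mathbb{N}$ and apply Taylor's theorem with Lagrange (or integral) remainder to $\psi$ at $0$ to order $N$. Every Taylor polynomial term vanishes, so for $|x|\le 1$
\[
|\psi(x)|\le C_{N}\,\sup_{|y|\le 1,\,|s|=N}|\psi^{(s)}(y)|\,|x|^{N}=:C'_{N,k-j}|x|^N.
\]
The supremum is finite because $\varphi\in C^\infty$ and the closed unit ball is compact. Now choose $N=\ell_j+1$. Hypothesis \eqref{EQgbound} then gives, in a neighbourhood of the origin,
\[
|f^{(j)}(x)\varphi^{(k-j)}(x)|\le C_jC'_{N,k-j}\,|x|^{-\ell_j}|x|^{\ell_j+1}=C\,|x|\to 0 .
\]
Summing over $j\le k$ yields $\lim_{x\to0}(f\varphi)^{(k)}(x)=0$.

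The extension claim is the step I expect to need the most care, because one has to check that the extended function is actually differentiable at $0$ with derivatives equal to $0$, not merely that the limits exist. I would argue by induction on $|k|$. Set $g=f\varphi$ extended by $g(0)=0$; this is continuous by the case $k=0$. Suppose $g^{(k)}$ is continuous on $\R^n$ with $g^{(k)}(0)=0$. Take a coordinate direction $e_i$. By the mean value theorem applied on the segment from $0$ to $te_i$, which avoids the origin except at the endpoint,
\[
\frac{g^{(k)}(te_i)-g^{(k)}(0)}{t}=\partial_i g^{(k)}(\theta te_i)
\]
for some $\theta\in(0,1)$. The right-hand side tends to $0$ as $t\to0$ by the limit statement already proved. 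Hence $\partial_i g^{(k)}(0)=0$, and $\partial_i g^{(k)}$ is continuous at $0$ again by the limit statement. This shows $g\in C^\infty(\R^n)$ with all derivatives vanishing at the origin.
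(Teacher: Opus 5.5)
Your proof is correct and matches the paper's argument: expand $(f\varphi)^{(k)}$ by the Leibniz rule, then use the Taylor flatness bound $|\varphi^{(k-m)}(x)|\le M|x|^N$ with $N>\ell_m$ to send each term to zero. Your inductive mean-value argument for the $C^\infty$ extension is a useful addition, since the paper only asserts that step. For complex-valued functions, such as the $|\xi|\xi_j\mathcal{F}\varphi$ the lemma is later applied to, apply the mean value theorem to the real and imaginary parts separately, or use $g^{(k)}(te_i)=\int_0^t\partial_i g^{(k)}(se_i)\,ds$ instead.
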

\begin{proof}
	For $x\neq 0$ and $k\in (\mathbb{N}\cup\{0\})^n$,
	\[
	(f\varphi)^{(k)}(x) = \sum_{m\leq k} \binom{k}{m} f^{(m)}(x)\varphi^{(k-m)}(x),
	\]
	where the inequality  $m\leq k$ in the sum indices is understood componentwise. Since $\varphi^{(k)}(0)=0$ for any $k\in (\mathbb{N}\cup\{0\})^n$,
by Taylor's theorem $\varphi$ vanishes to infinite order at the origin; hence, for every $N\in \mathbb{N}$ there exists a constant $M_{k,N}>0$ such that
	\[
	|\varphi^{(k)}(x)|\leq M_{k,N}\,|x|^{N}
	\]
	in a neighborhood of the origin. Thus,
	\[
	\lim_{x\to 0} |f^{(m)}(x)\varphi^{(k-m)}(x) | \leq  C_m M_{k-m,N}\lim_{x\to 0}|x|^{N-\ell_{m}}=0,
	\]
by choosing $N>\ell_{m}$.
\end{proof}
\begin{remark}\label{REMextension}
	In the sequel we will always understand the product $f\varphi$ of a function $f$ satisfying \eqref{EQgbound} and $\varphi\in \Psi (\R^n)$ as the $C^\infty(\R^n)$ extension given in Lemma~\ref{LEMproduct} (and thus, the product satisfies $f\varphi\in \Psi(\R^n)$).
\end{remark}

We now define an appropriate subspace of $\mathcal{S}(\R^n)$ for the action of $(-\Delta)^{\frac{1}{2}}$.
\begin{definition}
	\label{space}
	We define the space of functions $\Xi_{\frac{1}{2}}(\R^n)$ as the subspace of $\Xi(\R^n)$  consisting of the functions $\varphi$ such that $(-\Delta)^{\frac{1}{2}}\varphi\in \Xi(\R^n)$, i.e.,
	\[
	\mathcal{F}^{-1}\big( 2\pi|\xi| \mathcal{F}\varphi\big)\in \Xi(\R^n).
	\]
\end{definition}

\begin{theorem}\label{THMxirnonempty}
	The space $\Xi_{\frac{1}{2}}(\R^n)$ is nontrivial.
\end{theorem}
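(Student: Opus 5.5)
The plan is to build the example on the Fourier side, since every condition in Definition~\ref{space} becomes a condition on $\psi=\mathcal{F}\varphi$. Indeed, $\varphi\in\Xi_{\frac12}(\R^n)$ holds exactly when both $\psi$ and $2\pi|\xi|\psi$ satisfy three requirements:
\begin{enumerate}[label=(\roman*)]
\item they belong to $\mathcal{S}(\R^n)$;
\item all their derivatives vanish at the origin, which gives $\varphi,(-\Delta)^{\frac12}\varphi\in\Phi(\R^n)$;
\item all their moments vanish, which gives $\varphi,(-\Delta)^{\frac12}\varphi\in\Psi(\R^n)$, because $\partial^s(\mathcal{F}^{-1}\psi)(0)=\int(2\pi i\xi)^s\psi(\xi)\,d\xi$.
\end{enumerate}
Compactly supported $\psi$ supported in an annulus handles (i) and (ii) for free, but it cannot work for (iii). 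Restricted to rays, all moments would vanish on a compact interval, and by Weierstrass density this forces $\psi\equiv0$. So I will use a radial profile with unbounded support that is an indeterminate Stieltjes moment example.

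\textbf{Step 1 (radial profile).} Take
\[
w(r)=e^{-(\log r)^2}\sin(2\pi\log r),\qquad r>0.
\]
The substitution $r=e^{t}$ turns $\int_0^\infty r^{j}w(r)\,dr$ into $\int_{\R}e^{(j+1)t-t^2}\sin(2\pi t)\,dt$. Completing the square, $t=u+\frac{j+1}{2}$, gives $\sin(2\pi u+\pi(j+1))=\pm\sin(2\pi u)$, so the integral reduces to the integral of an odd function against $e^{-u^2}$ and vanishes. Hence $\int_0^\infty r^jw(r)\,dr=0$ for every integer $j\ge0$, the classical Stieltjes computation.

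\textbf{Step 2 (regularity).} Set $\psi(\xi):=w(|\xi|)$. Every derivative of $w$ is a finite sum of terms $r^{-k}p(\log r)e^{-(\log r)^2}\times(\sin\text{ or }\cos)(2\pi\log r)$ with $p$ a polynomial. Since $e^{-(\log r)^2}=r^{-\log r}$, these terms decay faster than any power of $r$ as $r\to\infty$ and faster than any power of $r$ as $r\to0^+$. Therefore $\psi$ is $C^\infty$ on $\R^n\setminus\{0\}$, all its derivatives tend to $0$ at the origin, and $\psi$ is Schwartz at infinity. The flatness at the origin makes the extension by $0$ smooth (as in Lemma~\ref{LEMproduct}). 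The same argument applies verbatim to $|\xi|\psi(\xi)=\widetilde w(|\xi|)$ with $\widetilde w(r)=rw(r)$. This gives (i) and (ii) for both functions.

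\textbf{Step 3 (moments).} In polar coordinates,
\[
\int_{\R^n}\xi^s\psi(\xi)\,d\xi=\Big(\int_{S^{n-1}}\omega^s\,d\omega\Big)\int_0^\infty r^{|s|+n-1}w(r)\,dr=0
\]
by Step 1. Likewise the moments of $|\xi|\psi$ involve $\int_0^\infty r^{|s|+n}w(r)\,dr=0$. This gives (iii) for both functions.

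\textbf{Conclusion.} Define $\varphi:=\mathcal{F}^{-1}\psi$. Then $\varphi\in\mathcal{S}(\R^n)$, $\varphi\not\equiv0$ since $w\not\equiv0$, and $\varphi\in\Xi(\R^n)$. Also $(-\Delta)^{\frac12}\varphi=\mathcal{F}^{-1}(2\pi|\xi|\psi)\in\Xi(\R^n)$, so $\varphi\in\Xi_{\frac12}(\R^n)$. The main obstacle is recognizing that the moment conditions exclude compact support and force a Stieltjes-type profile. Beyond that, the only delicate point is the bookkeeping showing that derivatives of $w(|\xi|)$ are flat at $0$ and rapidly decaying, which follows from the $r^{-\log r}$ factor dominating every power of $r$ and of $\log r$. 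A real-valued variant, or multiplication by a spherical harmonic, works equally well if needed.
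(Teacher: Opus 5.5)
Your proof is correct and follows the paper's route. You take a radial function on the Fourier side whose radial profile satisfies $\int_0^\infty r^j w(r)\,dr=0$ for every $j$; polar coordinates then kill the moments of both $\psi$ and $|\xi|\psi$, and flatness at the origin gives the remaining conditions.

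The only difference is the choice of profile. The paper starts from Stieltjes' $e^{-\rho^{1/4}}\sin(\rho^{1/4})$, which is not smooth at $0$, so it has to be convolved with a function in $\Psi(\R)$ supported in $(0,\infty)$. Your log-normal profile $e^{-(\log r)^2}\sin(2\pi\log r)=r^{-\log r}\sin(2\pi\log r)$ is already flat at $0$. This removes the mollification step and makes nontriviality immediate.
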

The proof is rather lengthy and technical. Thus, we postpone it to the Appendix (see p. \pageref{PAGproofxir}).

\begin{remark}
	From the definition of the spaces $\Xi(\mathbb{R}^n)$ and  $\Xi_{\frac{1}{2}}(\R^n)$, it is clear that both of them inherit the topology of $\mathcal{S}(\R^n)$.
\end{remark}

The following statement shows the commutativity of partial derivative operators and $(-\Delta)^\frac{1}{2}$ on $\Xi_\frac{1}{2}(\R^n)$.

\begin{lemma}\label{item3rem}
	Let $r\in \R$ and $\varphi\in \Xi_{\frac{1}{2}}(\R^n)$. For $j=1,\ldots ,n $ there holds $$\partial_{x_j}\big[(-\Delta)^{\frac{1}{2}}\varphi\big] =(-\Delta)^{\frac{1}{2}}\big[\partial_{x_j}\varphi\big].
$$ Moreover, the partial derivative operator $\partial_{x_j}$ leaves the space  $\Xi_{\frac{1}{2}}(\R^n)$ invariant.
	
\end{lemma}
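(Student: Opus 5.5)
The plan is to work entirely on the Fourier side, where both operators become multipliers. For $\varphi\in\Xi_{\frac12}(\R^n)\subset\mathcal{S}(\R^n)$ we have $\mathcal{F}[\partial_{x_j}\varphi](\xi)=2\pi i\xi_j\,\mathcal{F}\varphi(\xi)$. By Definition~\ref{space}, the function $\psi:=(-\Delta)^{\frac12}\varphi=\mathcal{F}^{-1}(2\pi|\xi|\mathcal{F}\varphi)$ lies in $\Xi(\R^n)\subset\mathcal{S}(\R^n)$. Hence $\mathcal{F}\psi=2\pi|\xi|\mathcal{F}\varphi$ is itself a Schwartz function, and so is $\mathcal{F}[\partial_{x_j}\psi]=2\pi i\xi_j\cdot 2\pi|\xi|\mathcal{F}\varphi$. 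On the other hand, $(-\Delta)^{\frac12}[\partial_{x_j}\varphi]=\mathcal{F}^{-1}\big(2\pi|\xi|\cdot 2\pi i\xi_j\mathcal{F}\varphi\big)$ by Definition~\ref{DEFfraclapschwarz}, since $\partial_{x_j}\varphi\in\mathcal{S}(\R^n)$. The two multipliers coincide pointwise, so injectivity of $\mathcal{F}$ on $L_2$ gives the commutation identity. No convergence issues arise, since every function involved is in $L_2$, and in fact Schwartz.

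For the invariance, I first show that $\partial_{x_j}$ maps $\Xi(\R^n)$ into itself. If $\varphi\in\Xi(\R^n)=\Phi(\R^n)\cap\Psi(\R^n)$, then $\partial_{x_j}\varphi\in\mathcal{S}(\R^n)$. All derivatives of $\partial_{x_j}\varphi$ at the origin are derivatives of $\varphi$ there, so they vanish and $\partial_{x_j}\varphi\in\Psi(\R^n)$. For the Lizorkin condition, I use the characterization that $\varphi\in\Phi(\R^n)$ if and only if all derivatives of $\mathcal{F}\varphi$ vanish at $0$. Then $\mathcal{F}[\partial_{x_j}\varphi]=2\pi i\xi_j\mathcal{F}\varphi$, and by the Leibniz rule each of its derivatives at $0$ is a combination of derivatives of $\mathcal{F}\varphi$ at $0$, hence zero. (Equivalently, I could integrate by parts in the moments: $\int x^s\partial_j\varphi=-s_j\int x^{s-e_j}\varphi=0$.) Thus $\partial_{x_j}\varphi\in\Xi(\R^n)$.

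Finally, let $\varphi\in\Xi_{\frac12}(\R^n)$. The previous paragraph gives $\partial_{x_j}\varphi\in\Xi(\R^n)$. By the commutation identity, $(-\Delta)^{\frac12}[\partial_{x_j}\varphi]=\partial_{x_j}[(-\Delta)^{\frac12}\varphi]$. Since $(-\Delta)^{\frac12}\varphi\in\Xi(\R^n)$, applying the previous paragraph once more shows this function is in $\Xi(\R^n)$, so $\partial_{x_j}\varphi\in\Xi_{\frac12}(\R^n)$.

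The only delicate point is the commutation step, because $|\xi|$ is not smooth at the origin. This is handled by never differentiating $|\xi|$: the identity is simply an equality of $L_2$ multipliers, and the fact that $2\pi|\xi|\mathcal{F}\varphi$ is Schwartz comes from the hypothesis $\varphi\in\Xi_{\frac12}(\R^n)$. The parameter $r$ in the statement plays no role.
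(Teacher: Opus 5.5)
Your proof is correct and follows the paper's argument. The commutation identity is obtained, as in the paper, by comparing the Fourier multipliers $(2\pi i\xi_j)(2\pi|\xi|)$, and invariance is deduced from that identity together with $(-\Delta)^{\frac{1}{2}}\varphi\in\Xi(\R^n)$.

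There are two minor differences. For the Lizorkin ($\Phi$) condition, the paper applies Lemma~\ref{LEMproduct} to $|\xi|\xi_j\mathcal{F}\varphi$, whereas you use that $2\pi|\xi|\mathcal{F}\varphi=\mathcal{F}\big[(-\Delta)^{\frac{1}{2}}\varphi\big]$ already lies in $\Psi(\R^n)$ and then multiply by the polynomial $\xi_j$. You also check explicitly that $\partial_{x_j}\varphi\in\Xi(\R^n)$, which the paper leaves implicit (cf.\ Remark~\ref{reminv}).
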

\begin{proof}
	Let us first prove the commutativity of $\partial_{x_j}$ and $(-\Delta)^{\frac{1}{2}}$. From the well-known property of the Fourier transform
	\[
	\mathcal{F} \big(\partial_{x_j}g\big)(\xi) = (2\pi i \xi_j)\mathcal{F}g(\xi)
	\] for a given $g\in \mathcal{S}(\R^n)$, we have
	\begin{equation}
		\label{EQpartialder}
		\partial_{x_j}g(x) = \mathcal{F}^{-1}\big( (2\pi i \xi_j) \mathcal{F}g\big)(x).
	\end{equation}
	Combining \eqref{EQpartialder} with the definition of $(-\Delta)^\frac{1}{2}$ yields, on the one hand,
	\[
	(-\Delta)^{\frac{1}{2}}\big[\partial_{x_j}\varphi\big](x) = \mathcal{F}^{-1}\big(2 \pi |\xi| \mathcal{F}(\partial_{x_j}\varphi)\big)(x) = \mathcal{F}^{-1}\big( (2\pi i \xi_j)(2 \pi |\xi|)  \mathcal{F}\varphi\big)(x).
	\]
	and on the other hand,
	\begin{align*}
		\partial_{x_j}\big[(-\Delta)^{\frac{1}{2}}\varphi\big](x)&=\mathcal{F}^{-1}\big( (2\pi i \xi_j) \mathcal{F}((-\Delta)^{\frac{1}{2}}\varphi)\big)(x) = \mathcal{F}^{-1}\big( (2\pi i \xi_j) \mathcal{F}\big[\mathcal{F}^{-1}\big( (2 \pi |\xi|) \mathcal{F}\varphi \big)\big]\big)(x)\\
		&= \mathcal{F}^{-1}\big( (2\pi i \xi_j) (2 \pi |\xi|)\mathcal{F}\varphi \big)(x),
	\end{align*}
	which proves the desired equality.
	
	In order to show that $\partial_{x_j} \varphi\in \Xi_{{\frac{1}{2}}}(\R^n)$, we have to verify that $(-\Delta)^{\frac{1}{2}}\big[\partial_{x_j}\varphi\big]  \in \Xi(\R^n)$. On the one hand, since the derivatives of all orders of $(-\Delta)^{\frac{1}{2}} \varphi$ vanish at the origin, so do those of
\[
\partial_{x_j}[ (-\Delta)^{\frac{1}{2}} \varphi]=(-\Delta)^{\frac{1}{2}}\big[\partial_{x_j}\varphi\big],
\]
 i.e., $(-\Delta)^{\frac{1}{2}}\big[\partial_{x_j}\varphi\big]\in \Psi(\R^n)$. We now show that the partial derivatives of all orders of $\mathcal{F}\big((-\Delta)^{\frac{1}{2}}\big[\partial_{x_j}\varphi\big]\big)$ also vanish at the origin (or equivalently, $(-\Delta)^{\frac{1}{2}}\big[\partial_{x_j}\varphi\big]\in \Phi(\R^n)$). Since
	\begin{align*}
		\mathcal{F}\big( (-\Delta)^{\frac{1}{2}}\big[\partial_{x_j}\varphi\big]\big)(\xi)& = \mathcal{F}\big[\mathcal{F}^{-1}\big( (2 \pi |\xi|) \mathcal{F}(\partial_{x_j}\varphi)\big)\big](\xi)\\
		&^ =  (2 \pi |\xi|)(2\pi i \xi_j) \mathcal{F}\varphi(\xi),
	\end{align*}
	and  $\mathcal{F}\varphi\in \Psi(\R^n)$ (because $\varphi\in \Phi(\R^n)$), the derivatives of all orders of $|\xi|\xi_j\mathcal{F}\varphi(\xi)$ vanish at the origin, by Lemma~\ref{LEMproduct} (see also Remark~\ref{REMextension}). Thus, $(-\Delta)^{\frac{1}{2}}\big[\partial_{x_j}\varphi\big]\in \Phi(\R^n)$.
\end{proof}

\begin{remark}
\label{reminv}
By using arguments similar to those used in the above proof, one can also prove that derivatives leave invariant the space $\Xi(\mathbb{R}^n)$.
\end{remark}

\begin{lemma}\label{LEMwelldefinedlaplacian}
	The fractional Laplacian defined in \eqref{delta} is a map between the following subspaces of $\mathcal{S}(\R^n)$:
	\[
	(-\Delta)^{\frac{1}{2}} \colon \Phi(\R^n)\to  \Phi(\R^n),\qquad (-\Delta)^{\frac{1}{2}}\colon \Xi_\frac{1}{2}(\R^n)\to \Xi_\frac{1}{2}(\R^n),
	\]
	i.e. $(-\Delta)^\frac{1}{2}$ leaves the subspaces  $\Phi(\R^n)$ and $\Xi_\frac{1}{2}(\R^n)$ invariant.
\end{lemma}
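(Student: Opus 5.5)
Both claims come down to where the Fourier side lands. For the first claim, take $\varphi\in\Phi(\R^n)$, so that $\psi:=\mathcal{F}\varphi\in\Psi(\R^n)$. The function $f(\xi)=2\pi|\xi|$ is smooth on $\R^n\backslash\{0\}$. It satisfies the bound \eqref{EQgbound} near the origin: each derivative $\partial^k|\xi|$ is homogeneous of degree $1-|k|$, hence bounded by $C_k|\xi|^{-|k|}$. Lemma~\ref{LEMproduct} and Remark~\ref{REMextension} then show that $2\pi|\xi|\psi$ extends to a $C^\infty(\R^n)$ function all of whose derivatives vanish at $0$.

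The remaining issue is that this product lies in $\mathcal{S}(\R^n)$, i.e.\ has rapid decay at infinity. Pick a cutoff $\chi\in C^\infty$ with $\chi=0$ near $0$ and $\chi=1$ outside $B_1$. On $\operatorname{supp}\chi$ the derivatives of $|\xi|$ are bounded by $C_k(1+|\xi|)$, so the Leibniz rule gives Schwartz decay of $\chi|\xi|\psi$. The piece $(1-\chi)|\xi|\psi$ is $C^\infty$ with compact support. Hence $2\pi|\xi|\psi\in\Psi(\R^n)$. Since $\mathcal{F}$ is a bijection between $\Phi(\R^n)$ and $\Psi(\R^n)$, we get $(-\Delta)^{\frac12}\varphi=\mathcal{F}^{-1}(2\pi|\xi|\psi)\in\Phi(\R^n)$, and this function is Schwartz. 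I expect this to be the one genuinely technical step. It needs the combination of Lemma~\ref{LEMproduct} at the origin with the decay argument at infinity. Everything else is bookkeeping with the definitions.

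For the second claim, let $\varphi\in\Xi_{\frac12}(\R^n)$ and put $g:=(-\Delta)^{\frac12}\varphi$. By Definition~\ref{space}, $g\in\Xi(\R^n)$. To show $g\in\Xi_{\frac12}(\R^n)$ we need $(-\Delta)^{\frac12}g\in\Xi(\R^n)$, i.e.\ $(-\Delta)^{\frac12}g\in\Phi(\R^n)\cap\Psi(\R^n)$. Membership in $\Phi$ follows from the first part, since $g\in\Phi(\R^n)$. For membership in $\Psi$, note that $\mathcal{F}\big((-\Delta)^{\frac12}g\big)=(2\pi|\xi|)^2\mathcal{F}\varphi$, so that
\[
(-\Delta)^{\frac12}g=\mathcal{F}^{-1}\big(4\pi^2|\xi|^2\mathcal{F}\varphi\big)=-\Delta\varphi=-\sum_{j=1}^n\partial_{x_j}^2\varphi .
\]
By Remark~\ref{reminv} (or by iterating Lemma~\ref{item3rem}), derivatives leave $\Xi(\R^n)$ invariant. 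Thus $-\Delta\varphi\in\Xi(\R^n)\subset\Psi(\R^n)$, which gives $(-\Delta)^{\frac12}g\in\Xi(\R^n)$ and hence $g\in\Xi_{\frac12}(\R^n)$.
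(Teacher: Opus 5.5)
Your proof is correct and follows the paper's argument. For $\Phi(\R^n)$ you pass to the Fourier side and use Lemma~\ref{LEMproduct} to get $2\pi|\xi|\mathcal{F}\varphi\in\Psi(\R^n)$, and for $\Xi_{\frac12}(\R^n)$ you reduce to $(-\Delta)^{\frac12}(-\Delta)^{\frac12}\varphi=-\Delta\varphi$ together with the invariance of $\Xi(\R^n)$ under derivatives. Your cutoff argument for Schwartz decay at infinity usefully makes explicit a point the paper leaves implicit in Remark~\ref{REMextension}, which only controls the behavior at the origin.
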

\begin{proof}
	For the first mapping, recall that $\varphi\in \Phi(\R^n)$ is equivalent to $\mathcal{F}\varphi\in\Psi(\R^n)$. Thus, by Lemma~\ref{LEMproduct} (see Remark~\ref{REMextension} as well), one also has $(2 \pi |\xi|) \mathcal{F}\varphi\in\Psi(\R^n)$, or equivalently,
	\[
	\mathcal{F}^{-1}\big( (2 \pi |\xi|) \mathcal{F}\varphi\big)  = (-\Delta)^\frac{1}{2}\varphi \in \Phi(\R^n).
	\]
	For the second mapping, let $\varphi \in \Xi_\frac{1}{2}(\mathbb{R}^n)$. By Definition, \ref{space} it is clear that $(-\Delta)^{\frac{1}{2}} \varphi \in \Xi(\mathbb{R}^n)$. It remains to check that $(-\Delta)^{\frac{1}{2}}\varphi\in \Xi_{\frac{1}{2}}(\R^n)$, or equivalently, that $(-\Delta)^\frac{1}{2}\big[(-\Delta)^{\frac{1}{2}}\varphi\big]\in \Xi(\R^n)$. Indeed, by definition,
	\[
	(-\Delta)^{\frac{1}{2}} \big[(-\Delta)^{\frac{1}{2}}\varphi\big]=\mathcal{F}^{-1}\big(2 \pi |\xi| \mathcal{F}\big[(-\Delta)^{\frac{1}{2}}\varphi\big]\big)=\mathcal{F}^{-1}\big((2 \pi |\xi|)^{2} \mathcal{F}\varphi\big)=-\Delta\varphi,
	\]
	and the conclusion follows from the fact that partial derivatives leave the space $\Xi_\frac{1}{2}(\R^n)$ invariant (see Lemma~\ref{item3rem}), which in particular implies that $-\Delta\varphi \in\Xi(\R^n)$.
\end{proof}

\subsection{Linear functionals on generalized Lizorkin spaces of test functions}
We now turn our attention to the study of distributions $T\in \Xi_\frac{1}{2}'(\R^n)$ and $T\in \Xi'(\R^n)$, i.e., linear functionals acting on functions $\varphi$ from either of the spaces $\Xi_\frac{1}{2}(\R^n)$ or $\Xi(\R^n)$. The motivation for studying such functionals is that the terms of the principal part of a Laurent series (power functions of negative power), or more generally, functions $f\in C^\infty(\R^n\backslash\{0\})$ with a singularity at the origin, naturally define functionals $T_f\in \Psi'(\R^n)\subset \Xi'(\R^n)\subset \Xi_\frac{1}{2}'(\R^n)$ via the relation
\[
T_f(\varphi) = \langle f,\varphi \rangle.
\]
This observation, together with Lemma~\ref{LEMwelldefinedlaplacian} and a definition in the distributional sense for $(-\Delta)^\frac{1}{2} f$ via transposition, gives a natural definition for the principal part of Laurent series that we will consider in the subsequent sections.

We begin with the definition derived from Lemma~\ref{LEMwelldefinedlaplacian}.

\begin{definition}\label{DEFfractionallap}
	We define the fractional Laplacian $(-\Delta)^{\frac{1}{2}}:\Xi'(\R^n)\to \Xi'_\frac{1}{2}(\R^n)$  via the relation
	\[
	\big\langle (-\Delta)^{\frac{1}{2}} T, \varphi \big\rangle =	\big\langle T, (-\Delta)^{\frac{1}{2}}\varphi \big\rangle  =\langle T, \mathcal{F}^{-1}\big( (2 \pi  |\xi|)\mathcal{F}[\varphi] \big)\rangle.
	\]	
\end{definition}
\begin{remark}\label{REMdefinitionlap12}
	Due to Lemma~\ref{LEMwelldefinedlaplacian}, one may also define $(-\Delta)^{\frac{1}{2}}:\Phi'(\R^n)\to \Phi'(\R^n)$, and more importantly,
	\begin{equation*}
	(-\Delta)^{\frac{1}{2}}:  \Xi_{\frac{1}{2}}'(\mathbb{R}^n) \to \Xi_{\frac{1}{2}}'(\mathbb{R}^n).
	\end{equation*}
\end{remark}

Such definitions are abstract and for our purposes, one would also like to make sense of the formal equality
\begin{equation*}
(-\Delta)^\frac{1}{2}T=\mathcal{F}\big((2 \pi  |\xi|)\mathcal{F}^{-1}[T]\big),
\end{equation*}
analog to \eqref{delta}, and of all the objects involved in the (also formal) equalities
	\begin{align}
	\big\langle (-\Delta)^{\frac{1}{2}}T ,\varphi  \big\rangle & = 	\big\langle \mathcal{F}\big((2 \pi |\xi|)\mathcal{F}^{-1}[T] ,\varphi  \big\rangle = 	\big\langle (2 \pi |\xi|)\mathcal{F}^{-1}[T] ,\mathcal{F}[\varphi ] \big\rangle \nonumber \\
	& = \big\langle \mathcal{F}^{-1}[T] ,(2 \pi  |\xi|)\mathcal{F}[\varphi ] \big\rangle = \big\langle T , \mathcal{F}^{-1} \big( (2 \pi |\xi|)\mathcal{F}[\varphi ]\big) \big\rangle,\label{EQchaineq}
\end{align}
emulating the classical theory of tempered distributions. The rest of the section is devoted to proving that all such object indeed admit a natural definition in the appropriate setting.

We first show that the Fourier transform in the dual $\Xi'(\R^n)$ admits a natural definition as the transpose map of $\mathcal{F}:\Xi(\R^n)\to\Xi(\R^n)$, similarly as defined in $\mathcal{S}'(\R^n)$, and moreover it is an isomorphism. To this end, we need the following auxiliary result.

\begin{proposition}\label{PROPisom}\cite[Proposition 23.1]{treves}
	If $E,F$ are topological vector spaces and $u:E\to F$ is an isomorphism of $E$ onto $F$ (for the vector space structures), then the transpose of $u$, $^{t}u:F'\to E'$ is an isomorphism of $F'$ onto $E'$ (for the vector space structures).
\end{proposition}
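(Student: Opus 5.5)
The plan is purely algebraic, since the statement concerns only the vector space structures. Recall that the transpose of $u:E\to F$ is defined by $\langle {}^t u(y'), x\rangle = \langle y', u(x)\rangle$ for $y'\in F'$ and $x\in E$. So ${}^t u(y') = y'\circ u$, and this is again a continuous linear functional on $E$ because it is a composition of continuous linear maps. Linearity of ${}^t u$ in $y'$ is immediate.

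The main step is to exhibit an inverse. Since $u$ is an isomorphism, it has an inverse $u^{-1}:F\to E$. I would take the candidate inverse of ${}^t u$ to be ${}^t(u^{-1}):E'\to F'$, defined by $x'\mapsto x'\circ u^{-1}$. Then I would check the two compositions:
\[
{}^t(u^{-1})\big({}^t u(y')\big) = y'\circ u\circ u^{-1} = y',\qquad {}^t u\big({}^t(u^{-1})(x')\big) = x'\circ u^{-1}\circ u = x'.
\]
This is the functorial identity ${}^t(v\circ u)={}^t u\circ {}^t v$ together with ${}^t(\mathrm{id})=\mathrm{id}$. These two identities give that ${}^t u$ is bijective and linear, hence a vector space isomorphism of $F'$ onto $E'$.

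The only point requiring care is that ${}^t(u^{-1})$ must land in the topological duals. This needs $u^{-1}$ to be continuous, which is why "isomorphism" here should be read as a topological isomorphism: a continuous linear bijection with continuous inverse. In the intended application, $u=\mathcal{F}:\Xi(\R^n)\to\Xi(\R^n)$, and this holds because $\mathcal{F}$ and $\mathcal{F}^{-1}$ are continuous on $\mathcal{S}(\R^n)$ and $\Xi(\R^n)$ carries the induced topology (Proposition~\ref{PROPinvertibility}). Beyond this, I expect no obstacle.
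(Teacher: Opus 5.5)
Your argument is correct and is exactly the standard one behind the cited result; the paper gives no proof of its own, only the reference to Tr\`eves. There, as in your proposal, ${}^t(u^{-1})$ is shown to be a two-sided inverse of ${}^t u$ via ${}^t(v\circ u)={}^t u\circ{}^t v$. You are also right that ``isomorphism'' must be read topologically (otherwise neither $y'\circ u$ nor $x'\circ u^{-1}$ need be continuous), and this holds for $\mathcal{F}$ on $\Xi(\R^n)$ with the topology induced from $\mathcal{S}(\R^n)$.
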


\begin{remark}
	\label{REMperp}
Before proceeding further we make some observations. The first concerns the dual space of $\Xi(\R^n)$. In general, for a vector space $E$, we define
\[
E^\perp := \{ x'\in E': \langle x,x'\rangle=0\text{ for every }x\in E \}.
\]
It is well known that for a subspace $F\subset E$, one has
\[
F' \cong E'/F^\perp.
\]
Thus, since $\Xi'(\R^n)=(\Psi(\R^n)\cap \Phi(\R^n))'$, we have
\[
\Xi'(\R^n) \cong \mathcal{S}'(\R^n)/\Xi^\perp(\R^n),
\]
and clearly $\Psi^\perp(\R^n)+\Phi^\perp(\R^n)\subset \Xi^\perp(\R^n)$, by construction. Moreover, it is well known that $\Phi^\perp(\R^n)=\mathcal{P}(\R^n)$ (the space of polynomials of $n$ real variables), and $\Psi^\perp(\R^n) = \widehat{\mathcal{P}}(\R^n)$ (the Fourier transforms of polynomials in $n$ real variables in the distributional sense, or in other words, linear combinations of the Dirac distribution $\delta_0$ and its derivatives). Thus, $\mathcal{P}(\R^n)+\widehat{\mathcal{P}}(\R^n)\subset \Xi^\perp(\R^n)\subset\Xi_{\frac{1}{2}}^\perp(\R^n)$. This relation is clear from the conditions defining $\Phi(\R^n)$  and $\Psi(\R^n)$.
\end{remark}

\begin{definition}\label{ALBERTO}
	We define the Fourier  transform of $ T\in \Xi'(\R^n)$ as the transpose map of $\mathcal{F}:\Xi(\R^n)\to \Xi(\R^n)$, i.e., via the relation
	\[
	\langle \mathcal{F}T,\varphi \rangle = \langle T,\mathcal{F}\varphi\rangle,\ \ \ {\rm for\ all}\ \ \ \varphi\in\Xi(\R^n).
	\]
\end{definition}
\begin{remark}
	By Propositions~\ref{PROPinvertibility}~and~\ref{PROPisom}, the Fourier transform is an isomorphism in $\Xi'(\R^n)$.
\end{remark}

Lemma~\ref{LEMproduct} can be used to define the product of $T\in \Xi'(\R^n)$ by an appropriate function, analog to the definition of the product of tempered distributions and $C^\infty$ functions.
\begin{definition}\label{DEFproddistbyfunct}
	Let $T\in \Xi'(\R^n)$ and let $f\in C^\infty(\R^n\backslash\{0\})$ be such that for every multiindex $k\in (\mathbb{N}\cup\{0\})^n$,
	\begin{equation}
		\label{EQfderivatives}
		|f^{(k)}(x)|\leq C_k\big( |x|^{n_k}+|x|^{-n_k}\big),
	\end{equation}
	where $C_k>0$ and $n_k\in\mathbb{N}$. If $\varphi\in \Xi(\R^n)$ is such that $f\varphi\in \Xi(\R^n)$ (where the product $f\varphi$ is understood as the $C^{\infty}(\R^n)$ extension given in Lemma~\ref{LEMproduct}), we define
	\[
	\langle fT,\varphi\rangle =\langle T,f\varphi\rangle.
	\]
\end{definition}

Now define the action of partial derivatives in $\Xi_\frac{1}{2}'(\R^n)$.
\begin{definition}\label{DEFderivativepsi}
	Let $T\in  \Xi_\frac{1}{2}'(\R^n)$, and $1\leq j\leq n$. The partial derivative of $T$ with respect to the $j$th variable is defined by the formula
	\[
	\bigg\langle \frac{\partial T}{\partial {x_j}},\varphi \bigg\rangle  = -\bigg\langle T, \frac{\partial \varphi}{\partial {x_j}} \bigg\rangle,
	\]
	for $\varphi\in \Xi_\frac{1}{2}(\R^n)$.
\end{definition}

\begin{remark}
\label{der1}
The above definition can also be stated for $T\in\Xi'(\mathbb{R}^n)$ and $\varphi\in\Xi(\mathbb{R}^n)$.
\end{remark}

Note that these partial derivatives are well defined on $\Xi_\frac{1}{2}'(\R^n)$, since all the derivatives of the functions in the spaces $\Xi_\frac{1}{2}(\R^n)$ belong to $\Xi_\frac{1}{2}(\R^n)$ (cf. Lemma \ref{item3rem}).
\begin{remark}
If $f\in C^\infty(\R^n\backslash\{0\})$ satisfies \eqref{EQfderivatives}, then it induces a linear functional $T_f=\langle f,\cdot\rangle\in \Xi'(\R^n)$ (and in particular in $\Xi'_{\frac{1}{2}}(\R^n)$; for a proof see Lemma~\ref{exchange} below). In this case, Definition~\ref{DEFderivativepsi} just corresponds to integrating by parts, and therefore the partial derivative of $T$ is derived from the usual partial derivative of $f$.
\end{remark}

The following statement asserts that all quantities involved in \eqref{EQchaineq} are well defined.

\begin{theorem}\label{IN QUESTO TEOR}
	For the fractional Laplacian $(-\Delta)^{\frac{1}{2}}:\Xi'(\R^n)\to \Xi'_\frac{1}{2}(\R^n)$, all the objects in the chain of equalities
	\begin{align*}
		\big\langle (-\Delta)^{\frac{1}{2}}T ,\varphi  \big\rangle & = 	\big\langle \mathcal{F}\big((2 \pi |\xi|)\mathcal{F}^{-1}[T]) ,\varphi  \big\rangle = 	\big\langle (2 \pi |\xi|)\mathcal{F}^{-1}[T] ,\mathcal{F}[\varphi ] \big\rangle \nonumber \\
		& = \big\langle \mathcal{F}^{-1}[T] ,(2 \pi  |\xi|)\mathcal{F}[\varphi ] \big\rangle = \big\langle T , \mathcal{F}^{-1} \big( (2 \pi |\xi|)\mathcal{F}[\varphi ]\big) \big\rangle,
	\end{align*}
	are well defined.
\end{theorem}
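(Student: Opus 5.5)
We verify the chain from left to right: each object is a well-defined element of the appropriate dual space, and each equality is either a definition or a transposition identity. Throughout, fix $T\in\Xi'(\R^n)$ and $\varphi\in\Xi_\frac{1}{2}(\R^n)$.

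\emph{Step 1 (Fourier transforms in $\Xi'$).} By Proposition~\ref{PROPinvertibility}, $\mathcal{F}$ is a linear bijection of $\Xi(\R^n)$ onto itself, and so is $\mathcal{F}^{-1}$ (it differs from $\mathcal{F}$ by the reflection $x\mapsto -x$, which preserves both $\Phi$ and $\Psi$). Definition~\ref{ALBERTO}, together with Proposition~\ref{PROPisom}, then gives transposes $\mathcal{F},\mathcal{F}^{-1}:\Xi'(\R^n)\to\Xi'(\R^n)$ that are isomorphisms. Hence $\mathcal{F}^{-1}[T]\in\Xi'(\R^n)$ is well defined, and $\langle\mathcal{F}^{-1}[T],\psi\rangle=\langle T,\mathcal{F}^{-1}\psi\rangle$ for every $\psi\in\Xi(\R^n)$. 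We use the normalisation in which this transpose identity holds without a reflection, adjusting signs if needed.

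\emph{Step 2 (multiplication by $2\pi|\xi|$).} The function $f(\xi)=2\pi|\xi|$ is $C^\infty$ off the origin. Its derivatives of order $|k|$ are homogeneous of degree $1-|k|$, so $f$ satisfies \eqref{EQfderivatives}. To use Definition~\ref{DEFproddistbyfunct} we must check that $\psi:=\mathcal{F}\varphi$ satisfies $f\psi\in\Xi(\R^n)$. We have $\psi\in\Xi(\R^n)$, and Lemma~\ref{LEMproduct} shows $f\psi\in\Psi(\R^n)$. The membership $f\psi\in\Phi(\R^n)$ is the key point: it is equivalent to $\mathcal{F}^{-1}(f\psi)=(-\Delta)^\frac{1}{2}\varphi\in\Phi(\R^n)$, which holds precisely because $\varphi\in\Xi_\frac{1}{2}(\R^n)$ (Definition~\ref{space}). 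Smoothness and Schwartz decay of $f\psi$ also follow from this identity, since $f\psi$ is the Fourier transform of a Schwartz function. Therefore $f\psi\in\Xi(\R^n)$, and both $\langle(2\pi|\xi|)\mathcal{F}^{-1}[T],\mathcal{F}\varphi\rangle$ and $\langle\mathcal{F}^{-1}[T],(2\pi|\xi|)\mathcal{F}\varphi\rangle$ make sense and agree by definition. This gives the third equality.

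\emph{Step 3 (the remaining equalities).} The last equality is the transpose identity from Step 1 applied with $\psi=(2\pi|\xi|)\mathcal{F}\varphi\in\Xi(\R^n)$. The right-hand side $\langle T,(-\Delta)^\frac{1}{2}\varphi\rangle$ is meaningful because $(-\Delta)^\frac{1}{2}\varphi\in\Xi(\R^n)$; this is exactly Definition~\ref{DEFfractionallap}, so it equals the leftmost term. For the second equality, the distribution $S:=(2\pi|\xi|)\mathcal{F}^{-1}[T]$ is defined on the subspace $\mathcal{F}(\Xi_\frac{1}{2}(\R^n))$ of $\Xi(\R^n)$. We therefore define $\mathcal{F}S$ on $\Xi_\frac{1}{2}(\R^n)$ by $\langle\mathcal{F}S,\varphi\rangle=\langle S,\mathcal{F}\varphi\rangle$, which is admissible because $\mathcal{F}\varphi$ lies in that subspace. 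This makes $\mathcal{F}((2\pi|\xi|)\mathcal{F}^{-1}[T])$ an element of $\Xi_\frac{1}{2}'(\R^n)$, and the first two equalities hold by construction.

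The main obstacle is Step 2. A product of a $\Xi$-function with $|\xi|$ need not have vanishing moments, so membership in $\Phi$ cannot come from Lemma~\ref{LEMproduct}. It has to be extracted from the defining property of $\Xi_\frac{1}{2}(\R^n)$ through the identity $\mathcal{F}^{-1}((2\pi|\xi|)\mathcal{F}\varphi)=(-\Delta)^\frac{1}{2}\varphi$. A related point is that $(2\pi|\xi|)\mathcal{F}^{-1}[T]$ is only defined on a subspace, so the outer $\mathcal{F}$ must be read as a functional on $\Xi_\frac{1}{2}(\R^n)$ rather than on all of $\Xi(\R^n)$.
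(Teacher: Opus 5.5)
Your proof is correct and follows the paper's argument. Each link is a transposition identity from Definition~\ref{ALBERTO}, Definition~\ref{DEFproddistbyfunct} or Definition~\ref{DEFfractionallap}, and the one substantive input is $(2\pi|\xi|)\mathcal{F}\varphi=\mathcal{F}\big((-\Delta)^{\frac12}\varphi\big)\in\Xi(\R^n)$ for $\varphi\in\Xi_{\frac12}(\R^n)$; the paper simply runs the chain from right to left. Your remark that $(2\pi|\xi|)\mathcal{F}^{-1}[T]$ is only defined on $\mathcal{F}\big(\Xi_{\frac12}(\R^n)\big)$, so the outer $\mathcal{F}$ must be read in $\Xi_{\frac12}'(\R^n)$, sharpens the paper's looser claim that this product lies in $\Xi'(\R^n)$.

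One small slip: $(2\pi|\xi|)\mathcal{F}\varphi\in\Phi(\R^n)$ is equivalent to $(-\Delta)^{\frac12}\varphi\in\Psi(\R^n)$, not $\Phi(\R^n)$, because vanishing moments of a function correspond to vanishing derivatives at the origin of its Fourier transform. Since $(-\Delta)^{\frac12}\varphi\in\Xi(\R^n)=\Phi(\R^n)\cap\Psi(\R^n)$, your conclusion is unaffected.
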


\begin{proof}
	We start from the right-hand side and prove the validity of all expressions backwards through the chain of equalities. To this end, recall that $\varphi\in \Xi_\frac{1}{2}(\R^n)$. First,
	\[
	 \big\langle T , \mathcal{F}^{-1} \big( (2 \pi |\xi|)\mathcal{F}[\varphi ]\big) \big\rangle
	\]
	is well-defined, since $T\in \Xi'(\R^n)$, and by definition $ \mathcal{F}^{-1} \big( (2 \pi |\xi|)\mathcal{F}[\varphi ]\big)\in \Xi(\R^n)$, due to the fact that $\varphi\in \Xi_\frac{1}{2}(\R^n)$.

The fact that the Fourier transform is an isomorphism in $\Xi(\R^n)$ (see Proposition \ref{PROPinvertibility})
implies that $(2 \pi |\xi|)\mathcal{F}[\varphi ]\in \Xi(\R^n)$, and hence
	\[
	\big\langle \mathcal{F}^{-1}[T] ,(2 \pi |\xi|)\mathcal{F}[\varphi ] \big\rangle=\big\langle T , \mathcal{F}^{-1} \big( (2 \pi |\xi|)\mathcal{F}[\varphi ]\big) \big\rangle,
	\]
	by Definition~\ref{ALBERTO} (here $\mathcal{F}^{-1}[T]\in \Xi'(\R^n)$).  The equality
	\[
	\big\langle (2 \pi |\xi|)\mathcal{F}^{-1}[T] ,\mathcal{F}[\varphi ] \big\rangle=\big\langle \mathcal{F}^{-1}[T] ,(2 \pi  |\xi|)\mathcal{F}[\varphi ] \big\rangle
	\]
	is just Definition~\ref{DEFproddistbyfunct}, and in this case $(2 \pi  |\xi|)\mathcal{F}^{-1}[T] \in \Xi'(\R^n)$. Finally, the equality
	\[
		\big\langle \mathcal{F}\big((2 \pi |\xi|)\mathcal{F}^{-1}[T]\big) ,\varphi  \big\rangle = 	\big\langle (2 \pi  |\xi|)\mathcal{F}^{-1}[T] ,\mathcal{F}[\varphi ] \big\rangle
	\]
	follows once again by the definition of the Fourier transform in $\Xi'(\R^n)$ (Definition~\ref{ALBERTO}).
\end{proof}

\begin{remark}
Note that in general Theorem~\ref{IN QUESTO TEOR} need not be valid for the fractional  Laplacian $(-\Delta)^\frac{1}{2}:\Xi_\frac{1}{2}'(\R^n)\to \Xi_\frac{1}{2}'(\R^n)$. More precisely, although it is well defined (see Remark~\ref{REMdefinitionlap12}), we would additionally need $\mathcal{F}:\Xi_\frac{1}{2}(\R^n)\to \Xi_\frac{1}{2}(\R^n)$ to be an isomorphism (the fact that $\mathcal{F}:\Xi(\R^n)\to \Xi(\R^n)$ is an isomorphism is essential in the proof of Theorem~\ref{IN QUESTO TEOR}). However, Theorem~\ref{IN QUESTO TEOR} is enough for our purposes.
\end{remark}

A natural question to ask is whether $(-\Delta)^\frac{1}{2}$ satisfies the semigroup property (this is clear for functions in $\Xi_\frac{1}{2}(\R^n)$, by definition). We now show this is the case.
\begin{lemma}
	\label{LEMsemigroup}
	For $T \in \Xi_\frac{1}{2}'(\R^n)$, we have $(-\Delta)^\frac{1}{2}(-\Delta)^\frac{1}{2}T=-\Delta T\in \Xi'_{\frac{1}{2}}(\R^n)$.
\end{lemma}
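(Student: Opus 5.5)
The plan is to work purely by duality and reduce everything to the corresponding identity for test functions. Fix $T\in\Xi_\frac{1}{2}'(\R^n)$ and $\varphi\in\Xi_\frac{1}{2}(\R^n)$. By Remark~\ref{REMdefinitionlap12}, $(-\Delta)^\frac{1}{2}T$ is again an element of $\Xi_\frac{1}{2}'(\R^n)$, because Lemma~\ref{LEMwelldefinedlaplacian} gives $(-\Delta)^\frac{1}{2}\varphi\in\Xi_\frac{1}{2}(\R^n)$. So the composition $(-\Delta)^\frac{1}{2}(-\Delta)^\frac{1}{2}T$ makes sense. Applying the defining transposition relation twice then gives
\[
\big\langle (-\Delta)^\frac{1}{2}(-\Delta)^\frac{1}{2}T,\varphi\big\rangle=\big\langle (-\Delta)^\frac{1}{2}T,(-\Delta)^\frac{1}{2}\varphi\big\rangle=\big\langle T,(-\Delta)^\frac{1}{2}\big[(-\Delta)^\frac{1}{2}\varphi\big]\big\rangle .
\]

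The second step is the test-function identity $(-\Delta)^\frac{1}{2}\big[(-\Delta)^\frac{1}{2}\varphi\big]=-\Delta\varphi$. This was essentially already computed in the proof of Lemma~\ref{LEMwelldefinedlaplacian}: since $(-\Delta)^\frac{1}{2}\varphi\in\mathcal{S}(\R^n)$, we have $\mathcal{F}\mathcal{F}^{-1}=\mathrm{id}$ on it. Hence $\mathcal{F}^{-1}\big(2\pi|\xi|\,\mathcal{F}\mathcal{F}^{-1}(2\pi|\xi|\mathcal{F}\varphi)\big)=\mathcal{F}^{-1}\big(4\pi^2|\xi|^2\mathcal{F}\varphi\big)$, and by \eqref{EQpartialder} applied twice in each variable this equals $-\sum_j\partial_{x_j}^2\varphi=-\Delta\varphi$.

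The third step identifies the right-hand side with $\langle -\Delta T,\varphi\rangle$. Using Definition~\ref{DEFderivativepsi} twice for each $j$, which is legitimate because Lemma~\ref{item3rem} shows that $\partial_{x_j}\varphi$ and $\partial_{x_j}^2\varphi$ remain in $\Xi_\frac{1}{2}(\R^n)$, we get $\langle\partial_{x_j}^2T,\varphi\rangle=\langle T,\partial_{x_j}^2\varphi\rangle$. Summing over $j$ gives $\langle -\Delta T,\varphi\rangle=\langle T,-\Delta\varphi\rangle$, and this also shows that $-\Delta T\in\Xi_\frac{1}{2}'(\R^n)$. Combining the three displays yields the claim for every $\varphi$.

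The only real subtlety is checking that every pairing is legitimate. Each test function fed to $T$ must lie in $\Xi_\frac{1}{2}(\R^n)$, not merely in $\Xi(\R^n)$, since $T$ is only defined there. This is precisely why the invariance statements of Lemma~\ref{LEMwelldefinedlaplacian} and Lemma~\ref{item3rem} are needed. If continuity of the resulting functional is also required, it follows because $(-\Delta)^\frac{1}{2}$ and $\partial_{x_j}$ are continuous in the topology inherited from $\mathcal{S}(\R^n)$.
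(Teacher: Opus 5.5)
Your proof is correct and follows essentially the same route as the paper. You transpose $(-\Delta)^{\frac{1}{2}}$ twice, use the test-function identity $(-\Delta)^{\frac{1}{2}}(-\Delta)^{\frac{1}{2}}\varphi=-\Delta\varphi$, and match $\langle T,-\Delta\varphi\rangle$ with $\langle -\Delta T,\varphi\rangle$ via Definition~\ref{DEFderivativepsi}. Your explicit checks that every test function stays in $\Xi_{\frac{1}{2}}(\R^n)$ (Lemmas~\ref{LEMwelldefinedlaplacian} and~\ref{item3rem}) spell out the justifications the paper only cites briefly.
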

\begin{proof}
	We first note that indeed, $\Delta T\in \Xi'_{\frac{1}{2}}(\R^n)$, since $\Delta$ is a differential operator and these operators leave $\Xi_\frac{1}{2}'(\R^n)$ invariant (by Definition~\ref{DEFderivativepsi} and Lemma~\ref{item3rem}). Let now $\varphi\in \Xi_\frac{1}{2}(\R^n)$. On the one hand, by Definition \ref{DEFfractionallap} and Remark \ref{DEFproddistbyfunct}, we have
	\[
	\big\langle (-\Delta)^\frac{1}{2}(-\Delta)^\frac{1}{2} T,\varphi\big\rangle = \big \langle (-\Delta)^\frac{1}{2} T,  (-\Delta)^\frac{1}{2}\varphi\big\rangle = \big\langle   T,  (-\Delta)^\frac{1}{2}(-\Delta)^\frac{1}{2}\varphi\big\rangle = -\langle T,\Delta \varphi \rangle.
	\]
	On the other hand, Definition~\ref{DEFderivativepsi} trivially yields
	\[
	\big\langle -\Delta T,\varphi\big\rangle = (-1)^2\big\langle  T,-\Delta\varphi\big\rangle = -\big\langle  T,\Delta\varphi\big\rangle ,
	\]
	as desired.
\end{proof}

Another useful property is that the operators $\partial_{x_j}$ and $ (-\Delta)^{\frac{1}{2}}$ commute in $\Xi_\frac{1}{2}'(\R^n)$.

\begin{lemma}\label{LEMcommutativity}
	For $1\leq j\leq n$, and $T\in \Xi'(\R^n)$,  we have $ \partial_{x_j} (-\Delta)^{\frac{1}{2}}T$ and $(-\Delta)^{\frac{1}{2}}\partial_{x_j}T\in \Xi_\frac{1}{2}'(\R^n)$. Moreover,
	\begin{equation*}
		\partial_{x_j} \big[(-\Delta)^{\frac{1}{2}} T\big]=(-\Delta)^{\frac{1}{2}} \big[ \partial_{x_j}T\big],
	\end{equation*}
	i.e., the operators $\partial_{x_j}$ and $ (-\Delta)^{\frac{1}{2}}$ commute when applied to $T\in \Xi'(\R^n)$. In particular, this is the case for $T\in \Xi_\frac{1}{2}'(\R^n)$
\end{lemma}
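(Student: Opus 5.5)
The plan is to prove the identity by duality, testing both sides against an arbitrary $\varphi\in \Xi_\frac{1}{2}(\R^n)$ and moving every operator onto the test function. The commutation then reduces to the corresponding statement for test functions, which is Lemma~\ref{item3rem}.

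First, both sides must be well defined as elements of $\Xi_\frac{1}{2}'(\R^n)$. Since $T\in \Xi'(\R^n)$, Definition~\ref{DEFfractionallap} gives $(-\Delta)^\frac{1}{2}T\in \Xi_\frac{1}{2}'(\R^n)$. Definition~\ref{DEFderivativepsi} then allows us to differentiate it, because Lemma~\ref{item3rem} guarantees $\partial_{x_j}\varphi\in \Xi_\frac{1}{2}(\R^n)$ for $\varphi\in\Xi_\frac{1}{2}(\R^n)$. For the other order, $\partial_{x_j}T\in \Xi'(\R^n)$ by Remark~\ref{der1}; this uses that derivatives leave $\Xi(\R^n)$ invariant (Remark~\ref{reminv}). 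Applying Definition~\ref{DEFfractionallap} once more yields $(-\Delta)^\frac{1}{2}\partial_{x_j}T\in \Xi_\frac{1}{2}'(\R^n)$.

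Next comes the computation. For $\varphi\in \Xi_\frac{1}{2}(\R^n)$ we have
\[
\big\langle \partial_{x_j}(-\Delta)^\frac{1}{2}T,\varphi\big\rangle = -\big\langle (-\Delta)^\frac{1}{2}T,\partial_{x_j}\varphi\big\rangle = -\big\langle T,(-\Delta)^\frac{1}{2}\partial_{x_j}\varphi\big\rangle .
\]
In the other order,
\[
\big\langle (-\Delta)^\frac{1}{2}\partial_{x_j}T,\varphi\big\rangle = \big\langle \partial_{x_j}T,(-\Delta)^\frac{1}{2}\varphi\big\rangle = -\big\langle T,\partial_{x_j}(-\Delta)^\frac{1}{2}\varphi\big\rangle .
\]
The second pairing is legitimate because $(-\Delta)^\frac{1}{2}\varphi\in\Xi(\R^n)$, since $\varphi\in\Xi_\frac{1}{2}(\R^n)$, and derivatives preserve $\Xi(\R^n)$. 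The two right-hand sides coincide by the pointwise identity $\partial_{x_j}(-\Delta)^\frac{1}{2}\varphi=(-\Delta)^\frac{1}{2}\partial_{x_j}\varphi$ from Lemma~\ref{item3rem}. Since $\varphi$ was arbitrary, the two distributions agree.

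For the final claim, one might first ask whether $\Xi_\frac{1}{2}'(\R^n)\subset\Xi'(\R^n)$. That inclusion goes the wrong way in general, since $\Xi_\frac{1}{2}(\R^n)\subset \Xi(\R^n)$ gives only the restriction $\Xi'(\R^n)\to\Xi_\frac{1}{2}'(\R^n)$. Instead, I would rerun the same argument with $(-\Delta)^\frac{1}{2}$ understood as the operator on $\Xi_\frac{1}{2}'(\R^n)$ from Remark~\ref{REMdefinitionlap12}, with test functions in $\Xi_\frac{1}{2}(\R^n)$. This works because both $\partial_{x_j}$ and $(-\Delta)^\frac{1}{2}$ leave $\Xi_\frac{1}{2}(\R^n)$ invariant, by Lemma~\ref{item3rem} and Lemma~\ref{LEMwelldefinedlaplacian}.

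The main obstacle is bookkeeping rather than analysis: at each pairing we must check that the test function lies in the space on which the distribution is defined, $\Xi$ versus $\Xi_\frac{1}{2}$. The only substantive input is the invariance of these spaces under $\partial_{x_j}$ and under $(-\Delta)^\frac{1}{2}$.
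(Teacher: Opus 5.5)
Your proof is correct and follows essentially the paper's route: you test against $\varphi\in\Xi_{\frac{1}{2}}(\R^n)$, move both operators onto $\varphi$, and compare. The only difference is that the paper checks the final equality by computing both sides explicitly as $-(2\pi)^2 i\,\langle T,\mathcal{F}^{-1}(\xi_j|\xi|\mathcal{F}\varphi)\rangle$, while you cite the test-function identity of Lemma~\ref{item3rem}, which packages exactly that computation; your separate rerun of the argument for $T\in\Xi_{\frac{1}{2}}'(\R^n)$ is a sensible tightening of the paper's unjustified ``in particular'' clause, since such a $T$ is not a priori an element of $\Xi'(\R^n)$.
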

\begin{proof}
Let $\varphi\in \Xi_\frac{1}{2}(\R^n)$. By Lemma \ref{item3rem} we have that $ \partial_{x_j} \varphi \in \Xi_\frac{1}{2}(\mathbb{R}^n)$. By Definitions~\ref{DEFderivativepsi}~and~\ref{DEFfractionallap} we have
	\begin{align}
		\nonumber
		\big\langle \partial_{x_j}(-\Delta)^{\frac{1}{2}} T, \varphi \big\rangle &=
- \big\langle (-\Delta)^{\frac{1}{2}} T, \partial_{x_j}\varphi \big\rangle
=- \langle T,\mathcal{F}^{-1}[( 2\pi | \xi| )\mathcal{F} [\partial_{x_j} \varphi ]]\rangle\\
\nonumber
		&=- \big\langle T,  \mathcal{F}^{-1}\big((2\pi)^{2} i\xi_j | \xi| \mathcal{F} [\varphi] \big)\big\rangle
				\label{frac1}
\\
&=-(2\pi)^{2} i \big\langle T,  \mathcal{F}^{-1}\big(\xi_j | \xi| \mathcal{F} [\varphi] \big)\big\rangle.
	\end{align}
	On the other hand, using well-known properties of the Fourier transform we get
	\begin{align}
		\nonumber
		\big\langle (-\Delta)^{\frac{1}{2}} \partial_{x_j}T, \varphi \big\rangle &= \big\langle  \partial_{x_j}T, \mathcal{F}^{-1} \big(( 2\pi | \xi| )\mathcal{F} [\varphi ] \big)\big\rangle\\
		\label{frac2}
		&=-(2 \pi )\big\langle T,\partial_{x_j} \mathcal{F}^{-1} \big(| \xi|\mathcal{F} [\varphi ] \big)\big\rangle \nonumber\\
		& = -(2 \pi )^{2} i\big\langle T, \mathcal{F}^{-1} \big(\xi_j| \xi| \mathcal{F} [\varphi ] \big) \big\rangle.
	\end{align}
	The conclusion follows by combining \eqref{frac1} and \eqref{frac2}.	
\end{proof}

Finally, we give some properties concerning functions $f\in C^\infty(\R^n\backslash\{0\})$ defining functionals from $\Xi'_{\frac{1}{2}}(\R^n)$ and $\Xi'(\R^n)$.

\begin{lemma}
	\label{exchange}
	Let $f\in C^\infty(\R^n\backslash\{0\})$ satisfy \eqref{EQfderivatives}. Then, the following hold.
	\begin{enumerate}[label=\textnormal{(\roman{*})}]
		\item $f$ induces a linear functional $T_f=\langle f,\cdot\rangle\in \Xi'(\R^n)$; \label{LEMexchangeitem1}
		\item For any $1\leq j\leq n$, $ \partial_{x_j}f\in \Xi'(\R^n) $;
		\item $(-\Delta)^{\frac{1}{2}}f\in \Xi_\frac{1}{2}'(\R^n)$.
	\end{enumerate}
\end{lemma}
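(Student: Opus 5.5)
The plan is to reduce all three items to item (i), using the defining properties of $\Xi(\R^n)$ together with the duality definitions already introduced.

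For (i), fix $\varphi\in\Xi(\R^n)$ and show that $f\varphi\in L_1(\R^n)$, so that $T_f(\varphi)=\int_{\R^n} f\varphi\,dx$ is well defined.
\begin{itemize}
\item Near the origin, take $k=0$ in \eqref{EQfderivatives}: $|f(x)|\le C_0(|x|^{n_0}+|x|^{-n_0})$. Since $\varphi\in\Psi(\R^n)$, Taylor's theorem (as in the proof of Lemma~\ref{LEMproduct}) gives $|\varphi(x)|\le M_N|x|^N$ near $0$ for every $N$. Choosing $N>n_0$ makes $f\varphi$ bounded near $0$.
\item Away from the origin, $|f(x)|\le 2C_0|x|^{n_0}$. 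Since $\varphi\in\mathcal{S}(\R^n)$, we have $|\varphi(x)|\le C(1+|x|)^{-n_0-n-1}$, which gives integrability at infinity.
\end{itemize}
Linearity is clear. The same two estimates give $|\langle f,\varphi\rangle|\le C\big(\sup_{|x|\le1}|x|^{-N}|\varphi(x)|+\sup_{x}(1+|x|)^{n_0+n+1}|\varphi(x)|\big)$. For continuity we need to control the first supremum by Schwartz seminorms. Taylor's formula with integral remainder, using that all derivatives of $\varphi$ vanish at $0$, gives $|\varphi(x)|\le C_N|x|^N\sup_{|y|\le1,|s|=N}|\partial^s\varphi(y)|$ for $|x|\le1$. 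Hence $T_f$ is bounded by finitely many seminorms of $\mathcal{S}(\R^n)$, and the topology of $\Xi(\R^n)$ is inherited from $\mathcal{S}(\R^n)$. So $T_f\in\Xi'(\R^n)$.

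For (ii), there are two routes.
\begin{itemize}
\item \emph{Distributional route.} By Remark~\ref{der1}, $\partial_{x_j}T_f$ is defined by $\langle\partial_{x_j}T_f,\varphi\rangle=-\langle f,\partial_{x_j}\varphi\rangle$. This lies in $\Xi'(\R^n)$ because $\partial_{x_j}$ maps $\Xi(\R^n)$ continuously into itself (Remark~\ref{reminv}).
\item \emph{Pointwise route.} The classical derivative $\partial_{x_j}f$ again satisfies \eqref{EQfderivatives}, with constants $C_{k+e_j}$ and $n_{k+e_j}$, so by (i) it defines an element of $\Xi'(\R^n)$.
\end{itemize}
To see that the two routes agree, integrate by parts on $\{\varepsilon<|x|<R\}$. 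The outer boundary term vanishes as $R\to\infty$ by the Schwartz decay of $\varphi$. The inner boundary term is bounded by $C\varepsilon^{n-1}\cdot\varepsilon^{-n_0}\cdot\varepsilon^{N}$, which tends to $0$ for large $N$ by the infinite-order vanishing of $\varphi$ at $0$.

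For (iii), combine (i) with Definition~\ref{DEFfractionallap}: set $\langle(-\Delta)^{\frac12}f,\varphi\rangle=\langle f,(-\Delta)^{\frac12}\varphi\rangle$ for $\varphi\in\Xi_{\frac12}(\R^n)$. The right-hand side makes sense since $(-\Delta)^{\frac12}\varphi\in\Xi(\R^n)$ by Definition~\ref{space}.

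The main obstacle is continuity of $\varphi\mapsto(-\Delta)^{\frac12}\varphi$ from $\Xi_{\frac12}(\R^n)$ to $\Xi(\R^n)$ in the inherited Schwartz topology. The approach is as follows.
\begin{itemize}
\item If $\Xi_{\frac12}(\R^n)$ carries only the Schwartz topology, one should either equip it with the graph topology (seminorms of $\varphi$ and of $(-\Delta)^{\frac12}\varphi$), under which continuity is immediate, or argue directly. The graph topology is the natural one for the statement.
\item For a direct argument, use that $|\xi|\,\mathcal{F}\varphi$ is smooth and vanishes to infinite order at $0$ (Lemma~\ref{LEMproduct}). Its Schwartz seminorms are controlled by those of $\mathcal{F}\varphi$, since every derivative of $|\xi|$ is bounded by a power of $|\xi|^{-1}$, and this is compensated by the Taylor bound on $\mathcal{F}\varphi$ near $0$.
\item Continuity of $\mathcal{F}^{\pm1}$ on $\mathcal{S}(\R^n)$ then yields the required estimate.
\end{itemize}
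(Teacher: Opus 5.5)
Your proposal is correct and follows the paper's route. In (i) you split into $|x|\le 1$, where the infinite-order vanishing of $\varphi$ at the origin beats the power singularity of $f$, and $|x|\ge 1$, where Schwartz decay beats polynomial growth; (ii) and (iii) then come from the transposition definitions, the invariance of $\Xi(\R^n)$ under $\partial_{x_j}$, and $(-\Delta)^{\frac12}\varphi\in\Xi(\R^n)$ for $\varphi\in\Xi_{\frac12}(\R^n)$.

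The paper only checks that the pairings are well defined and never addresses continuity. Your seminorm estimates and the integration-by-parts consistency check in (ii) are therefore valid refinements, not a different approach. This includes your direct argument that $\varphi\mapsto(-\Delta)^{\frac12}\varphi$ is continuous in the inherited Schwartz topology, which works as you sketch it.
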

\begin{proof}
	\begin{enumerate}[wide,label=\textnormal{(\roman{*})}]
		\item We have to show that the functional
		\[
		T_f(\varphi)=\int_{\mathbb{R}^n} f(x)\varphi(x)\,dx
		\]
		is well-defined on $\Xi(\mathbb{R}^n)$, i.e., $f\varphi \in L^1(\mathbb{R}^n)$ for every $\varphi \in \Xi(\mathbb{R}^n)$. Splitting the integral as
		\[
		\int_{\mathbb{R}^n} |f(x)\varphi(x)|\,dx =  \int_{|x|\leq  1} |f(x)\varphi(x)|\,dx +\int_{|x|\ge 1} |f(x)\varphi(x)|\,dx,
		\]
		the integral for $|x|\geq1 $ is obviously finite, since by \eqref{EQfderivatives} there holds $|f(x)|\leq C|x|^{m}$ for some $m$ and  $\varphi$ decays faster than any power of $|x|$. On the other hand, since $f$ satisfies the hypotheses of Lemma~\ref{LEMproduct}, the function $f\varphi$ is continuous, which implies the finiteness of the integral
		\[
		\int_{|x|\leq  1} |f(x)\varphi(x)|\,dx.
		\]

		\item This follows from \ref{LEMexchangeitem1}, Definition~\ref{DEFderivativepsi} , and the fact that partial derivatives leave the spaces $\Xi(\R^n)$ invariant.
		\item This readily follows from  \ref{LEMexchangeitem1} and  Definition~\ref{DEFfractionallap}. \qedhere
	\end{enumerate}
\end{proof}

We are now in a position to define the fractional Laplacian $(-\Delta)^\frac{1}{2}$ of certain complex (and quaternionic) functions.
\begin{definition}\label{DEFlaplacecomplex}
Let $\mathcal{U}\subset \mathbb{C}$ be a domain and let $f:\mathcal{U}\to \mathbb{C}$ be represented by a series
\[
f(z) = \sum_{k=1}^\infty a_{k} f_{k}(z) , \qquad a_{k}\in \mathbb{C},
\]
in $\mathcal{U}$, where $f_k\in \Xi'_{\frac{1}{2}}(\mathbb{C})$ for every $k$ (in the sense that, for all $k$, the function $f_k$ induces a functional in $ \Xi'_{\frac{1}{2}}(\mathbb{C})$). We formally define the fractional Laplacian $(-\Delta)^\frac{1}{2}f$ as
\begin{equation}
	\label{EQdeflapcomplex}
(-\Delta)^\frac{1}{2}f(z): = \sum_{k=1}^\infty a_{k}(-\Delta)^\frac{1}{2}\big(f_{k}(z)\big) .
\end{equation}
\end{definition}
We are now in a position to define complex $\frac{1}{2}$-harmonic functions.

\begin{definition}[$\frac{1}{2}$-harmonic functions]\label{DEFharmonicfunctions}
	A complex function $f$  as in Definition~\ref{DEFlaplacecomplex} is said to be $\frac{1}{2}$-harmonic in $\mathcal{U}$ if
	\[
	(-\Delta)^\frac{1}{2} f \equiv 0.
	\]
	The class of $\frac{1}{2}$-harmonic functions in $\mathcal{U}$ is denoted by $\mathcal{H}_\frac{1}{2}(\mathcal{U})$.
\end{definition}

Such functions are well known for the classical definition of the fractional Laplacian for functions with regularity and integrability. We refer to \cite{BV, DSV, DTV} for a comprehensive study of $s$-harmonic functions and the applications to diffusion problems.

\begin{definition}\label{DEFquaternioniclaplacian}
	Let $U\subset \mathbb{H}$ be a domain and let $f:U\to \mathbb{H}$ be represented by a series
	\[
	f(q) = \sum_{k=1}^\infty f_{k}(q) a_{k}, \qquad a_{k}\in \mathbb{H},
	\]
	in $U$, where $f_k(q)\in \Xi'_{\frac{1}{2}}(\mathbb{H})$ for every $k$. We formally define the fractional Laplacian $(-\Delta)^\frac{1}{2}f$ as
	\begin{equation}
		\label{EQdeflapquaternion}
	(-\Delta)^\frac{1}{2}f(q) := \sum_{k=1}^\infty (-\Delta)^\frac{1}{2}\big(f_{k}(q)\big) a_{k}.
	\end{equation}
\end{definition}

The two new scales of classes of functions arising in the factorizations of the Fueter mapping theorem through the fractional Laplacian  $(-\Delta)^\frac{1}{2}$ are the following.

\begin{definition}[Axially $s$-harmonic functions]\label{DEFaxiallyharmonic}
	A quaternionic function as in Definition~\ref{DEFquaternioniclaplacian} (where, additionally, the functions $f_k$ are of axial type) is said to be
	axially $\frac{1}{2}$-harmonic  in $U$ if
	\[
	(-\Delta)^\frac{1}{2} f \equiv 0.
	\]
	The class of axially $\frac{1}{2}$-harmonic functions  in $U$ is denoted by $\mathcal{AH}_\frac{1}{2}(U)$.
\end{definition}

\begin{definition}[Axially analytic harmonic functions of type $(1,\frac{1}{2})$]
	\label{polyclif}
	A quaternionic function as in Definition~\ref{DEFquaternioniclaplacian} (where, additionally, the functions $f_k$ are of axial type) is said to be axially analytic
	harmonic of type $(1,\frac{1}{2})$ in $U$ if
	\[
	D(-\Delta)^\frac{1}{2} f \equiv0.
	\]
	The class of axially analytic harmonic of type $(1,\frac{1}{2})$ in $U$ is denoted by $\mathcal{AAH}_{(1,\frac{1}{2})}(U)$.
\end{definition}

\begin{remark}\begin{enumerate}[wide,label=\textnormal{(\roman{*})}]
		\item The convergence of series \eqref{EQdeflapcomplex} and \eqref{EQdeflapquaternion} is not clear a priori from the formal definition. We show in Theorem~\ref{main1complex} (resp. Theorem~\ref{main1}) that actually in the holomorphic case, where $f_k(z)=z^{-k}$ (resp. in the hyperholomorphic case, where $f_k(q)=q^{-k}$), the domain of convergence of these series is the same as the domain of convergence of the respective series defining $f$.
		
		\item We have defined $(-\Delta)^\frac{1}{2}f$ formally applying $(-\Delta)^\frac{1}{2}$ to each summand in the series \eqref{EQdeflapcomplex} because in general, one cannot view $(-\Delta)^\frac{1}{2}f$ as an element of $\Xi'_\frac{1}{2}(\mathbb{C})$. For instance, given the series
		\[
		f(z) = \log \bigg(1+\frac{1}{z}\bigg)= \sum_{k=1}^\infty \frac{(-1)^{k+1}}{k}z^{-k}.
		\]
		It is clear that such a series converges uniformly in compact subsets of $\mathcal{U}=\mathbb{C}\backslash B_1(0)$ and it hence defines a holomorphic function in $\mathcal{U}$. However, in general one does not necessarily have  $f\cdot (-\Delta)^\frac{1}{2}g\not\in L^1(\mathbb{C})$, i.e., $(-\Delta)^\frac{1}{2}f$ does not define an element in $\Xi'_\frac{1}{2}(\mathbb{C})$ via the relation
		\[
		\big\langle (-\Delta)^\frac{1}{2} f, g\big\rangle= 	\big\langle  f, (-\Delta)^\frac{1}{2}g\big\rangle,
		\]
		even in the case where $f$ is supported on $\mathcal{U}$, since $(-\Delta)^\frac{1}{2}f$ may vary the support of the original function $f$.
	\end{enumerate}
\end{remark}

\section{The complex case: holomorphic and $\frac{1}{2}$-harmonic functions}\label{SECcomplex}

As mentioned in the introduction, for a holomorphic function $f:\mathcal{U}\to \mathbb{C}$ one has $\Delta f\equiv0$, which is just a consequence of $Df\equiv 0$. This makes the factorization $\Delta=D\overline{D}$ uninteresting in terms of obtaining new classes of functions (either $\overline{D}f$ is again a holomorphic function of $Df$ is the zero function). However, by being able to define the fractional Laplacian $(-\Delta)^\frac{1}{2}$ for holomorphic functions (Definition~\ref{DEFharmonicfunctions}), one may study the  arising intermediate space of $\frac{1}{2}$-harmonic functions in the factorization $-\Delta=(-\Delta)^\frac{1}{2}(-\Delta)^\frac{1}{2}$.

The goal of this section is to present the main results we give in Section~\ref{FRACLAPCIAN} below reduced to the complex case for the interested readers. Most of the proofs are not given, since they are an immediate application of corresponding statements in the quaternionic case (modulo a dimensional constant), as discussed in Remark~\ref{REMquaternioncomplex} below.

We start by recalling that negative integer powers of $z=z_0+Iz_1$ satisfy the relation

\begin{equation}
	\label{negativecomplexz}
	z^{-k}= c_k \frac{\partial^{k-1}}{\partial z_0^{k-1}} \bigg( \frac{\bar{z}}{|z|^2}\bigg), \qquad c_k := \frac{(-1)^{k-1}}{(k-1)!}, \qquad k\in \mathbb{N}.
\end{equation}

Moreover, $f(z)=z^{-k}$ defines a linear functional in $\Xi'(\mathbb{C})$ (see \ref{LEMexchangeitem1} of Lemma~\ref{exchange}). Thus, $(-\Delta)^{\frac{1}{2}} z^{-k}\in \Xi'_{\frac{1}{2}}(\mathbb{C})$, by Definition~\ref{DEFfractionallap} (see also Theorem~\ref{IN QUESTO TEOR}). For convenience, we introduce the following notation.

\begin{definition}
	Let $k\in\mathbb{N}$. We define
	\[
	P^{(-k)}(z) := (-\Delta)^\frac{1}{2}\big(z^{-k}\big).
	\]
\end{definition}
The fractional Laplacian $(-\Delta)^\frac{1}{2}$ can also be applied on positive monomials $z^k$, although . See the detailed discussion in Remark~\ref{REMdiscussion} below.

The commutativity of the operators $\partial_{z_0}$ and $ (-\Delta)^{\frac{1}{2}}$ (see Lemma~\ref{LEMcommutativity}) allows one to obtain an explicit expression for $P^{(-k)}(z)$ in terms of the derivatives of $\frac{\bar{z}}{|z|^3}$. In this case the proof does not follow from that of Proposition~\ref{negative3} for the quaternionic context (although the conclusion is the same modulo a constant). Thus, we include the proof adapted to the complex case.

\begin{proposition}
	Let $z \in \mathbb{C}\backslash \{0\}$. Then, for $k\in \mathbb{N}$,
	\begin{equation}
		\label{Pkcomplex}
		P^{(-k)}(z)= c_k\partial_{z_0}^{k-1}\bigg( \frac{\bar{z}}{|z|^{3}}\bigg),
	\end{equation}
	as an element of $\Xi_{\frac{1}{2}}'(\mathbb{C})$.
\end{proposition}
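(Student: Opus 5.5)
The plan is to first compute the case $k=1$, i.e. $(-\Delta)^{\frac12}\big(\frac{\bar z}{|z|^2}\big)=\frac{\bar z}{|z|^3}$ in $\Xi_{\frac12}'(\mathbb{C})$. The general case then follows from the representation \eqref{negativecomplexz} together with the commutativity of $\partial_{z_0}$ and $(-\Delta)^{\frac12}$ on $\Xi'(\mathbb{C})$ (Lemma~\ref{LEMcommutativity}).

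\textbf{Step 1: the case $k=1$.} Write $\bar z/|z|^2 = F_1(z_0,z_1)/|z|^{2}$, where $F_1(z_0,z_1)=z_0-Iz_1$. Each real component $z_0$ and $z_1$ is a homogeneous harmonic polynomial of degree $1$, so Corollary~\ref{CORsteincomplex} applies componentwise (with $I$ treated as an independent constant, as in \eqref{EQcomplex}).

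In the form \eqref{zeroonecomplex} with $n=2$ and $k=1$, the exponent $2+k-\alpha=2$ gives $\alpha=1$. Hence
\[
\mathcal{F}\Big[\frac{F_1(x)}{|x|^{2}}\Big](\xi)=\gamma_{1,1}\frac{F_1(\xi)}{|\xi|^{2}}.
\]
Similarly, $\bar z/|z|^3=F_1/|x|^{3}$ corresponds to $2+1-\alpha=3$, i.e. $\alpha=0$. This lies outside the range $0<\alpha<2$, so I will instead use the inverse relation \eqref{zeroone1complex} with $\alpha$ chosen so that $k+\alpha=3$... In practice, the cleanest route is to use the Fourier side.

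For $\varphi\in\Xi_{\frac12}(\mathbb{C})$, Theorem~\ref{IN QUESTO TEOR} together with Lemma~\ref{LEMstein} gives
\[
\Big\langle \frac{F_1}{|x|^2},(-\Delta)^{\frac12}\varphi\Big\rangle=\gamma_{1,1}\Big\langle \frac{F_1(\xi)}{|\xi|^{2}},2\pi|\xi|\,\widehat\varphi\Big\rangle\cdot(\text{sign/reflection factors})=2\pi\gamma_{1,1}\Big\langle \frac{F_1(\xi)}{|\xi|},\widehat\varphi\Big\rangle .
\]
Here $\widehat{\varphi}$ and $|\xi|\widehat\varphi$ lie in $\Xi$, so applying Lemma~\ref{LEMstein} with Schwartz test function $|\xi|\widehat\varphi\in\mathcal S$ is legitimate.

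Next I apply Lemma~\ref{LEMstein} again, now with $n=2$, $k=1$, $2+1-\alpha=1$, i.e. $\alpha=2$. This is a borderline case, so instead I would apply it in reverse with $\alpha'$ satisfying $k+\alpha'=1$... This shows that the exponents must be matched carefully. The honest approach is to note that $F_1(\xi)/|\xi|$ equals $F_1/|\xi|^{k+\alpha}$ with $\alpha=0$, whose transform is a multiple of $F_1/|x|^{3}$ in the sense of $\Xi'$. To justify this, I would use the identity $F_1(\xi)/|\xi|=\lim_{\alpha\to0^+}F_1/|\xi|^{1+\alpha}$. Lemma~\ref{LEMstein} holds for each $\alpha\in(0,2)$. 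When testing against $\widehat\varphi\in\Xi$, with $\varphi$ vanishing to infinite order at $0$, both sides converge by dominated convergence, and $\gamma_{1,\alpha}\to\gamma_{1,0}=i\pi\,\Gamma(\tfrac12)/\Gamma(\tfrac32)=2\pi i$. The constants should then combine to $1$ (since $\gamma_{1,1}\gamma_{1,0}\cdot 2\pi$ with the reflection $\mathcal F^2\varphi(x)=\varphi(-x)$ flipping the sign of the odd $F_1$). This gives $(-\Delta)^{\frac12}(\bar z/|z|^2)=\bar z/|z|^3$ in $\Xi_{\frac12}'(\mathbb{C})$, possibly up to a constant that must be checked. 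This constant bookkeeping and the limiting argument at the endpoint $\alpha=0$ is where I expect the main obstacle to be.

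\textbf{Step 2: general $k$.} The function $\bar z/|z|^2$ satisfies \eqref{EQfderivatives}, so it lies in $\Xi'(\mathbb{C})$ by Lemma~\ref{exchange}, and so do all its $z_0$-derivatives. Using \eqref{negativecomplexz} and applying Lemma~\ref{LEMcommutativity} $k-1$ times (the derivatives remain in $\Xi'(\mathbb{C})$ by Lemma~\ref{exchange}(ii)), we get
\[
P^{(-k)}=c_k(-\Delta)^{\frac12}\partial_{z_0}^{k-1}\frac{\bar z}{|z|^2}=c_k\partial_{z_0}^{k-1}(-\Delta)^{\frac12}\frac{\bar z}{|z|^2}=c_k\partial_{z_0}^{k-1}\frac{\bar z}{|z|^3}.
\]
Here the distributional derivatives coincide with the classical ones, since integration by parts is valid against test functions vanishing to infinite order at $0$ (Definition~\ref{DEFderivativepsi}).
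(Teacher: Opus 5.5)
Your Step~2 is exactly the paper's reduction: identity \eqref{negativecomplexz}, Lemma~\ref{exchange}, and Lemma~\ref{LEMcommutativity} iterated. Your remark that distributional and classical $z_0$-derivatives agree makes explicit something the paper leaves implicit.

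The difference, and the gap, is the base case $k=1$. The statement is an exact identity, so the constant is precisely what has to be proved, and you leave it ``to be checked''. Worse, the combination you indicate, $2\pi\gamma_{1,1}\gamma_{1,0}$ times a reflection sign, equals $\pm4\pi^2$, not $1$. Transforming $F_1(\xi)/|\xi|$ back through the endpoint identity $\int F_1|x|^{-3}\mathcal{F}\psi=\gamma_{1,0}\int F_1|x|^{-1}\psi$ produces $\gamma_{1,0}^{-1}$, not $\gamma_{1,0}$. Your first display also silently drops a reflection sign.

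Done carefully, using $(-\Delta)^{\frac12}\varphi=\mathcal{F}[h(-\cdot)]$ with $h=2\pi|\xi|\widehat\varphi$, the oddness of $F_1$, and $\mathcal{F}\widehat\varphi=\varphi(-\cdot)$, one gets
\[
\Big\langle \frac{F_1}{|x|^{2}},(-\Delta)^{\frac12}\varphi\Big\rangle=-2\pi\gamma_{1,1}\Big\langle \frac{F_1}{|\xi|},\widehat\varphi\Big\rangle,\qquad \Big\langle \frac{F_1}{|\xi|},\widehat\varphi\Big\rangle=-\frac{1}{\gamma_{1,0}}\Big\langle \frac{F_1}{|x|^{3}},\varphi\Big\rangle .
\]
The two reflection signs cancel, and the factor is $2\pi\gamma_{1,1}/\gamma_{1,0}=2\pi i/(2\pi i)=1$. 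Your dominated-convergence justification of $\alpha\to0^+$ is sound. Since $\varphi(-\cdot)$ vanishes to infinite order at $0$, the function $|x|^{-2+\alpha}|\varphi(-x)|$ has an integrable majorant uniformly in $\alpha\in(0,1)$. With this bookkeeping supplied, your argument does prove the statement.

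The paper never goes to the endpoint. After $\mathcal{F}(\bar z/|z|^2)=\gamma_{1,1}\bar\xi/|\xi|^2$, it reads $2\pi i\,\xi_j/|\xi|$ as the symbol of $\partial_{z_j}$ acting on $|\xi|^{-1}$. This gives $\mathcal{F}^{-1}[2\pi i\,\xi_j|\xi|^{-1}]=\partial_{z_j}\mathcal{F}^{-1}[|\xi|^{-1}]=\partial_{z_j}|z|^{-1}$, which uses Lemma~\ref{LEMstein} only at the interior point $k=0$, $\alpha=1$. There $\gamma_{0,1}(2)=1$, i.e.\ $|x|^{-1}$ is its own Fourier transform in $\mathbb{R}^2$. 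The proof then ends with $(\partial_{z_0}-I\partial_{z_1})|z|^{-1}=-\bar z/|z|^3$.

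That trick buys a proof with no approximation argument. Your route costs a limit, but it is more systematic and has one real advantage: the constant enters only through the ratio $\gamma_{1,1}/\gamma_{1,0}$. Lemma~\ref{LEMstein} is Stein's, stated for the $e^{+2\pi i x\cdot\xi}$ normalization; under the paper's $e^{-2\pi i x\cdot\xi}$ convention the factor $i^k$ becomes $(-i)^k$. Your ratio is unaffected by this switch. A route that uses a single $\gamma_{1,1}$, as the paper's does, must get that sign right.
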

\begin{proof}
	The function $\frac{\overline{z}}{|z|^2}$ satisfies the hypotheses of Lemma~\ref{exchange} and hence it defines an element in $\Xi'(\mathbb{C})$. Thus, by \eqref{negativecomplexz} and the commutativity of $\partial_{z_0}$ and $(-\Delta)^{\frac{1}{2}}$ given in Lemma~\ref{LEMcommutativity}, we get
	\begin{align*}
		P^{(-k)}(z)&= (-\Delta)^{ \frac{1}{2}}(z^{-k})= c_k (-\Delta)^{ \frac{1}{2}} \partial_{z_0}^{k-1} \bigg(\frac{\bar{z}}{|z|^2} \bigg)= c_k \partial_{z_0}^{k-1} \mathcal{F}^{-1} \bigg[ 2\pi |\xi| \mathcal{F} \bigg( \frac{\bar{z}}{|z|^2}\bigg)(\xi) \bigg] (z).
	\end{align*}
	Applying \eqref{zeroonecomplex} with $k=\alpha=1$ and using that $\gamma_{1,1}(2)=i$, we get, in the distributional sense,	
	\begin{align*}
	P^{(-k)}(z) &= c_k \partial_{z_0}^{k-1} \mathcal{F}^{-1} \bigg[ 2\pi i   \frac{\bar{\xi}}{|\xi|} \bigg] (z) = 2\pi ic_k \partial_{z_0}^{k-1} \mathcal{F}^{-1} \bigg[   \frac{\xi_0}{|\xi|} - I\frac{\xi_1}{|\xi|} \bigg] (z)\\
	& = -c_k \partial_{z_0}^{k-1} \Big( \partial_{z_0} \mathcal{F}\big( |\xi|^{-1}\big) - I \partial_{z_1} \mathcal{F}\big( |\xi|^{-1}\big)\Big) = - c_k \partial_{z_0}^{k-1}  \Big( \partial_{z_0}|z|^{-1} - I \partial_{z_1}  |z|^{-1}\Big) ,
	\end{align*}
	where the last equality follows from Corollary~\ref{CORsteincomplex}.  Finally, applying derivatives, we obtain
	\[
	P^{(-k)}(z)  = - c_k \partial_{z_0}^{k-1}  \Big( \partial_{z_0}|z|^{-1} - I \partial_{z_1}  |z|^{-1}\Big) =  c_k \partial_{z_0}^{k-1}\bigg( \frac{\bar{z}}{|z|^{3}}\bigg),
	\]
	which is precisely \eqref{Pkcomplex}.
\end{proof}

The closed expression for the functions $P^{(-k)}(z)$ in terms of Jacobi polynomials is as follows (see Theorem~\ref{minusk} below for a proof).
\begin{theorem}\label{minuskcomplex}
	Let $z \in \mathbb{C} \backslash \{0\}$. For $k=2m$ with $m \geq 1$, we have
	\begin{equation*}
		\label{secondpcomplex}
		P^{(-k)}(z)=\frac{ (-1)^{m+1}\bar{z}}{|z|^{2m+3}}\bigg((2m+1)z_0P_{m-1}^{\big(\frac{1}{2},1\big)}\bigg(1-\frac{2z_0^2}{|z|^2}\bigg)-(2m-1)zP_{m-1}^{\big(-\frac{1}{2},1\big)}\bigg(1-\frac{2z_0^2}{|z|^2}\bigg)\bigg),
	\end{equation*}
	while for $k=2m+1$ with $m \geq 0$, we have
	\begin{equation*}
		\label{threepcomplex}
		P^{(-k)}(z)= \frac{(-1)^m (2m+1)}{|z|^{2m+3}}\bigg(\bar{z}P_{m}^{\big(-\frac{1}{2},1\big)}\bigg(1-\frac{2z_0^2}{|z|^2}\bigg)+z_0P_{m-1}^{\big(\frac{1}{2},1\big)}\bigg(1-\frac{2z_0^2}{|z|^2}\bigg)\bigg),
	\end{equation*}
	as elements of $\Xi'_{\frac{1}{2}}(\mathbb{C})$.
\end{theorem}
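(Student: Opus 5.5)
The plan is to start from the representation \eqref{Pkcomplex}, $P^{(-k)}(z)=c_k\partial_{z_0}^{k-1}\big(\bar z\,|z|^{-3}\big)$, and reduce everything to iterated $z_0$-derivatives of the radial function $|z|^{-3}$, to which Proposition~\ref{PROPdermodulus1} applies with $n=2$, $\alpha=3$, $j=0$. The choice $\alpha=3$ is deliberate: it produces exactly the Jacobi parameters $\big(-\tfrac12,\tfrac{\alpha-1}{2}\big)=\big(-\tfrac12,1\big)$ and $\big(\tfrac12,1\big)$ that appear in the statement. Working instead with $\partial_{z_0}^k|z|^{-1}$ would give parameters with second entry $0$, and would require contiguous relations afterwards.

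\emph{Justification step.} First I would note that all functions involved, namely $\bar z|z|^{-3}$, $|z|^{-3}$ and their derivatives, are $C^\infty(\mathbb{C}\backslash\{0\})$ and satisfy \eqref{EQfderivatives}. Hence they define elements of $\Xi'(\mathbb{C})$ by Lemma~\ref{exchange}. Their distributional derivatives (Definition~\ref{DEFderivativepsi}) coincide with the classical ones, since integration by parts has no boundary contribution: the test functions vanish to infinite order at $0$, and the products extend smoothly by Lemma~\ref{LEMproduct}. So it suffices to prove the identities pointwise for $z\neq0$.

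\emph{Leibniz step.} Write $\bar z=z_0-Iz_1$. Since $z_1$ does not depend on $z_0$ and $\partial_{z_0}^2 z_0=0$, the Leibniz rule gives
\[
P^{(-k)}(z)=c_k\Big(\bar z\,\partial_{z_0}^{k-1}|z|^{-3}+(k-1)\,\partial_{z_0}^{k-2}|z|^{-3}\Big).
\]
For $k=1$ the second term is absent.

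\emph{Case $k=2m+1$.} Apply \eqref{J11} with $\alpha=3$ to $\partial_{z_0}^{2m}|z|^{-3}$; this gives a multiple of $|z|^{-2m-3}P_m^{(-\frac12,1)}(y)$, where $y=1-2z_0^2/|z|^2$. Apply \eqref{J21} to $\partial_{z_0}^{2m-1}|z|^{-3}$; this gives a multiple of $z_0|z|^{-2m-3}P_{m-1}^{(\frac12,1)}(y)$.

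\emph{Case $k=2m$.} Here \eqref{J21} yields $\partial_{z_0}^{2m-1}|z|^{-3}\propto z_0|z|^{-2m-3}P_{m-1}^{(\frac12,1)}(y)$, multiplied by $\bar z$. Likewise \eqref{J11} yields $\partial_{z_0}^{2m-2}|z|^{-3}\propto |z|^{-2m-1}P_{m-1}^{(-\frac12,1)}(y)$. To match the stated form I would rewrite $|z|^{-2m-1}=\bar z z\,|z|^{-2m-3}$, which factors out $\bar z$.

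\emph{Constants.} It then remains to simplify the constants. Use $\Gamma(\tfrac32)=\tfrac{\sqrt\pi}{2}$ together with
\[
\frac{\Gamma(m+\frac32)}{\Gamma(m+\frac12)}=m+\tfrac12, \qquad \frac{\Gamma(m+\frac52)}{\Gamma(m+\frac32)}=m+\tfrac32,
\]
combined with $c_k=(-1)^{k-1}/(k-1)!$. The factorials $(2m)!$ or $(2m-1)!$ cancel against $(k-1)!$, leaving the factors $(2m+1)$ and $(2m-1)$ and the signs $(-1)^m$, $(-1)^{m+1}$.

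This constant and sign bookkeeping is the main obstacle, though it is routine. In particular one must check that the coefficient of the $P_{m-1}^{(\frac12,1)}$ term in the odd case is exactly $2m+1$, coming from $(k-1)=2m$ times the \eqref{J21} constant. I would verify the final formulas on $k=1,2$ directly against $\bar z|z|^{-3}$ and $\partial_{z_0}(\bar z|z|^{-3})$ as a sanity check, using $P_0=1$ and the convention $P_{-1}=0$ for $m=0$.
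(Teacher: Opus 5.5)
Your proposal is correct and is essentially the paper's argument: the paper proves this statement by referring to the quaternionic Theorem~\ref{minusk}, whose proof is exactly your Leibniz split $\partial_{q_0}^{k-1}(\bar q|q|^{-3})=\bar q\,\partial_{q_0}^{k-1}|q|^{-3}+(k-1)\partial_{q_0}^{k-2}|q|^{-3}$, followed by \eqref{J11}--\eqref{J21} with $\alpha=3$, $j=0$ and the same constant simplifications; the complex version differs only by the absence of the factor $2$ appearing in \eqref{Pk}. Working directly from \eqref{Pkcomplex}, as you do, reproduces that computation verbatim, and your explicit justification that distributional and classical $z_0$-derivatives agree (no contribution from the origin, since the test functions vanish to infinite order there) is a slightly more careful version of the paper's appeal to Lemma~\ref{item3rem} and Definition~\ref{DEFderivativepsi}.
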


\begin{remark}
	\label{REMnoholom}
The functions $P^{(-k)}(z)$ are not holomorphic. Indeed, using the definition of the Jacobi polynomials \eqref{jacob}, one readily verifies that $P^{(-k)}(z)$ depends on $\bar{z}$. Hence, they are polyanalytic; see \cite{Balk1}.
\end{remark}

We now state some properties of the functions $P^{(-k)}(z)$. We refer to Proposition~\ref{STIMAPk-1} for proofs.
\begin{theorem}
	Let $z \in \mathbb{H} \backslash \{0\}$. For $k \in \mathbb{N}$, the following hold:
	\begin{enumerate}[label=\textnormal{(\roman{*})}]
		\item $P^{(-k)}$ is positive homogeneous of degree $-k-1$, i.e., for $a\in (0,+\infty)$, one has $P^{(-k)}(az)=a^{-k-1}P^{(-k)}(z)$.
		\item For $z=z_0\in \mathbb{R}\backslash \{0\}$, the identity
		\[
		P^{(-k)}(z_0)= k |z_0|^{-k-1}
		\]
		holds.
		\item  The functions $P^{(-k)}$  satisfy the estimate
		\begin{equation}
			\label{est00complex}
			|P^{(-k)}(z)| \leq C (2k^2-6k+5)|z|^{-k-1},
		\end{equation}
		where $C$ is a positive constant independent of $k$.
	\end{enumerate}
\end{theorem}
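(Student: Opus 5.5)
The natural starting point is the representation \eqref{Pkcomplex}, namely $P^{(-k)}(z)=c_k\,\partial_{z_0}^{k-1}\big(\bar z|z|^{-3}\big)$ with $c_k=\frac{(-1)^{k-1}}{(k-1)!}$. Since $\bar z|z|^{-3}=-\partial_{z_0}|z|^{-1}+I\partial_{z_1}|z|^{-1}$, we can write
\[
P^{(-k)}(z)=-c_k\Big(\partial_{z_0}^{k}|z|^{-1}-I\,\partial_{z_1}\partial_{z_0}^{k-1}|z|^{-1}\Big).
\]
The key observation is that $|z|^{-1}$ is homogeneous of degree $-1$, so every term is homogeneous of degree $-1-k$. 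This gives (i) at once. If we prefer to use Theorem~\ref{minuskcomplex}, (i) also follows there, because the Jacobi arguments $1-2z_0^2/|z|^2$ are invariant under $z\mapsto az$ and the prefactors have total degree $1-(2m+3)=-k-1$ or $-(2m+3)+1$ accordingly.

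For (ii) I would use the closed forms of Theorem~\ref{minuskcomplex} at $z=z_0$ real. There the argument equals $-1$, and \eqref{EQjacobiextreme} gives $P_m^{(\beta,1)}(-1)=(-1)^m(m+1)$. In the odd case $k=2m+1$, $\bar z=z_0$, so the bracket becomes
\[
z_0\big((-1)^m(m+1)+(-1)^{m-1}m\big)=(-1)^m z_0.
\]
This produces $(2m+1)z_0^2|z_0|^{-2m-3}=k|z_0|^{-k-1}$. The even case works the same way: the bracket is
\[
z_0\big((2m+1)(-1)^{m-1}m-(2m-1)(-1)^{m-1}m\big)=(-1)^{m-1}2m\,z_0,
\]
and multiplying by $(-1)^{m+1}z_0|z_0|^{-2m-3}$ gives $2m|z_0|^{-2m-1}$. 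As a cross-check, we could note instead that $\partial_{z_1}\partial_{z_0}^{k-1}|z|^{-1}$ vanishes on the real axis by oddness in $z_1$, while $\partial_{z_0}^k|z_0|^{-1}$ is explicit.

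For (iii), by (i) it is enough to bound on $|z|=1$. The plan is to apply Theorem~\ref{LEMjacobiestimate} to the closed forms, with parameters $\max\{\beta,\gamma\}=1$, so $\max|P_{m}^{(\beta,1)}|=m+1$. Using $|z_0|,|\bar z|\le1$, the even case gives
\[
|P^{(-k)}|\le(2m+1)m+(2m-1)m=4m^2=k^2,
\]
and the odd case gives $(2m+1)(m+1+m)=(2m+1)^2=k^2$. Both are at most $C(2k^2-6k+5)$, because $2k^2-6k+5\ge k^2/5$ for all $k\ge1$ (it is positive with minimum at $k=1,2$). The main obstacle is only bookkeeping: checking the signs in (ii) and treating $m=0$ (where $P_{-1}:=0$) separately. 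The exact polynomial $2k^2-6k+5$ presumably comes from a sharper triangle-inequality computation that we do not need, since $C$ absorbs it.
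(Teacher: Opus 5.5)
Your arguments for (i) and (iii) are sound. The odd case of (ii), however, rests on an algebra slip, and once it is corrected the identity you are asked to prove turns out to be false.

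For $k=2m+1$ the prefactor in Theorem~\ref{minuskcomplex} is $\frac{(-1)^m(2m+1)}{|z|^{2m+3}}$, with no $\bar z$. Combined with your (correct) bracket $(-1)^m z_0$, it gives $P^{(-k)}(z_0)=(2m+1)\,z_0|z_0|^{-2m-3}=k\,\frac{z_0}{|z_0|}\,|z_0|^{-k-1}$. It does not give $(2m+1)z_0^2|z_0|^{-2m-3}$; that expression is homogeneous of degree $-k$, so by (i) it could not equal $k|z_0|^{-k-1}$ anyway.

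Hence (ii) fails for odd $k$ and $z_0<0$. Already $P^{(-1)}(z)=\bar z|z|^{-3}$ gives $P^{(-1)}(-1)=-1\neq 1$. Structurally, $\bar z|z|^{-3}$ is odd and each $\partial_{z_0}$ flips parity, so \eqref{Pkcomplex} forces $P^{(-k)}(-z)=(-1)^kP^{(-k)}(z)$. The cross-check you mention would have exposed this. On the real axis $P^{(-k)}(z_0)=-c_k\,\partial_{z_0}^k|z_0|^{-1}$, and for $z_0<0$ you must differentiate $-z_0^{-1}$.

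What is true is $P^{(-k)}(z_0)=k\,z_0^{-k}|z_0|^{-1}$. That is, the value is $k|z_0|^{-k-1}$ if $k$ is even or $z_0>0$, and $-k|z_0|^{-k-1}$ if $k$ is odd and $z_0<0$. The paper's own proof makes the same mistake: it proves the quaternionic Theorem~\ref{STIMAPk-1}, to which the complex statement is referred, and there rewrites $q_0|q_0|^{-2m-3}$ as $q_0^{-2m-2}$. So (ii) must be restricted or its right-hand side corrected. Your even case is right for all $z_0\neq0$.

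For (iii) you take a different and cleaner route than the paper. The paper expands $P^{(-k)}=c_k\big(\bar z\,\partial_{z_0}^{k-1}|z|^{-3}+(k-1)\,\partial_{z_0}^{k-2}|z|^{-3}\big)$ (the complex analogue of \eqref{EQformulaP-k}). It then applies the derivative bound \eqref{EQuniversalestmodulus}, whose constant comes from Stirling asymptotics and is not explicit. The polynomial $(k-1)^2+(k-2)^2=2k^2-6k+5$ is just an artifact of that two-term sum. Also, \eqref{EQuniversalestmodulus} is only stated for $\ell\ge1$, so $k=1,2$ need separate handling there.

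You instead apply Theorem~\ref{LEMjacobiestimate} directly to the closed forms on $|z|=1$. This yields the explicit bound $|P^{(-k)}(z)|\le k^2|z|^{-k-1}$ for all $k\ge1$. Since $2k^2-6k+5-\frac{k^2}{5}=\frac95\big(k-\frac53\big)^2\ge0$, you get (iii) with $C=5$.

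One small slip in your aside on (i): for $k=2m$ the numerator ($\bar z z_0$ or $\bar z z$) has degree $2$, so the count is $2-(2m+3)=-k-1$. Your main argument, via $\partial_{z_0}^{k-1}$ of a function homogeneous of degree $-2$, is fine.
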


We now show that the fractional Laplacian $(-\Delta)^\frac{1}{2}$ maps holomorphic functions into $\frac{1}{2}$-harmonic functions.

\begin{lemma}
	\label{LEMmonomials12harm}
	For $k\in \mathbb{N}$, the function $P^{(-k)}(z)$ is $\frac{1}{2}$-harmonic as an element from $\Xi_{\frac{1}{2}}'(\mathbb{H})$.
\end{lemma}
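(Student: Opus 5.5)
The plan is to avoid computing $(-\Delta)^\frac{1}{2}P^{(-k)}$ directly from the Jacobi-polynomial expression in Theorem~\ref{minuskcomplex}. Instead I would use the semigroup property of Lemma~\ref{LEMsemigroup} together with the fact that $z^{-k}$ is harmonic away from the origin. (Although the statement writes $\Xi_{\frac{1}{2}}'(\mathbb{H})$, in this section I read it as $\Xi_{\frac{1}{2}}'(\mathbb{C})$; the quaternionic version should go through the same way.)

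\textbf{Step 1 (setting).} By \ref{LEMexchangeitem1} of Lemma~\ref{exchange}, $z^{-k}$ defines an element of $\Xi'(\mathbb{C})\subset\Xi_{\frac{1}{2}}'(\mathbb{C})$. Hence $P^{(-k)}=(-\Delta)^\frac{1}{2}(z^{-k})\in\Xi_{\frac{1}{2}}'(\mathbb{C})$. By Remark~\ref{REMdefinitionlap12}, $(-\Delta)^\frac{1}{2}P^{(-k)}$ is therefore a well-defined element of $\Xi_{\frac{1}{2}}'(\mathbb{C})$. Lemma~\ref{LEMsemigroup} then gives
\[
(-\Delta)^\frac{1}{2}P^{(-k)}=(-\Delta)^\frac{1}{2}(-\Delta)^\frac{1}{2}\big(z^{-k}\big)=-\Delta\big(z^{-k}\big)\quad\text{in }\Xi_{\frac{1}{2}}'(\mathbb{C}).
\]
So it suffices to show that $\langle z^{-k},\Delta\varphi\rangle=0$ for every $\varphi\in\Xi_{\frac{1}{2}}(\mathbb{C})$.

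\textbf{Step 2 (vanishing of the Laplacian).} Fix $\varphi\in\Xi_{\frac{1}{2}}(\mathbb{C})$. Since $z^{-k}\Delta\varphi$ is integrable, I would write $\int_{\mathbb{C}}z^{-k}\Delta\varphi\,dz=\lim_{\varepsilon\to0}\int_{\varepsilon<|z|<R}z^{-k}\Delta\varphi\,dz$ and let $R\to\infty$. On the annulus I apply Green's second identity componentwise (real and $I$-parts separately, as in \eqref{EQcomplex}). Each component of $z^{-k}$ is harmonic on $\mathbb{C}\backslash\{0\}$, being the real or imaginary part of a holomorphic function. The area integral therefore reduces to $\int\varphi\,\Delta(z^{-k})=0$, and only boundary terms of the form
\[
\int_{|z|=\rho}\Big(z^{-k}\partial_\nu\varphi-\varphi\,\partial_\nu z^{-k}\Big)\,d\sigma
\]
remain, for $\rho=\varepsilon$ and $\rho=R$.

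\textbf{Step 3 (boundary terms; main point).} At $\rho=R$ the terms vanish as $R\to\infty$, since $\varphi\in\mathcal{S}$ decays faster than any power while $z^{-k}$ and its derivatives are bounded there. At $\rho=\varepsilon$ the danger is the singularity: $|z^{-k}|=\varepsilon^{-k}$ and $|\partial_\nu z^{-k}|\lesssim\varepsilon^{-k-1}$. This is exactly where the defining property of $\Xi(\mathbb{C})\subset\Psi(\mathbb{C})$ is used. All derivatives of $\varphi$ vanish at $0$, so, as in the proof of Lemma~\ref{LEMproduct}, $|\varphi(z)|+|\nabla\varphi(z)|\le M_N|z|^N$ near the origin for every $N$. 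Choosing $N>k+1$ bounds the inner boundary term by $2\pi\varepsilon\cdot M_N\varepsilon^{N-k-1}\to0$. Hence $\langle -\Delta z^{-k},\varphi\rangle=0$ for all $\varphi$, i.e.\ $(-\Delta)^\frac{1}{2}P^{(-k)}\equiv0$, and $P^{(-k)}$ is $\frac{1}{2}$-harmonic.

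The only real subtlety is this inner boundary estimate. It is where one sees why classical distribution theory would fail: there $\Delta z^{-k}$ would be a nonzero combination of derivatives of $\delta_0$. Equivalently, one could note that this combination lies in $\widehat{\mathcal{P}}(\mathbb{C})\subset\Xi_{\frac{1}{2}}^\perp(\mathbb{C})$ by Remark~\ref{REMperp}, which gives the same conclusion.
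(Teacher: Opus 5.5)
Your proof is correct and follows the paper's route: the paper also uses the semigroup property (Lemma~\ref{LEMsemigroup}) to write $(-\Delta)^{\frac12}P^{(-k)}=-\Delta\big(z^{-k}\big)$, and then simply asserts that this vanishes because $z^{-k}$ is holomorphic. Your Green's-identity argument, which uses the infinite-order vanishing of the test function at the origin, makes explicit the integration-by-parts step the paper leaves implicit, and you are right that $\Xi_{\frac12}'(\mathbb{H})$ in the statement should read $\Xi_{\frac12}'(\mathbb{C})$.
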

\begin{proof}
	By the definition of $(-\Delta)^\frac{1}{2}$ and the semigroup property on $\Xi'_\frac{1}{2}(\mathbb{C})$ (see Lemma~\ref{LEMsemigroup}), we have that
	\[
	(-\Delta)^\frac{1}{2} P^{(-k)}(z) = (-\Delta)^\frac{1}{2} (-\Delta)^\frac{1}{2} \big(z^{-k}\big) = -\Delta \big(z^{-k}\big)=0,
	\]
	since $z^{-k}$ is holomorphic.
\end{proof}

\begin{theorem}
	\label{main1complex}
	Let $\{a_{-k}\}\subset \mathbb{C}$ and assume that
	\[
	R_1= \limsup_{k\to \infty} |a_{-k}|^\frac{1}{k},
	\]
	so that the series
	\[
	f(z)=\sum_{k=1}^\infty a_{-k} z^{-k}
	\]
	defines a holomorphic function on $\mathcal{U}= \{z\in \mathbb{C}:|z|>R_1\}$ (and the series converges absolutely and uniformly on compact subsets of  $\mathcal{U}$). Then, the series
	\[
	(-\Delta)^\frac{1}{2} f(z) =\sum_{k=1}^\infty a_{-k} P^{(-k)}(z)
	\]
	also converges uniformly and absolutely on compact subsets of $\mathcal{U}$. Moreover, it defines a $(-\Delta)^\frac{1}{2}$-harmonic function on $\mathcal{U}$.
\end{theorem}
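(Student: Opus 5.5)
The plan is to combine the pointwise estimate \eqref{est00complex} for $P^{(-k)}$ with the root test, and to read off $\frac{1}{2}$-harmonicity termwise from Lemma~\ref{LEMmonomials12harm}.

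\textbf{Convergence.} Fix a compact set $K\subset\mathcal{U}$. Then there is $\rho>R_1$ such that $|z|\geq\rho$ for all $z\in K$. By \eqref{est00complex},
\[
|a_{-k}P^{(-k)}(z)|\leq C(2k^2-6k+5)\,|a_{-k}|\,|z|^{-k-1}\leq \frac{C}{\rho}(2k^2-6k+5)\,|a_{-k}|\,\rho^{-k},\qquad z\in K.
\]
Choose $\rho'$ with $R_1<\rho'<\rho$. By the definition of $R_1$ as a $\limsup$, we have $|a_{-k}|\leq(\rho')^k$ for all sufficiently large $k$. Hence the majorant is eventually bounded by $C'k^2(\rho'/\rho)^k$, which is summable because $\rho'/\rho<1$ and the polynomial factor $k^2$ does not affect the root test ($(k^2)^{1/k}\to1$). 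The Weierstrass M-test then gives absolute and uniform convergence on $K$. Since each $P^{(-k)}$ is continuous on $\mathbb{C}\backslash\{0\}$ by Theorem~\ref{minuskcomplex}, the sum is continuous on $\mathcal{U}$.

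\textbf{Harmonicity.} Since $(-\Delta)^\frac{1}{2}$ is defined termwise on series (Definition~\ref{DEFlaplacecomplex}), we have
\[
(-\Delta)^\frac{1}{2}\big((-\Delta)^\frac{1}{2}f\big)=\sum_k a_{-k}(-\Delta)^\frac{1}{2}P^{(-k)}.
\]
Each term vanishes in $\Xi'_\frac{1}{2}(\mathbb{C})$ by Lemma~\ref{LEMmonomials12harm}, that is, by the semigroup property (Lemma~\ref{LEMsemigroup}) together with $\Delta z^{-k}=0$. One subtlety needs to be handled: by Remark~\ref{REMperp}, $-\Delta(z^{-k})$ computed in the distributional sense differs from the pointwise Laplacian by at most derivatives of $\delta_0$, and these lie in $\Xi^\perp_\frac{1}{2}$. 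So the term is genuinely zero in $\Xi'_\frac{1}{2}$, and every term of the series is $0$.

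\textbf{Main obstacle.} The hard part is the estimate \eqref{est00complex} itself, which is taken from Proposition~\ref{STIMAPk-1}. If it had to be proved here, I would bound the Jacobi polynomials in Theorem~\ref{minuskcomplex} using Theorem~\ref{LEMjacobiestimate}. Their maxima are $\binom{m-1+1}{m-1}=m$, respectively $\binom{m+1}{m}=m+1$, and multiplying by the prefactor $2m+1$ gives the quadratic growth $O(k^2)$. Any polynomial growth in $k$ would suffice for the argument above.
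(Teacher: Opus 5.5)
Your proposal is correct and follows the paper's own (very brief) argument: the bound \eqref{est00complex}, combined with the $\limsup$ definition of $R_1$, gives absolute and locally uniform convergence, and $\frac{1}{2}$-harmonicity is read off termwise from Lemma~\ref{LEMmonomials12harm} via the formal Definition~\ref{DEFlaplacecomplex}. Your M-test with an intermediate radius $R_1<\rho'<\rho$ is a bit more careful than the paper's quaternionic analogue, which divides by $R_1$ and so breaks down when $R_1=0$; your $\delta_0$ caveat is harmless but unnecessary, because in $\Xi'_{\frac{1}{2}}(\mathbb{C})$ integration by parts against test functions vanishing to infinite order at the origin shows directly that the distributional Laplacian of $z^{-k}$ is zero.
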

\begin{proof}
	The statement concerning convergence is clear from estimate \eqref{est00complex} and  the fact that $R_1= \limsup_{k\to \infty} |a_{-k}|^\frac{1}{k}$. The function $(-\Delta)^\frac{1}{2}$ is $\frac{1}{2}$-harmonic due to Lemma~\ref{LEMmonomials12harm} (see also Definition~\ref{DEFlaplacecomplex}).
\end{proof}

To conclude, we remark that in general, Definition~\ref{DEFlaplacecomplex} and Lemma~\ref{LEMmonomials12harm} imply that any series of the type
\[
\sum_{k=1}^\infty a_{k}  P^{(-k)}(z) ,\qquad \{a_k\}\subset \mathbb{C},
\]
on a domain $\mathcal{U}\subset \mathbb{C}$ defines a $\frac{1}{2}$-harmonic function.

\section{Fractional analytic harmonic fine structure}\label{FRACLAPCIAN}

In this section, we aim to study in detail the action of the fractional Laplacian $(-\Delta)^{\frac{1}{2}}$ (given in Definition \ref{DEFquaternioniclaplacian}) applied to slice hyperholomorphic functions whose Laurent series only has principal part. To this end we carry out the analysis of the terms $(-\Delta)^{\frac{1}{2}}\big(q^{-k}\big)$ in the corresponding series.

Before moving further, the case of positive powers $(-\Delta)^\frac{1}{2}q^k$ deserves some discussion.

\begin{remark}\label{REMdiscussion}
	\begin{enumerate}[wide,label=\textnormal{(\roman{*})}]
\item  Although a monomial with positive power $f(q)=q^{k}$ naturally defines an element from $\Xi'(\mathbb{H})$  (in fact, from $\mathcal{S}'(\mathbb{H})$), our approach allows a natural definition for  $(-\Delta)^{\frac{1}{2}}q^{k}$ although it is not meaningful, since such a definition readily yields $(-\Delta)^{\frac{1}{2}}q^{k}=0$ in $\Xi'_{\frac{1}{2}}(\mathbb{H})$. Indeed,
\begin{equation}
	\langle (-\Delta)^{\frac{1}{2}} q^{k}, \varphi \rangle = \langle q^{k}, \mathcal{F}^{-1}\big( (2 \pi  |\xi|) \mathcal{F}(\varphi) \big)\rangle=0,
\end{equation}
where the last equality follows from $\mathcal{F}^{-1}\big( (2 \pi  |\xi|) \mathcal{F}(\varphi) \big)\in \Xi(\mathbb{H})$, i.e.,  such a function has all moments equal to zero.

\item  Alternatively, one could try to view $(-\Delta)^{\frac{1}{2}}q^{k}$ as an element from $\mathcal{S}'(\mathbb{H})$ (in order to avoid that $(-\Delta)^{\frac{1}{2}} q^{k}\equiv 0$), but this already poses a technical problem, since the function $(2\pi  |\xi|)\mathcal{F}^{-1}(\varphi)$ may not be in $\mathcal{S}(\mathbb{H})$ for $\varphi\in \mathcal{S}(\mathbb{H})$ (due to the non-differentiability of $|\xi|$ at the origin).
Finally, one can consider an intermediate step where $\mathcal{F}^{-1}(\varphi)$ has all derivatives vanishing at the origin, so that $|\xi|\mathcal{F}^{-1}(\varphi)(\xi)$ is infinitely differentiable at the origin (cf. Lemma~\ref{LEMproduct}) and all of its derivatives vanish. This is the case if $\varphi\in \Phi(\mathbb{H})$. However, in this case one also has $(-\Delta)^{\frac{1}{2}} q^{k}\equiv 0$. Indeed,
\[
\langle \mathcal{F} q^{k}, \big( (2 \pi  |\xi|) \mathcal{F}^{-1}(\varphi) \big)\rangle=0,
\]
since $\mathcal{F}q^k$ is a linear combination of derivatives of Dirac distributions (see \cite{DQ}).
\end{enumerate}
\end{remark}

We recall that the quaternionic monomials with negative powers satisfy the relation
\begin{equation}
	\label{negative2}
	q^{-k}= c_k \frac{\partial^{k-1}}{\partial q_0^{k-1}} \bigg( \frac{\bar{q}}{|q|^2}\bigg), \qquad c_k := \frac{(-1)^{k-1}}{(k-1)!}, \qquad k\in \mathbb{N},
\end{equation}
see, e.g., \cite{Q, TaoQian1}. Also recall that $f(q)=q^{-k}$ defines a linear functional in $\Xi'(\mathbb{H})$ (see \ref{LEMexchangeitem1} of Lemma~\ref{exchange}), and hence $(-\Delta)^{\frac{1}{2}} q^{-k}\in \Xi'_{\frac{1}{2}}(\mathbb{H})$, by Definition~\ref{DEFfractionallap} (see also Theorem~\ref{IN QUESTO TEOR}).

For convenience, we introduce the following notation.

\begin{definition}
For $k \in \mathbb{N}$, we define
\begin{equation*}
	P^{(-k)}(q):=(-\Delta)^{\frac{1}{2}} q^{-k}.
\end{equation*}
\end{definition}

The commutativity of the operators $\partial_{q_0}$ and $ (-\Delta)^{\frac{1}{2}}$ given in Lemma~\ref{LEMcommutativity} allows one to obtain a useful formula for $P^{(-k)}(q)$ in terms of the derivatives of $\frac{\bar{q}}{|q|^3}$.

\begin{proposition}
	\label{negative3}
	Let $q \in \mathbb{H}\backslash \{0\}$. Then, for $k\in \mathbb{N}$,
	\begin{equation}
		\label{Pk}
		P^{(-k)}(q)=2  c_k\partial_{q_0}^{k-1}\bigg( \frac{\bar{q}}{|q|^{3}}\bigg),
	\end{equation}
	as an element of $\Xi_{\frac{1}{2}}'(\mathbb{H})$.
\end{proposition}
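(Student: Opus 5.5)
The plan is to mimic the proof of the complex analogue \eqref{Pkcomplex}, now in $\R^4\cong\mathbb{H}$, keeping track of the different dimensional constant. First, by Lemma~\ref{exchange}, the function $\bar q/|q|^2$ satisfies \eqref{EQfderivatives}, so it defines an element of $\Xi'(\mathbb{H})$. Then \eqref{negative2} together with the commutativity of $\partial_{q_0}$ and $(-\Delta)^{\frac12}$ (Lemma~\ref{LEMcommutativity}, applied $k-1$ times, each derivative preserving $\Xi'$ by Remark~\ref{der1}) gives
\[
P^{(-k)}(q)=c_k\,\partial_{q_0}^{k-1}(-\Delta)^{\frac12}\Big(\frac{\bar q}{|q|^2}\Big).
\]
It therefore suffices to prove the case $k=1$, namely $(-\Delta)^{\frac12}(\bar q/|q|^2)=2\,\bar q/|q|^3$ in $\Xi'_{\frac12}(\mathbb{H})$.

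For this I would use Theorem~\ref{IN QUESTO TEOR}, which gives $(-\Delta)^{\frac12}T=\mathcal{F}\big(2\pi|\xi|\mathcal{F}^{-1}T\big)$. Write $\bar q=q_0-q_1e_1-q_2e_2-q_3e_3$. This is a homogeneous harmonic quaternion-valued polynomial of degree $1$, so \eqref{zeroone1} applies with $n=4$, $k=1$. For $\bar q/|q|^{2}$ we need $4+1-\alpha=2$, i.e. $\alpha=3\in(0,4)$. This yields $\mathcal{F}^{-1}[\bar q/|q|^2]=\gamma\,\bar\xi/|\xi|^4$ (up to the sign coming from $\mathcal{F}^{-1}$ versus $\mathcal{F}$ on odd functions). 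Multiplying by $2\pi|\xi|$ gives a multiple of $\bar\xi/|\xi|^3$. Next, apply \eqref{zeroone} again with $4+1-\alpha=3$, i.e. $\alpha=2$, to return to $x$-space: $\mathcal{F}[\bar\xi/|\xi|^3]$ is a multiple of $\bar q/|q|^3$. Each of these identities is first a statement in $\mathcal{S}'$; since $\Xi\subset\mathcal{S}$ and the transforms are defined by transposition (Definition~\ref{ALBERTO}), they hold in $\Xi'$ as well. The multiplication by $|\xi|$ is legitimate by Definition~\ref{DEFproddistbyfunct}.

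The step where I expect the real work is the constant bookkeeping. By \eqref{gammaRn} with $n=4$, $\gamma_{1,3}(4)=i\pi^{-1}\Gamma(2)/\Gamma(1)=i/\pi$ and $\gamma_{1,2}(4)=i\,\Gamma(3/2)/\Gamma(3/2)=i$. I would track these factors together with the factor $2\pi$ and the sign changes from the reflection $\xi\mapsto-\xi$ (the kernels are odd), and check that they combine to exactly $2$. An alternative cross-check, parallel to the complex proof, is to write $\bar\xi/|\xi|$ as a combination of $\xi_j/|\xi|$. One then recognises $\mathcal{F}^{-1}[\xi_j/|\xi|]$ as a derivative of $\mathcal{F}^{-1}[|\xi|^{-1}]$, which is $\propto|q|^{-3}$ via the $k=0$ (Riesz potential) version of Lemma~\ref{LEMstein}. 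The factor $2$ (versus $1$ in $\mathbb{C}$) should then emerge from $\partial_{q_j}|q|^{-2}\cdot$(constant) $=-2q_j|q|^{-4}$-type computations, i.e. from the $\Gamma$-ratio $\Gamma(3/2)/\Gamma(1/2)$ and its analogue. If the two computations agree on the constant $2$, the $k=1$ case is done. Differentiating $k-1$ times in $q_0$ then gives \eqref{Pk}.
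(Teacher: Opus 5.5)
Your proposal is correct and is essentially the paper's proof. You reduce to $\bar q/|q|^2$ via \eqref{negative2} and Lemma~\ref{LEMcommutativity}, then apply Lemma~\ref{LEMstein} in $\R^4$ twice (with $\alpha=3$, then $\alpha=2$), and the bookkeeping you left open closes as $\bigl(-\tfrac{i}{\pi}\bigr)\cdot 2\pi\cdot i=2$, since the reflection sign from the odd kernel cancels against $i^2$. So the two-dimensional-style cross-check is unnecessary, and as sketched it would not transfer verbatim: in $\R^4$ the intermediate kernel is $\bar\xi/|\xi|^3$, not $\bar\xi/|\xi|$.
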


\begin{remark}
	\label{REMquaternioncomplex}
	Thanks to identities \eqref{Pkcomplex} and \eqref{Pk}, all the results for the functions $P^{(-k)}(q)$ that follow in this section are directly applicable to the complex case (modulo multiplication by $\pi$ on the quaternionic side; such a difference in the constants come from the difference in dimensions when applying the corresponding versions of Lemma~\ref{LEMstein}). Indeed, the only operator involved in such identities is the partial derivative with respect to the real component $q_0$, and thus no interaction with the quaternionic imaginary directions occurs. Consequently, by restricting $q=q_0+q_1e_1+q_2e_2+q_3e_3$ to the complex slice given by $q_2=q_3=0$ and identifying $e_1$ with the complex imaginary unit $I$, one immediately recovers the corresponding complex identities for $P^{(-k)}(q_0+Iq_1)$. Note that this would not be the case if the involved operator contained derivatives in the directions of imaginary units. For example, consider the case of the usual Laplacian, where in the complex case $\Delta(z^{-k})=0$ and and in the quaternionic case $\Delta(q^{-k})\big|_{q=z_0+Iz_1}\neq 0$ (see identity \eqref{EQlalppolynomial} below).
\end{remark}

\begin{proof}[Proof of Proposition~\ref{negative3}]
The function $\frac{\overline{q}}{|q|^2}$ satisfies the assumptions of Lemma~\ref{exchange}. Therefore, $\frac{\overline{q}}{|q|^2}$ defines an element of $\Xi'(\mathbb{H})$. Hence, by formula \eqref{negative2} and the commutativity of $\partial_{q_0}$ and $(-\Delta)^{\frac{1}{2}}$ established in Lemma~\ref{LEMcommutativity}, we obtain
	\begin{align*}
		P^{(-k)}(q)&= (-\Delta)^{ \frac{1}{2}}(q^{-k})= c_k (-\Delta)^{ \frac{1}{2}} \partial_{q_0}^{k-1} \bigg(\frac{\bar{q}}{|q|^2} \bigg)= c_k \partial_{q_0}^{k-1} \mathcal{F}^{-1} \bigg[ 2\pi |\xi| \mathcal{F} \bigg( \frac{\bar{q}}{|q|^2}\bigg)(\xi) \bigg] (q).
	\end{align*}
	Applying \eqref{zeroone} with $k=1$, $\alpha=3$ and then \eqref{zeroone1} with $k=1$, $\alpha=2$, and using that $\gamma_{1,3}(4)=i \pi^{-1}$ and $\gamma_{1,2}(4)= i$, we get
	\begin{align*}
		P^{(-k)}(q)&= 2 i  c_k \partial_{q_0}^{k-1}\mathcal{F}^{-1} \bigg( \frac{\bar{\xi}}{|\xi|^{3}}\bigg)(q)=  2  c_k \partial_{q_0}^{k-1}\bigg( \frac{\bar{q}}{|q|^{3}}\bigg).
	\end{align*}
Finally, since $q^{-k}$ defines a linear functional on $\Xi'(\mathbb{H})$, it follows from Theorem~\ref{IN QUESTO TEOR} that $P^{(-k)} \in \Xi'_{\frac{1}{2}}(\mathbb{H})$.
\end{proof}

The Jacobi polynomials allow to obtain a closed expression for $P^{(-k)}(q)$ suitable for deriving tight estimates.

\begin{theorem}\label{minusk}
	Let $q \in \mathbb{H} \backslash \{0\}$. For $k=2m$ with $m \geq 1$, we have
	\begin{equation}
		\label{secondp}
		P^{(-k)}(q)=\frac{2 (-1)^{m+1}\bar{q}}{|q|^{2m+3}}\bigg((2m+1)q_0P_{m-1}^{\big(\frac{1}{2},1\big)}\bigg(1-\frac{2q_0^2}{|q|^2}\bigg)-(2m-1)qP_{m-1}^{\big(-\frac{1}{2},1\big)}\bigg(1-\frac{2q_0^2}{|q|^2}\bigg)\bigg),
	\end{equation}
	while for $k=2m+1$ with $m \geq 0$, we have
	\begin{equation}
		\label{threep}
		P^{(-k)}(q)= \frac{2 (-1)^m (2m+1)}{|q|^{2m+3}}\bigg(\bar{q}P_{m}^{\big(-\frac{1}{2},1\big)}\bigg(1-\frac{2q_0^2}{|q|^2}\bigg)+q_0P_{m-1}^{\big(\frac{1}{2},1\big)}\bigg(1-\frac{2q_0^2}{|q|^2}\bigg)\bigg),
	\end{equation}
	as elements of $\Xi'_{\frac{1}{2}}(\mathbb{H})$.
\end{theorem}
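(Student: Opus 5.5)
By Proposition~\ref{negative3}, $P^{(-k)}(q)=2c_k\partial_{q_0}^{k-1}\big(\bar q|q|^{-3}\big)$ as elements of $\Xi'_{\frac12}(\mathbb{H})$. Since $\bar q/|q|^3$ is smooth away from the origin and satisfies \eqref{EQfderivatives}, the distributional derivatives are the pointwise ones (integration by parts against test functions that vanish to infinite order at $0$). So it suffices to compute $\partial_{q_0}^{k-1}(\bar q|q|^{-3})$ pointwise on $\mathbb{H}\setminus\{0\}$.

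I would write $\bar q|q|^{-3}=q_0|q|^{-3}-\underline{q}\,|q|^{-3}$. For the first term I would use $q_0|q|^{-3}=-\partial_{q_0}|q|^{-1}$. Then $\partial_{q_0}^{k-1}(q_0|q|^{-3})=-\partial_{q_0}^{k}|q|^{-1}$, which Proposition~\ref{PROPdermodulus1} gives with $\alpha=1$, $j=0$. The second term is $\underline{q}$ times $\partial_{q_0}^{k-1}|q|^{-3}$, which Proposition~\ref{PROPdermodulus1} gives with $\alpha=3$. Set $y=1-2q_0^2/|q|^2$ throughout.

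\emph{Case $k=2m+1$.} We need $\partial^{2m+1}_{q_0}|q|^{-1}$ with $\alpha=1$. This involves $P_m^{(\frac12,0)}$, together with $\partial^{2m}_{q_0}|q|^{-3}$, which involves $P_m^{(-\frac12,1)}$ with prefactor $(-1)^m\sqrt\pi\,\Gamma(m+\frac32)(2m)!/(|q|^{2m+3}\Gamma(\frac32)\Gamma(m+\frac12))=(-1)^m(2m+1)!/|q|^{2m+3}$. After multiplying by $c_{2m+1}=1/(2m)!$, the $\underline q$ part gives $(-1)^{m+1}(2m+1)\underline q P_m^{(-\frac12,1)}/|q|^{2m+3}$. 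This matches the $-\underline q$ component of $\bar q P_m^{(-\frac12,1)}$ in \eqref{threep}.

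What remains is a Jacobi identity expressing the $q_0$-coefficient: the $\alpha=1$ term $q_0 P_m^{(\frac12,0)}(y)$ must equal, up to the right constant, $q_0\big(P_m^{(-\frac12,1)}(y)+P_{m-1}^{(\frac12,1)}(y)\big)$. I expect this to follow from the standard contiguous relation $P_m^{(\beta,\gamma-1)}-P_m^{(\beta-1,\gamma)}=P_{m-1}^{(\beta,\gamma)}$ (Szeg\H{o} (4.5.4)), applied with $\beta=\frac12$, $\gamma=1$. One still has to check the Gamma-factor constants; for instance $\Gamma(m+\frac32)/\Gamma(m+\frac32)$ from $\alpha=1$ should produce the same $(2m+1)$ factor.

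\emph{Case $k=2m$.} We need $\partial^{2m}_{q_0}|q|^{-1}$ ($\alpha=1$, involving $P_m^{(-\frac12,0)}$) and $\partial_{q_0}^{2m-1}|q|^{-3}$ ($\alpha=3$, giving $q_0 P_{m-1}^{(\frac12,1)}$ with prefactor $\propto(2m+1)$). The target \eqref{secondp} is written with the factor $\bar q$ pulled out. So I would expand $\bar q\big((2m+1)q_0P_{m-1}^{(\frac12,1)}-(2m-1)qP_{m-1}^{(-\frac12,1)}\big)$, using $\bar q q=|q|^2$ and $\bar q=q_0-\underline q$. The $\underline q$-coefficient $-(2m+1)q_0P_{m-1}^{(\frac12,1)}$ matches the $\alpha=3$ term directly. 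The real part, $(2m+1)q_0^2P_{m-1}^{(\frac12,1)}-(2m-1)|q|^2P_{m-1}^{(-\frac12,1)}$, must reproduce the $\alpha=1$ term $|q|^2P_m^{(-\frac12,0)}(y)$, once $q_0^2=|q|^2(1-y)/2$ is substituted. This gives a polynomial identity of the form
\[
(2m+1)\tfrac{1-y}{2}P_{m-1}^{(\frac12,1)}(y)-(2m-1)P_{m-1}^{(-\frac12,1)}(y)=\lambda_m P_m^{(-\frac12,0)}(y).
\]
I would prove it by combining the contiguous relation with the relation $(1-y)$-times-shift, Szeg\H{o} (4.5.4)/(4.5.5) type. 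Alternatively I would compare the coefficients of $((y-1)/2)^j$ using \eqref{jacob}, which is a direct binomial/Gamma check.

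The main obstacle is exactly this verification of the two Jacobi identities, together with careful tracking of the signs $(-1)^m$ and the Gamma constants. Everything else reduces directly to Propositions~\ref{negative3} and \ref{PROPdermodulus1}. If the contiguous relations do not close cleanly, I would fall back on the explicit coefficient comparison via \eqref{jacob}.
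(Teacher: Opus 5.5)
Your proposal is correct, but it is organized differently from the paper's proof.

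\textbf{The paper's route.} The paper never splits $\bar q$. It applies the Leibniz rule to $\bar q\cdot|q|^{-3}$ and uses $\partial_{q_0}\bar q=1$ to get $\partial_{q_0}^{k-1}(\bar q|q|^{-3})=\bar q\,\partial_{q_0}^{k-1}|q|^{-3}+(k-1)\,\partial_{q_0}^{k-2}|q|^{-3}$. It then uses Proposition~\ref{PROPdermodulus1} only with $\alpha=3$. The stated forms \eqref{secondp}--\eqref{threep}, with $\bar q$ factored out and only the polynomials $P^{(\pm\frac12,1)}$, come out directly, and no auxiliary Jacobi identity is needed.

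\textbf{Your route.} Your splitting $\bar q=q_0-\underline q$, together with $q_0|q|^{-3}=-\partial_{q_0}|q|^{-1}$, brings in the case $\alpha=1$. It gives the axial decomposition directly, with the real part expressed by a single Jacobi polynomial. For instance,
\[
P^{(-2m-1)}(q)=\frac{2(-1)^m(2m+1)}{|q|^{2m+3}}\Big(q_0P_m^{(\frac12,0)}(y)-\underline q\,P_m^{(-\frac12,1)}(y)\Big).
\]
This is a more compact closed form and is convenient for Lemma~\ref{axiall}. The price is the two identities needed to reach the stated form. Both hold, so your plan closes.

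\textbf{Odd case.} Both prefactors equal $(-1)^m(2m+1)!$, since $\sqrt\pi/\Gamma(\frac12)=1$, so no constant is left over. The required identity $P_m^{(\frac12,0)}=P_m^{(-\frac12,1)}+P_{m-1}^{(\frac12,1)}$ is exactly \eqref{recc} with $(\beta,\gamma)=(-\frac12,0)$.

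\textbf{Even case.} Here $\lambda_m=-2m$. Setting $t=\frac{y-1}{2}$, the identity becomes $(2m-1)P_{m-1}^{(-\frac12,1)}+(2m+1)t\,P_{m-1}^{(\frac12,1)}=2m\,P_m^{(-\frac12,0)}$. By \eqref{jacob}, the coefficient of $t^j$ on the left is $\frac{2}{(m-1)!}\big[\binom{m-1}{j}+\binom{m-1}{j-1}\big]\frac{\Gamma(m+j+\frac12)}{\Gamma(j+\frac12)}$. Pascal's rule turns this into the coefficient of $t^j$ on the right.

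\textbf{A note on constants.} Your value $c_{2m+1}=1/(2m)!$ is the correct one. The paper's proof misprints it as $1/(2m+1)!$, although its final formula \eqref{threep} is right.
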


\begin{proof}
	We begin by assuming that $ k \geq 2 $. Then, by Proposition~\ref{negative3} and the product rule for differentiation (together with Lemma~\ref{item3rem} and Definition~\ref{DEFderivativepsi}), we obtain
	\begin{align}
		P^{(-k)}(q)& = 2c_k \partial_{q_0}^{k-1} \bigg(\frac{\bar{q}}{|q|^{3}}\bigg) = 2c_k \sum_{\ell=0}^{k-1} {k-1 \choose \ell}\partial_{q_0}^{k-1-\ell}(\overline{q}) \partial_{q_0}^{\ell} \bigg(\frac{1}{|q|^{3}}\bigg) \nonumber \\
		&=2c_k \bigg( \overline{q}  \partial_{q_0}^{k-1} \bigg(\frac{1}{|q|^{3}}\bigg) + (k-1)\partial_{q_0}^{k-2} \bigg(\frac{1}{|q|^{3}}\bigg) \bigg).		\label{EQformulaP-k}
	\end{align}
	
	For the case $k=2m$, by \eqref{J11} and \eqref{J21} with $\alpha=3$ and $j=0$, we have
				\begingroup\allowdisplaybreaks
	\begin{align*}
		P^{(-k)}(q)&=2 c_{2m}\bigg( \frac{\bar{q}(-1)^m \sqrt{\pi} \Gamma\big(m+\frac{3}{2}\big)(2m-1)!q_0}{|q|^{2m+3}
\Gamma\big(\frac{3}{2}\big)\Gamma\big(m+\frac{1}{2}\big)} P_{m-1}^{\big(\frac{1}{2},1\big)}\bigg(1-\frac{2q_0^2}{|q|^2}\bigg) \\
		&\phantom{=}+\frac{(2m-1)(-1)^{m-1}\sqrt{\pi}
\Gamma\big(m+\frac{1}{2}\big)(2m-2)!}{|q|^{2m+1}\Gamma\big(\frac{3}{2}\big)\Gamma\big(m-\frac{1}{2}\big)}P_{m-1}^{\big(-\frac{1}{2},1\big)}\bigg(1-\frac{2q_0^2}{|q|^2}\bigg) \bigg)\\
		&=2 c_{2m}\bigg( \frac{2q_0\bar{q}(-1)^m \big(m+\frac{1}{2}\big)(2m-1)!}{|q|^{2m+3}}P_{m-1}^{\big(\frac{1}{2},1\big)}\bigg(1-\frac{2q_0^2}{|q|^2}\bigg) \\
		&\phantom{=}-\frac{2(-1)^m \big(m-\frac{1}{2}\big)(2m-1)!}{|q|^{2m+1}}P_{m-1}^{\big(-\frac{1}{2},1\big)}\bigg(1-\frac{2q_0^2}{|q|^2}\bigg) \bigg)\\
		&=\frac{2c_{2m}(-1)^m(2m-1)!}{|q|^{2m+3}} \bigg( (2m+1)q_0\bar{q} P_{m-1}^{\big(\frac{1}{2},1\big)}\bigg(1-\frac{2q_0^2}{|q|^2}\bigg)\\
		&\phantom{=}-(2m-1)|q|^2 P_{m-1}^{\big(-\frac{1}{2},1\big)}\bigg(1-\frac{2q_0^2}{|q|^2}\bigg)\bigg).
	\end{align*}
	\endgroup
Since $c_{2m}=-\frac{1}{(2m-1)!}$ and $|q|^2=q\bar{q}$, \eqref{secondp} follows. Let now $k=2m+1$ with $m\geq 1$. By \eqref{J11} and \eqref{J21} with $\alpha=3$ and $j=0$, we get
	\begin{align*}
		P^{(-k)}(q)
&= 2 c_{2m+1}\bigg(\frac{\bar{q}(-1)^m\sqrt{\pi} \Gamma\big(m+\frac{3}{2}\big) (2m)!}{|q|^{2m+3}\Gamma\big(\frac{3}{2}\big)\Gamma\big(m+\frac{1}{2}\big)}P_{m}^{\big(-\frac{1}{2},1\big)}\bigg(1-\frac{2q_0^2}{|q|^2}\bigg) \\
		&\phantom{=} +\frac{2mq_0 (-1)^m\sqrt{\pi}\Gamma\big(m+\frac{3}{2}\big)(2m-1)!}{|q|^{2m+3}\Gamma\big(\frac{3}{2}\big)\Gamma\big(m+\frac{1}{2}\big)} P_{m-1}^{\big(\frac{1}{2},1\big)}\bigg(1-\frac{2q_0^2}{|q|^2}\bigg) \bigg)\\
		&= 2 c_{2m+1}\bigg(\frac{\bar{q}(-1)^m(2m+1)!}{|q|^{2m+3}}P_{m}^{\big(-\frac{1}{2},1\big)}\bigg(1-\frac{2q_0^2}{|q|^2}\bigg) \\
		&\phantom{=} +\frac{q_0 (-1)^m(2m+1)!}{|q|^{2m+3}} P_{m-1}^{\big(\frac{1}{2},1\big)}\bigg(1-\frac{2q_0^2}{|q|^2}\bigg) \bigg)\\
		&=\frac{2 c_{2m+1}(-1)^m (2m+1)!}{|q|^{2m+3}}\bigg(\bar{q}P_{m}^{\big(-\frac{1}{2},1\big)}\bigg(1-\frac{2q_0^2}{|q|^2}\bigg)+q_0P_{m-1}^{\big(\frac{1}{2},1\big)}\bigg(1-\frac{2q_0^2}{|q|^2}\bigg)\bigg).
	\end{align*}
Since $c_{2m+1}=\frac{1}{(2m+1)!}$, \eqref{threep} follows. Finally, formula~\eqref{threep} remains valid for $ m = 0$, provided that the Jacobi polynomial $ P_{-1}^{\big(\frac{1}{2},1\big)}\Big(1 - \frac{2q_0^2}{|q|^2}\Big)$
is interpreted as zero.
\end{proof}
\begin{remark}\label{REMnoslice}
The functions $P^{(-k)}(q)$ are not slice hyperholomorphic, although the function $f(q)=q^{-k}$ is. See Remark~\ref{REMnoholom} for the analogous discussion in the complex setting. Indeed, the functions $P^{(-k)}(q)$ are slice polyanalytic; see \cite{ADS}.
\end{remark}

The next result collects useful properties of the functions $P^{(-k)}(q)$ that will be needed later.
\begin{theorem}\label{STIMAPk-1}
	Let $q \in \mathbb{H} \backslash \{0\}$. For $k \in \mathbb{N}$, the following hold:
	\begin{enumerate}[label=\textnormal{(\roman{*})}]
		\item $P^{(-k)}$ is positive homogeneous of degree $-k-1$, i.e., for $a\in (0,+\infty)$, one has $P^{(-k)}(aq)=a^{-k-1}P^{(-k)}(q)$.
		\item For $q=q_0\in \mathbb{R}\backslash \{0\}$, the identity
		\[
		P^{(-k)}(q_0)=2k |q_0|^{-k-1}
		\]
		holds.
		\item The functions $P^{(-k)}$  satisfy the estimate
		\begin{equation}
			\label{est00}
			|P^{(-k)}(q)| \leq C (2k^2-6k+5)|q|^{-k-1},
		\end{equation}
		where $C$ is a positive constant independent of $k$.
	\end{enumerate}
\end{theorem}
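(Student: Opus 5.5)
All three items can be read off from the representations in Proposition~\ref{negative3} and Theorem~\ref{minusk}, treating the three parts separately.

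For (i), we use \eqref{Pk}, $P^{(-k)}(q)=2c_k\partial_{q_0}^{k-1}\big(\bar q|q|^{-3}\big)$. The function $\bar q|q|^{-3}$ is positively homogeneous of degree $-2$. Each $\partial_{q_0}$ lowers the degree of homogeneity by one, since $g(aq)=a^{d}g(q)$ implies $(\partial_{q_0}g)(aq)=a^{d-1}\partial_{q_0}g(q)$. Hence $P^{(-k)}$ is homogeneous of degree $-2-(k-1)=-k-1$. The same conclusion can also be read directly from \eqref{secondp}–\eqref{threep}: the prefactor has degree $-(2m+3)$, the bracket has degree $2$ (respectively $1$), and the Jacobi arguments $1-2q_0^2/|q|^2$ are invariant under dilations.

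For (ii), we set $q=q_0$ real, so $|q|=|q_0|$ and the Jacobi argument equals $1-2=-1$. By \eqref{EQjacobiextreme}, $P_m^{(\beta,1)}(-1)=(-1)^m\binom{m+1}{m}=(-1)^m(m+1)$.

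In the odd case $k=2m+1$, \eqref{threep} gives
\[
2(-1)^m(2m+1)|q_0|^{-2m-3}\,q_0\big((-1)^m(m+1)+(-1)^{m-1}m\big)=2(2m+1)\,q_0|q_0|^{-2m-3}.
\]
In the even case, \eqref{secondp} gives a bracket equal to $q_0^2(-1)^{m-1}m\big((2m+1)-(2m-1)\big)$. Multiplying by the prefactor yields $2\cdot 2m\, q_0^3|q_0|^{-2m-3}$.

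These expressions reduce to $2k|q_0|^{-k-1}$ for $q_0>0$. For $q_0<0$ a sign $\operatorname{sgn}(q_0)^{k}$ appears. This agrees with the direct one-variable computation $2c_k\partial_{q_0}^{k-1}\big(q_0|q_0|^{-3}\big)$, and I would double-check that sign convention against the statement. The whole computation is routine bookkeeping.

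For (iii), by homogeneity it suffices to take $|q|=1$. We bound the Jacobi polynomials with Theorem~\ref{LEMjacobiestimate}, using $\max\{\beta,\gamma\}=1$ in both cases:
\[
|P_{m-1}^{(\pm\frac12,1)}|\le m, \qquad |P_m^{(-\frac12,1)}|\le m+1.
\]
Also $|q_0|,|q|,|\bar q|\le1$.

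In the even case this gives a bound of $2\big((2m+1)m+(2m-1)m\big)=8m^2=2k^2$. In the odd case it gives $2(2m+1)(2m+1)=2k^2$.

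The remaining step is to compare $2k^2$ with $C(2k^2-6k+5)$ uniformly in $k$. Since $2k^2-6k+5=2(k-\tfrac32)^2+\tfrac12>0$ for all $k$, and it is $\ge k^2/2$ for large $k$ while being positive for the finitely many small $k$, a single constant $C$ works. The only real obstacle is this careful bookkeeping of the constants and the $m=0$ convention $P_{-1}\equiv0$; beyond that, the argument is a direct application of the earlier results.
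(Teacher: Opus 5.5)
Your argument is correct. For (i) and (ii) it is essentially the paper's: homogeneity is read off the closed forms (your alternative via \eqref{Pk} is equally valid), and (ii) comes from evaluating \eqref{secondp}--\eqref{threep} at Jacobi argument $-1$ using \eqref{EQjacobiextreme}.

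Your doubt about the sign in (ii) is justified. For odd $k$ and $q_0<0$ one gets $P^{(-k)}(q_0)=2k\,q_0|q_0|^{-k-2}=-2k|q_0|^{-k-1}$; already $P^{(-1)}(q_0)=2q_0|q_0|^{-3}$. So the identity as stated holds only for $q_0>0$ or $k$ even, and the general form is $2k\,(\operatorname{sgn} q_0)^k|q_0|^{-k-1}$. The paper's proof loses the sign when it rewrites $q_0|q_0|^{-2m-3}$ as $q_0^{-2m-2}$.

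There is one slip in your even case. At $q=q_0$ the large bracket in \eqref{secondp} equals $2m(-1)^{m-1}q_0$, with a single factor $q_0$; the second factor is the $\bar q$ in the prefactor. The result is therefore $4m\,q_0^2|q_0|^{-2m-3}=4m|q_0|^{-2m-1}$ for every $q_0\neq 0$. Your expression $q_0^3|q_0|^{-2m-3}$ has the wrong degree of homogeneity and would contradict (i).

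For (iii) your route genuinely differs from the paper's. The paper starts from the Leibniz form \eqref{EQformulaP-k} and applies \eqref{EQuniversalestmodulus} with $\alpha=3$ to $\partial_{q_0}^{k-1}|q|^{-3}$ and $\partial_{q_0}^{k-2}|q|^{-3}$. This is exactly where the shape $(k-1)^2+(k-2)^2=2k^2-6k+5$ comes from. That estimate, however, carries a constant from Stirling asymptotics and is stated only for $\ell\in\mathbb{N}$; its right-hand side vanishes at $\ell=0$, so $k=1,2$ must be absorbed into $C$ separately.

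You instead apply the exact maximum from Theorem~\ref{LEMjacobiestimate} to the closed forms on the unit sphere. This gives the explicit bound $|P^{(-k)}(q)|\le 2k^2|q|^{-k-1}$ for all $k$, and hence \eqref{est00} with $C=8$, since the ratio $2k^2/(2k^2-6k+5)$ peaks at $k=2$. Your version is shorter and fully explicit. The paper's version has the merit of reusing the general derivative estimate \eqref{EQuniversalestmodulus}, which it also needs later for $\mathcal{P}^{(-k)}$.
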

\begin{proof}
	\begin{enumerate}[wide,label=\textnormal{(\roman{*})}]
		\item Let $a\in (0,+\infty)$ and suppose first that $k$ is even, i.e., $k=2m$ with $m \geq 1$. By \eqref{secondp},
		\[
		P^{(-k)}(aq) = a^{-2m-1} P^{(-k)}(q).
		\]
		Let now $k$ be odd, i.e., $k=2m+1$, with $m \geq 1$ (the case $k=1$ is clear). By \eqref{threep},
		\[
		P^{(-k)}(aq) = a^{-2m-2} P^{(-k)}(q).
		\]
		
		\item Let first $k=2m$ with $m \in \mathbb{N}$. By \eqref{EQjacobiextreme} and \eqref{secondp}, we get
		\begin{align*}
			P^{(-k)}(q_0)&=\frac{2 (-1)^{m+1}q_0}{|q_0|^{2m+3}} \bigg((2m+1)q_0 P^{\big(\frac{1}{2},1\big)}_{m-1}(-1)-(2m-1)q_0 P_{m-1}^{\big(-\frac{1}{2},1\big)}(-1)\bigg)\\
			& = \frac{2}{|q_0|^{2m+1}} \bigg((2m+1) \binom{m}{m-1}-(2m-1) \binom{m}{m-1}\bigg)\\
			& =  4m |q_0|^{-2m-1}=2k|q_0|^{-k-1}.
		\end{align*}
		For the case $k=2m+1$ with $m \in \mathbb{N}\cup\{0\}$ we have, by \eqref{EQjacobiextreme} and \eqref{threep},
	\begin{align*}
		P^{(-k)}(q_0)&= \frac{2(-1)^m (2m+1)q_0}{|q_0|^{2m+3}}
		\bigg(P^{\big(-\frac{1}{2},1\big)}_m(-1)+P^{\big(\frac{1}{2},1\big)}_{m-1}(-1)\bigg)\\
		& = \frac{2 (2m+1)}{q_0^{2m+2}} \bigg( \binom{m+1}{m}-\binom{m}{m-1}\bigg) = 2 (2m+1)q_0^{-2m-2} = 2k|q_0|^{-k-1},
	\end{align*}
	since $k+1$ is even.

	\item The product rule for differentiation \eqref{EQformulaP-k} and estimate \eqref{EQuniversalestmodulus} yield
		\begin{align*}
			|P^{(-k)}(q)| &\leq \frac{2}{(k-1)!} \bigg( |q| \bigg|\partial_{q_0}^{k-1}\bigg(\frac{1}{|q|^3}\bigg)\bigg|+(k-1)\bigg|\partial_{q_0}^{k-2}\bigg(\frac{1}{|q|^3}\bigg)\bigg| \bigg)\\
			& \leq C \big( (k-1)^2+(k-2)^2\big)|q|^{-k-1}=C(2k^2-6k+5)|q|^{-k-1}.\qedhere
		\end{align*}
	\end{enumerate}
\end{proof}

We now have the tools to show that the fractional Laplacian $(-\Delta)^\frac{1}{2}$ applied to appropriate slice hyperholomorphic functions preserves the radius of convergence.

\begin{theorem}
	\label{main1}
	Let $\{a_{-k}\}\subset \mathbb{H}$ and assume that
	\[
	R_1= \limsup_{k\to \infty} |a_{-k}|^\frac{1}{k},
	\]
	so that the series
	\[
	f(q)=\sum_{k=1}^\infty q^{-k}a_{-k}
	\]
	defines a slice hyperholomorphic function on $\mathcal{A}_{R_1,\infty}$ (and the series converges absolutely and uniformly on compact subsets of  $\mathcal{A}_{R_1,\infty}$). Then, the series
	\[
	(-\Delta)^\frac{1}{2} f(q) =\sum_{k=1}^\infty P^{(-k)}(q) a_{-k}
	\]
	also converges uniformly and absolutely on compact subsets of $\mathcal{A}_{R_1,\infty}$.
\end{theorem}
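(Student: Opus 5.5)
The proof will be a Weierstrass M-test argument built on the uniform estimate \eqref{est00} of Theorem~\ref{STIMAPk-1}. First, fix a compact set $K\subset\mathcal{A}_{R_1,\infty}$. By compactness there is $r>R_1$ with $|q|\geq r$ for all $q\in K$. Since each $P^{(-k)}$ is a continuous function on $\mathbb{H}\backslash\{0\}$ (it is given by the closed formulas \eqref{secondp}--\eqref{threep}, equivalently by \eqref{Pk}), the partial sums $\sum_{k=1}^N P^{(-k)}(q)a_{-k}$ are continuous on $K$. Uniform absolute convergence will follow once we produce a summable majorant independent of $q\in K$.

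Second, by \eqref{est00} and multiplicativity of the quaternionic modulus, for $q\in K$ we have
\[
|P^{(-k)}(q)a_{-k}|\leq C(2k^2-6k+5)\,|q|^{-k-1}|a_{-k}|\leq \frac{C}{r}\,(2k^2-6k+5)\,\frac{|a_{-k}|}{r^{k}}=:M_k .
\]
Choose $\rho$ with $R_1<\rho<r$. Since $\limsup_k |a_{-k}|^{1/k}=R_1<\rho$, there is $k_0$ with $|a_{-k}|\leq \rho^k$ for $k\geq k_0$. Hence for $k\geq k_0$, $M_k\leq \frac{C}{r}(2k^2-6k+5)(\rho/r)^k$, and $\sum_k k^2(\rho/r)^k<\infty$ since $\rho/r<1$ (root test: $(k^2)^{1/k}\to1$). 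The Weierstrass M-test then yields absolute and uniform convergence of $\sum_k P^{(-k)}(q)a_{-k}$ on $K$. If $R_1=0$ the same argument works with any $\rho\in(0,r)$.

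The only genuinely delicate point is that \eqref{est00} must hold with a constant $C$ independent of $k$; this is exactly what item (iii) of Theorem~\ref{STIMAPk-1} provides, relying in turn on the uniform Jacobi bound of Theorem~\ref{LEMjacobiestimate} via \eqref{EQuniversalestmodulus} with $\alpha=3$. Granting that, the polynomial factor $2k^2-6k+5$ is harmless against the geometric decay, so no loss of radius occurs. I would also remark that noncommutativity causes no issue, as only the moduli $|P^{(-k)}(q)|\,|a_{-k}|$ enter the estimate.
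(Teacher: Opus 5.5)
Your proof is correct and follows the paper's argument. The paper also combines the $k$-uniform bound \eqref{est00} with root-test convergence of Laurent principal parts; it packages your Weierstrass M-test as an application of Proposition~\ref{conve} to the coefficients $\frac{C}{R_1}(2k^2-6k+5)a_{-k}$. Your version uses $|q|\geq r>R_1$ on a compact set instead of $|q|^{-1}<R_1^{-1}$ on the whole shell, so it also covers the case $R_1=0$, where the paper's constant $C/R_1$ is undefined.
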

\begin{proof}
	By \eqref{est00}, we have
	\[
	\big|P^{(-k)}(q) a_{-k} \big| \leq C |a_{-k}| (2k^2-6k+5)|q|^{-k-1} \leq \frac{C}{R_1} |a_{-k}| (2k^2-6k+5) |q|^{-k}.
	\]
	Applying Proposition~\ref{conve} with $\frac{C}{R_1}  (2k^2-6k+5)a_{-k}$ in place of $a_{-k}$, we obtain the convergence of the desired series.
\end{proof}

\begin{remark}
	Although the series defining  the function $f$ from Theorem~\ref{main1} is a Laurent series, the series defining the function $(-\Delta)^\frac{1}{2} f(q) $ is not, as shown by the explicit expressions of the terms $P^{(-k)}(q)$ given in Theorem~\ref{minusk}. In fact, as already noted in Remark~\ref{REMnoslice}, the functions $P^{(-k)}(q)$ are not even slice hyperholomorphic.
\end{remark}

We now show that the fractional Laplacian $(-\Delta)^\frac{1}{2}$ maps slice hyperholomorphic functions into axially analytic harmonic functions of type $(1,\frac{1}{2})$. We first prove two auxiliary results.

\begin{lemma}
\label{axiall}
For $k \in \mathbb{N}$, the function $P^{(-k)}(q)$ is of axial type.
\end{lemma}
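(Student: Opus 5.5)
We will read off the axial structure from the closed-form expressions of Theorem~\ref{minusk}, or equivalently from the formula \eqref{EQformulaP-k}. The aim is to write $P^{(-k)}(q)=A(q_0,|\underline{q}|)+I\,B(q_0,|\underline{q}|)$ with $I=\underline{q}/|\underline{q}|$ and $A,B$ real-valued, and then to check the even--odd conditions \eqref{co1}.

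\textbf{Step 1 (radial factors depend only on $(q_0,|\underline{q}|)$).} By \eqref{EQformulaP-k}, we have $P^{(-k)}(q)=2c_k\big(\bar q\,\partial_{q_0}^{k-1}|q|^{-3}+(k-1)\partial_{q_0}^{k-2}|q|^{-3}\big)$. Write $g_j(q):=\partial_{q_0}^{j}|q|^{-3}$. Since $|q|^2=q_0^2+|\underline{q}|^2$, each $g_j$ is a real-valued function of $(q_0,|\underline{q}|)$ only. This is also visible in Proposition~\ref{PROPdermodulus1}: the argument $1-2q_0^2/|q|^2$ of the Jacobi polynomials and the powers of $|q|$ depend only on $q_0$ and $|\underline{q}|^2$. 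Hence $g_j(q_0,v)=g_j(q_0,-v)$, i.e.\ each $g_j$ is even in $v$.

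\textbf{Step 2 (splitting $\bar q$).} Write $\bar q=q_0-\underline{q}=q_0-I|\underline{q}|$. Substituting this gives
\[
A(q_0,v):=2c_k\big(q_0\,g_{k-1}(q_0,v)+(k-1)g_{k-2}(q_0,v)\big),\qquad B(q_0,v):=-2c_k\,v\,g_{k-1}(q_0,v),
\]
with the convention that the $g_{k-2}$ term is absent when $k=1$. Then $P^{(-k)}(q)=A(q_0,|\underline{q}|)+I\,B(q_0,|\underline{q}|)$, and $A,B$ are real-valued. By Step~1, $A$ is even in $v$, and $B$ is odd in $v$ because of the factor $v$. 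So \eqref{co1} holds. The same conclusion can be checked directly on \eqref{secondp} and \eqref{threep}, since $\bar q q=|q|^2$ is real and $\bar q$ splits exactly as above.

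\textbf{Step 3 (domain and distributional identification).} The representation above holds pointwise on $\mathbb{H}\setminus\mathbb{R}$, which is axially symmetric. On the real axis $B=0$, so the formula extends consistently to $\mathbb{H}\setminus\{0\}$. This agrees with Theorem~\ref{STIMAPk-1}(ii), where $P^{(-k)}(q_0)$ is real. The function given by Proposition~\ref{negative3} is smooth away from the origin and represents the distribution, so the axial form is a property of this representative.

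The only mild obstacle is justifying that the distributional identity of Proposition~\ref{negative3} yields a genuine function to which Definition~\ref{axial1} applies. We handle it by using the smooth representative, which satisfies \eqref{EQfderivatives}, on the axially symmetric set $\mathbb{H}\setminus\{0\}$.
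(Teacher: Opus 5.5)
Your proof is correct and follows essentially the paper's route. The paper also substitutes $\bar q=q_0-\underline{\omega}\,|\underline{q}|$ into an explicit expression for $P^{(-k)}(q)$ in which all other factors are real functions of $q_0$ and $|q|$, and then reads off real-valued $A$ and $B$ with the required even--odd parity. The only differences are that the paper works from the Jacobi-polynomial formulas of Theorem~\ref{minusk} rather than the intermediate formula \eqref{EQformulaP-k}, and that it leaves implicit your Step~3 point about working with the smooth representative on $\mathbb{H}\setminus\{0\}$.
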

\begin{proof}
By Theorem \ref{minusk} and writing $\bar{q}=q_0-\underline{\omega}|\underline{q}|$, where $ \underline{\omega}= \frac{\underline{q}}{|\underline{q}|}$, we have
\[
P^{(-k)}(q_0, |\underline{q}|)=A(q_0, |\underline{q}|)+ \underline{\omega} B(q_0, |\underline{q}|),
\]
where
\[
A(q_0, |\underline{q}|)=\frac{2 (-1)^m}{|q|^{2m+3}}\mathcal{M}(q_0, |\underline{q}|)
\]
with
\[
\mathcal{M}(q_0, |\underline{q}|):=\begin{cases}
-(2m+1)q_0^2 P_{m-1}^{\big(\frac{1}{2},1\big)}\Big(1-\frac{2q_0^2}{|q|^2}\Big)+ |q|^2 (2m-1)P_{m-1}^{\big(-\frac{1}{2},1\big)}\Big(1-\frac{2q_0^2}{|q|^2}\Big), & k=2m,\\
q_0 (2m+1)P_{m}^{\big(-\frac{1}{2},1\big)}\Big(1-\frac{2q_0^2}{|q|^2}\Big)+q_0 (2m+1)P_{m-1}^{\big(\frac{1}{2},1\big)}\Big(1-\frac{2q_0^2}{|q|^2}\Big), & k=2m+1,
\end{cases}
\]
and
\[
B(q_0, |\underline{q}|)= \frac{2 (-1)^{m}(2m +1)}{|q|^{2m+3}}\begin{cases}
q_0 |\underline{q}|P_{m-1}^{\big(\frac{1}{2},1\big)}\Big(1-\frac{2q_0^2}{|q|^2}\Big), & k=2m,\\
-|\underline{q}|P_{m}^{\big(-\frac{1}{2},1\big)}\Big(1-\frac{2q_0^2}{|q|^2}\Big), & k=2m+1.
\end{cases}
\]
It is clear that the above axial components satisfy the compatibility conditions $A(q_0, |\underline{q}|)=A(q_0,-|\underline{q}|)$ and $B(q_0,-|\underline{q}|)=-B(q_0,|\underline{q}|)$ from Definition~\ref{axial1}.
\end{proof}

\begin{lemma}
	\label{LEMaxiallymonomials}
	For $k\in \mathbb{N}$, the function $P^{(-k)}(q)$ is axially analytic-harmonic of type $(1,\frac{1}{2})$ as an element from $\Xi_{\frac{1}{2}}'(\mathbb{H})$.
\end{lemma}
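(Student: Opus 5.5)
The plan is to show that $D(-\Delta)^{\frac12}P^{(-k)}\equiv 0$ in $\Xi_{\frac12}'(\mathbb{H})$; axial type is already Lemma~\ref{axiall}. By Definition~\ref{polyclif}, what must vanish is $D(-\Delta)^\frac12 P^{(-k)} = D(-\Delta)^\frac12(-\Delta)^\frac12 (q^{-k})$. We could also read the definition literally as $D P^{(-k)}$ with $P^{(-k)}$ the already-fractional object. Given the diagram $\mathcal{SH}\to\mathcal{AAH}_{(1,\frac12)}\to\mathcal{AM}$, the intended meaning is that $(-\Delta)^\frac12 f\in\mathcal{AAH}_{(1,\frac12)}$ whenever $f\in\mathcal{SH}$, which requires $D(-\Delta)^\frac12\big((-\Delta)^\frac12 f\big)=0$. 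So we take $T=P^{(-k)}\in\Xi'_{\frac12}(\mathbb{H})$ (Proposition~\ref{negative3}) and compute $D(-\Delta)^\frac12 T$.

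First, the semigroup property (Lemma~\ref{LEMsemigroup}) applied to $q^{-k}$, which lies in $\Xi'(\mathbb{H})\subset \Xi'_\frac12(\mathbb{H})$ by Lemma~\ref{exchange}, gives
\[
(-\Delta)^\frac12 P^{(-k)}=(-\Delta)^\frac12(-\Delta)^\frac12 (q^{-k})=-\Delta (q^{-k}) \quad \text{in } \Xi'_\frac12(\mathbb{H}).
\]
Second, we apply $D$. The operator $D=\partial_{q_0}+\sum_j e_j\partial_{q_j}$ is a first-order differential operator with constant (quaternionic, left-multiplied) coefficients. Distributional derivatives are well defined on $\Xi_\frac12'(\mathbb{H})$ (Definition~\ref{DEFderivativepsi} together with Lemma~\ref{item3rem}), and left multiplication by $e_j$ acts componentwise via \eqref{EQquaternioniccomponent}. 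Hence $D(-\Delta)^\frac12P^{(-k)}=-D\Delta(q^{-k})$ in $\Xi'_\frac12(\mathbb{H})$. Third, we invoke the Fueter mapping theorem (Theorem~\ref{Fueter1}): $q^{-k}$ is intrinsic slice hyperholomorphic on $\mathbb{H}\setminus\{0\}$, so $\Delta(q^{-k})$ is axially Fueter regular there, i.e.\ $D\Delta(q^{-k})=0$ pointwise for $q\neq0$.

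The main obstacle is passing from this pointwise identity on $\mathbb{H}\setminus\{0\}$ to the distributional identity in $\Xi_\frac12'(\mathbb{H})$. For a general distribution, differentiating a function singular at the origin can produce Dirac-type terms supported at $0$. Here, however, $q^{-k}$ is smooth away from $0$ and satisfies \eqref{EQfderivatives}, and every test function $\varphi\in\Xi_\frac12(\mathbb{H})$ vanishes to infinite order at the origin. The plan is therefore to integrate by parts on $\{|q|>\varepsilon\}$ and show that the boundary terms on $|q|=\varepsilon$ vanish as $\varepsilon\to0$. Those terms are bounded by $\varepsilon^{3}\sup_{|q|=\varepsilon}|\partial^\beta q^{-k}|\,|\partial^\gamma\varphi|$ over finitely many multi-indices $\beta,\gamma$, and this tends to $0$ because $|\partial^\gamma\varphi(q)|\le M_N|q|^N$ for every $N$. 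Equivalently, Lemma~\ref{LEMproduct} guarantees that $q^{-k}\varphi$ and its derivatives extend smoothly with zero value at $0$. Alternatively, one can note that any distribution supported at the origin is a combination of derivatives of $\delta_0$, and these lie in $\Xi^\perp_\frac12(\mathbb{H})$ by Remark~\ref{REMperp}, so they vanish in the quotient anyway. Either route yields $\langle D\Delta (q^{-k}),\varphi\rangle=\int_{\mathbb{H}} (D\Delta q^{-k})\,\varphi\,dq=0$, which concludes the proof.
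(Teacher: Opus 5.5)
Your argument is correct and is essentially the paper's. You get axial type from Lemma~\ref{axiall}, then apply Lemma~\ref{LEMsemigroup} to $q^{-k}\in\Xi'(\mathbb{H})\subset\Xi'_{\frac12}(\mathbb{H})$ to obtain $D(-\Delta)^{\frac12}P^{(-k)}=-D\Delta(q^{-k})$, which vanishes by the Fueter mapping theorem. The paper simply asserts that this last identity holds in $\Xi'_{\frac12}(\mathbb{H})$. Your integration-by-parts check, with boundary terms on $|q|=\varepsilon$ killed by the infinite-order vanishing of test functions at $0$, supplies that step, which the paper leaves implicit.
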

\begin{proof}
By Lemma~\ref{axiall}, $P^{(-k)}(q)$ is of axial type. Since $q^{-k}$ defines a continuous linear functional in $\Xi'(\mathbb{R}^n)$ (see Lemma~\ref{exchange}) and $ \Xi'(\mathbb{R}^n)\subset\Xi'_{\frac12}(\mathbb{R}^n)$,
Lemma~\ref{LEMsemigroup} implies
\[
D(-\Delta)^\frac{1}{2}P^{(-k)}(q)=D(-\Delta)^\frac{1}{2}(-\Delta)^\frac{1}{2}\big(q^{-k}\big) =- D\Delta\big(q^{-k}\big).
\]
Since $f(q)=q^{-k}$ is slice hyperholomorphic, by the Fueter mapping theorem we have that $D\Delta\big(q^{-k}\big)=0$.
\end{proof}
\begin{theorem}
	Let $f$ be a slice hyperholomorphic function whose Laurent series only has principal part. Then, the function $(-\Delta)^\frac{1}{2} f(q) $ is axially analytic harmonic of type $(1,\frac{1}{2})$ and the series representing both $f$ and $(-\Delta)^\frac{1}{2}f$ converge on the same set.
\end{theorem}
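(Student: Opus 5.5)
The proof is essentially an assembly of the results already established in this section. I would start by writing $f$ as its principal part $f(q)=\sum_{k=1}^\infty q^{-k}a_{-k}$, convergent on $\mathcal{A}_{R_1,\infty}$ with $R_1=\limsup_{k\to\infty}|a_{-k}|^{1/k}$ (Proposition~\ref{conve}). Each summand $f_k(q)=q^{-k}$ satisfies \eqref{EQfderivatives}. By Lemma~\ref{exchange}, it therefore induces an element of $\Xi'(\mathbb{H})\subset\Xi'_{\frac12}(\mathbb{H})$, and it is of axial type. Hence $f$ is a function as in Definition~\ref{DEFquaternioniclaplacian}, and $(-\Delta)^\frac12 f$ is defined termwise by \eqref{EQdeflapquaternion} as $\sum_{k\ge1}P^{(-k)}(q)a_{-k}$.

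Next comes convergence, which I would take directly from Theorem~\ref{main1}. The estimate \eqref{est00} gives $|P^{(-k)}(q)a_{-k}|\le C(2k^2-6k+5)|a_{-k}||q|^{-k-1}$. Since $\limsup_k\big((2k^2-6k+5)|a_{-k}|\big)^{1/k}=R_1$, the polynomial factor does not change the radius, so the new series converges absolutely and uniformly on compact subsets of $\mathcal{A}_{R_1,\infty}$. This is exactly the set where the Laurent series of $f$ converges, so the two series share their domain of convergence. One small caveat in Theorem~\ref{main1} is the division by $R_1$, which fails when $R_1=0$. I would avoid it by bounding $|q|^{-1}\le r^{-1}$ on the compact set $K\subset\{|q|\ge r\}$ and applying the root test directly.

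For membership in $\mathcal{AAH}_{(1,\frac12)}$, Lemma~\ref{axiall} shows that each $P^{(-k)}$ is of axial type. Lemma~\ref{LEMaxiallymonomials} gives $D(-\Delta)^\frac12 P^{(-k)}=-D\Delta(q^{-k})=0$ in $\Xi'_{\frac12}(\mathbb{H})$, using the semigroup property (Lemma~\ref{LEMsemigroup}) and the Fueter mapping theorem. Since $(-\Delta)^\frac12$ on these series is defined termwise, and $D$ is likewise applied termwise to the convergent series, I get $D(-\Delta)^\frac12\big((-\Delta)^\frac12 f\big)=\sum_k D(-\Delta)^\frac12 P^{(-k)}\,a_{-k}=0$. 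Right multiplication by the constants $a_{-k}$ commutes with $D$, because $D$ acts on the left. This matches Definition~\ref{polyclif}, applied with $f_k=P^{(-k)}$ in place of $q^{-k}$.

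The main obstacle is interpretive rather than computational. The statement says that ``$(-\Delta)^\frac12 f$ is axially analytic harmonic of type $(1,\frac12)$'', which, read literally via Definition~\ref{polyclif}, is the condition $D(-\Delta)^\frac12\big((-\Delta)^\frac12 f\big)=0$; this is what Lemma~\ref{LEMaxiallymonomials} supplies termwise. I would make this reading explicit and note that everything is formal in the sense of Definitions~\ref{DEFquaternioniclaplacian} and~\ref{polyclif}. That is, the vanishing is verified term by term in $\Xi'_{\frac12}(\mathbb{H})$, while the convergence statement guarantees that the resulting series is a genuine function on $\mathcal{A}_{R_1,\infty}$. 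If a pointwise statement is wanted, I would add a short remark justifying termwise differentiation. The derivatives $\partial_{q_0}^j$ of $P^{(-k)}$ obey bounds of the same shape as \eqref{est00}, with an extra polynomial factor in $k$, via \eqref{EQuniversalestmodulus}, so the differentiated series also converge locally uniformly.
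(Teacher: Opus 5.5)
Your proposal is correct and is essentially the paper's own proof, which consists of citing Lemma~\ref{LEMaxiallymonomials} termwise (via the termwise definition of $(-\Delta)^{\frac12}f$) and Theorem~\ref{main1} for convergence. Your explicit reading of the class condition as $D(-\Delta)^{\frac12}\big((-\Delta)^{\frac12}f\big)=0$ is exactly what that lemma delivers. Your handling of $R_1=0$, bounding $|q|^{-1}$ on the compact set rather than dividing by $R_1$, is a small but genuine repair of the estimate in the paper's proof of Theorem~\ref{main1}. Note that, as in the paper, ``same set'' is only established in the sense that the new series converges on all of $\mathcal{A}_{R_1,\infty}$; neither argument shows divergence for $|q|<R_1$.
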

The proof just follows from Lemma~\ref{LEMaxiallymonomials} and the definition of $(-\Delta)^\frac{1}{2} f(q)$. The convergence of the corresponding series is Theorem~\ref{main1}.

\medskip

Let us now discuss the connection between axially Fueter regular functions and particular analytic harmonic functions of type $ \big(1, \frac{1}{2}\big)$ defined by series where $f_k=P^{(-k)}$  (see Definitions~\ref{DEFfueterregular} and \ref{polyclif}, respectively). In particular we show that for functions $g$ that axially analytic harmonic of type $ \big(1, \frac{1}{2}\big)$, the result of the operation $(-\Delta)^\frac{1}{2}g$ is an axially Fueter regular function.

First of all, if
\begin{equation*}
	g(q)=\sum_{k=1}^{\infty} P^{(-k)}(q) a_{-k}, \qquad \{a_{-k}\} \subset \mathbb{H}
\end{equation*}
is axially analytic harmonic and $R_1= \limsup_{k\to \infty} |a_{-k}|^\frac{1}{k}$, the series converges in $\mathcal{A}_{R_1,\infty}$, as shown in Lemma~\ref{main1}.

Secondly, the semigroup property of $(-\Delta)^\frac{1}{2}$ in $\Xi'_\frac{1}{2}(\R^n) $  (Lemma~\ref{LEMsemigroup}) readily yields, together with Theorem~\ref{main1},
\[
(-\Delta)^{\frac{1}{2}} P^{(-k)}(q) = (-\Delta)^{\frac{1}{2}} (-\Delta)^{\frac{1}{2}} \big(q^{-k}\big) = -\Delta \big(q^{-k}\big).
\]
Therefore, the series
\begin{equation}
	\label{EQmonogenicseries}
	(-\Delta)^{\frac{1}{2}} g(q) =  \sum_{k=1}^{\infty}(-\Delta)^{\frac{1}{2}} P^{(-k)}(q) a_{-k}  = \sum_{k=1}^{\infty}-\Delta \big(q^{-k}\big) a_{-k}  =-\Delta\bigg[ \sum_{k=1}^{\infty} q^{-k} a_{-k}\bigg]
\end{equation}
also converges in $\mathcal{A}_{R_1,\infty}$, since it is just obtained by applying a differential operator to a power series converging in $\mathcal{A}_{R_1,\infty}$ (in fact, absolutely and uniformly in compact subsets of it). It is worth mentioning that series of the type \eqref{EQmonogenicseries} were also obtained in \cite{CDSm} using different techniques.

In view of the above observations, we can assert the following.
\begin{theorem}\label{THMlaurentsecond}
	Let $g$ be an analytic harmonic series of type $\left(1, \frac{1}{2}\right)$ with $f_k=P^{(-k)}$ converging in $\mathcal{A}_{R_1,\infty}$ for some $R_1>0$. Then $(-\Delta)^\frac{1}{2}g(q)$ is an axially Fueter regular function and the corresponding series \eqref{EQmonogenicseries} converges in $\mathcal{A}_{R_1,\infty}$. In particular, this is the case if $g(q)=(-\Delta)^\frac{1}{2}f(q)$ for some slice hyperholomorphic function $f$.
\end{theorem}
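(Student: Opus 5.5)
The proof goes term by term, following the discussion preceding the statement. Write $g(q)=\sum_{k\ge1}P^{(-k)}(q)a_{-k}$ with $R_1=\limsup_k|a_{-k}|^{1/k}$. By Definition~\ref{DEFquaternioniclaplacian}, $(-\Delta)^{\frac12}g$ is the series $\sum_k (-\Delta)^{\frac12}P^{(-k)}(q)\,a_{-k}$.

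\textbf{Step 1: each term.} Since $q^{-k}$ satisfies \eqref{EQfderivatives}, Lemma~\ref{exchange} gives $q^{-k}\in\Xi'(\mathbb{H})\subset\Xi'_{\frac12}(\mathbb{H})$. Lemma~\ref{LEMsemigroup} then yields
\[
(-\Delta)^{\frac12}P^{(-k)}=(-\Delta)^{\frac12}(-\Delta)^{\frac12}q^{-k}=-\Delta\big(q^{-k}\big)
\]
in $\Xi'_{\frac12}(\mathbb{H})$. Because $q^{-k}$ is smooth away from the origin, the distributional derivatives of Definition~\ref{DEFderivativepsi} coincide with the classical ones (integration by parts against test functions vanishing to infinite order at $0$). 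So each term is the pointwise function $-\Delta(q^{-k})$ on $\mathbb{H}\setminus\{0\}$.

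\textbf{Step 2: convergence.} The series $\sum_k q^{-k}a_{-k}$ converges absolutely and locally uniformly on $\mathcal{A}_{R_1,\infty}$ by Proposition~\ref{conve}. I would bound the derivatives directly rather than appeal to Weierstrass-type arguments, which are not immediately available for quaternionic series. Write $q^{-k}=c_k\partial_{q_0}^{k-1}(\bar q|q|^{-2})$ by \eqref{negative2}. Then $\Delta q^{-k}=c_k\partial_{q_0}^{k-1}\Delta(\bar q|q|^{-2})=c_k\partial_{q_0}^{k-1}(-4\bar q|q|^{-4})$, since $\Delta|q|^{-2}=0$ in $\mathbb{R}^4$ and $\Delta(\bar q\,|q|^{-2})=2\nabla\bar q\cdot\nabla|q|^{-2}$, which gives a multiple of $\bar q|q|^{-4}$. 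Expanding with the Leibniz rule as in \eqref{EQformulaP-k} and using \eqref{EQuniversalestmodulus} with $\alpha=4$ gives $|\Delta q^{-k}|\le C k^{4}|q|^{-k-2}$. The polynomial factor is harmless, so Proposition~\ref{conve} applied to the coefficients $Ck^4a_{-k}/R_1^{2}$ (with $R_1>0$ as assumed) gives absolute and uniform convergence on compact subsets of $\mathcal{A}_{R_1,\infty}$. The same bounds, applied to one further $q_0$- or $q_j$-derivative, also justify applying $D$ term by term. Hence \eqref{EQmonogenicseries} holds as a convergent series and equals $-\Delta\big[\sum_k q^{-k}a_{-k}\big]$.

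\textbf{Step 3: Fueter regularity.} Each $q^{-k}$ is intrinsic slice hyperholomorphic, so by the Fueter mapping theorem (Theorem~\ref{Fueter1} together with the subsequent remark on dropping the upper-half-plane assumption) $\Delta q^{-k}$ is axially Fueter regular. It is of axial type and satisfies $D\Delta q^{-k}=0$. Term-by-term differentiation from Step 2 gives $D(-\Delta)^{\frac12}g=-\sum_k D\Delta(q^{-k})\,a_{-k}=0$. Axial type survives the locally uniform limit, since right multiplication by constants $a_{-k}$ preserves the left-slice form $A+IB$ and the even-odd conditions pass to the limit.

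\textbf{Last assertion.} If $g=(-\Delta)^{\frac12}f$ with $f=\sum q^{-k}a_{-k}$, then $g$ is exactly such a series by Theorem~\ref{main1} and the preceding theorem, so it is covered.

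\textbf{Main obstacle.} The delicate point is Step 2: justifying that the formal series defined termwise equals a genuine function and can be differentiated term by term, which needs the derivative estimates with explicit $k$-dependence. I expect that controlling the growth in $k$ through \eqref{EQuniversalestmodulus}, as done in Theorem~\ref{STIMAPk-1}, suffices.
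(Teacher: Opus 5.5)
Your proposal is correct and follows the same route as the paper. The paper likewise uses the termwise semigroup identity $(-\Delta)^{\frac12}P^{(-k)}=-\Delta(q^{-k})$ from Lemma~\ref{LEMsemigroup}, the Fueter mapping theorem for each $q^{-k}$, and convergence of $\sum_k -\Delta(q^{-k})a_{-k}$ on $\mathcal{A}_{R_1,\infty}$; your Step~2 just makes explicit, via polynomial-in-$k$ derivative bounds, the termwise differentiation the paper dismisses in one line, and Proposition~\ref{prop} even gives the sharper bound $|\Delta(q^{-k})|\le 2k(k+1)|q|^{-k-2}$.
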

\begin{proof}
	The convergence of the involved  series was already discussed above. On the other hand, Since $P^{(-k)}(q)$ is axially analytic harmonic of type $(1,\frac{1}{2})$, by the semigroup property of $(-\Delta)^\frac{1}{2}$, we have that
	$(-\Delta)^\frac{1}{2}P^{(-k)}\big(q^{-k}\big)=-\Delta\big(q^{-k}\big)$ is axially Fueter regular (because $q^{-k}$ is slice hyperholomorphic), see Theorem \ref{Fueter1}.
\end{proof}

\begin{remark}
	Theorems \ref{main1} and \ref{THMlaurentsecond} yield the following factorization of the Fueter--Sce--Qian construction \eqref{diagintro} on spherical shells $\mathcal{A}_{R_{1},\infty}$:
	\begin{equation*}
		\begin{CD}
			&& \textcolor{black}{\mathcal{SH}(\mathcal{A}_{R_{1},\infty})}  @>\   (-\Delta)^{\frac{1}{2}}\ >>\textcolor{black}{\mathcal{AAH}_{\left(1,\frac{1}{2}\right)  }(\mathcal{A}_{R_{1},\infty})}@>\   (-\Delta)^{\frac{1}{2}}\ >>\textcolor{black}{\mathcal{AM}(\mathcal{A}_{R_{1},\infty})}.
		\end{CD}
	\end{equation*}
\end{remark}

To conclude this section, we derive some interesting properties of $\Delta\big(q^{-k}\big)$ that may be of independent interest. As is shown in \cite[Remark 8.6]{CDSm}, one has
\begin{equation}
	\label{EQlalppolynomial}
-\Delta\big(q^{-k}\big) = 4|q|^{-2(k+1)} S_k(q),
\end{equation}
where $S_k(q)$ is the polynomial
\[
S_k(q) = \sum_{j=0}^{k-1} (k-j)\bar{q}^{k-j}q^j.
\]
Using the semigroup property of $(-\Delta)^\frac{1}{2}$ in $\Xi'_{\frac{1}{2}}(\R^n)$ and the above results, we can obtain a interesting identity relating the polynomials $S_k(q)$ and the derivatives of the Cauchy--Fueter kernel \eqref{Fueter}.
\begin{proposition}
	\label{oneN}
	Let $q \in \mathbb{H}\backslash \{0\}$ and $k\in \mathbb{N}$. Then
	\begin{equation}
		\label{oneN1}
		-\Delta \big(q^{-k}\big)= 4c_k\partial_{q_0}^{k-1}E(q),
	\end{equation}
	where $E(q)=\frac{\bar q}{|q|^4}$ is the Cauchy-Fueter kernel.
\end{proposition}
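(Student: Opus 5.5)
The plan is to prove \eqref{oneN1} via the semigroup property, with a direct check as backup. By Lemma~\ref{LEMsemigroup} (applicable because $q^{-k}$ induces an element of $\Xi'(\mathbb{H})\subset\Xi'_{\frac12}(\mathbb{H})$ by Lemma~\ref{exchange}), we have
\[
-\Delta\big(q^{-k}\big)=(-\Delta)^{\frac12}(-\Delta)^{\frac12}\big(q^{-k}\big)=(-\Delta)^{\frac12}P^{(-k)}(q).
\]
Proposition~\ref{negative3} gives $P^{(-k)}=2c_k\partial_{q_0}^{k-1}\big(\bar q|q|^{-3}\big)$ in $\Xi'_{\frac12}(\mathbb{H})$. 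The function $\bar q|q|^{-3}$ satisfies \eqref{EQfderivatives}, so it lies in $\Xi'(\mathbb{H})$. Lemma~\ref{LEMcommutativity} then lets us commute $(-\Delta)^{\frac12}$ past $\partial_{q_0}^{k-1}$, giving
\[
-\Delta\big(q^{-k}\big)=2c_k\,\partial_{q_0}^{k-1}(-\Delta)^{\frac12}\big(\bar q|q|^{-3}\big).
\]

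It therefore remains to compute $(-\Delta)^{\frac12}(\bar q|q|^{-3})$. I would repeat the Fourier computation from the proof of Proposition~\ref{negative3}, now with degree-one harmonic numerator $\bar q$ and $n=4$:
\begin{itemize}
\item By \eqref{zeroone} with $k=1,\alpha=2$, we get $\mathcal{F}\big(\bar q|q|^{-3}\big)=\gamma_{1,2}(4)\,\bar\xi|\xi|^{-3}=i\,\bar\xi|\xi|^{-3}$.
\item Multiplying by $2\pi|\xi|$ gives $2\pi i\,\bar\xi|\xi|^{-2}$.
\item By \eqref{zeroone1} with $k=1,\alpha=1$, we get $\mathcal{F}^{-1}\big(\bar\xi|\xi|^{-2}\big)=\gamma_{1,1}(4)^{-1}\bar q|q|^{-4}$, where $\gamma_{1,1}(4)=i\pi\,\Gamma(1)/\Gamma(2)=i\pi$.
\end{itemize}
Hence $(-\Delta)^{\frac12}(\bar q|q|^{-3})=2\pi i\cdot(i\pi)^{-1}\,\bar q|q|^{-4}=2E(q)$, and so $-\Delta(q^{-k})=4c_k\partial_{q_0}^{k-1}E(q)$ as elements of $\Xi'_{\frac12}(\mathbb{H})$. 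Each step is justified in $\Xi'$ by Theorem~\ref{IN QUESTO TEOR}, exactly as in Proposition~\ref{negative3}.

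\textbf{Main obstacle.} The main difficulty is upgrading the distributional identity to a pointwise identity on $\mathbb{H}\setminus\{0\}$. Both sides are smooth there, but $\Xi^\perp$ contains polynomials and derivatives of $\delta_0$, so equality in $\Xi'_{\frac12}$ only determines the functions modulo such terms. I would sidestep this with a direct classical check. Since $\partial_{q_0}$ commutes with $\Delta$, \eqref{negative2} gives
\[
-\Delta\big(q^{-k}\big)=c_k\,\partial_{q_0}^{k-1}\big(-\Delta(\bar q|q|^{-2})\big).
\]
A direct computation in $\mathbb{R}^4$ yields $\Delta\big(q_0|q|^{-2}\big)=-4q_0|q|^{-4}$ (using $\Delta|q|^{-2}=0$ and $2\nabla q_0\cdot\nabla|q|^{-2}=-4q_0|q|^{-4}$), and the same holds for each $q_\ell$. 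Thus $-\Delta(\bar q|q|^{-2})=4\bar q|q|^{-4}=4E(q)$ pointwise for $q\neq0$, which proves \eqref{oneN1} pointwise and confirms the constant. As a consistency check, for $k=1$ this agrees with \eqref{EQlalppolynomial}: $4|q|^{-4}S_1(q)=4\bar q|q|^{-4}$.
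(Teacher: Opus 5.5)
Your proposal is correct. Its first part is exactly the paper's proof. It uses the semigroup property (Lemma~\ref{LEMsemigroup}), Proposition~\ref{negative3}, and the commutation of $(-\Delta)^{\frac12}$ with $\partial_{q_0}^{k-1}$ (Lemma~\ref{LEMcommutativity}). It then applies \eqref{zeroone} and \eqref{zeroone1} with $\gamma_{1,2}(4)=i$ and $\gamma_{1,1}(4)=i\pi$, which gives $(-\Delta)^{\frac12}(\bar q|q|^{-3})=2E(q)$.

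What you add is a genuinely different, elementary route. Apply $-\Delta$ to \eqref{negative2}, commute it with $\partial_{q_0}^{k-1}$, and use $\Delta(q_\ell|q|^{-2})=-4q_\ell|q|^{-4}$ on $\mathbb{R}^4\setminus\{0\}$. This gives $-\Delta(\bar q|q|^{-2})=4E(q)$, hence \eqref{oneN1} pointwise, with no Fourier analysis or fractional operators at all.

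This direct route buys two things the paper's argument leaves implicit. First, it shows the identity holds as an equality of smooth functions on $\mathbb{H}\setminus\{0\}$. The Fourier-side computation only identifies the two sides as functionals on $\Xi_{\frac12}(\mathbb{H})$, that is, modulo $\Xi_{\frac12}^\perp(\mathbb{H})$. That space contains at least polynomials and derivatives of $\delta_0$, but it is not characterized in the paper, so your phrase ``modulo such terms'' is slightly too specific. Second, it confirms the constant $4c_k$ independently, and your $k=1$ check against \eqref{EQlalppolynomial} is consistent with this.

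The paper's route serves a different purpose. It presents \eqref{oneN1} as a by-product of the factorization $-\Delta=(-\Delta)^{\frac12}(-\Delta)^{\frac12}$ and the formula for $P^{(-k)}$, which is the point of that section. As a proof of the stated pointwise identity, however, your direct computation suffices on its own.
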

\begin{proof}
	By the semigroup property of $(-\Delta)^\frac{1}{2}$ in $\Xi'_{\frac{1}{2}}(\R^n)$,  Lemma~\ref{LEMcommutativity}, and Proposition~\ref{negative3}, we have
	\begin{align*}
	-\Delta \big(q^{-k}\big) &= (-\Delta)^\frac{1}{2} (-\Delta)^\frac{1}{2}  \big(q^{-k}\big)  =  (-\Delta)^\frac{1}{2} P^{(-k)}(q) = 2c_k(-\Delta)^\frac{1}{2}  \partial_{q_0}^{k-1}\bigg( \frac{\bar{q}}{|q|^{3}}\bigg)\\
	& = 2c_k \partial_{q_0}^{k-1} (-\Delta)^\frac{1}{2}\bigg(  \frac{\bar{q}}{|q|^{3}}\bigg).
	\end{align*}
	Note that, in particular, the function $\frac{\bar{q}}{|q|^3}$ induces a functional in $\Xi'(\R^n)$. Therefore, by Theorem~\ref{IN QUESTO TEOR}, we have
	\[
	(-\Delta)^\frac{1}{2}\bigg(  \frac{\bar{q}}{|q|^{3}}\bigg)  =  \mathcal{F}^{-1} \bigg[ 2\pi |\xi| \mathcal{F} \bigg( \frac{\bar{q}}{|q|^{3}}\bigg)(\xi) \bigg](q).
	\]
	Using \eqref{zeroone} with $k=1$ and $\alpha=2$ and then \eqref{zeroone1} with $k=\alpha=1$ (in which case $\gamma_{1,2}(4)=i$ and $\gamma_{1,1}(4)=i\pi$, respectively), we get
	\[
	-\Delta \big(q^{-k}\big)=4 \pi i c_k \partial_{q_0}^{k-1} \bigg[ \mathcal{F}^{-1} \bigg(\frac{\overline{\xi}}{| \xi|^2}\bigg)\bigg](q)=4c_k  \partial_{q_0}^{k-1}E(q),
	\]
	as desired.
\end{proof}
\begin{corollary}
	Let $q \in \mathbb{H}\backslash \{0\}$ and $k\in \mathbb{N}\cup\{0\}$. Then
	\begin{equation}
		\label{EQlaplacianidentity}
	\partial_{q_0}^{k}E(q)  = (-1)^{k}|q|^{-2(k+2)}k!  S_{k+1}(q) = \frac{(-1)^k (k+2)!}{2}\bar{q}|q|^{-2(k+2)} Q_k(\bar{q}) ,
	\end{equation}
	where $Q_k$ is the Clifford--Appell polynomial  \eqref{ca}. In particular,
	\[
	-\Delta \big(q^{-k}\big)= 2 k(k+1) \bar{q}|q|^{-2(k+1)}Q_{k-1}(\bar{q}).
	\]
\end{corollary}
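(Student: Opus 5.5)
The plan is to combine Proposition~\ref{oneN} with the identity \eqref{EQlalppolynomial} from \cite[Remark 8.6]{CDSm}, and then to rewrite the polynomial $S_{k+1}$ in terms of the Clifford--Appell polynomial $Q_k$. Both sides of \eqref{oneN1} and \eqref{EQlalppolynomial} are smooth functions on $\mathbb{H}\backslash\{0\}$. So the identity of Proposition~\ref{oneN}, obtained in $\Xi'_{\frac{1}{2}}(\mathbb{H})$, can be read as a pointwise identity there. I would justify this by noting that $\Xi_{\frac{1}{2}}(\mathbb{H})$ is nontrivial (Theorem~\ref{THMxirnonempty}), and that the classical derivatives agree with the distributional ones (integration by parts, Lemma~\ref{exchange}). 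This passage from distributional to pointwise equality is the only point requiring care; the rest is bookkeeping.

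For the first equality in \eqref{EQlaplacianidentity}, fix $k\in\mathbb{N}\cup\{0\}$ and apply Proposition~\ref{oneN} with index $k+1$:
\[
-\Delta\big(q^{-(k+1)}\big)=4c_{k+1}\,\partial_{q_0}^{k}E(q).
\]
Comparing with \eqref{EQlalppolynomial} for $k+1$, namely $-\Delta\big(q^{-(k+1)}\big)=4|q|^{-2(k+2)}S_{k+1}(q)$, gives $c_{k+1}\partial_{q_0}^kE(q)=|q|^{-2(k+2)}S_{k+1}(q)$. Since $c_{k+1}=(-1)^k/k!$, we have $1/c_{k+1}=(-1)^kk!$, and the first equality follows.

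For the second equality, I will use that $q$ and $\bar q$ commute, since both lie in the same complex slice $\mathbb{C}_J$. Factoring out $\bar q$ gives
\[
S_{k+1}(q)=\sum_{j=0}^{k}(k+1-j)\,\bar q^{\,k+1-j}q^j=\bar q\sum_{j=0}^{k}(k-j+1)\,\bar q^{\,k-j}q^j .
\]
Evaluating \eqref{ca} at $\bar q$, and using $\overline{\bar q}=q$, we get $Q_k(\bar q)=\frac{2}{(k+1)(k+2)}\sum_{\ell=0}^k(k-\ell+1)\bar q^{\,k-\ell}q^\ell$. Hence
\[
S_{k+1}(q)=\frac{(k+1)(k+2)}{2}\,\bar q\,Q_k(\bar q).
\]
Multiplying by $(-1)^kk!\,|q|^{-2(k+2)}$ gives the factor $k!(k+1)(k+2)/2=(k+2)!/2$. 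Since $\bar q$ commutes with the real scalar $|q|^{-2(k+2)}$, the ordering in \eqref{EQlaplacianidentity} is harmless.

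Finally, for $k\in\mathbb{N}$, I insert the just-proved formula with $k-1$ in place of $k$ into \eqref{oneN1}:
\[
-\Delta\big(q^{-k}\big)=4\,\frac{(-1)^{k-1}}{(k-1)!}\cdot\frac{(-1)^{k-1}(k+1)!}{2}\,\bar q\,|q|^{-2(k+1)}Q_{k-1}(\bar q)=2k(k+1)\,\bar q\,|q|^{-2(k+1)}Q_{k-1}(\bar q),
\]
using $(k+1)!/(k-1)!=k(k+1)$ and $(-1)^{2(k-1)}=1$. This is the claimed formula.
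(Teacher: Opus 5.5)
Your argument is the paper's proof: combine \eqref{EQlalppolynomial} at index $k+1$ with \eqref{oneN1} using $c_{k+1}=(-1)^k/k!$, factor $\bar q$ out of $S_{k+1}$ to get $S_{k+1}(q)=\frac{(k+1)(k+2)}{2}\bar q\,Q_k(\bar q)$, and substitute back into \eqref{oneN1}. All your constants check out. What needs correcting is the justification you give for reading \eqref{oneN1} pointwise, which is the one step you yourself flag as delicate.

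Nontriviality of $\Xi_{\frac12}(\mathbb{H})$ gives no uniqueness. By Remark~\ref{REMperp}, polynomials annihilate $\Xi_{\frac12}(\mathbb{H})$. For instance, the smooth functions $0$ and $1$ coincide as elements of $\Xi'_{\frac12}(\mathbb{H})$, since every $\varphi\in\Phi(\mathbb{H})$ has vanishing zeroth moment. Nor can you localize in the usual way: $\Xi(\mathbb{R}^n)$ contains no nonzero compactly supported function, because such a function would have an entire Fourier transform vanishing to infinite order at $0$. So an identity proved only in $\Xi'_{\frac12}(\mathbb{H})$, as \eqref{oneN1} is, determines the functions involved only modulo $\Xi_{\frac12}^\perp(\mathbb{H})$. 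Integration by parts via Lemma~\ref{exchange} does not change this.

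The repair is easy, and it makes the fractional machinery unnecessary for this corollary. Identity \eqref{negative2} holds pointwise on $\mathbb{H}\setminus\{0\}$, and $\Delta$ commutes with $\partial_{q_0}$ on smooth functions there. Hence, pointwise,
\[
-\Delta\big(q^{-k}\big)=c_k\,\partial_{q_0}^{k-1}\big(-\Delta\, q^{-1}\big)=4c_k\,\partial_{q_0}^{k-1}E(q).
\]
Here $-\Delta(q^{-1})=4|q|^{-4}\bar q=4E(q)$ is just \eqref{EQlalppolynomial} with $k=1$, since $S_1(q)=\bar q$. It also follows directly: $|q|^{-2}$ is harmonic in $\mathbb{R}^4\setminus\{0\}$, and $2\sum_{i=0}^3\partial_{q_i}\bar q\,\partial_{q_i}|q|^{-2}=-4\bar q|q|^{-4}$. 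With \eqref{oneN1} established pointwise this way, the rest of your bookkeeping goes through unchanged. The paper passes over this point silently, so the corrected version is in fact slightly more rigorous than the printed proof.
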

\begin{proof}
The first equality in \eqref{EQlaplacianidentity} follows by combining \eqref{EQlalppolynomial} and \eqref{oneN1} taking into account that $c_{k+1}=(-1)^k (k!)^{-1}$. The second equality in \eqref{EQlaplacianidentity} follows from the fact that
\[
S_{k+1}(q) = \frac{(k+1)(k+2)\bar{q}}{2}Q_k(\bar{q}).
\]
Indeed,
\[
S_{k+1}(q)= \sum_{j=0}^{k} (k+1-j)\bar{q}^{k+1-j}q^j = \bar{q} \sum_{j=0}^{k} (k+1-j)\bar{q}^{k-j}q^j = \frac{(k+1)(k+2)\bar{q}}{2}Q_k(\bar{q}),
\]
The remaining equality is just a combination of \eqref{oneN1} and \eqref{EQlaplacianidentity}.
\end{proof}
The above identities allow one to easily derive properties for $\Delta \big(q^{-k}\big)$ that are of independent interest.

\begin{proposition}
	\label{prop}
	Let $q \in \mathbb{H} \backslash \{0\}$. For $k\in \mathbb{N}$, the following hold.
	\begin{enumerate}[label=\textnormal{(\roman{*})}]
		\item $\widetilde{P}^{(-k)}(q)$ is positive homogeneous of degree $-k-2$.
		\item For $q=q_0\in \mathbb{R}\backslash \{0\}$, the identity
		\begin{equation}
			\label{ide}
			\Delta \big(q^{-k}\big)\big|_{q=q_0}=-2k(k+1)q_0^{-k-2}
		\end{equation}
		holds.
		\item For $k \in \mathbb{N}$, the estimate
		\begin{equation}
		\label{oneS}
			\big|\Delta \big(q^{-k}\big)\big| \leq 2 k(k+1)|q|^{-k-2}
		\end{equation}
		holds. In particular
		\begin{equation}
			\label{est}
			\big | \partial_{q_0}^{k-1}E(q)\big| \leq 2 k(k+1)|q|^{-k-2}
		\end{equation}
	\end{enumerate}
\end{proposition}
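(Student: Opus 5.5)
The plan is to derive all three items from the closed form obtained in the Corollary just above,
\[
-\Delta\big(q^{-k}\big)= 2k(k+1)\,\bar q\,|q|^{-2(k+1)}Q_{k-1}(\bar q),
\]
together with the explicit form \eqref{ca} of the Clifford--Appell polynomials. Throughout I read $\widetilde{P}^{(-k)}(q)$ as $\Delta\big(q^{-k}\big)$ (equivalently $-\Delta(q^{-k})$; the sign does not affect homogeneity), in analogy with $P^{(-k)}=(-\Delta)^{\frac12}(q^{-k})$.

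For (i), note that $Q_{k-1}$ is a homogeneous polynomial of degree $k-1$ in $(q,\bar q)$, so $Q_{k-1}(a\bar q)=a^{k-1}Q_{k-1}(\bar q)$ for $a>0$. Moreover $\bar q$ is homogeneous of degree $1$ and $|q|^{-2(k+1)}$ of degree $-2k-2$. Adding the degrees gives $1+(k-1)-2k-2=-k-2$. (Alternatively, $q^{-k}$ is homogeneous of degree $-k$ and $\Delta$ lowers the degree by $2$.)

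For (ii), take $q=q_0$ real. Then $q=\bar q=q_0$, and every term $q^{m-\ell}\bar q^{\ell}$ in \eqref{ca} equals $q_0^{m}$. Since $\sum_{\ell=0}^{m}(m-\ell+1)=\frac{(m+1)(m+2)}{2}$, we get $Q_{m}(q_0)=q_0^{m}$. Substituting $m=k-1$ gives
\[
-\Delta(q^{-k})\big|_{q=q_0}=2k(k+1)\,q_0\,q_0^{-2k-2}q_0^{k-1}=2k(k+1)q_0^{-k-2},
\]
which is \eqref{ide}. As a consistency check, this agrees with differentiating $x^{-k}$ twice in the real variable.

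For (iii), apply the triangle inequality in \eqref{ca}. Using $|q^{a}\bar q^{b}|=|q|^{a+b}$, the same sum identity yields $|Q_{k-1}(\bar q)|\le |q|^{k-1}$. Hence
\[
\big|\Delta(q^{-k})\big|\le 2k(k+1)\,|q|\,|q|^{-2k-2}|q|^{k-1}=2k(k+1)|q|^{-k-2},
\]
which is \eqref{oneS}. The bound is sharp on the real axis by (ii).

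For \eqref{est}, I would pass through Proposition~\ref{oneN}, which gives $\partial_{q_0}^{k-1}E(q)=-\Delta(q^{-k})/(4c_k)$, so that $|\partial_{q_0}^{k-1}E(q)|=\frac{(k-1)!}{4}|\Delta(q^{-k})|$, and then apply \eqref{oneS}. Equivalently, one can use the first equality in \eqref{EQlaplacianidentity} with $k-1$ in place of $k$, together with the analogous triangle-inequality bound $|S_k(q)|\le \frac{k(k+1)}{2}|q|^{k}$.

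The main obstacle is the bookkeeping of the normalizing constant $c_k$ in this final step. The factor $\frac{(k-1)!}{4}$ has to be tracked carefully against the claimed constant $2k(k+1)$. I would first check it on the real axis, where $\partial_{q_0}^{k-1}q_0^{-3}$ can be computed directly, before committing to the general statement of \eqref{est}.
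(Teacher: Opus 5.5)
Your treatment of (i), (ii) and \eqref{oneS} is correct and is essentially the paper's argument. The paper works directly from $-\Delta(q^{-k})=4|q|^{-2(k+1)}S_k(q)$: it evaluates $S_k(q_0)=\frac{k(k+1)}{2}q_0^k$ and bounds $|S_k(q)|\le\frac{k(k+1)}{2}|q|^k$. Since $S_k(q)=\frac{k(k+1)}{2}\,\bar q\,Q_{k-1}(\bar q)$, your Clifford--Appell form is the same identity in different notation. One side remark of yours is wrong, though. The value $\Delta(q^{-k})|_{q=q_0}=-2k(k+1)q_0^{-k-2}$ does \emph{not} agree with $\frac{d^2}{dx^2}x^{-k}=k(k+1)x^{-k-2}$. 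The extra factor $-2$ is genuine, as the paper itself points out right after the proposition.

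Your hesitation about \eqref{est} is justified: that bound cannot be proved, because it is false for $k\ge 4$. Your reduction via Proposition~\ref{oneN}, $|\partial_{q_0}^{k-1}E(q)|=\frac{(k-1)!}{4}|\Delta(q^{-k})|$, is correct. Combined with \eqref{oneS} it gives $|\partial_{q_0}^{k-1}E(q)|\le\frac{(k+1)!}{2}|q|^{-k-2}$. The real-axis check you propose shows this is sharp. Since $E(q_0)=q_0^{-3}$ for real $q_0\neq 0$, one gets $\partial_{q_0}^{k-1}E(q_0)=(-1)^{k-1}\frac{(k+1)!}{2}q_0^{-k-2}$. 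For instance, $k=4$, $q=1$ gives $|\partial_{q_0}^{3}E(1)|=60>40=2k(k+1)$. Since $\frac{(k+1)!}{2}\le 2k(k+1)$ if and only if $(k-1)!\le 4$, the bound \eqref{est} holds only for $k\le 3$.

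The paper's one-line justification (``follows from Proposition~\ref{oneN}'') drops the factor $1/(4|c_k|)=(k-1)!/4$. What Proposition~\ref{oneN} and \eqref{oneS} actually give is $|4c_k\,\partial_{q_0}^{k-1}E(q)|\le 2k(k+1)|q|^{-k-2}$. That is the bound for $\widetilde{\mathcal{P}}^{(-k)}(q)=4c_k\,\partial_{q_0}^{k-1}E(q)$, which is presumably what was meant and is all the later convergence argument needs. So state and prove either that corrected form or the sharp bound $\frac{(k+1)!}{2}|q|^{-k-2}$, rather than trying to reach $2k(k+1)$.
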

\begin{proof}
	\begin{enumerate}[wide,label=\textnormal{(\roman{*})}]
		\item This property readily follows from \eqref{EQlalppolynomial} and the fact that the polynomials $S_{k}(q)$ are positive homogeneous of degree $k$.

		\item The equality follows from \eqref{EQlalppolynomial} and the fact that
		\[
		 S_k(q_0)= \sum_{j=0}^{k-1} (k-j)q_0^{k-j}q_0^j = \frac{k(k+1)}{2}q_0^k,
		\]
		together with $|q|^{-2(k+1)}=q_0^{-2(k+1)}$ for $q=q_0$.
		
		\item The inequality \eqref{oneS}  follows from identity \eqref{EQlalppolynomial} and the estimate
		\[
		|S_k(q)|\leq \sum_{j=0}^{k-1} (k-j)|q|^{k-j}|q|^j =\frac{k(k+1)}{2} |q|^k .\qedhere
		\]
Finally, estimate \eqref{est} follows from Proposition~\ref{oneN}.
	\end{enumerate}
\end{proof}
\begin{remark}
	Note that in contrast with the classical one-variable real case, where $\Delta \big( x^{-k}\big)=k(k+1)x^{-k-2}$, in \eqref{ide} there is an extra $-2$ factor. This is an essential intrinsic difference between the real Laplacian ($\Delta=\big(\frac{\partial}{\partial x}\big)^2$) and the quaternionic Laplacian $\Delta=D\overline{D}$.
\end{remark}

\section{Fractional harmonic fine structure}\label{Fractionalharmonic}

The goal of this section is to analyze the action of the operator $D(-\Delta)^{\frac{1}{2}}$ on appropriate slice hyperholomorphic functions (i.e., those whose Laurent series only has principal part) in order to describe the fine structures involved in the  factorization $D\Delta=D(-\Delta)^\frac{1}{2}(-\Delta)^\frac{1}{2}$.

\medskip

We start by extending the commutativity of derivatives and the fractional Laplacian $(-\Delta)^{\frac{1}{2}}$ to the Dirac operator.

\begin{lemma}
	\label{NN}
	Let $f \in C^\infty \big(\mathbb{H} \backslash \{0\}\big)$ satisfy the estimate \eqref{EQfderivatives}. Then $D(-\Delta)^{\frac{1}{2}}f$ and $(-\Delta)^{\frac{1}{2}}Df \in \Xi_{\frac{1}{2}}'(\mathbb{H})$. Moreover,
	\begin{equation*}
		D\big[(-\Delta)^{\frac{1}{2}}f(q)\big]=(-\Delta)^{\frac{1}{2}}\big[Df(q)\big],
	\end{equation*}
\end{lemma}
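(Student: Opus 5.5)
The plan is to reduce the statement to the real-variable commutation result, Lemma~\ref{LEMcommutativity}, applied componentwise. We write $D=\partial_{q_0}+\sum_{\ell=1}^3 e_\ell\partial_{q_\ell}$ and $f=f_0+\sum_{\ell=1}^3 f_\ell e_\ell$ with real-valued components $f_\ell\in C^\infty(\R^4\backslash\{0\})$. Each $f_\ell$ inherits the estimate \eqref{EQfderivatives}, since it holds for $f$ and $|f_\ell^{(k)}|\le |f^{(k)}|$. Throughout, quaternionic pairings are understood componentwise as in \eqref{EQquaternioniccomponent}. Both $D$ and $(-\Delta)^{\frac12}$ act on $\mathbb{H}$-valued distributions through their real components, and left multiplication by the constant units $e_\ell$ commutes with both: $(-\Delta)^{\frac12}$ has a real scalar symbol, and partial derivatives are real-linear.

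First, the membership claims. By Lemma~\ref{exchange}\ref{LEMexchangeitem1}, each $f_\ell$ defines an element of $\Xi'(\mathbb{H})$, and hence so does $f$ componentwise. By Lemma~\ref{exchange}(ii), each $\partial_{q_j}f_\ell\in\Xi'(\mathbb{H})$, so $Df\in\Xi'(\mathbb{H})$, being a finite sum of such terms multiplied by constant units. Definition~\ref{DEFfractionallap} then gives $(-\Delta)^{\frac12}Df\in\Xi'_{\frac12}(\mathbb{H})$. For the other order, $(-\Delta)^{\frac12}f\in\Xi'_{\frac12}(\mathbb{H})$ by Lemma~\ref{exchange}(iii). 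Since partial derivatives preserve $\Xi_{\frac12}(\mathbb{H})$ (Lemma~\ref{item3rem}), Definition~\ref{DEFderivativepsi} shows that $D(-\Delta)^{\frac12}f\in\Xi'_{\frac12}(\mathbb{H})$.

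Second, the identity. For $\varphi\in\Xi_{\frac12}(\mathbb{H})$ real-valued (the general case following componentwise), we expand
\[
D\big[(-\Delta)^{\frac12}f\big]=\sum_{j=0}^3\sum_{\ell=0}^3 e_j\,\partial_{q_j}\big[(-\Delta)^{\frac12}f_\ell\big]e_\ell ,
\]
with $e_0:=1$. We then invoke Lemma~\ref{LEMcommutativity} for each real distribution $T=T_{f_\ell}\in\Xi'(\mathbb{H})$ and each $j$, which gives $\partial_{q_j}(-\Delta)^{\frac12}f_\ell=(-\Delta)^{\frac12}\partial_{q_j}f_\ell$ in $\Xi'_{\frac12}(\mathbb{H})$. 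Reassembling the sum, and using the linearity of $(-\Delta)^{\frac12}$ together with its commutation with constant quaternionic units on the left and right, yields $(-\Delta)^{\frac12}\big[Df\big]$.

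The only delicate point is the bookkeeping of the noncommutative units: one must check that $e_j\big((-\Delta)^{\frac12}T\big)e_\ell=(-\Delta)^{\frac12}(e_jTe_\ell)$ in the componentwise sense. This is immediate, because $\big\langle (-\Delta)^{\frac12}T,\varphi\big\rangle=\big\langle T,(-\Delta)^{\frac12}\varphi\big\rangle$ with $(-\Delta)^{\frac12}\varphi$ real-valued whenever $\varphi$ is real (the multiplier $2\pi|\xi|$ is real and even). I do not expect a genuine obstacle beyond this verification.
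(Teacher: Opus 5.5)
Your proof is correct and is essentially the paper's argument: apply Lemma~\ref{LEMcommutativity} to each $\partial_{q_j}$ (componentwise in the real parts of $f$), take the memberships from Lemma~\ref{exchange} and the definitions, and reassemble $D=\sum_j e_j\partial_{q_j}$ by linearity while preserving the order of the quaternionic units; you write out the unit bookkeeping that the paper handles in a parenthetical remark. One small slip: left multiplication by a constant unit $e_\ell$ does not commute with $D$ itself (since $e_je_\ell=-e_\ell e_j$ for $j\neq\ell$), only with each $\partial_{q_j}$ and with $(-\Delta)^{\frac12}$, but those are the only commutations your computation uses.
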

\begin{proof}
	First of all, $f$ induces a linear functional $\langle f,\cdot \rangle \in \Xi'(\mathbb{H})$, since it satisfies the hypotheses of Lemma \ref{exchange}. Further, by Lemma \ref{LEMcommutativity} we have $\partial_{q_i}  (-\Delta)^{\frac{1}{2}}f(q)=(-\Delta)^{\frac{1}{2}}\partial_{q_i}f(q)\in \Xi'_\frac{1}{2}(\mathbb{H})$ for $i=0,1,2,3$. By linearity, we conclude that $D\big[(-\Delta)^{\frac{1}{2}}f(q)\big]=(-\Delta)^{\frac{1}{2}}\big[Df(q)\big]\in \Xi'_\frac{1}{2}(\mathbb{H})$ (observe that the relative order of the quaternionic basis elements $e_i$ present in $D$ and in $f$ is preserved in the identity, so noncommutativity plays no role here).
\end{proof}

As already settled in Section~\ref{FRACLAPCIAN}, for $k\in \mathbb{N}$ we have that $q^{-k}\in \Xi'(\mathbb{H})$, which allows one to define $(-\Delta)^\frac{1}{2}\big(q^{-k}\big)=P^{(-k)}(q)\in \Xi'_{\frac{1}{2}}(\mathbb{H})$. By Lemma~\ref{NN}, we also have
\begin{equation}
\label{app}
D(-\Delta)^\frac{1}{2}\big(q^{-k}\big)=DP^{(-k)}(q)\in \Xi'_{\frac{1}{2}}(\mathbb{H}).
\end{equation}
We now proceed to study the functions $D(-\Delta)^\frac{1}{2}\big(q^{-k}\big)$ in detail. In this setting, the analysis is considerably simpler because $D$ is a pointwise differential operator, and we can just rely on the detailed study of the functions $P^{(-k)}(q)$ carried out in Section~\ref{FRACLAPCIAN}. For conveninence, we introduce the following notation.

\begin{definition}
	For $k \in \mathbb{N}$, we denote
	\begin{equation}
		\label{harmN1}
		\mathcal{P}^{(-k)}(q):=D (-\Delta)^{\frac{1}{2}} \big(q^{-k}\big) = DP^{(-k)}(q).
	\end{equation}
\end{definition}

In the same spirit as in Proposition~\ref{negative3}, we can obtain a closed expression for $\mathcal{P}^{(-k)}(q)$ in terms of partial derivatives.

\begin{proposition}
\label{negH}
For $q \in \mathbb{H} \setminus \{0\}$ and $k \in \mathbb{N}$, we have
\begin{equation*}
	\label{nehH1}
	\mathcal{P}^{(-k)}(q)= 2c_k \, \partial_{q_0}^{k-1} \left( \frac{1}{|q|^{3}}\right),
\end{equation*}
as an element of $\Xi_{\frac{1}{2}}'(\mathbb{H})$, where the constant $c_k$ is defined in \eqref{negative2}.
\end{proposition}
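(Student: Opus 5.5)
The plan is to start from Proposition~\ref{negative3}, which gives $P^{(-k)}(q)=2c_k\,\partial_{q_0}^{k-1}\big(\bar q\,|q|^{-3}\big)$ in $\Xi'_{\frac{1}{2}}(\mathbb{H})$, and then apply the Dirac operator $D$ to this identity. By \eqref{harmN1}, $\mathcal{P}^{(-k)}(q)=DP^{(-k)}(q)$. On $\Xi'_{\frac{1}{2}}(\mathbb{H})$ the operator $D$ acts componentwise through Definition~\ref{DEFderivativepsi}. That definition and Lemma~\ref{item3rem} show that the partial derivatives $\partial_{q_i}$ commute with each other as operators on $\Xi'_{\frac{1}{2}}(\mathbb{H})$: test functions are smooth and $\partial_{q_i}$ preserves $\Xi_{\frac12}(\mathbb{H})$. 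Since the $e_i$ in $D$ act from the left, as noted in Lemma~\ref{NN}, this gives $D\,\partial_{q_0}^{k-1}=\partial_{q_0}^{k-1}D$. Hence
\[
\mathcal{P}^{(-k)}(q)=2c_k\,\partial_{q_0}^{k-1}D\Big(\frac{\bar q}{|q|^3}\Big),
\]
and it only remains to compute $D(\bar q|q|^{-3})$ as an element of $\Xi'_{\frac{1}{2}}(\mathbb{H})$.

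The key point is that the function $\bar q|q|^{-3}$ satisfies \eqref{EQfderivatives}. I will therefore check that its distributional derivatives (Definition~\ref{DEFderivativepsi}) coincide with its pointwise derivatives on $\mathbb{H}\setminus\{0\}$. The plan is to integrate by parts on $\{|q|>\varepsilon\}$ against $\varphi\in\Xi_{\frac{1}{2}}(\mathbb{H})$. The boundary term on the sphere $|q|=\varepsilon$ is bounded by $\varepsilon^{3}\cdot\varepsilon^{-2}\sup_{|q|=\varepsilon}|\varphi|$. Since $\varphi$ vanishes to infinite order at the origin (as in Lemma~\ref{LEMproduct}), this bound tends to $0$. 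The terms at infinity vanish by the Schwartz decay of $\varphi$. This step is the main obstacle, because it is the only place where the singularity at the origin could produce a Dirac-type contribution. However, by Remark~\ref{REMperp}, such contributions are zero in $\Xi'_{\frac12}$ anyway, which gives a second argument.

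The pointwise computation goes as follows. Since $|q|^{-3}$ is real-valued, the product rule gives $D(|q|^{-3}\bar q)=(D|q|^{-3})\,\bar q+|q|^{-3}D\bar q$. We have
\[
D\bar q=1+\sum_{i=1}^3 e_i(-e_i)=4,\qquad D|q|^{-3}=-3\,q\,|q|^{-5}.
\]
Therefore
\[
D\Big(\frac{\bar q}{|q|^3}\Big)=-3\frac{q\bar q}{|q|^5}+\frac{4}{|q|^3}=\frac{1}{|q|^3}.
\]
Substituting this into the displayed identity yields $\mathcal{P}^{(-k)}(q)=2c_k\,\partial_{q_0}^{k-1}\big(|q|^{-3}\big)$. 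The right-hand side lies in $\Xi'_{\frac{1}{2}}(\mathbb{H})$ by Lemma~\ref{exchange}, since $|q|^{-3}$ satisfies \eqref{EQfderivatives}, and then by Lemma~\ref{NN} and Definition~\ref{DEFderivativepsi}.
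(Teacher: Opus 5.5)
Your proposal is correct and follows the paper's proof. You apply $D$ to the identity of Proposition~\ref{negative3}, commute $D$ with $\partial_{q_0}^{k-1}$, and use the Leibniz rule with $D\bar q=4$ and $D|q|^{-3}=-3q|q|^{-5}$ to get $D(\bar q|q|^{-3})=|q|^{-3}$. The only addition is your integration-by-parts check that the distributional derivative of $\bar q|q|^{-3}$ agrees with the pointwise one: the boundary term on $|q|=\varepsilon$ is $O(\varepsilon)$. The paper leaves this implicit in its remark following Definition~\ref{DEFderivativepsi}.
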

\begin{proof}
	By Proposition~\ref{negative3}, we have
\begin{equation*}
	\label{oneh}
\mathcal{P}^{(-k)}(q)=D P^{(-k)}(q)=2c_k D \partial_{q_0}^{k-1}\bigg( \frac{\bar{q}}{|q|^{3}} \bigg)=2c_k\partial_{q_0}^{k-1} D \bigg( \frac{\bar{q}}{|q|^{3}} \bigg).
\end{equation*}
The Leibniz rule and the fact that $D|q|^{\beta}= \beta |q|^{\beta-2}q$ for $\beta\in \R$ imply that
\begin{equation*}
\label{twoh}
D \bigg( \frac{\bar{q}}{|q|^{3}} \bigg)=4 |q|^{-3}- 3\bar{q} |q|^{-5}q=4 |q|^{-3}-3|q|^{-3}=|q|^{-3}.
\end{equation*}
Combining these equalities we obtain the desired identity.
\end{proof}

The functions $ \mathcal{P}^{(-k)}(q)$ admit a representation in terms of Jacobi polynomials, as do the functions $P^{(-k)}(q)$ (see Theorem~\ref{minusk}).

\begin{lemma}
Let $q \in \mathbb{H}\backslash \{0\}$. For $k=2m$ with $m \geq 1$, we have
\begin{equation}
\label{h1}
\mathcal{P}^{(-k)}(q)= \frac{(-1)^{m+1}2(2m+1) q_0}{|q|^{2m+3}}P_{m-1}^{(\frac{1}{2},1 )}\bigg(1-\frac{2q_0^2}{|q|^2}\bigg),
\end{equation}
while for $k=2m+1$ with $m \geq 0$, we have
\begin{equation}
	\label{h2}
\mathcal{P}^{(-k)}(q)=\frac{(-1)^m2(2m+1)}{|q|^{2m+3}}P_{m}^{(-\frac{1}{2},1)}\bigg(1-\frac{2q_0^2}{|q|^2}\bigg),
\end{equation}
as elements of $\Xi'_{\frac{1}{2}}(\mathbb{H})$.
\end{lemma}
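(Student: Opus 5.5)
The plan is to start from Proposition~\ref{negH}, which already gives $\mathcal{P}^{(-k)}(q)=2c_k\,\partial_{q_0}^{k-1}\big(|q|^{-3}\big)$ in $\Xi'_{\frac12}(\mathbb{H})$. The function $|q|^{-3}$ is smooth away from the origin, and the distributional derivatives of the functional it induces coincide with its classical derivatives (Definition~\ref{DEFderivativepsi} together with Lemma~\ref{exchange}). So it suffices to compute $\partial_{q_0}^{k-1}|q|^{-3}$ pointwise on $\mathbb{H}\backslash\{0\}$. For this I would use Proposition~\ref{PROPdermodulus1} with $n=4$, $\alpha=3$ and $x_j=q_0$, so that the Jacobi parameter $\frac{\alpha-1}{2}$ equals $1$. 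I would take $c_k=\frac{(-1)^{k-1}}{(k-1)!}$ exactly as defined in \eqref{negative2}.

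\emph{Odd case} $k=2m+1$, $m\ge 0$. Here $k-1=2m$, so \eqref{J11} applies:
\[
\partial_{q_0}^{2m}|q|^{-3}=\frac{(-1)^m\sqrt{\pi}\,\Gamma\big(m+\frac32\big)(2m)!}{|q|^{2m+3}\Gamma\big(\frac32\big)\Gamma\big(m+\frac12\big)}P_m^{(-\frac12,1)}\Big(1-\tfrac{2q_0^2}{|q|^2}\Big).
\]
I would simplify with $\Gamma(m+\frac32)/\Gamma(m+\frac12)=m+\frac12$ and $\sqrt{\pi}/\Gamma(\frac32)=2$, which turns the prefactor into $(-1)^m(2m+1)(2m)!\,|q|^{-2m-3}$. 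Multiplying by $2c_{2m+1}=\frac{2}{(2m)!}$ cancels the factorial and yields \eqref{h2} with coefficient exactly $2(-1)^m(2m+1)$. The case $m=0$ needs no separate treatment: there $P_0\equiv 1$, and the formula reduces to $2|q|^{-3}$.

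\emph{Even case} $k=2m$, $m\ge1$. Here $k-1=2(m-1)+1$, so \eqref{J21} applies with $m-1$ in place of $m$:
\[
\partial_{q_0}^{2m-1}|q|^{-3}=\frac{(-1)^m\sqrt{\pi}\,\Gamma\big(m+\frac32\big)(2m-1)!\,q_0}{|q|^{2m+3}\Gamma\big(\frac32\big)\Gamma\big(m+\frac12\big)}P_{m-1}^{(\frac12,1)}\Big(1-\tfrac{2q_0^2}{|q|^2}\Big).
\]
The same Gamma simplifications give the prefactor $(-1)^m(2m+1)(2m-1)!\,q_0|q|^{-2m-3}$. Multiplying by $2c_{2m}=-\frac{2}{(2m-1)!}$ then produces \eqref{h1} with coefficient $(-1)^{m+1}2(2m+1)$.

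The only step needing care is the bookkeeping of indices and constants: shifting $m\mapsto m-1$ in \eqref{J21}, and using the correct value $c_{2m+1}=1/(2m)!$ rather than $1/(2m+1)!$. With these, the factor $(2m+1)$ survives in both cases, as stated. As an independent sanity check I would compare with Theorem~\ref{minusk}: applying $D$ to \eqref{EQformulaP-k}, using $D\bar q=4$ and the commutation of $D$ with $\partial_{q_0}$, should reproduce the same expressions.
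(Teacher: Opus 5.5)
Your proposal is correct and follows the paper's proof exactly: combine Proposition~\ref{negH} with \eqref{J21} (shifted to index $m-1$) for $k=2m$, and with \eqref{J11} for $k=2m+1$, both at $\alpha=3$, then simplify the Gamma factors. Your bookkeeping is right throughout: $P_{m-1}^{(\frac12,1)}$ in the even case, \eqref{J11} rather than \eqref{J21} in the odd case, and $c_{2m+1}=1/(2m)!$; it also avoids the index slips in the paper's displayed computation.
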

\begin{proof}
Let first $k=2m$ with $m \geq 1$. In this case, \eqref{h1} follows by combining Proposition~\ref{negH} and identity \eqref{J21} with $\alpha=3$.  Indeed, recalling that $\Gamma\big(\frac{3}{2}\big)=\frac{\sqrt{\pi}}{2}$, we get
\begin{align*}
\mathcal{P}^{(-k)}(q)= 2c_{2m} \, \partial_{q_0}^{2m-1} \left( \frac{1}{|q|^{3}}\right)& = 2c_{2m}\frac{(-1)^m \sqrt{\pi}\Gamma\big(m+\frac{3}{2}\big) (2m-1)! q_0}{|q|^{2m+3}\Gamma\big(m+\frac{1}{2}\big) \Gamma\big(\frac{3}{2}\big)} P_m^{\big(\frac{1}{2},1\big)}\bigg(1-\frac{2q_0^2}{|q|^2}\bigg)\\
&=2c_{2m}\frac{(-1)^m 2\big(m+\frac{1}{2}\big) (2m-1)! q_0}{|q|^{2m+3} } P_m^{\big(\frac{1}{2},1\big)}\bigg(1-\frac{2q_0^2}{|q|^2}\bigg)\\
& = \frac{(-1)^{m+1} 2(2m+1) q_0}{|q|^{2m+3} } P_m^{\big(\frac{1}{2},1\big)}\bigg(1-\frac{2q_0^2}{|q|^2}\bigg).
\end{align*}
For the case $k=2m+1$ with $m \geq 0$, \eqref{h2} similarly follows from~Proposition \ref{negH} and identity \eqref{J21}.
\end{proof}

We continue with a list of useful properties for the functions $ \mathcal{P}^{(-k)}(q)$.

\begin{proposition}
Let $q \in \mathbb{H} \backslash \{0\}$. For $k \in \mathbb{N}$, the following hold:
\begin{enumerate}[label=\textnormal{(\roman{*})}]
\item $\mathcal{P}^{(-k)}(q)$ is positive homogeneous of degree $-k-2$.
\item For $q=q_0\in \mathbb{R}\backslash \{0\}$, the identity
\begin{equation*}
 \mathcal{P}^{(-k)}(q_0)=\begin{cases}
 	k(k+1)q_0|q_0|^{-k-3}, &k\text{ even},\\
 	k(k+1)|q_0|^{-k-2},&k\text{ odd},
 \end{cases}
\end{equation*}
holds.
\item The functions $ \mathcal{P}^{(-k)}(q)$ satisfy the estimate
\begin{equation}
\label{hineq}
|\mathcal{P}^{(-k)}(q)| \leq C \frac{(k-1)^2}{|q|^{k+2}},
\end{equation}
where $C$ is a positive constant independent of $k$.
\end{enumerate}
\end{proposition}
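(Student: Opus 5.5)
The plan is to work entirely from the closed form in Proposition~\ref{negH},
\[
\mathcal{P}^{(-k)}(q)=2c_k\,\partial_{q_0}^{k-1}\big(|q|^{-3}\big),\qquad c_k=\frac{(-1)^{k-1}}{(k-1)!}.
\]
This expresses $\mathcal{P}^{(-k)}$ as a pointwise derivative of a smooth function on $\mathbb{H}\backslash\{0\}$. All three items are therefore statements about $\partial_{q_0}^{k-1}|q|^{-3}$, and we never need to return to the distributional framework. As a cross-check one may also use the Jacobi representations \eqref{h1}--\eqref{h2}.

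For (i), the function $|q|^{-3}$ is positively homogeneous of degree $-3$. Each application of $\partial_{q_0}$ lowers the degree of positive homogeneity by one: differentiate $g(aq)=a^{d}g(q)$ in $q_0$ to get $a\,(\partial_{q_0}g)(aq)=a^{d}\partial_{q_0}g(q)$. Hence $\partial_{q_0}^{k-1}|q|^{-3}$ is homogeneous of degree $-3-(k-1)=-k-2$. Equivalently, this can be read off \eqref{h1}--\eqref{h2}, since the Jacobi arguments are $0$-homogeneous.

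For (ii), the derivatives are taken only in $q_0$. Restricting to the real axis $q_1=q_2=q_3=0$ therefore commutes with $\partial_{q_0}^{k-1}$, so we only need to differentiate the one-variable function $|q_0|^{-3}$.
\begin{itemize}
\item For $q_0>0$ we have $\partial^{k-1}q_0^{-3}=(-1)^{k-1}\frac{(k+1)!}{2}q_0^{-k-2}$. Multiplying by $2c_k$ gives $k(k+1)q_0^{-k-2}$.
\item For $q_0<0$ we write $|q_0|^{-3}=(-q_0)^{-3}$. Each derivative of $(-q_0)^{-a}$ produces $a(-q_0)^{-a-1}$ with a positive sign, so $\partial^{k-1}(-q_0)^{-3}=\frac{(k+1)!}{2}|q_0|^{-k-2}$. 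Multiplying by $2c_k$ gives $(-1)^{k-1}k(k+1)|q_0|^{-k-2}$.
\end{itemize}
For $k$ odd both signs of $q_0$ give $k(k+1)|q_0|^{-k-2}$. For $k$ even we get $\operatorname{sign}(q_0)\,k(k+1)|q_0|^{-k-2}=k(k+1)q_0|q_0|^{-k-3}$. This is exactly the claimed identity. As an alternative check, evaluate \eqref{h1}--\eqref{h2} at argument $-1$ using \eqref{EQjacobiextreme}.

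For (iii), apply the universal estimate \eqref{EQuniversalestmodulus} with $\alpha=3$ and $\ell=k-1$, which gives
\[
\big|\partial_{q_0}^{k-1}|q|^{-3}\big|\le C\,(k-1)^{2}(k-1)!\,|q|^{-k-2}.
\]
Multiplying by $2|c_k|=2/(k-1)!$ yields \eqref{hineq}.

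The only delicate point is the range of validity. Estimate \eqref{EQuniversalestmodulus} is stated for $\ell\in\mathbb{N}$, so it covers $k\ge 2$. For $k=1$ one has $\mathcal{P}^{(-1)}=2|q|^{-3}\neq 0$, while the right-hand side of \eqref{hineq} vanishes, so the estimate as literally written fails there. The plan is to handle this by stating \eqref{hineq} for $k\ge2$, or by replacing $(k-1)^2$ with $\max\{1,(k-1)^2\}$, which costs nothing for the convergence applications. This is also where the constant $C$ from Proposition~\ref{CORestimatemodulus} (via Stirling) must be checked to be uniform in $k$.
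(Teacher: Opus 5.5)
Your proof is correct. Part (iii) is exactly the paper's argument: Proposition~\ref{negH} combined with \eqref{EQuniversalestmodulus} for $\alpha=3$, $\ell=k-1$.

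For (i) and (ii) the paper instead reads everything off the Jacobi representations \eqref{h1}--\eqref{h2}. Homogeneity holds because the argument $1-2q_0^2/|q|^2$ is $0$-homogeneous. The real-axis values come from \eqref{EQjacobiextreme}, namely $P_{m-1}^{(\frac12,1)}(-1)=(-1)^{m-1}m$ and $P_m^{(-\frac12,1)}(-1)=(-1)^m(m+1)$. Your route is more elementary and does not rely on the Jacobi closed forms: you lower the homogeneity degree by one per $\partial_{q_0}$, and you differentiate $|q_0|^{-3}$ in one variable on each half-line. The paper's route is the one you list as a cross-check. Its advantage is that it can be reused for a sharper estimate (see below).

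Your observation about $k=1$ is right, and it is a genuine defect of the statement as written that the paper's proof passes over. Estimate \eqref{EQuniversalestmodulus} is only available for $\ell\ge 1$. Moreover $\mathcal{P}^{(-1)}(q)=2|q|^{-3}$, while the right-hand side of \eqref{hineq} vanishes at $k=1$, so the estimate fails there.

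Either of your repairs is enough for Corollary~\ref{CORconvergence12harmonic}, which only uses the growth in $k$. Uniformity of $C$ is not a real issue, since $C_\alpha$ in Proposition~\ref{CORestimatemodulus} depends only on $\alpha=3$.

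A cleaner repair comes from \eqref{h1}--\eqref{h2} together with Theorem~\ref{LEMjacobiestimate}. Since $\max_{[-1,1]}|P_{m-1}^{(\frac12,1)}|=m$, $\max_{[-1,1]}|P_{m}^{(-\frac12,1)}|=m+1$ and $|q_0|\le |q|$, one gets
\[
|\mathcal{P}^{(-k)}(q)|\le k(k+1)\,|q|^{-k-2}\qquad\text{for all } k\in\mathbb{N}.
\]
By (ii), equality holds on the real axis, so this bound is sharp and valid including $k=1$.
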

\begin{proof}
\begin{enumerate}[wide,label=\textnormal{(\roman{*})}]
	\item This property readily follows from \eqref{h1} and \eqref{h2} and the definition of the Jacobi polynomials.
	\item We first prove the identity for the case $k=2m$. By \eqref{EQjacobiextreme}  and \eqref{h1}, we have
	\begin{align*}
		\mathcal{P}^{(-k)}(q_0)&= \frac{(-1)^{m+1}2(2m+1)q_0}{|q_0|^{2m+3}}P_{m-1}^{\left(\frac{1}{2},1\right)}(-1) = 2 m(2m+1) q_0|q_0|^{-2m-3} \\
		&= k(k+1)q_0|q_0|^{-k-3}.
	\end{align*}
	For the case $k=2m+1$, it follows from \eqref{EQjacobiextreme} and \eqref{h2} that
	\begin{align*}
		\mathcal{P}^{(-k)}(q_0)&= \frac{(-1)^{m}2(2m+1)}{|q_0|^{2m+3}}P_{m}^{\left(-\frac{1}{2},1\right)}(-1) =  (2m+1)(2m+2) |q_0|^{-2m-3}\\
		& = k(k+1)|q_0|^{-k-2},
	\end{align*}
as desired.

	\item By \eqref{EQuniversalestmodulus} (with $\ell=k-1$ and $\alpha=3$) and Proposition~\ref{negH}, we have
	\[
	\big| \mathcal{P}^{(-k)}(q)\big| \leq \frac{2}{(k-1)!} \bigg|\partial_{q_0}^{k-1} \bigg( \frac{1}{|q|^3}\bigg) \bigg| \leq C \frac{(k-1)^2}{|q|^{k+2}},
	\]
	where $C$ is independent of $k$.\qedhere
\end{enumerate}
\end{proof}

\begin{corollary}
	\label{CORconvergence12harmonic}
		Let $g(q) = \sum_{k=1}^{\infty }P^{(-k)}(q)a_{-k}$, with $ \{a_{-k}\} \subset \mathbb{H}$, be a Laurent analytic harmonic function of type $\left(1, \frac{1}{2}\right)$, and let
		\[
		R_1= \limsup_{k\to \infty} |a_{-k}|^\frac{1}{k},
		\]
		so that the series converges absolutely and uniformly  on compact subsets of the spherical shell $\mathcal{A}_{R_1,\infty}$ (by Theorem~\ref{main1}). Then the series
		\[
		\sum_{k=1}^\infty DP^{(-k)}(q) a_{-k}
		\]
		also converges absolutely and uniformly on compact subsets of $\mathcal{A}_{R_1,\infty}$.	In particular,
		\[
		Dg(q) = \sum_{k=1}^\infty DP^{(-k)}(q) a_{-k} = \sum_{k=1}^\infty \mathcal{P}^{(-k)} (q)a_{-k}.
		\]
\end{corollary}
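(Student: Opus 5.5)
The plan is to mirror the proof of Theorem~\ref{main1}: a Weierstrass-type domination of the general term by a convergent series, this time using the pointwise estimate \eqref{hineq} for $\mathcal{P}^{(-k)}=DP^{(-k)}$ instead of \eqref{est00}. Three steps are needed.

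\textbf{Step 1 (termwise identification).} By \eqref{harmN1}, $DP^{(-k)}(q)=\mathcal{P}^{(-k)}(q)$ for $q\neq 0$. Here $P^{(-k)}$ is the smooth function given explicitly in Theorem~\ref{minusk}, so $D$ can be applied pointwise on $\mathbb{H}\backslash\{0\}$. This agrees with the distributional meaning in $\Xi'_\frac{1}{2}(\mathbb{H})$ because of Lemma~\ref{NN} and Definition~\ref{DEFderivativepsi}, which amounts to integration by parts.

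\textbf{Step 2 (domination).} Fix a compact $K\subset\mathcal{A}_{R_1,\infty}$ and set $r:=\min_{q\in K}|q|>R_1$, so that also $r>0$. By \eqref{hineq}, for $q\in K$,
\[
\big|\mathcal{P}^{(-k)}(q)a_{-k}\big|\leq C(k-1)^2|a_{-k}|\,|q|^{-k-2}\leq \frac{C}{r^{2}}(k-1)^2|a_{-k}|\,r^{-k}.
\]
The factor $(k-1)^2$ does not change the root-test limit, since $\limsup_k\big((k-1)^2|a_{-k}|\big)^{1/k}=R_1<r$. Hence $\sum_k (k-1)^2|a_{-k}|r^{-k}<\infty$. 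Equivalently, one can apply Proposition~\ref{conve} to the coefficients $C(k-1)^2a_{-k}$, exactly as in the proof of Theorem~\ref{main1}. The Weierstrass M-test then gives absolute and uniform convergence of $\sum_k \mathcal{P}^{(-k)}a_{-k}$ on $K$.

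\textbf{Step 3 (interchanging $D$ with the sum).} The identity $Dg=\sum_k DP^{(-k)}a_{-k}$ needs termwise differentiation of $g=\sum_k P^{(-k)}a_{-k}$, and this is the main point requiring care. First, each $P^{(-k)}$ is $C^1$ on $\mathcal{A}_{R_1,\infty}$. Second, $g$ converges uniformly on compacts by Theorem~\ref{main1}. Third, and the missing ingredient, every individual partial derivative $\partial_{q_i}P^{(-k)}$, not just their combination $D$, must converge uniformly on compacts.

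For the third point I would differentiate \eqref{EQformulaP-k} using \eqref{EQuniversalestmodulus} with $\alpha=3$. For $\partial_{q_0}$ this is immediate: $\partial_{q_0}P^{(-k)}=-k\,P^{(-k-1)}$ by \eqref{Pk}, and these terms are bounded via \eqref{est00}. For $\partial_{q_i}$ with $i\geq 1$, I would use either homogeneity together with Cauchy-type estimates, or the alternative of commuting $\partial_{q_i}$ with $\partial_{q_0}^{k-1}$ and bounding the mixed derivatives of $|q|^{-3}$ through its real-analyticity on $|q|>r$. Either route gives bounds of the form $\mathrm{poly}(k)|q|^{-k-2}$.

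Summable bounds on the partial derivatives give uniform convergence of the differentiated series. The standard theorem on termwise differentiation then yields $Dg=\sum_k DP^{(-k)}a_{-k}=\sum_k\mathcal{P}^{(-k)}a_{-k}$ on $\mathcal{A}_{R_1,\infty}$.
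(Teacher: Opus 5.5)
Your Step 2 is exactly the paper's proof, which only says that the claim follows as in Theorem~\ref{main1} from the estimate \eqref{hineq}; your choice $r=\min_K|q|$ even avoids the division by $R_1$ used there. Your Step 3 goes beyond the paper, which does not justify applying $D$ termwise at all. To make it concrete for $i\geq 1$, write $\partial_{q_i}\partial_{q_0}^{k-1}|q|^{-3}=-3q_i\,\partial_{q_0}^{k-1}|q|^{-5}$ and use \eqref{EQuniversalestmodulus} with $\alpha=5$ (not $\alpha=3$), which gives $|\partial_{q_i}P^{(-k)}(q)|\leq Ck^4|q|^{-k-2}$; a generic real-analyticity bound would only reach all of $\mathcal{A}_{R_1,\infty}$ if the Cauchy estimate is taken in the $q_0$-direction with radius close to $|q|$.
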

\begin{proof}
The assertion follows similarly as Theorem~\ref{main1} by using estimate \eqref{hineq}.
\end{proof}

To conclude, we show that the composition $D(-\Delta)^\frac{1}{2}$ maps slice hyperholomorphic functions into axially $\frac{1}{2}$-harmonic functions. We first prove two auxiliary results.

\begin{lemma}\label{LEMaxialddelta}
	 For $k \in \mathbb{N}$, the function $ \mathcal{P}^{(-k)}$ is of axial type.
\end{lemma}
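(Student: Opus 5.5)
The plan is to mirror the proof of Lemma~\ref{axiall}: write $\mathcal{P}^{(-k)}$ in the form $A(q_0,|\underline{q}|)+\underline{\omega}\,B(q_0,|\underline{q}|)$ with $\underline{\omega}=\underline{q}/|\underline{q}|$, and check the even-odd conditions \eqref{co1}. The explicit formulas \eqref{h1} and \eqref{h2} make this almost immediate, because $\mathcal{P}^{(-k)}$ turns out to be real-valued. Equivalently, by Proposition~\ref{negH}, $\mathcal{P}^{(-k)}(q)=2c_k\partial_{q_0}^{k-1}\big(|q|^{-3}\big)$, which is a real scalar function.

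First, I will note that the function $|q|^{-3}=(q_0^2+|\underline{q}|^2)^{-3/2}$ depends only on $q_0$ and $|\underline{q}|$. Since $\partial_{q_0}$ does not involve the imaginary variables, $\partial_{q_0}^{k-1}|q|^{-3}$ is again a function $A(q_0,|\underline{q}|)$ of $q_0$ and $|\underline{q}|^2$ alone. This can be seen directly from \eqref{h1} and \eqref{h2}: the argument of the Jacobi polynomials is $1-\frac{2q_0^2}{q_0^2+|\underline{q}|^2}$, and the prefactors involve only $q_0$ and $|q|=\big(q_0^2+|\underline{q}|^2\big)^{1/2}$.

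Second, I set $B\equiv 0$ and verify the conditions. The quantity $|\underline{q}|$ enters $A$ only through $|\underline{q}|^2$, so $A(q_0,-|\underline{q}|)=A(q_0,|\underline{q}|)$. Trivially, $B(q_0,-|\underline{q}|)=-B(q_0,|\underline{q}|)$. Hence $\mathcal{P}^{(-k)}=A+\underline{\omega}B$ is of axial type in the sense of Definition~\ref{axial1}, and in fact intrinsic. The same conclusion holds on $\mathbb{H}\setminus\{0\}$, including real points, since $A$ extends smoothly there.

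The only point needing care, and the main (mild) obstacle, is to make sure the identity holds as genuine functions on $\mathbb{H}\setminus\{0\}$ and not only in $\Xi'_{\frac12}(\mathbb{H})$. For this I would invoke the pointwise identity $D(\bar q|q|^{-3})=|q|^{-3}$ from the proof of Proposition~\ref{negH}, together with classical differentiation away from the origin. Alternatively, I would note that $\mathcal{P}^{(-k)}=DP^{(-k)}$, and that $D$ applied to an axial function $A+\underline{\omega}B$ yields $\partial_{q_0}A-\partial_rB-\frac{2}{r}B+\underline{\omega}(\partial_{q_0}B+\partial_rA)$ with $r=|\underline{q}|$. This expression preserves the parity conditions, and it gives a second route through Lemma~\ref{axiall}.
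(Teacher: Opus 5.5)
Your proof is correct, and your main argument takes a different route from the paper's.

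The paper argues structurally. It notes that $P^{(-k)}$ is of axial type by Lemma~\ref{axiall}, and that the Dirac operator maps axial functions to axial functions, a fact it cites from the literature. Hence $\mathcal{P}^{(-k)}=DP^{(-k)}$ is axial.

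You instead start from the closed form in Proposition~\ref{negH}, $\mathcal{P}^{(-k)}(q)=2c_k\partial_{q_0}^{k-1}\big(|q|^{-3}\big)$. You observe that this is a real scalar function of $q_0$ and $|\underline{q}|^2$ alone, and you take $B\equiv 0$.

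\textbf{What your route buys.} It is more elementary and self-contained, since it needs no external result about $D$. It also proves more: $\mathcal{P}^{(-k)}$ is real-valued with vanishing vector part, so the even--odd conditions hold automatically.

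\textbf{What the paper's route buys.} It does not depend on the explicit form of $\mathcal{P}^{(-k)}$. It would apply verbatim to $D$ of any axial function.

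\textbf{Your secondary argument.} This is exactly the paper's route. Your formula $D(A+\underline{\omega}B)=\partial_{q_0}A-\partial_rB-\frac{2}{r}B+\underline{\omega}(\partial_{q_0}B+\partial_rA)$ is correct and visibly preserves the parity conditions. It therefore supplies the justification that the paper leaves to a citation.

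\textbf{Pointwise versus distributional identities.} Your concern is handled correctly. The test functions vanish to infinite order at the origin, so integrating by parts on $\{|q|>\varepsilon\}$ produces no boundary contribution in the limit. The classical computation $D(\bar q|q|^{-3})=|q|^{-3}$ on $\mathbb{H}\setminus\{0\}$ therefore agrees with the distributional one.
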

\begin{proof}
The statement simply follows from the fact that $P^{(-k)}(q)$ is of axial type (Lemma~\ref{axiall}) and the fact that the Dirac operator maps functions of axial type into functions of axial type, see \cite{Dixan, green}. Thus, $\mathcal{P}^{(-k)}= DP^{(-k)}(q)$ is of axial type.
\end{proof}

\begin{lemma}\label{lemaxiallyharmonic}
For $k \in \mathbb{N}$, the function $\mathcal{P}^{(-k)}(q)$ is axially $\frac{1}{2}$-harmonic as an element of $\Xi'_{\frac{1}{2}}(\mathbb{H})$.
\end{lemma}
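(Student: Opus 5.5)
The plan is to mirror the proof of Lemma~\ref{LEMaxiallymonomials}, replacing $D(-\Delta)^{\frac{1}{2}}$ by $(-\Delta)^{\frac{1}{2}}$ and using the commutation of the Dirac operator with the fractional Laplacian. Definition~\ref{DEFaxiallyharmonic} asks for two things: that $\mathcal{P}^{(-k)}$ be of axial type, and that $(-\Delta)^{\frac{1}{2}}\mathcal{P}^{(-k)}=0$ in $\Xi'_{\frac{1}{2}}(\mathbb{H})$. The first requirement is exactly Lemma~\ref{LEMaxialddelta}, so only the vanishing needs an argument.

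For the vanishing, I will first use that $q^{-k}$ satisfies \eqref{EQfderivatives}, so $q^{-k}\in\Xi'(\mathbb{H})$ by Lemma~\ref{exchange}. Lemma~\ref{NN} then lets me write
\[
\mathcal{P}^{(-k)}=D(-\Delta)^{\frac{1}{2}}\big(q^{-k}\big)=(-\Delta)^{\frac{1}{2}}\big(Dq^{-k}\big)
\]
in $\Xi'_{\frac{1}{2}}(\mathbb{H})$. Next I apply $(-\Delta)^{\frac{1}{2}}$ once more; this is legitimate on $\Xi'_{\frac{1}{2}}(\mathbb{H})$ by Remark~\ref{REMdefinitionlap12}. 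Two facts then give $(-\Delta)^{\frac{1}{2}}\mathcal{P}^{(-k)}=(-\Delta)^{\frac{1}{2}}(-\Delta)^{\frac{1}{2}}Dq^{-k}=-\Delta Dq^{-k}$. The first is the semigroup property, Lemma~\ref{LEMsemigroup}, applied to $T=Dq^{-k}\in\Xi'(\mathbb{H})\subset\Xi'_{\frac{1}{2}}(\mathbb{H})$; membership follows from part (ii) of Lemma~\ref{exchange}, since each partial derivative of $q^{-k}$ lies in $\Xi'$ and $D$ is a linear combination of them. The second is the componentwise interpretation of the quaternionic setting.

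It remains to identify $-\Delta Dq^{-k}$. Distributional derivatives on $\Xi'_{\frac{1}{2}}$ act as the pointwise ones for functions smooth away from the origin (the remark after Definition~\ref{DEFderivativepsi}), and constant-coefficient operators commute. Hence $\Delta D=D\Delta$, and $-\Delta Dq^{-k}=-D\Delta q^{-k}=0$ by the Fueter mapping theorem (Theorem~\ref{Fueter1}), since $q^{-k}$ is slice hyperholomorphic on $\mathbb{H}\setminus\{0\}$.

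The step needing the most care is the commutation $D\Delta=\Delta D$ together with the noncommutativity issue, i.e.\ checking that the real scalar operators $\Delta$ and $(-\Delta)^{\frac{1}{2}}$ pass through the left multiplication by $e_i$ in $D$. As observed in the proof of Lemma~\ref{NN}, both act componentwise and preserve the order of the basis units, so this is harmless. A secondary point is justifying the pointwise identity $D\Delta q^{-k}=0$ in the distributional sense away from the origin. This needs no extra work, because any distribution supported at the origin (Dirac masses and their derivatives) is zero in $\Xi'_{\frac{1}{2}}$ by Remark~\ref{REMperp}.
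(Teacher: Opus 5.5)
Your proof is correct and follows essentially the same route as the paper: axial type comes from Lemma~\ref{LEMaxialddelta}, and vanishing comes from the commutativity of $D$ with $(-\Delta)^{\frac{1}{2}}$, the semigroup property (Lemma~\ref{LEMsemigroup}) and the Fueter mapping theorem. The only difference is which half-Laplacian you move past $D$. The paper commutes the outer $(-\Delta)^{\frac{1}{2}}$ past $D$ while it acts on $P^{(-k)}$, whereas you commute $D$ past the inner one via Lemma~\ref{NN} applied directly to $q^{-k}$ and then use $\Delta D=D\Delta$. Incidentally, you keep the minus sign from the semigroup identity, which the paper's display drops (harmlessly, since the result is $0$).
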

\begin{proof}
By Lemma~\ref{LEMaxialddelta}, the function $\mathcal{P}^{(-k)}(q)$ is of axial type. Thus, it only remains to prove that $(-\Delta)^{\frac{1}{2}} \mathcal{P}^{(-k)}(q)\equiv 0$. But this is clear from the commutativity of $(-\Delta)^{\frac{1}{2}}$ and $D$ in $\Xi'_{\frac{1}{2}}(\mathbb{H})$ and the semigroup property of $(-\Delta)^{\frac{1}{2}}$ in $\Xi'_{\frac{1}{2}}(\mathbb{H})$. Indeed,
\[
(-\Delta)^{\frac{1}{2}} \mathcal{P}^{(-k)}(q) = (-\Delta)^{\frac{1}{2}} D(-\Delta)^{\frac{1}{2}} \big(q^{-k}\big)=D(-\Delta)^{\frac{1}{2}}(-\Delta)^{\frac{1}{2}}\big(q^{-k}\big)  =D\Delta\big(q^{-k}\big)=0,
\]
where the last equality follows from Fueter's mapping theorem and the fact that $q^{-k}$ is slice hyperholomorphic.
\end{proof}

\begin{theorem}\label{THMah}
	Let $f$ be a slice hyperholomorphic function whose Laurent series only has principal part. Then, the function $D(-\Delta)^\frac{1}{2} f(q) $ is axially $\frac{1}{2}$-harmonic  and the series representing both $f$ and $D(-\Delta)^\frac{1}{2} f(q)$, namely
	\begin{equation}
	\label{serH}
		 f(q) = \sum_{k=1}^\infty q^{-k}a_{-k}\qquad \text{and}\qquad D(-\Delta)^\frac{1}{2} f(q) = \sum_{k=1}^\infty \mathcal{P}^{(-k)}(q)a_{-k},
	\end{equation}
	 converge on the same set.
\end{theorem}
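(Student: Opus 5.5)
The plan mirrors the proof of the analogous statement in Section~\ref{FRACLAPCIAN}, using Lemma~\ref{lemaxiallyharmonic} for harmonicity and Corollary~\ref{CORconvergence12harmonic} (i.e.\ estimate \eqref{hineq}) for convergence. Let $f(q)=\sum_{k\geq1}q^{-k}a_{-k}$ be the principal-part Laurent expansion of $f$, and set $R_1=\limsup_{k\to\infty}|a_{-k}|^{1/k}$. By Proposition~\ref{conve}, the series for $f$ converges absolutely and uniformly on compact subsets of $\mathcal{A}_{R_1,\infty}$. By Definition~\ref{DEFquaternioniclaplacian} applied termwise, and Lemma~\ref{NN} to move $D$ through each term, we obtain
\[
D(-\Delta)^{\frac12}f(q)=\sum_{k\geq1}D(-\Delta)^{\frac12}\big(q^{-k}\big)a_{-k}=\sum_{k\geq1}\mathcal{P}^{(-k)}(q)a_{-k}.
\]
This is the second series in \eqref{serH}.

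For convergence, we use \eqref{hineq}. For $q$ in a compact subset $K\subset\mathcal{A}_{R_1,\infty}$ with $|q|\geq r>R_1$ on $K$,
\[
|\mathcal{P}^{(-k)}(q)a_{-k}|\leq C(k-1)^2|a_{-k}|\,|q|^{-k-2}\leq \frac{C}{r^{2}}(k-1)^2|a_{-k}|\,|q|^{-k}.
\]
Since $\limsup_k\big((k-1)^2|a_{-k}|\big)^{1/k}=R_1$, Proposition~\ref{conve} applied to the coefficients $(k-1)^2a_{-k}$ gives absolute and uniform convergence on $K$. Here we bound by $r^{-2}$ rather than $R_1^{-2}$, which also covers $R_1=0$. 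Hence both series converge on $\mathcal{A}_{R_1,\infty}$.

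The same set is obtained in the other direction as well. The coefficients $a_{-k}$ can be recovered from the values $\mathcal{P}^{(-k)}(q_0)$ on the real axis, which are given in closed form by the previous proposition. So if the second series converged on a larger shell, a root test along the real axis would force $\limsup|a_{-k}|^{1/k}$ to be smaller. I would state the result mainly as the inclusion above, since that is what ``converge on the same set'' means in the analogous Theorem~\ref{main1}.

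For axial $\frac12$-harmonicity, each term $\mathcal{P}^{(-k)}$ is of axial type by Lemma~\ref{LEMaxialddelta} and lies in $\ker(-\Delta)^{\frac12}$ by Lemma~\ref{lemaxiallyharmonic}. Applying $(-\Delta)^{\frac12}$ termwise, per Definitions~\ref{DEFquaternioniclaplacian} and \ref{DEFaxiallyharmonic}, gives $\sum_k(-\Delta)^{\frac12}\mathcal{P}^{(-k)}(q)a_{-k}=0$. Hence $D(-\Delta)^{\frac12}f\in\mathcal{AH}_{\frac12}(\mathcal{A}_{R_1,\infty})$.

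The main obstacle is only interpretive: the fractional Laplacian of the series is defined formally term by term, not as the distribution of the sum. The argument therefore relies on the termwise definition rather than on an interchange of limits in $\Xi'_{\frac12}(\mathbb{H})$, and I would remark on this explicitly.
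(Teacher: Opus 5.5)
Your proposal is correct and follows the paper's proof. Convergence on $\mathcal{A}_{R_1,\infty}$ comes from Theorem~\ref{main1} together with estimate \eqref{hineq}, which is exactly Corollary~\ref{CORconvergence12harmonic}, and axial $\frac12$-harmonicity is obtained termwise from Lemmas~\ref{LEMaxialddelta} and~\ref{lemaxiallyharmonic}. Your $r^{-2}$ bound on compacta also covers $R_1=0$, which the paper's $R_1^{-1}$ in Theorem~\ref{main1} does not. Reading ``same set'' as that inclusion is right, since the pointwise convergence sets can really differ: the even-$k$ terms $\mathcal{P}^{(-k)}$ vanish identically when $q_0=0$.
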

\begin{proof}
	The statement concerning convergence follows by combining Theorem~\ref{main1} and Corollary~\ref{CORconvergence12harmonic}. The fact that $D(-\Delta)^\frac{1}{2} f(q) $ is $\frac{1}{2}$-harmonic follows from Definition~\ref{DEFaxiallyharmonic} and Lemma~\ref{lemaxiallyharmonic}, which yields $(-\Delta)^\frac{1}{2}D(-\Delta)^\frac{1}{2}f = -D\Delta f\equiv 0$.
\end{proof}

\subsection{Connection between fractional harmonic and monogenic Laurent series}

In this second part of the section, we establish the connection between the series representation of $D(-\Delta)^{1/2}f(q)$ given in \eqref{serH} and the axially Fueter regular Laurent series \eqref{EQmonogenicseries}. To this end, we first introduce the following definition; see \cite{SILDUE,Stin}.
\begin{definition}\label{negativelapla} Let $\varphi\in \mathcal{S}(\mathbb{R}^n)$. We define the negative half fractional Laplacian by the relation
	\begin{equation}
		\label{delta1}
		(-\Delta)^{-\frac{1}{2}}\varphi =\mathcal{F}^{-1}\big((2 \pi  |\xi|)^{-1} \mathcal{F}\varphi\big),
	\end{equation}
	where $\mathcal{F}$ and $\mathcal{F}^{-1}$ are the Fourier and inverse Fourier transforms on $\R^{n}$.
\end{definition}
As discussed in Section~\ref{Lizorkin}, the operator $(-\Delta)^{\frac{1}{2}}$ does not map $\mathcal{S}(\mathbb{R}^n)$ into itself. The same is true for $(-\Delta)^{-\frac{1}{2}}$. We therefore introduce the following subspace of $\mathcal{S}(\mathbb{R}^n)$, which is better suited to the action of $(-\Delta)^{-\frac{1}{2}}$.
\begin{definition}
	\label{space1}
We define the space of functions $\Xi_{-\frac{1}{2}}(\R^n)$ as the subspace of $\Xi(\R^n)$ (see Definition \ref{space0}) consisting of the functions $\varphi$ such that $(-\Delta)^{-\frac{1}{2}}\varphi\in \Xi(\R^n)$, i.e.,
\[
\mathcal{F}^{-1}\big( (2\pi|\xi|)^{-1} \mathcal{F}\varphi\big)\in \Xi(\R^n).
\]
\end{definition}

\begin{theorem}
\label{nonempty}
The space $\Xi_{-\frac{1}{2}}(\R^n)$ is nontrivial.
\end{theorem}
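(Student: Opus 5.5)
The plan is to construct an explicit nonzero element on the Fourier side. By Proposition~\ref{PROPinvertibility}, $\varphi\in\Xi(\R^n)$ if and only if $\psi:=\mathcal{F}\varphi\in\Xi(\R^n)$. Also $\mathcal{F}\big((-\Delta)^{-\frac12}\varphi\big)=(2\pi|\xi|)^{-1}\psi$. So it suffices to find a nonzero $\psi\in\Xi(\R^n)$ such that $(2\pi|\xi|)^{-1}\psi\in\Xi(\R^n)$. We will then set $\varphi:=\mathcal{F}^{-1}\psi$.

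We take $\psi$ with compact support in an annulus $\{a\le|\xi|\le b\}$, $0<a<b$. Such a $\psi$ is $C^\infty$, vanishes identically near the origin, and so lies in $\Psi(\R^n)$. The function $|\xi|^{-1}$ is $C^\infty$ on the support, so $|\xi|^{-1}\psi$ is again $C^\infty_c$ and supported away from the origin; hence both belong to $\mathcal{S}(\R^n)\cap\Psi(\R^n)$. This settles the $\Psi$ part.

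The main obstacle is the $\Phi$ part, since compact support forces $\mathcal{F}^{-1}\psi$ to be analytic. We need all derivatives of $\mathcal{F}^{-1}\psi$ to vanish at $0$, i.e. all moments $\int\xi^s\psi(\xi)\,d\xi=0$. The same must hold for $|\xi|^{-1}\psi$. For this we choose $\psi$ radial times a fixed oscillating profile, $\psi(\xi)=\rho(|\xi|)$ with $\rho\in C_c^\infty((a,b))$. In polar coordinates, the moment of $\xi^s$ reduces to a spherical integral, which is a constant $c_s$, times $\int_0^\infty r^{|s|+n-1}\rho(r)\,dr$. It therefore suffices that
\[
\int_0^\infty r^{j}\rho(r)\,dr=0\quad\text{and}\quad \int_0^\infty r^{j-1}\rho(r)\,dr=0\qquad\text{for all } j\ge n-1.
\]
This holds if all moments $\int_0^\infty r^j\rho(r)\,dr$, $j\ge n-2$, vanish.

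To build such a $\rho$, we substitute $r=e^t$. A moment condition $\int r^j\rho(r)\,dr=\int e^{(j+1)t}\rho(e^t)\,dt=0$ says that the two-sided Laplace transform of $g(t):=\rho(e^t)$ vanishes at the points $j+1$. Compact support of $g$ makes this transform entire, so it cannot vanish on a set with an accumulation point. However, vanishing on the positive integers $\ge n-1$ does not force $g=0$. We take $g\in C_c^\infty(\R)$ such that its Fourier--Laplace transform $G(z)=\int g(t)e^{zt}\,dt$ is divisible by an entire function of exponential type vanishing at those integers. Concretely, write $G(z)=h(z)\cdot\frac{1}{\Gamma(n-1-z)}$ with $h$ chosen so that $G$ is still of exponential type and decays on vertical lines. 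Then we invoke Paley--Wiener to obtain a compactly supported smooth $g$.

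We expect the delicate point to be this growth bookkeeping, since $1/\Gamma$ grows super-exponentially along the real axis. If that fails, the fallback is to drop compact support. We would instead take $\psi$ Schwartz, supported in $|\xi|\ge a$, with rapidly decaying radial profile. The construction then mimics the classical example of a nonzero Schwartz function on $(0,\infty)$ with all moments zero, e.g. $\rho(r)=\exp(-r^{1/4})\sin(r^{1/4})$ multiplied by a cutoff vanishing near $0$ and corrected. Once $\rho$ is obtained, $\psi$ and $|\xi|^{-1}\psi$ lie in $\Xi(\R^n)$. Then $\varphi=\mathcal{F}^{-1}\psi\neq0$ belongs to $\Xi_{-\frac12}(\R^n)$.
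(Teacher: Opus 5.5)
Your reduction is correct and matches the paper's. It suffices to find a nonzero radial $\psi$ on the Fourier side such that both $\psi$ and $|\xi|^{-1}\psi$ lie in $\Xi(\R^n)$, and this becomes a one-dimensional moment problem for the profile $\rho$. The gap is in constructing $\rho$. Your main route, $\rho\in C_c^\infty((a,b))$ with $0<a<b$, is not merely delicate; it is impossible. Suppose $\int_0^\infty r^j\rho(r)\,dr=0$ for all $j\ge n-2$. Then $\sigma(r):=r^{n-2}\rho(r)$ is continuous on $[a,b]$ and annihilates every polynomial, so Weierstrass approximation gives $\sigma\equiv0$, hence $\rho\equiv 0$.

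The analyticity you noted already settles this. For $\psi\in C_c^\infty(\R^n)$, $\mathcal{F}^{-1}\psi$ extends to an entire function on $\mathbb{C}^n$, so if all its derivatives vanish at the origin it is identically zero; that is, $C_c^\infty(\R^n)\cap\Phi(\R^n)=\{0\}$. In your Laplace-transform language, $H(z):=G(z+n-1)$ is of exponential type, bounded on the imaginary axis, and vanishes at every nonnegative integer, so Carlson's theorem forces $H\equiv 0$. So no choice of $h$ makes $h(z)/\Gamma(n-1-z)$ the transform of a compactly supported $g$. Your claim that vanishing at the integers $\ge n-1$ does not force $g=0$ is false in that class.

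The fallback contains the actual proof. It is the paper's route, using Stieltjes's function $f(\rho)=e^{-\rho^{1/4}}\sin(\rho^{1/4})$ on $(0,\infty)$, all of whose moments vanish. However, the step ``multiply by a cutoff vanishing near $0$ and correct'' fails as stated. A cutoff destroys all the moment conditions at once, and you give no way to restore infinitely many of them. The paper (Lemma~\ref{LEMradialmomentsvanish}) convolves instead. Extend $f$ by $0$ on $(-\infty,0)$ and set $g=\phi*f$, with $\phi$ smooth, Schwartz, and supported in $(0,\infty)$. Then $g$ is Schwartz and supported in $(0,\infty)$, so $g\in\Psi(\R)$. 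Moreover $\int x^m g(x)\,dx=\sum_{k}\binom{m}{k}\int y^{m-k}\phi(y)\,dy\int t^k f(t)\,dt=0$ for every $m$. Put $\eta(\xi)=g(|\xi|)$; then $2\pi|\xi|\eta$ plays the role of your $\psi$. Both it and $\eta$ lie in $\Psi(\R^n)$ by Lemmas~\ref{LEMradial} and~\ref{LEMproduct}, and in $\Phi(\R^n)$ because their moments reduce to moments of $g$.

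Alternatively, your logarithmic substitution works if you give up compact support in $t$. For $g(t)=e^{-t^2/2}\sin(2\pi t)$ one computes $G(z)=\sqrt{2\pi}\,e^{z^2/2-2\pi^2}\sin(2\pi z)$. This vanishes at every integer, and since it has order $2$, Carlson's theorem does not apply. The profile $\rho(r)=g(\ln r)$ vanishes to infinite order at $0$ and decays faster than any power at infinity, so it is admissible.
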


Since the proof is closely related to that of Theorem~\ref{THMxirnonempty}, which is included in the Appendix, we defer it to the Appendix.
By using similar arguments used in Lemma \ref{item3rem} one can prove the following result.
\begin{lemma}
\label{invariance}
Let $\varphi \in \Xi_{-\frac{1}{2}}(\mathbb{R}^n)$ and $j=1,\ldots,n$ there holds
$$\partial_{x_j}\big[(-\Delta)^{-\frac{1}{2}}\varphi\big] =(-\Delta)^{-\frac{1}{2}}\big[\partial_{x_j}\varphi\big].
$$ Moreover, the partial derivative operator $\partial_{x_j}$ leaves the space  $\Xi_{-\frac{1}{2}}(\R^n)$ invariant.
\end{lemma}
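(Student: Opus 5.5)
We follow the proof of Lemma~\ref{item3rem}, replacing the multiplier $2\pi|\xi|$ by $(2\pi|\xi|)^{-1}$. There are two claims: the commutation identity, and the invariance of $\Xi_{-\frac{1}{2}}(\R^n)$ under $\partial_{x_j}$.

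\emph{Commutation.} Let $\varphi\in\Xi_{-\frac{1}{2}}(\R^n)$. Since $\varphi\in\Xi(\R^n)\subset\Phi(\R^n)$, we have $\mathcal{F}\varphi\in\Psi(\R^n)$. The function $f(\xi)=(2\pi|\xi|)^{-1}$ is smooth away from the origin, and its derivatives satisfy $|f^{(k)}(\xi)|\le C_k|\xi|^{-1-|k|}$ near $0$. So Lemma~\ref{LEMproduct} (with Remark~\ref{REMextension}) shows that $(2\pi|\xi|)^{-1}\mathcal{F}\varphi$ extends to a $C^\infty$ function whose derivatives all vanish at the origin.

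Next we use \eqref{EQpartialder}, namely $\mathcal{F}(\partial_{x_j}g)=2\pi i\xi_j\mathcal{F}g$. This is valid for $g=(-\Delta)^{-\frac{1}{2}}\varphi$ because that function lies in $\Xi(\R^n)\subset\mathcal{S}(\R^n)$ by definition. It gives
\[
\partial_{x_j}\big[(-\Delta)^{-\frac{1}{2}}\varphi\big]=\mathcal{F}^{-1}\big((2\pi i\xi_j)(2\pi|\xi|)^{-1}\mathcal{F}\varphi\big).
\]
Applying the same rule to $\varphi$ itself gives
\[
(-\Delta)^{-\frac{1}{2}}[\partial_{x_j}\varphi]=\mathcal{F}^{-1}\big((2\pi|\xi|)^{-1}(2\pi i\xi_j)\mathcal{F}\varphi\big).
\]
The two right-hand sides agree, which proves the identity.

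\emph{Invariance.} By Remark~\ref{reminv}, $\partial_{x_j}\varphi\in\Xi(\R^n)$. It remains to show that $(-\Delta)^{-\frac{1}{2}}[\partial_{x_j}\varphi]\in\Xi(\R^n)$. By the identity just proved, this function equals $\partial_{x_j}\big[(-\Delta)^{-\frac{1}{2}}\varphi\big]$. Since $(-\Delta)^{-\frac{1}{2}}\varphi\in\Xi(\R^n)$, Remark~\ref{reminv} applied once more gives $\partial_{x_j}\big[(-\Delta)^{-\frac{1}{2}}\varphi\big]\in\Xi(\R^n)$. Concretely, this function is Schwartz with all derivatives vanishing at $0$, so it lies in $\Psi(\R^n)$. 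Its Fourier transform is $2\pi i\xi_j$ times $\mathcal{F}[(-\Delta)^{-\frac{1}{2}}\varphi]$, and the latter lies in $\Psi(\R^n)$, so the product does too, which places the function in $\Phi(\R^n)$.

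The only point requiring care is the first one: the multiplier $|\xi|^{-1}$ is singular at the origin. The vanishing of $\mathcal{F}\varphi$ to infinite order at $0$, through Lemma~\ref{LEMproduct}, is what makes the products $(2\pi|\xi|)^{-1}\mathcal{F}\varphi$ and $\xi_j|\xi|^{-1}\mathcal{F}\varphi$ smooth. This is also what makes the Fourier manipulations legitimate in $\mathcal{S}(\R^n)$.
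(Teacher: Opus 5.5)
Your proof is correct and is essentially the argument the paper intends: the paper gives no separate proof, only saying the lemma follows by the same arguments as Lemma~\ref{item3rem}, and your proposal carries out exactly that adaptation with the multiplier $(2\pi|\xi|)^{-1}$ in place of $2\pi|\xi|$. Your use of Lemma~\ref{LEMproduct} handles the singularity of $|\xi|^{-1}$ at the origin, and Remark~\ref{reminv} handles the invariance step.
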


\begin{lemma}
\label{newneg0}
The negative half fractional Laplacian $(-\Delta)^{-\frac{1}{2}}$ is a  map between the following subspaces of $ \mathcal{S}(\mathbb{R}^n)$:
$$
(-\Delta)^{-\frac{1}{2}} : \Xi_{-\frac{1}{2}}(\mathbb{R}^n) \to \Xi_{\frac{1}{2}}(\mathbb{R}^n)
$$
\end{lemma}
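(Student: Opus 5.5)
Let $\varphi\in\Xi_{-\frac12}(\R^n)$ and put $\psi:=(-\Delta)^{-\frac12}\varphi=\mathcal{F}^{-1}\big((2\pi|\xi|)^{-1}\mathcal{F}\varphi\big)$. The plan is to verify the two conditions of Definition~\ref{space}: first that $\psi\in\Xi(\R^n)$, and then that $(-\Delta)^{\frac12}\psi\in\Xi(\R^n)$.

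The first condition is immediate from Definition~\ref{space1}, since $\varphi\in\Xi_{-\frac12}(\R^n)$ means exactly that $(-\Delta)^{-\frac12}\varphi\in\Xi(\R^n)$.

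For the second condition, compute on the Fourier side. Since $\psi\in\Xi(\R^n)\subset\mathcal{S}(\R^n)$, $(-\Delta)^{\frac12}\psi$ is defined by \eqref{delta}, and
\[
\mathcal{F}\big[(-\Delta)^{\frac12}\psi\big](\xi)=2\pi|\xi|\,\mathcal{F}\psi(\xi)=2\pi|\xi|\,(2\pi|\xi|)^{-1}\mathcal{F}\varphi(\xi)=\mathcal{F}\varphi(\xi),\qquad \xi\neq0.
\]
The singularity at the origin needs care, since $|\xi|$ and $|\xi|^{-1}$ are not smooth there. By Remark~\ref{REMextension}, these products are taken as the $C^\infty$ extensions given by Lemma~\ref{LEMproduct}, which applies because $|\xi|^{\pm1}$ satisfies \eqref{EQgbound} and $\mathcal{F}\varphi\in\Psi(\R^n)$ (as $\varphi\in\Phi(\R^n)$). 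Hence $\mathcal{F}\psi$ and $\mathcal{F}\varphi$ are both smooth with all derivatives vanishing at $0$. The identity above holds off the origin, so by continuity it holds everywhere. Since $\mathcal{F}\varphi\in L^1(\R^n)$, Fourier inversion then gives $(-\Delta)^{\frac12}\psi=\varphi$ pointwise. As $\varphi\in\Xi(\R^n)$, we obtain $(-\Delta)^{\frac12}\psi\in\Xi(\R^n)$, so $\psi\in\Xi_{\frac12}(\R^n)$.

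I expect the only real obstacle to be this cancellation $|\xi|\cdot|\xi|^{-1}=1$ at the origin, that is, making sure that composing the two multipliers on $\mathcal{F}\varphi$ gives back exactly $\mathcal{F}\varphi$ in the sense of the smooth extensions of Lemma~\ref{LEMproduct}. Continuity together with pointwise agreement on $\R^n\setminus\{0\}$ settles it. As a byproduct, the argument shows that $(-\Delta)^{\frac12}$ is a left inverse of $(-\Delta)^{-\frac12}$ on $\Xi_{-\frac12}(\R^n)$.
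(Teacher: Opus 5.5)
Your proof is correct and follows the same route as the paper's. Membership $(-\Delta)^{-\frac12}\varphi\in\Xi(\R^n)$ comes directly from the definition of $\Xi_{-\frac12}(\R^n)$, and then $(-\Delta)^{\frac12}(-\Delta)^{-\frac12}\varphi=\varphi\in\Xi(\R^n)$. Your additional check that the cancellation $2\pi|\xi|\cdot(2\pi|\xi|)^{-1}\mathcal{F}\varphi=\mathcal{F}\varphi$ also holds at the origin is a detail the paper leaves implicit.
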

\begin{proof}
Let $\varphi \in \Xi_{-\frac{1}{2}}(\mathbb{R}^n) $. By Definition \ref{space1} we have $(-\Delta)^{-\frac{1}{2}} \varphi \in \Xi(\mathbb{R}^n)$. It remains to prove that $(-\Delta)^{-\frac{1}{2}} \varphi \in \Xi_{{\frac{1}{2}}}(\mathbb{R}^n)$, or equivalently, that $(-\Delta)^{\frac{1}{2}} [(-\Delta)^{-\frac{1}{2}}\varphi] \in \Xi(\mathbb{R}^n)$. Indeed by Definition \ref{negativelapla} we have
$$(-\Delta)^{\frac{1}{2}} [(-\Delta)^{-\frac{1}{2}}\varphi]= \mathcal{F}^{-1}(2 \pi | \xi| \mathcal{F}[(-\Delta)^{-\frac{1}{2}}\varphi])=\varphi \in \Xi(\mathbb{R}^n).$$
This proves the result.
\end{proof}

Now, we turn our attention to the study of distributions  $T\in \Xi'_{-\frac{1}{2}}(\mathbb{R}^n)$, i.e., linear functionals acting on test functions $\varphi\in \Xi_{-\frac{1}{2}}(\mathbb{R}^n)$.
Similarly to Definition~\ref{DEFderivativepsi}, we define the action of partial derivatives on
$\Xi'_{-\frac{1}{2}}(\mathbb{R}^n)$.

\begin{definition}\label{derivative}
	Let $T\in  \Xi_{-\frac{1}{2}}'(\R^n)$, and $1\leq j\leq n$. The partial derivative of $T$ with respect to the $j$th variable is defined by the formula
	\[
	\bigg\langle \frac{\partial T}{\partial {x_j}},\varphi \bigg\rangle  = -\bigg\langle T, \frac{\partial \varphi}{\partial {x_j}} \bigg\rangle,
	\]
	for $\varphi\in \Xi_{-\frac{1}{2}}(\R^n)$.
\end{definition}

By Lemma \ref{newneg0} we deduce the following

\begin{definition}
\label{newneg1}
We define the negative half Laplacian $
(-\Delta)^{-\frac{1}{2}} : \Xi'_{\frac{1}{2}}(\mathbb{R}^n) \to \Xi'_{-\frac{1}{2}}(\mathbb{R}^n)
$ via the relation
\begin{equation}
\label{negLap}
\bigl\langle (-\Delta)^{-\frac{1}{2}}T, \varphi \bigr\rangle
=
\bigl\langle T, \mathcal{F}^{-1}\!\bigl((2\pi|\xi|)^{-1}\,\mathcal{F}(\varphi)\bigr) \bigr\rangle.
\end{equation}
\end{definition}
Using similar arguments used to prove Lemma \ref{LEMcommutativity}, we have the following result
\begin{lemma}
\label{newder}
Let  $T \in \Xi'_{-\frac{1}{2}}(\mathbb{R}^n)$ and $1 \leq j \leq n$. We have $\partial_{x_j} (-\Delta)^{-\frac{1}{2}}T$ and $\partial_{x_j} (-\Delta)^{-\frac{1}{2}}T \in \Xi_{-\frac{1}{2}}'(\mathbb{R}^n)$. Moreover
	$$	\partial_{x_j} \big[(-\Delta)^{-\frac{1}{2}} T\big]=(-\Delta)^{-\frac{1}{2}} \big[ \partial_{x_j}T\big].$$
\end{lemma}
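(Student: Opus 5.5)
The plan is to mimic the proof of Lemma~\ref{LEMcommutativity}, testing against an arbitrary $\varphi\in\Xi_{-\frac{1}{2}}(\R^n)$ and moving everything onto the test-function side. Note that in the statement $T$ should be taken in the domain of $(-\Delta)^{-\frac{1}{2}}$, namely $\Xi'_{\frac{1}{2}}(\R^n)$ (Definition~\ref{newneg1}). Partial derivatives of $T$ are defined in $\Xi'_{\frac{1}{2}}(\R^n)$ by Definition~\ref{DEFderivativepsi}, and of elements of $\Xi'_{-\frac{1}{2}}(\R^n)$ by Definition~\ref{derivative}. So both sides $\partial_{x_j}\big[(-\Delta)^{-\frac{1}{2}}T\big]$ and $(-\Delta)^{-\frac{1}{2}}\big[\partial_{x_j}T\big]$ are a priori meaningful elements of $\Xi'_{-\frac{1}{2}}(\R^n)$.

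The first step is to check well-definedness. For $\varphi\in\Xi_{-\frac{1}{2}}(\R^n)$, Lemma~\ref{invariance} gives $\partial_{x_j}\varphi\in\Xi_{-\frac{1}{2}}(\R^n)$. Lemma~\ref{newneg0} gives $(-\Delta)^{-\frac{1}{2}}\varphi\in\Xi_{\frac{1}{2}}(\R^n)$, and then Lemma~\ref{item3rem} gives $\partial_{x_j}(-\Delta)^{-\frac{1}{2}}\varphi\in\Xi_{\frac{1}{2}}(\R^n)$. Hence every pairing written below is between an element of $\Xi'_{\frac{1}{2}}(\R^n)$ and an element of $\Xi_{\frac{1}{2}}(\R^n)$, or between an element of $\Xi'_{-\frac{1}{2}}(\R^n)$ and an element of $\Xi_{-\frac{1}{2}}(\R^n)$.

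Next I compute both sides. By Definitions~\ref{derivative} and~\ref{newneg1},
\[
\big\langle \partial_{x_j}(-\Delta)^{-\frac{1}{2}}T,\varphi\big\rangle=-\big\langle T,(-\Delta)^{-\frac{1}{2}}[\partial_{x_j}\varphi]\big\rangle
=-\big\langle T,\mathcal{F}^{-1}\big((2\pi|\xi|)^{-1}(2\pi i\xi_j)\mathcal{F}\varphi\big)\big\rangle .
\]
On the other side, by Definitions~\ref{newneg1} and~\ref{DEFderivativepsi},
\[
\big\langle (-\Delta)^{-\frac{1}{2}}\partial_{x_j}T,\varphi\big\rangle=\big\langle \partial_{x_j}T,(-\Delta)^{-\frac{1}{2}}\varphi\big\rangle=-\big\langle T,\partial_{x_j}(-\Delta)^{-\frac{1}{2}}\varphi\big\rangle .
\]
By the first identity of Lemma~\ref{invariance}, $\partial_{x_j}(-\Delta)^{-\frac{1}{2}}\varphi=(-\Delta)^{-\frac{1}{2}}\partial_{x_j}\varphi$. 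Equivalently, both functions equal $\mathcal{F}^{-1}\big(i\xi_j|\xi|^{-1}\mathcal{F}\varphi\big)$, by \eqref{EQpartialder}. So the two right-hand sides coincide, and since $\varphi$ is arbitrary the distributional identity follows.

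The only step needing care is the Fourier-side manipulation inside Lemma~\ref{invariance}, namely that the multiplier $|\xi|^{-1}$ combines with $\xi_j$ legitimately. This causes no trouble: $\mathcal{F}\varphi\in\Psi(\R^n)$ vanishes to infinite order at $0$, so by Lemma~\ref{LEMproduct} (with $f=\xi_j|\xi|^{-1}$, which satisfies \eqref{EQgbound}) the product $\xi_j|\xi|^{-1}\mathcal{F}\varphi$ is a Schwartz function. Hence $\mathcal{F}^{-1}$ of it is an honest function, and the identity holds pointwise. I expect no real obstacle beyond keeping track of which test space each function lies in.
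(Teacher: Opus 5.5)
Your proof is correct and is essentially the argument the paper intends (it only says the lemma follows as in Lemma~\ref{LEMcommutativity}): test against $\varphi\in\Xi_{-\frac{1}{2}}(\R^n)$, move both operators onto $\varphi$, and conclude from $\partial_{x_j}(-\Delta)^{-\frac{1}{2}}\varphi=(-\Delta)^{-\frac{1}{2}}\partial_{x_j}\varphi$. You were also right to correct the statement's typo: $T$ must lie in $\Xi'_{\frac{1}{2}}(\R^n)$, which is the domain of $(-\Delta)^{-\frac{1}{2}}$ in Definition~\ref{newneg1}.
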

The above result concerning the commutativity of derivatives with the negative half fractional Laplacian can be extended to the conjugate Fueter operator.
\begin{lemma}
Let $f \in C^\infty (\mathbb{H} \setminus \{0\})$ that satisfy the condition \eqref{EQfderivatives}. Then, we have $\overline{D}(-\Delta)^{-\frac{1}{2}}f$ and $(-\Delta)^{-\frac{1}{2}}\overline{D}f \in \Xi'_{-\frac{1}{2}}(\mathbb{H})$. Moreover, we also obtain
$$ \overline{D}(-\Delta)^{-\frac{1}{2}}f(q)=(-\Delta)^{-\frac{1}{2}}[\overline{D}f(q)].$$
\end{lemma}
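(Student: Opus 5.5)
The plan is to reduce the statement to the scalar commutation result, Lemma~\ref{newder}, exactly as Lemma~\ref{NN} was obtained from Lemma~\ref{LEMcommutativity}. First I will check that $f$ defines an element of the relevant dual space. By Lemma~\ref{exchange}\,\ref{LEMexchangeitem1}, $f$ induces $T_f\in\Xi'(\mathbb{H})$, and since $\Xi_{\frac12}(\mathbb{H})\subset\Xi(\mathbb{H})$ with the inherited topology, also $T_f\in\Xi'_{\frac12}(\mathbb{H})$. Definition~\ref{newneg1} then gives $(-\Delta)^{-\frac12}f\in\Xi'_{-\frac12}(\mathbb{H})$. Each partial derivative $\partial_{q_i}f$ again satisfies \eqref{EQfderivatives}, because the bounds on all derivatives of $f$ are inherited with shifted multi-indices. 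So $\partial_{q_i}f$ also lies in $\Xi'(\mathbb{H})\subset\Xi'_{\frac12}(\mathbb{H})$, and $(-\Delta)^{-\frac12}\partial_{q_i}f\in\Xi'_{-\frac12}(\mathbb{H})$ is well defined. By linearity, $(-\Delta)^{-\frac12}\overline{D}f\in\Xi'_{-\frac12}(\mathbb{H})$.

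Next I will handle the derivative side. By Definition~\ref{derivative} together with Lemma~\ref{invariance} (derivatives preserve $\Xi_{-\frac12}$), $\partial_{q_i}$ acts on $\Xi'_{-\frac12}(\mathbb{H})$. Hence $\partial_{q_i}(-\Delta)^{-\frac12}f\in\Xi'_{-\frac12}(\mathbb{H})$, and therefore so is $\overline{D}(-\Delta)^{-\frac12}f$. I then apply Lemma~\ref{newder} componentwise for $i=0,1,2,3$ to get
\[
\partial_{q_i}(-\Delta)^{-\frac12}f=(-\Delta)^{-\frac12}\partial_{q_i}f .
\]
Writing $\overline{D}=\partial_{q_0}-\sum_{i=1}^3 e_i\partial_{q_i}$, I will sum these identities, after left multiplication by $-e_i$ for $i\ge1$.

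The main obstacle is justifying this sum in the quaternion-valued setting, since it requires that left multiplication by the constant $e_i$ commutes with $(-\Delta)^{-\frac12}$. Following the componentwise convention \eqref{EQquaternioniccomponent}, I will write $f=f_0+\sum_\ell f_\ell e_\ell$ with real components. The operator $(-\Delta)^{-\frac12}$ is defined on each real component through a real-linear pairing with the real-valued kernel $(2\pi|\xi|)^{-1}$. Consequently it acts componentwise and commutes with left multiplication by any constant quaternion; as noted in Lemma~\ref{NN}, the order of the units is preserved. One caveat has to be checked: the test functions and their Fourier images are complex-valued, with imaginary unit $i$ independent of the $e_\ell$. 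This does not interfere, because $e_i$ multiplies only the quaternionic components.

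Combining the two sides then gives
\[
\overline{D}(-\Delta)^{-\frac12}f=(-\Delta)^{-\frac12}\overline{D}f
\]
as elements of $\Xi'_{-\frac12}(\mathbb{H})$, which is the claim.
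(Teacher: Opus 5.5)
Your proposal is correct and follows the paper's route. The paper just refers back to the proof of Lemma~\ref{NN}: $f$ defines an element of $\Xi'(\mathbb{H})$ by Lemma~\ref{exchange}, the scalar commutation result (here Lemma~\ref{newder}) is applied to each $\partial_{q_i}$, and the identities are combined by linearity, since left multiplication by the constant units $e_i$ commutes with $(-\Delta)^{-\frac12}$. You make explicit the well-definedness steps (through $\Xi'(\mathbb{H})\subset\Xi'_{\frac12}(\mathbb{H})$ and Lemma~\ref{invariance}) and the harmlessness of noncommutativity, which the paper leaves implicit.
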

\begin{proof}
The result follows by using arguments similar to those employed in the proof of Theorem~\ref{NN}.
\end{proof}

By \eqref{app} we have that $\mathcal{P}^{(-k)}(q) \in \Xi'_{\frac{1}{2}}(\mathbb{H})$ and by Definition \ref{newneg1} and Lemma \ref{LEMcommutativity} we have
\begin{equation}
\label{app1}
(-\Delta)^{-\frac{1}{2}} \overline{D}\mathcal{P}^{(-k)}(q) \in \Xi'_{-\frac{1}{2}}(\mathbb{H}).
\end{equation}
For convince we introduce the following notation
\begin{definition}
Let $k \in \mathbb{N}$. We define
\begin{equation*}
	\widetilde{\mathcal{P}}^{(-k)}(q):=  (-\Delta)^{-\frac{1}{2}} \overline{D}\mathcal{P}^{(-k)}(q).
\end{equation*}
\end{definition}

Now, we show a fundamental relation between $ \widetilde{\mathcal{P}}^{(-k)}(q)$ and the Cauchy-Fueter kernel $E(q)$.
\begin{proposition}
\label{neglapla}
	Let $q \in \mathbb{H} \backslash \{0\}$. Then we have
\begin{equation*}
 \widetilde{\mathcal{P}}^{(-k)}(q)=4c_k \partial_{q_0}^{k-1}E(q),
\end{equation*}
as element of $\Xi'_{-\frac{1}{2}}(\mathbb{H})$.  Moreover, the functions $\widetilde{\mathcal{P}}^{(-k)}(q)$ are axial functions.
\end{proposition}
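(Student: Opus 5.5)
Since $\mathcal{P}^{(-k)}=DP^{(-k)}$ and $D$ commutes with $(-\Delta)^{\frac12}$ (Lemma~\ref{NN}), the plan is to compute
\[
\overline{D}\mathcal{P}^{(-k)}=\overline{D}D(-\Delta)^{\frac12}\big(q^{-k}\big)=\Delta(-\Delta)^{\frac12}\big(q^{-k}\big)=-(-\Delta)^{\frac12}(-\Delta)^{\frac12}(-\Delta)^{\frac12}\big(q^{-k}\big),
\]
using $\overline{D}D=\Delta$ and the semigroup property (Lemma~\ref{LEMsemigroup}). Then $(-\Delta)^{-\frac12}$ cancels one factor $(-\Delta)^{\frac12}$, which should leave $(-\Delta)^{\frac12}P^{(-k)}=-\Delta(q^{-k})$. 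Proposition~\ref{oneN} identifies this with $4c_k\partial_{q_0}^{k-1}E(q)$.

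The cancellation $(-\Delta)^{-\frac12}(-\Delta)^{\frac12}T=T$ on $\Xi'_{-\frac12}(\mathbb{H})$ is the step that needs care. For $\varphi\in\Xi_{-\frac12}(\mathbb{H})$, Definition~\ref{newneg1} gives
\[
\langle(-\Delta)^{-\frac12}S,\varphi\rangle=\langle S,(-\Delta)^{-\frac12}\varphi\rangle ,
\]
and $(-\Delta)^{-\frac12}\varphi\in\Xi_{\frac12}(\mathbb{H})$ by Lemma~\ref{newneg0}. Taking $S=(-\Delta)^{\frac12}T$ and applying Definition~\ref{DEFfractionallap}, we reach $\langle T,(-\Delta)^{\frac12}(-\Delta)^{-\frac12}\varphi\rangle=\langle T,\varphi\rangle$, since the Fourier multipliers $2\pi|\xi|$ and $(2\pi|\xi|)^{-1}$ cancel on $\mathcal{F}\varphi\in\Psi$.

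This cancellation is the main obstacle: it requires the duality pairings to be legitimate. For that we need $T=(-\Delta)^{\frac12}P^{(-k)}$, or $P^{(-k)}$ itself, to lie in the appropriate duals. This holds because $q^{-k}\in\Xi'(\mathbb{H})\subset\Xi'_{\frac12}(\mathbb{H})$ and $\Xi_{-\frac12}\subset\Xi$.

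A cleaner alternative route avoids the triple composition. Write $\overline{D}\mathcal{P}^{(-k)}=2c_k\partial_{q_0}^{k-1}\overline{D}(|q|^{-3})$ by Proposition~\ref{negH} and the commutativity of derivatives. Since $\overline{D}|q|^{-3}=-3|q|^{-5}\bar q$, this equals $-6c_k\partial_{q_0}^{k-1}(\bar q|q|^{-5})$. Then, by Lemma~\ref{newder}, pull $(-\Delta)^{-\frac12}$ inside the derivatives. It remains to compute $(-\Delta)^{-\frac12}(\bar q|q|^{-5})$ via \eqref{zeroone} and \eqref{zeroone1}, as in the proof of Proposition~\ref{oneN}. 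Here $\bar q|q|^{-5}$ corresponds to $\alpha=0$, which lies outside the range of Lemma~\ref{LEMstein}. I would therefore use the first route, and use this computation only as a consistency check of the constant $4c_k$.

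For axiality, note that $E(q)=\frac{q_0}{|q|^4}-\frac{\underline q}{|\underline q|}\frac{|\underline q|}{|q|^4}$ is axial with components $A=q_0|q|^{-4}$ (even in $|\underline q|$) and $B=-|\underline q||q|^{-4}$ (odd in $|\underline q|$). The operator $\partial_{q_0}$ acts only on these real components and preserves their parity in $|\underline q|$. Hence $4c_k\partial_{q_0}^{k-1}E$ has the form $A_k+\underline\omega B_k$ with the even/odd conditions \eqref{co1}, so it is axial in the sense of Definition~\ref{axial1}.
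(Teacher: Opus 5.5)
\textbf{The gap is a dropped minus sign, and it is decisive.} Your main route differs from the paper's: you use the semigroup property, cancel $(-\Delta)^{\frac12}$ against $(-\Delta)^{-\frac12}$, and then invoke Proposition~\ref{oneN}, whereas the paper computes the Fourier multiplier of $(-\Delta)^{-\frac12}\overline{D}$ on $\mathcal{F}(|q|^{-3})$ directly.

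Your own display gives $\overline{D}\mathcal{P}^{(-k)}=-(-\Delta)^{\frac12}(-\Delta)^{\frac12}(-\Delta)^{\frac12}(q^{-k})$. After the cancellation, what remains is $-(-\Delta)^{\frac12}P^{(-k)}=\Delta(q^{-k})$, not $(-\Delta)^{\frac12}P^{(-k)}=-\Delta(q^{-k})$.

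The cleanest rigorous form of your idea moves everything onto $q^{-k}\in\Xi'(\mathbb{H})$, which also settles which dual each object lives in. For $\varphi\in\Xi_{-\frac12}(\mathbb{H})$, using $\overline{D}D=\Delta$ and Lemmas~\ref{newneg0} and~\ref{item3rem},
\[
\big\langle \widetilde{\mathcal{P}}^{(-k)},\varphi\big\rangle=\big\langle q^{-k},(-\Delta)^{\frac12}\Delta(-\Delta)^{-\frac12}\varphi\big\rangle=\langle q^{-k},\Delta\varphi\rangle=\big\langle \Delta(q^{-k}),\varphi\big\rangle ,
\]
since the multipliers $2\pi|\xi|\cdot(-4\pi^2|\xi|^2)\cdot(2\pi|\xi|)^{-1}$ collapse to that of $\Delta$. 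Hence, by Proposition~\ref{oneN},
\[
\widetilde{\mathcal{P}}^{(-k)}=\Delta(q^{-k})=-4c_k\partial_{q_0}^{k-1}E .
\]
For $k=1$ this can be checked directly: $\Delta(\bar q|q|^{-2})=-4\bar q|q|^{-4}$ on $\mathbb{R}^4\setminus\{0\}$.

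So, carried out correctly, your argument proves the identity with the opposite sign, and no repair gives $+4c_k$. The two candidates really differ in $\Xi'_{-\frac12}(\mathbb{H})$, because $E$ does not annihilate $\Xi_{-\frac12}(\mathbb{H})$. To see this, take $\mathcal{F}\varphi(\xi)=\xi_0|\xi|^{-4}h(|\xi|)$ with $h$ built as in Lemma~\ref{LEMradialmomentsvanish}. Then $\langle E,\varphi\rangle$ is a nonzero multiple of $\int_0^\infty h(\rho)\rho^{-1}\,d\rho$, which is nonzero for suitable $h$.

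\textbf{The error is in the statement, not in your method.} The paper reaches $+4c_k$ by inverting $\bar\xi/|\xi|^2$ through \eqref{zeroone1} with $\gamma_{1,1}(4)=i\pi$. The factor $i^{k}$ in \eqref{gammaRn} is right for the kernel $e^{+2\pi i x\cdot\xi}$. With the kernel $e^{-2\pi i x\cdot\xi}$ of Definition~\ref{DEFfraclapschwarz} it must be $i^{-k}$; already on $\mathbb{R}$ one has $\mathcal{F}[x e^{-\pi x^2}](\xi)=-i\,\xi e^{-\pi\xi^2}$. In Propositions~\ref{negative3} and~\ref{oneN}, two odd-degree kernels are transformed and the error cancels. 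Here the other factor is the directly computed multiplier $2\pi i\bar\xi$ of $\overline{D}$, so the error survives.

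Your second route would have exposed this rather than confirmed $4c_k$. Its constant is $\gamma_{1,0}(4)/(2\pi\gamma_{1,1}(4))=\tfrac23$, which does not depend on the convention. So $(-\Delta)^{-\frac12}(\bar q|q|^{-5})=\tfrac23E$, and $\widetilde{\mathcal{P}}^{(-k)}=-6c_k\cdot\tfrac23\,\partial_{q_0}^{k-1}E=-4c_k\partial_{q_0}^{k-1}E$. The case $\alpha=0$ that worried you is harmless: $\bar q|q|^{-5}$ pairs absolutely with test functions vanishing to infinite order at $0$, and the principal-value version of Lemma~\ref{LEMstein} applies.

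Your axiality argument is correct and unaffected by the sign. It is also more direct than the paper's appeal to \eqref{EQlaplacianidentity} and the Clifford--Appell polynomials.
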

\begin{proof}
By Proposition \ref{negH} we have
\begin{eqnarray*}
\widetilde{\mathcal{P}}^{(-k)}(q)&=& (-\Delta)^{-\frac{1}{2}}\overline{D} \mathcal{P}^{(-k)}(q)\\
&=& 2c_k (-\Delta)^{-\frac{1}{2}}\overline{D} \partial_{q_0}^{k-1} \left(\frac{1}{|q|^{3}}\right)\\
&=& 2c_k\partial_{q_0}^{k-1} \mathcal{F}^{-1}\left( \mathcal{F}\left((-\Delta)^{-\frac{1}{2}}\overline{D}\left( \frac{1}{|q|^{3}} \right) (\xi)\right)\right)(q).
\end{eqnarray*}
Now, by using the fact that $\mathcal{F}(\overline{D}g)(\xi)=2 \pi i \overline{\xi}g(\xi)$, for a suitable quaternionic-valued function $g$, we have
$$
\widetilde{\mathcal{P}}^{(-k)}(q)=2ic_k\partial_{q_0}^{k-1} \mathcal{F}^{-1}\left( \overline{\xi} | \xi|^{-1}\mathcal{F} \left(\frac{1}{|q|^{3}}\right)(\xi) \right)(q).
$$
By using formula \eqref{zeroone}, with $k=0$, $\alpha=1$, and \eqref{zeroone1}, with $\alpha=k=1$, we have
$$
	\widetilde{\mathcal{P}}^{(-k)}(q)=  4i \pi c_k
\partial_{q_0}^{k-1} \mathcal{F}^{-1}\left( \frac{\bar{\xi}}{|\xi|^2}\right)(q)\\
= 4c_k \partial_{q_0}^{k-1}  (E(q)).
$$
Finally, the fact that the functions $\widetilde{\mathcal{P}}^{(-k)}(q)$ are of axial type follows by \eqref{EQlaplacianidentity} and the fact that $Q_k(\overline{q})$ are functions of axial type.
\end{proof}

\begin{remark}
It is clear that the function $E(q)$ satisfies all the assumptions of Lemma~\ref{exchange}; therefore, $\partial_{q_0}^{k-1}E(q)$ defines a linear functional on $\Xi'(\mathbb{H})$. Moreover, by Propositions~\ref{oneN} and \ref{neglapla}, the distribution $\partial_{q_0}^{k-1}E(q)$ belongs to both $\Xi'_{\frac{1}{2}}(\mathbb{H})$ and $\Xi'_{-\frac{1}{2}}(\mathbb{H})$.
\end{remark}

We now investigate the regularity of $\widetilde{\mathcal{P}}^{(-k)}(q)$.

\begin{theorem}
For $k \in \mathbb{N}$, the function $\widetilde{\mathcal{P}}^{(-k)}(q)$ is axially Fueter regular as an element from $\Xi'_{-\frac{1}{2}}(\mathbb{H})$.
\end{theorem}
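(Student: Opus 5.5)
The plan is to rely on Proposition~\ref{neglapla}, which gives $\widetilde{\mathcal{P}}^{(-k)}(q)=4c_k\,\partial_{q_0}^{k-1}E(q)$ as an element of $\Xi'_{-\frac{1}{2}}(\mathbb{H})$. The statement then reduces to two facts. First, $\widetilde{\mathcal{P}}^{(-k)}$ is of axial type, which is already part of Proposition~\ref{neglapla}. Second, $D\widetilde{\mathcal{P}}^{(-k)}=0$ in $\Xi'_{-\frac{1}{2}}(\mathbb{H})$, where $D$ acts distributionally componentwise via Definition~\ref{derivative}. So the whole argument is the vanishing of $D\,\partial_{q_0}^{k-1}E$.

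To prove that vanishing, I would argue as follows.

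1. The partial derivatives $\partial_{q_i}$ commute with one another on $\Xi'_{-\frac{1}{2}}(\mathbb{H})$. This is immediate from Definition~\ref{derivative} and the invariance of $\Xi_{-\frac{1}{2}}$ under derivatives (Lemma~\ref{invariance}). It gives
\[
D\,\partial_{q_0}^{k-1}E=\partial_{q_0}^{k-1}DE .
\]
Here the quaternionic units of $D$ multiply on the left, so noncommutativity plays no role, exactly as in Lemma~\ref{NN}.

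2. $E$ is smooth on $\mathbb{H}\setminus\{0\}$ and satisfies \eqref{EQfderivatives}. By Lemma~\ref{exchange} it defines a functional in $\Xi'$. The same holds for each classical derivative of $E$, and the distributional derivative of $E$ coincides with the functional induced by its pointwise derivative on $\mathbb{H}\setminus\{0\}$.

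3. The key point is that the test functions vanish to infinite order at the origin. Concretely, I would integrate by parts on $\{|q|>\varepsilon\}$ and let $\varepsilon\to0$. The boundary term is bounded by $\varepsilon^{3}\cdot\varepsilon^{-3}\cdot\sup_{|q|=\varepsilon}|\varphi|\to0$, since $|E|\sim\varepsilon^{-3}$ on the sphere of radius $\varepsilon$ while $\varphi\in\Psi$ vanishes to infinite order at $0$.

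4. Since $DE=0$ pointwise on $\mathbb{H}\setminus\{0\}$ (the Cauchy--Fueter kernel is Fueter regular), step 3 gives $\langle DE,\varphi\rangle=0$ for every $\varphi\in\Xi_{-\frac{1}{2}}(\mathbb{H})$. The usual $\delta_0$ contribution that appears in $\mathcal{S}'$ is invisible here; this is consistent with Remark~\ref{REMperp}, which places $\widehat{\mathcal{P}}\subset\Xi^\perp$.

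5. Applying $\partial_{q_0}^{k-1}$ to $DE=0$ gives $D\widetilde{\mathcal{P}}^{(-k)}=0$.

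As an independent check, the same conclusion follows from \eqref{oneN1} together with the Fueter mapping theorem. Indeed, $-\Delta(q^{-k})=4c_k\partial_{q_0}^{k-1}E$ and $D\Delta(q^{-k})=0$ pointwise away from the origin, so $D\widetilde{\mathcal{P}}^{(-k)}=0$ classically on $\mathbb{H}\setminus\{0\}$. One then transfers this to the distributional level by the same boundary-term argument.

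The main obstacle is the identification of the distributional derivative with the pointwise one, i.e.\ that integration by parts against the singular kernel creates no contribution at the origin. This is precisely where membership of the test functions in $\Psi(\mathbb{H})$ is used, and I would spell out the $\varepsilon$-sphere estimate carefully.
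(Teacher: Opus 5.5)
Your proof is correct, but it reaches $D\widetilde{\mathcal{P}}^{(-k)}=0$ by a different route from the paper's.

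The paper also starts from Proposition~\ref{neglapla} and moves $D\,\partial_{q_0}^{k-1}$ onto the test function. It then passes to the Fourier side. It uses $\mathcal{F}^{-1}E=\frac{\pi}{i}\,\bar{\xi}/|\xi|^{2}$ (from \eqref{zeroone1}) and $\mathcal{F}(D\partial_{q_0}^{k-1}\varphi)\propto \xi\,\xi_0^{k-1}\mathcal{F}\varphi$. The product $\bar\xi\,\xi=|\xi|^2$ then collapses the pairing to a constant times $\int \xi_0^{k-1}\mathcal{F}\varphi(\xi)\,d\xi$, which vanishes because $\mathcal{F}\varphi\in\Phi(\mathbb{H})$.

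You stay in physical space instead. You use that $DE=0$ classically on $\mathbb{H}\setminus\{0\}$, and integrate by parts on $\{\varepsilon<|q|<R\}$ to show that the distributional and pointwise derivatives of $E$ agree on test functions vanishing at the origin.

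The two arguments describe the same fact from dual sides. In $\mathcal{S}'$, $DE$ is a multiple of $\delta_0$, and $\partial_{q_0}^{k-1}\delta_0\in\widehat{\mathcal{P}}\subset\Xi^\perp$. Vanishing moments of $\mathcal{F}\varphi$ and vanishing derivatives of $\varphi$ at $0$ are the same condition.

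\begin{itemize}
\item \textbf{What your route buys.} It is more elementary and more general. It avoids Stein's lemma, the constants $\gamma_{k,\alpha}$, and the delicate interplay between the Fourier unit $i$ and the quaternionic ordering in $\bar\xi\,\xi$. Moreover, the same $\varepsilon$-sphere estimate, using infinite-order vanishing at $0$, shows that for any $f$ satisfying \eqref{EQfderivatives} the distributional derivatives in $\Xi'(\mathbb{R}^n)$ coincide with the pointwise ones. So any constant-coefficient identity that holds off the origin transfers automatically.
\item \textbf{What the paper's route buys.} It stays inside the Fourier-multiplier framework of Section~\ref{Lizorkin}, and it shows explicitly where the $\delta$-term is annihilated.
\end{itemize}

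One small caveat concerns your ``independent check''. The paper establishes \eqref{oneN1} only as an identity in $\Xi'_{\frac{1}{2}}(\mathbb{H})$. Such an identity holds only modulo $\Xi^\perp_{\frac{1}{2}}$, which contains all polynomials, so it does not by itself give a pointwise identity. The identity does hold pointwise: on $\mathbb{H}\setminus\{0\}$ one has $\Delta(\bar q\,|q|^{-2})=-4E$, and applying $c_k\partial_{q_0}^{k-1}$ gives it. You should verify that directly rather than cite \eqref{oneN1}. Your main argument does not use this check, so the proof is unaffected.
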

\begin{proof}
We have already proved in Proposition \ref{neglapla} that the functions $\widetilde{\mathcal{P}}^{(-k)}(q)$ if of axial type. It remains to show that $ D \widetilde{\mathcal{P}}^{(-k)}(q) \equiv 0$ as an element in $ \Xi_{-\frac{1}{2}}'(\mathbb{H})$. By \eqref{app1} we know that $ \widetilde{\mathcal{P}}^{(-k)}(q) \in \Xi'_{-\frac{1}{2}}(\mathbb{H})$. So, by Definition \ref{derivative} and Proposition \ref{neglapla} we have
$$ \langle D \widetilde{\mathcal{P}}^{(-k)}(q), \varphi \rangle=-\langle  \widetilde{\mathcal{P}}^{(-k)}(q), D\varphi \rangle=-4 c_k \langle  \partial_{q_0}^{k-1}E(q), D \varphi \rangle,$$
for any $\varphi \in \Xi_{-\frac{1}{2}}(\mathbb{H})$. By the second point of Lemma~\ref{exchange}, we deduce that $\partial_{q_0}^{k-1}E(q)$ defines a linear functional in $\Xi'(\mathbb{H})$. Moreover, by Definition~\ref{space1}, we have in particular that $\varphi \in \Xi(\mathbb{H})$, and therefore, by Remark~\ref{reminv}, it follows that $D\varphi \in \Xi(\mathbb{H})$. Moreover, $E(q)$ defines a linear functional in $\Xi'(\mathbb{H})$. Thus, by Reamrk~\ref{der1}, we have
$$ -4 c_k \langle  \partial_{q_0}^{k-1}E(q), D \varphi \rangle=4 (-1)^k c_k \langle  E(q), D \partial_{q_0}^{k-1} \varphi \rangle.$$
By Remark \ref{reminv} we have that $D \partial_{q_0}^{k-1} \varphi \in \Xi(\mathbb{H})$, so by Proposition \ref{PROPinvertibility} we have that $\mathcal{F}(D \partial_{q_0}^{k-1} \varphi) \in \Xi(\mathbb{H})$. Thus by Definition \ref{ALBERTO} we have
\begin{eqnarray*}
	4 (-1)^k c_k \langle  E(q), D \partial_{q_0}^{k-1} \varphi \rangle&=&4 (-1)^k c_k \left \langle  E(q), \mathcal{F}^{-1} \mathcal{F}\left(D \partial_{q_0}^{k-1} \varphi \right) \right \rangle\\
	&=&4 (-1)^k c_k \left \langle  \mathcal{F}^{-1} E(q),  \mathcal{F}\left(D \partial_{q_0}^{k-1} \varphi \right) \right \rangle.
\end{eqnarray*}

By \eqref{zeroone1} (with $k=1$ and $\alpha=3$) we have
$$ (\mathcal{F}^{-1}E(q))(\xi)= \frac{\pi}{i} \frac{\overline{\xi}}{|\xi|^2}.$$
So by the fact that $D \varphi= \mathcal{F}^{-1}(2 \pi i \xi \mathcal{F}(\varphi))$, setting $\widetilde{c}_k$ all the constants we get
$$\widetilde{c}_k \left \langle  \mathcal{F}^{-1} E(q),  \mathcal{F}\left(D \partial_{q_0}^{k-1} \varphi \right) \right\rangle=\widetilde{c}_k \left \langle   \frac{\overline{\xi}}{|\xi|^2},  \xi \xi_0^{k-1}\mathcal{F}(\varphi)  \right \rangle=\widetilde{c}_k \left \langle 1, \xi_{0}^{k-1} \mathcal{F}(\varphi) \right \rangle.$$

Since $\varphi \in \Xi(\mathbb{H})$, by Proposition \ref{PROPinvertibility} we have that $\mathcal{F}(\varphi) \in \Xi(\mathbb{H})$, and by Definition \ref{space0} we have $ \mathcal{F}(\varphi) \in \Phi(\mathbb{H})$. This implies that
$$ \left \langle 1, \xi_{0}^{k-1} \mathcal{F}(\varphi) \right \rangle=0.$$
This proves the result.
\end{proof}

Now, we have all the tools to prove that the negative half Laplacian $(-\Delta)^{-\frac{1}{2}} \overline{D}$ applied to appropriate slice hyprholomopchic functions preserves the radious of convergence.

\begin{corollary}
\label{main4}
Let $ \{a_{-k}\} \subset \mathbb{H}$. Let $g(q) = \sum_{k=1}^{\infty }\mathcal{P}^{(-k)}(q)a_{-k}$, see \eqref{serH}, and
\[
R_1= \limsup_{k\to \infty} |a_{-k}|^\frac{1}{k},
\]
so that the series converges absolutely and uniformly  on compact subsets of the spherical shell $\mathcal{A}_{R_1,\infty}$ (by Corollary~\ref{CORconvergence12harmonic}). Then the series
\[
\sum_{k=1}^\infty (-\Delta)^{-\frac{1}{2}} \overline{D}\mathcal{P}^{(-k)}(q) a_{-k}
\]
also converges absolutely and uniformly on compact subsets of $\mathcal{A}_{R_1,\infty}$.	In particular,
\[
(-\Delta)^{-\frac{1}{2}} \overline{D}\mathcal{P}^{(-k)}(q)g(q)= \sum_{k=1}^\infty \widetilde{\mathcal{P}}^{(-k)}(q)a_{-k}.
\]
\end{corollary}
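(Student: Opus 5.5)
The plan is to identify each term $\widetilde{\mathcal{P}}^{(-k)}(q)$ explicitly and then obtain convergence from a root-test type comparison, in the same way as in Theorem~\ref{main1} and Corollary~\ref{CORconvergence12harmonic}.

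First, by Definition of $\widetilde{\mathcal{P}}^{(-k)}$ and Proposition~\ref{neglapla}, each summand satisfies
\[
(-\Delta)^{-\frac{1}{2}}\overline{D}\mathcal{P}^{(-k)}(q)=\widetilde{\mathcal{P}}^{(-k)}(q)=4c_k\partial_{q_0}^{k-1}E(q)
\]
as elements of $\Xi'_{-\frac{1}{2}}(\mathbb{H})$. On $\mathbb{H}\setminus\{0\}$ the right-hand side is a genuine smooth function. Consequently, the formal series in the statement (understood term by term, as in Definition~\ref{DEFquaternioniclaplacian}) is the pointwise series $\sum_{k\ge1}4c_k\,\partial_{q_0}^{k-1}E(q)\,a_{-k}$. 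By Proposition~\ref{oneN}, this coincides with $\sum_{k\ge1}-\Delta(q^{-k})a_{-k}$.

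Second, I bound the terms uniformly. Estimate \eqref{est} in Proposition~\ref{prop} gives $|\partial_{q_0}^{k-1}E(q)|\le 2k(k+1)|q|^{-k-2}$, and $|c_k|=1/(k-1)!\le1$. Hence
\[
\big|\widetilde{\mathcal{P}}^{(-k)}(q)a_{-k}\big|\le 8k(k+1)|a_{-k}|\,|q|^{-k-2}.
\]
Alternatively, one can use \eqref{oneS} directly through \eqref{EQlalppolynomial}, which gives the same bound without the factor $|c_k|$ issue. On a compact $K\subset\mathcal{A}_{R_1,\infty}$ we have $|q|\ge r>R_1$, so $|q|^{-2}\le r^{-2}$. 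Since $\limsup_k (8k(k+1)|a_{-k}|)^{1/k}=R_1$, the modified coefficients $8k(k+1)r^{-2}|a_{-k}|$ have the same $\limsup$ as the original ones. Proposition~\ref{conve} (or the Weierstrass M-test with $\sum_k 8k(k+1)|a_{-k}|r^{-k-2}<\infty$) then yields absolute and uniform convergence on $K$.

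Finally, the identification of the sum with the expression applied to $g$ follows by linearity of $(-\Delta)^{-\frac{1}{2}}\overline{D}$ applied termwise to $g=\sum_k\mathcal{P}^{(-k)}a_{-k}$. This is exactly how the operator acts on series in the formal sense of Definition~\ref{DEFquaternioniclaplacian}, and the right-sided position of the constants $a_{-k}$ is preserved because the operators act on the left.

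The only delicate point is this last identification. The operator is defined termwise, not on the sum as a distribution, so I would state explicitly that the equality is meant as the definition of the series. Under that reading, the convergence established above is what makes the series meaningful. If a stronger interpretation were required, one could note that the sum equals $-\Delta f$ for $f=\sum q^{-k}a_{-k}$, a smooth function on $\mathcal{A}_{R_1,\infty}$, by \eqref{EQmonogenicseries}.
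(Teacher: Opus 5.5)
Your route is the paper's route. The paper's proof says only ``argue as in Theorem~\ref{main1} using \eqref{est}'': identify $\widetilde{\mathcal{P}}^{(-k)}=4c_k\partial_{q_0}^{k-1}E$ via Proposition~\ref{neglapla}, bound each term, and compare with a root-test majorant. Your main derivation, however, does not work, because \eqref{est} is false as stated.

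On the real axis $E(t,0,0,0)=t^{-3}$, so $\partial_{q_0}^{k-1}E(q_0)=(-1)^{k-1}\frac{(k+1)!}{2}q_0^{-k-2}$. Since $\frac{(k+1)!}{2}>2k(k+1)$ as soon as $k\ge 4$, the inequality \eqref{est} already fails at $k=4$. The correct bound, read off from \eqref{EQlaplacianidentity}, is $|\partial_{q_0}^{k-1}E(q)|\le \frac{(k+1)!}{2}|q|^{-k-2}$, and it is sharp. When \eqref{est} is derived from \eqref{oneS} through \eqref{oneN1}, the factor $1/(4|c_k|)=(k-1)!/4$ is dropped. The $1/(k-1)!$ inside $c_k$ is exactly what cancels the factorial growth of the derivatives. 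So bounding $|c_k|\le 1$ is the one simplification you cannot make. Combined with the true derivative bound, it leaves majorant coefficients $2(k+1)!\,|a_{-k}|$, whose $k$-th roots tend to $\infty$ whenever $R_1>0$. You would then get no convergence region at all, let alone $\mathcal{A}_{R_1,\infty}$.

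Your ``alternative'' is the correct argument and should be the main line. By Propositions~\ref{neglapla} and~\ref{oneN}, $\widetilde{\mathcal{P}}^{(-k)}(q)=4c_k\partial_{q_0}^{k-1}E(q)=-\Delta(q^{-k})$. Then \eqref{EQlalppolynomial} and \eqref{oneS} give directly $|\widetilde{\mathcal{P}}^{(-k)}(q)|\le 2k(k+1)|q|^{-k-2}$. Your Weierstrass M-test on $\{|q|\ge r\}$ with $r>R_1$ then finishes the proof. It also covers $R_1=0$, where the division by $R_1$ in the proof of Theorem~\ref{main1} breaks down.

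One harmless aside concerns the sign. The formal computation $(-\Delta)^{-\frac12}\overline{D}D(-\Delta)^{\frac12}=(-\Delta)^{-\frac12}\Delta(-\Delta)^{\frac12}=\Delta$, valid within the paper's framework, gives $\widetilde{\mathcal{P}}^{(-k)}=\Delta(q^{-k})$. So the sign in Proposition~\ref{neglapla} appears to be off, and with it the sign in your identification of the sum with $-\Delta f$. Only moduli enter the convergence argument, so nothing in your proof changes.
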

\begin{proof}
The result follows by arguments similar to those used in Theorem~\ref{main1} and the estimate~\eqref{est}.
\end{proof}

\begin{remark}
The formal application of the operators $(-\Delta)^{\frac{1}{2}}$ and $(-\Delta)^{-\frac{1}{2}}\overline{D}$ to, respectively, the Laurent series of analytic harmonic functions of type $\left(\frac{1}{2},1\right)$ and to the series \eqref{serH} yields the same expansion, namely the Laurent series for axially Fueter regular functions, studied in \cite{CDSm}.
\end{remark}

\begin{remark}
One may wonder why, in this last factorization of the Fueter-Sce-Qian construction, we chose to employ a fractional Laplacian with a negative index. This choice is motivated by the fact that it provides a direct connection between $\frac{1}{2}$-harmonic functions and axially Fueter regular functions. One may also ask why we did not instead start the factorization of the Fueter-Sce-Qian construction with the negative half fractional Laplacian. The reason is that this approach would naturally lead to the study of functions satisfying
$$
D(-\Delta)^{\frac{3}{2}}f\equiv 0
$$
in the distributional sense. Developing such a theory would require a substantially more involved framework for distribution spaces than the one introduced in this paper. We plan to address these issues in future work.
\end{remark}

\begin{remark}
From Theorems \ref{THMah} and Corollary \ref{main4}, we obtain the following factorization of the Fueter--Sce--Qian construction \eqref{diagintro}:
\begin{equation*}
	\begin{CD}	
		&& \textcolor{black}{\mathcal{SH}(\mathcal{A}_{R_1,\infty})}  @>\ \  D (-\Delta )^{\frac{1}{2}} >>\textcolor{black}{\mathcal{AH}_{\frac{1}{2}}(\mathcal{A}_{R_1,\infty})}@>\ \   (-\Delta)^{-\frac{1}{2}} \overline{D}
		>>\mathcal{AM}(\mathcal{A}_{R_1,\infty})@>\ \   D
		>>0.
	\end{CD}	
\end{equation*}
\end{remark}

\section{Appendices}\label{appendix}

\subsection{Appendix A}
Here we give the proofs of the most technical results in the manuscript. We begin with the proof of Proposition~\ref{PROPdermodulus1}. To prove such a statement, we recall the recurrence
\begin{equation}
	\label{recc}
	P^{(\beta+1, \gamma+1)}_{m-1}(y)=P^{(\beta+1, \gamma)}_{m}(y)-P^{(\beta, \gamma+1)}_{m}(y),
\end{equation}
see \cite[Eq. (3.118a)]{STW}, and obtain yet two more identities that are be necessary.

\begin{proposition}
	Let $y \in [-1,1]$ and $\beta$. $\gamma \geq -1$, then we have
	\begin{equation}
		\label{relJ}
		P^{(\beta, \gamma)}_m(y)+\frac{y+1}{2}P^{(\beta+1, \gamma+1)}_{m-1}(y)=P^{(\beta+1, \gamma)}_m(y),
	\end{equation}
	and
	\begin{equation}
		\label{relJ3}
		P_m^{\big(\frac{1}{2}, \frac{\alpha-1}{2}\big)}(y)+(y-1)\Big(\frac{\alpha}{2}+m+1\Big)P_m^{\big(\frac{3}{2}, \frac{\alpha-1}{2}\big)}(y)=2(m+1) P_{m+1}^{\big(-\frac{1}{2}, \frac{\alpha-1}{2}\big)}(y).
	\end{equation}
\end{proposition}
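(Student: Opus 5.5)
Both identities will be checked at the level of coefficients, using the explicit sum \eqref{jacob} in the variable $t:=\frac{y-1}{2}$. Factors of $\frac{y+1}{2}=1+t$ and $y-1=2t$ then act as simple shifts on the coefficient sequences.

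\emph{Identity \eqref{relJ}.} Write $P_m^{(\beta,\gamma)}(y)=\sum_{k=0}^m A_k(\beta,\gamma,m)\,t^k$ with
\[
A_k(\beta,\gamma,m)=\frac{\Gamma(\beta+m+1)\,\Gamma(\beta+\gamma+m+k+1)}{k!\,(m-k)!\,\Gamma(\beta+\gamma+m+1)\,\Gamma(\beta+k+1)}.
\]
The coefficient of $t^k$ in $(1+t)P_{m-1}^{(\beta+1,\gamma+1)}$ is $A_k(\beta+1,\gamma+1,m-1)+A_{k-1}(\beta+1,\gamma+1,m-1)$. I would factor out the common Gamma quotient from all four terms; the claim should then reduce to a rational identity in $k$, $m$, $\beta$, $\gamma$ that can be verified by hand.

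A shorter route may be available. By the standard recurrence
\[
(1+t)P_{m-1}^{(\beta+1,\gamma+1)}=\tfrac{y+1}{2}P_{m-1}^{(\beta+1,\gamma+1)}=\ldots
\]
one could instead combine \eqref{recc} with the symmetry $P_m^{(\beta,\gamma)}(-y)=(-1)^mP_m^{(\gamma,\beta)}(y)$ and the analogous contiguous relation in the other parameter. However, I would commit to the direct coefficient comparison, since it is self-contained. A sanity check at $y=-1$ is immediate: there the second term vanishes, and \eqref{EQjacobiextreme} gives equal values on both sides.

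\emph{Identity \eqref{relJ3}.} I would proceed the same way, with $\beta=\frac12$, $\gamma=\frac{\alpha-1}{2}$. Multiplying by $y-1=2t$ shifts the index $k\mapsto k+1$. Comparing coefficients of $t^k$ for $0\le k\le m+1$ then requires
\[
A_k\big(\tfrac12,\gamma,m\big)+2\Big(\tfrac{\alpha}{2}+m+1\Big)A_{k-1}\big(\tfrac32,\gamma,m\big)=2(m+1)\,A_k\big(-\tfrac12,\gamma,m+1\big).
\]
Here $\beta+\gamma+1=\frac{\alpha}{2}+1$ in the first term and $\frac{\alpha}{2}+2$ in the second, so the factor $\frac{\alpha}{2}+m+1$ exactly absorbs the ratio $\Gamma(\frac{\alpha}{2}+m+2)/\Gamma(\frac{\alpha}{2}+m+1)$. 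After extracting $\Gamma(\frac{\alpha}{2}+m+k+1)/\Gamma(\frac{\alpha}{2}+m+1)$ and the half-integer Gamma factors via $\Gamma(x+1)=x\Gamma(x)$, I expect both sides to reduce to the same expression of the form $\frac{(\cdot)}{k!\,(m+1-k)!}$ times a linear factor, which should match.

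The endpoint cases $k=0$ and $k=m+1$ must be checked separately, because there one of the terms on the left is absent. A check at $y=1$ uses \eqref{EQjacobiextreme}: the left side is $\binom{m+1/2}{m}$ and the right side is $2(m+1)\binom{m+1/2}{m+1}$, and these agree. This confirms the normalization.

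The main obstacle is the bookkeeping of the half-integer Gamma ratios in \eqref{relJ3}, in particular getting the shifted $\beta$-dependent factors $\Gamma(\beta+k+1)$ for $\beta=\frac12,\frac32,-\frac12$ to line up. I would first rewrite every coefficient in Pochhammer form, $(a)_k$, and only then compare.
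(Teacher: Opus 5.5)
Your plan is correct and is essentially the paper's argument. Both identities are checked coefficient by coefficient in $t=\frac{y-1}{2}$ using \eqref{jacob}, and for \eqref{relJ3} your reduced identity $(m+1-k)+(2m+3)k=(m+1)(2k+1)$ is the paper's \eqref{starfinal} once the binomial factors are cleared.

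The only variation is in \eqref{relJ}. The paper first uses \eqref{recc} to trade $\frac{y+1}{2}$ for $\frac{y-1}{2}$, and then verifies $P_m^{(\beta,\gamma)}+\frac{y-1}{2}P_{m-1}^{(\beta+1,\gamma+1)}=P_m^{(\beta,\gamma+1)}$. Your direct four-term comparison also closes: with $s=\beta+\gamma$ it reduces to $(s+m+1)(\beta+k+1)+(m-k)(s+m+k+1)+k(\beta+k+1)=(\beta+m+1)(s+m+k+1)$, and it has the small advantage of not needing \eqref{recc}.
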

\begin{proof}
First we show formula \eqref{relJ}, or rather, the equivalent
	\begin{equation*}
		\label{relJ1}
		P_m^{(\beta, \gamma)}(y)+ \frac{y-1}{2}P_{m-1}^{(\beta+1, \gamma+1)}(y)=P^{(\beta+1, \gamma)}_m(y)-P^{(\beta+1, \gamma+1)}_{m-1}(y).
	\end{equation*}
	We rewrite the right-hand side using \eqref{recc}, thus obtaining that \eqref{relJ} is equivalent to
	\begin{equation*}
		P_m^{(\beta, \gamma)}(y)+ \frac{y-1}{2}P_{m-1}^{(\beta+1, \gamma+1)}(y)=P^{(\beta, \gamma+1)}_{m}(y).
	\end{equation*}
	We now verify this identity using the definition of the Jacobi polynomials \eqref{jacob} and well-known properties of the binomial coefficients and of the Gamma function. We have
			\begingroup\allowdisplaybreaks
	\begin{align*}
		&\phantom{=}P_m^{(\beta, \gamma)}(y)+ \frac{y-1}{2}P_{m-1}^{(\beta+1, \gamma+1)}(y)\\
		&= \frac{\Gamma(\beta+m+1)}{m!\,\Gamma(\beta+\gamma+m+1)} \sum_{k=0}^{m} \binom{m}{k} \frac{\Gamma(\beta+\gamma+m+k+1)}{\Gamma(\beta+k+1)} \bigg(\frac{y-1}{2}\bigg)^{k}\\
		&\phantom{=}+\frac{\Gamma(\beta+m+1)}{(m-1)!\, \Gamma(\beta+\gamma+m+2)} \sum_{k=0}^{m-1} \binom{m-1}{k} \frac{\Gamma(\beta+\gamma+m+k+2)}{\Gamma(\beta+k+2)} \bigg(\frac{y-1}{2}\bigg)^{k+1}\\
		&=\frac{\Gamma(\beta+m+1)}{m!\,\Gamma(\beta+\gamma+m+1)} \sum_{k=0}^{m} \binom{m}{k} \frac{\Gamma(\beta+\gamma+m+k+1)}{\Gamma(\beta+k+1)} \bigg(\frac{y-1}{2}\bigg)^{k}\\
		&\phantom{=} +\frac{\Gamma(\beta+m+1)}{(m-1)!\, \Gamma(\beta+\gamma+m+2)}\sum_{k=1}^{m} \binom{m-1}{k-1} \frac{\Gamma(\beta+\gamma+m+k+1)}{\Gamma(\beta+k+1)} \bigg(\frac{y-1}{2}\bigg)^{k}\\
		&= \frac{\Gamma(\beta+m+1)}{m!\, \Gamma(\beta+\gamma+m+2)}\bigg[  \frac{\Gamma(\beta+\gamma+m+1)(\beta+\gamma+m+1)}{\Gamma(\beta+1)}\\
		&\phantom{=} +\sum_{k=1}^m \frac{\Gamma(\beta+\gamma+m+k+1)}{\Gamma(\beta+k+1)} \bigg( (\beta+\gamma+m+1)\binom{m}{k}+m \binom{m-1}{k-1}\bigg) \bigg(\frac{y-1}{2}\bigg)^{k}\bigg] \\
		&= \frac{\Gamma(\beta+m+1)}{m! \, \Gamma(\beta+\gamma+m+2)} \bigg[ \frac{\Gamma(\beta+\gamma+m+2)}{ \Gamma(\beta+1)} \\
		&\phantom{=}+\sum_{k=1}^m \binom{m}{k} \frac{\Gamma(\beta+\gamma+m+k+1)(\beta+\gamma+m+k+1)}{\Gamma(\beta+k+1)}\bigg(\frac{y-1}{2}\bigg)^{k} \bigg]\\
		&=  \frac{\Gamma(\beta+m+1)}{m! \, \Gamma(\beta+\gamma+m+2)}\sum_{k=0}^m \binom{m}{k} \frac{\Gamma(\beta+\gamma+m+k+2)}{\Gamma(\beta+k+1)}\bigg(\frac{y-1}{2}\bigg)^{k}\\
		&= P^{(\beta, \gamma+1)}_{m}(y).
	\end{align*}
	\endgroup
Let us show \eqref{relJ3}. By the definition of the Jacobi polynomials \eqref{jacob}, we can rewrite the left-hand side as
			\begingroup\allowdisplaybreaks
\begin{align*}
	&\phantom{=}P_m^{\big(\frac{1}{2},\frac{\alpha-1}{2}\big)}(y)
+(y-1)\Big(\frac{\alpha}{2}+m+1\Big)P_m^{\big(\frac{3}{2}, \frac{\alpha-1}{2}\big)}(y)\\
	&=\frac{\Gamma \big(m+\frac{3}{2}\big)}{m!\, \Gamma \big(\frac{\alpha}{2}+m+1\big)} \sum_{k=0}^{m} \binom{m}{k} \frac{\Gamma\big(\frac{\alpha}{2}+m+k+1\big)}{\Gamma \big(k+\frac{3}{2}\big)} \bigg(\frac{y-1}{2}\bigg)^k\\
	&\phantom{=}+\frac{2\Gamma \big(m+\frac{5}{2}\big)\big(\frac{\alpha}{2}+m+1\big)}{m!\, \Gamma\big(\frac{\alpha}{2}+m+2\big)} \sum_{k=0}^{m} \binom{m}{k} \frac{\Gamma\big(\frac{\alpha}{2}+m+k+2\big)}{\Gamma \big(k+\frac{5}{2}\big)} \bigg(\frac{y-1}{2}\bigg)^{k+1}\\
	&=\frac{\Gamma \big(m+\frac{3}{2}\big)}{m!\, \Gamma \big(\frac{\alpha}{2}+m+1\big)} \sum_{k=0}^{m} \binom{m}{k} \frac{\Gamma\big(\frac{\alpha}{2}+m+k+1\big)}{\Gamma \big(k+\frac{3}{2}\big)} \bigg(\frac{y-1}{2}\bigg)^k\\
	&\phantom{=}+\frac{(2m+3)\Gamma \big(m+\frac{3}{2}\big)}{m!\, \Gamma\big(\frac{\alpha}{2}+m+1\big)} \sum_{k=1}^{m+1} \binom{m}{k-1} \frac{\Gamma\big(\frac{\alpha}{2}+m+k+1\big)}{\Gamma \big(k+\frac{3}{2}\big)} \bigg(\frac{y-1}{2}\bigg)^{k}\\
	&=\frac{\Gamma \big(m+\frac{3}{2}\big)}{m!\, \Gamma \big(\frac{\alpha}{2}+m+1\big)} \frac{\Gamma \big(\frac{\alpha}{2}+m+1\big)}{\Gamma\big(\frac{3}{2}\big)}
	+\frac{(2m+3)\Gamma \big(m+\frac{3}{2}\big)}{m!\, \Gamma \big(\frac{\alpha}{2}+m+1\big)} \frac{\Gamma \big(\frac{\alpha}{2}+2m+2\big)}{\Gamma\big(m+\frac{5}{2}\big)}\bigg(\frac{y-1}{2}\bigg)^{m+1}\\
	&\phantom{=}+
	\frac{\Gamma \big(m+\frac{3}{2}\big)}{m!\, \Gamma \big(\frac{\alpha}{2}+m+1\big)}  \sum_{k=1}^{m} \frac{1}{k+\frac{1}{2}}\bigg[\binom{m}{k}+(2m+3) \binom{m}{k-1}\bigg] \frac{\Gamma\big(\frac{\alpha}{2}+m+k+1\big)}{\Gamma \big(k+\frac{1}{2}\big)} \bigg(\frac{y-1}{2}\bigg)^k.
\end{align*}
\endgroup
We observe that
\begin{equation}
\label{starfinal}
\frac{1}{k+\frac{1}{2}}\bigg[\binom{m}{k}+(2m+3) \binom{m}{k-1}\bigg] = 2\binom{m+1}{k}.
\end{equation}
Indeed, this equality is equivalent to
\[
\binom{m}{k}+(2m+3) \binom{m}{k-1} = (2k+1)\bigg( \binom{m}{k}+\binom{m}{k-1}\bigg).
\]
which follows from the property
$$\binom{m+1}{k}=\binom{m}{k}+\binom{m}{k-1}.
$$
The equality \eqref{starfinal} follows from the definition of binomial.
\begin{align*}
	&\phantom{=}P_m^{\big(\frac{1}{2},\frac{\alpha-1}{2}\big)}(y)+(y-1)\Big(\frac{\alpha}{2}+m+1\Big)P_m^{\big(\frac{3}{2}, \frac{\alpha-1}{2}\big)}(y)\\
	&=\frac{2(m+1)\Gamma \big(m+\frac{3}{2}\big)}{(m+1)!\, \Gamma \big(\frac{\alpha}{2}+m+1\big)} \frac{\Gamma \big(\frac{\alpha}{2}+m+1\big)}{\Gamma\big(\frac{1}{2}\big)}\\
	& \, \, \, \, \,  +\frac{2(m+1)\Gamma \big(m+\frac{3}{2}\big)}{(m+1)!\, \Gamma \big(\frac{\alpha}{2}+m+1\big)} \frac{\Gamma \big(\frac{\alpha}{2}+2m+2\big)}{\Gamma\big(m+\frac{3}{2}\big)}\bigg(\frac{y-1}{2}\bigg)^{m+1}\\
	&\phantom{=}+
	\frac{2(m+1)\Gamma \big(m+\frac{3}{2}\big)}{(m+1)!\, \Gamma \big(\frac{\alpha}{2}+m+1\big)}  \sum_{k=1}^{m} \binom{m+1}{k} \frac{\Gamma\big(\frac{\alpha}{2}+m+k+1\big)}{\Gamma \big(k+\frac{1}{2}\big)} \bigg(\frac{y-1}{2}\bigg)^k \\
	&=\frac{2(m+1)\Gamma \big(m+\frac{3}{2}\big)}{(m+1)!\, \Gamma \big(\frac{\alpha}{2}+m+1\big)}  \sum_{k=0}^{m+1} \binom{m+1}{k} \frac{\Gamma\big(\frac{\alpha}{2}+m+k+1\big)}{\Gamma \big(k+\frac{1}{2}\big)} \bigg(\frac{y-1}{2}\bigg)^k\\
	& =2(m+1)P_{m+1}^{\big(-\frac{1}{2}, \frac{\alpha-1}{2}\big)}(y).
\end{align*}
This concludes the proof.
\end{proof}

\begin{proof}[Proof of Proposition~\ref{PROPdermodulus1}]
	We prove the result by induction on $m$. The equalities corresponding to $m=0$ are immediate to verify. We now prove the desired equalities by differentiating the right-hand sides of \eqref{J11} for $m\geq 1$ and \eqref{J21} for $m\geq 0$, respectively. Differentiating the right-hand side of \eqref{J11} and taking into account that
	\begin{equation}
		\label{derr}
		\frac{\partial}{\partial x_j} \frac{1}{|x|^\alpha}=-\alpha x_j |x|^{-\alpha-2}, \qquad \frac{\partial}{\partial x_j} \bigg(1-\frac{2x_j^2}{|x|^2}\bigg)=-4x_j \bigg(\frac{|x|^2-x_j^2}{|x|^4}\bigg), \quad \alpha>0,
	\end{equation}
	and the formula for the $k$-th derivate of the Jacobi polynomials \eqref{derJ}, we get
	\begin{align*}
		\frac{\partial^{2m+1}}{\partial x_j^{2m+1}}\frac{1}{|x|^{\alpha}}&= \frac{(-1)^m \sqrt{\pi} \Gamma\big( m+\frac{\alpha}{2}\big) (2m)! }{\Gamma\big(\frac{\alpha}{2}\big)\Gamma\big( m+\frac{1}{2}\big)} \frac{\partial}{\partial x_j}\bigg[\frac{1}{|x|^{2m+\alpha}}P_m^{ \big(-\frac{1}{2},\frac{\alpha-1}{2}\big)}\bigg(1-\frac{2x_j^2}{|x|^2} \bigg)\bigg]\\
		&=-\frac{(-1)^m \sqrt{\pi} \Gamma\big( m+\frac{\alpha}{2}\big) (2m)! }{\Gamma\big(\frac{\alpha}{2}\big)\Gamma\big( m+\frac{1}{2}\big)} \bigg[(2m+\alpha) \frac{x_j}{|x|^{2m+2+\alpha}}P_m^{\big(-\frac{1}{2}, \frac{\alpha-1}{2}\big)}\bigg(1-\frac{2x_j^2}{|x|^2}\bigg) \\
		&\phantom{=} + \frac{2x_j}{|x|^{2m+\alpha}} \bigg(\frac{|x|^2-x_j^2}{|x|^4}\bigg)\frac{\Gamma\big(m+\frac{\alpha}{2}+1\big)}{\Gamma\big(m+\frac{\alpha}{2}\big)} P_{m-1}^{\big(\frac{1}{2}, \frac{\alpha+1}{2}\big)}\bigg(1-\frac{2x_j^2}{|x|^2}\bigg)\bigg]\\
		&=\frac{2x_j(-1)^{m+1} \sqrt{\pi} \Gamma\big( m+\frac{\alpha}{2}\big) (2m)! }{|x|^{2m+\alpha+2}\Gamma\big(\frac{\alpha}{2}\big)\Gamma\big( m+\frac{1}{2}\big)} \bigg[\Big(m+\frac{\alpha}{2}\Big)P_m^{\big(-\frac{1}{2}, \frac{\alpha-1}{2}\big)}\bigg(1-\frac{2x_j^2}{|x|^2}\bigg)\\
		&\phantom{=} +  \bigg(1-\frac{x_j^2}{|x|^2}\bigg)\Big(m+\frac{\alpha}{2}\Big) P_{m-1}^{\big(\frac{1}{2}, \frac{\alpha+1}{2}\big)}\bigg(1-\frac{2x_j^2}{|x|^2}\bigg)\bigg]\\
		&=\frac{2x_j(-1)^{m+1} \sqrt{\pi} \Gamma\big( m+\frac{\alpha}{2}+1\big) (2m)! }{|x|^{2m+\alpha+2}\Gamma\big(\frac{\alpha}{2}\big)\Gamma\big( m+\frac{1}{2}\big)}\bigg[P_m^{\big(-\frac{1}{2}, \frac{\alpha-1}{2}\big)}\bigg(1-\frac{2x_j^2}{|x|^2}\bigg)\\
		&\phantom{=}  +\bigg(1-\frac{x_j^2}{|x|^2}\bigg)P_{m-1}^{\big(\frac{1}{2}, \frac{\alpha+1}{2}\big)}\bigg(1-\frac{2x_j^2}{|x|^2}\bigg)\bigg].
	\end{align*}	
	Using relation \eqref{relJ} with $\beta=-\frac{1}{2}$, $\gamma= \frac{\alpha-1}{2}$ and $y=1-\frac{2x_j^2}{|x|^2}$ and multiplying by the constant $(m+\frac{1}{2})$ in the numerator and denominator of the latter expression, we get
	\begin{equation*}
		\label{J2prime}
		\frac{\partial^{2m+1}}{\partial x_j^{2m+1}}\frac{1}{|x|^{\alpha}}=\frac{(-1)^{m+1} \sqrt{\pi} \Gamma\big( m+\frac{\alpha}{2}+1\big) (2m+1)!x_j} {|x|^{2m+\alpha+2}\Gamma\big(\frac{\alpha}{2}\big)\Gamma\big( m+\frac{3}{2}\big)}P_m^{\big(\frac{1}{2}, \frac{\alpha-1}{2}\big)}\bigg(1-\frac{2x_j^2}{|x|^2}\bigg),
	\end{equation*}
	We now differentiate both sides of \eqref{J21} with respect to $x_j$. Using relations \eqref{derJ} and \eqref{derr} again, we get that $\frac{\partial^{2m+2}}{\partial x_j^{2m+2}}\frac{1}{|x|^\alpha}$ equals
	\begin{align*}
		&\phantom{=}\frac{(-1)^{m+1} \sqrt{\pi} \Gamma \big(m+\frac{\alpha}{2}+1\big) (2m+1)!}{\Gamma \big(\frac{\alpha}{2}\big) \Gamma \big(m+\frac{3}{2}\big)} \frac{\partial}{\partial x_j} \bigg[ \frac{x_j}{|x|^{2m+2+\alpha}} P_m^{\big(\frac{1}{2}, \frac{\alpha-1}{2}\big)} \bigg(1-\frac{2x_j^2}{|x|^2}\bigg)\bigg]\\
		&=\frac{(-1)^{m+1} \sqrt{\pi} \Gamma \big(m+\frac{\alpha}{2}+1\big) (2m+1)!}{\Gamma \big(\frac{\alpha}{2}\big) \Gamma \big(m+\frac{3}{2}\big)} \bigg[\frac{1}{|x|^{2m+2+\alpha}} P_m^{\big(\frac{1}{2}, \frac{\alpha-1}{2}\big)} \bigg(1-\frac{2x_j^2}{|x|^2}\bigg) \\
		& \phantom{=}-2 \Big(m+1+\frac{\alpha}{2}\Big) \frac{x_j^2}{|x|^{2m+4+\alpha}}P_m^{\big(\frac{1}{2}, \frac{\alpha-1}{2}\big)}\bigg(1-\frac{2x_j^2}{|x|^2}\bigg)\\
		& \phantom{=}-\frac{2x_j^2}{|x|^{2m+4+\alpha}} \bigg(1-\frac{x_j^2}{|x|^2}\bigg) \frac{\Gamma\big(\frac{\alpha}{2}+m+2\big)}{\Gamma\big(\frac{\alpha}{2}+m+1\big)}P_{m-1}^{\big(\frac{3}{2}, \frac{\alpha+1}{2}\big)} \bigg(1-\frac{2x_j^2}{|x|^2}\bigg) \bigg]\\	
		&= \frac{(-1)^{m+1} \sqrt{\pi} \Gamma \big(m+\frac{\alpha}{2}+1\big) (2m+1)!}{\Gamma \big(\frac{\alpha}{2}\big) \Gamma \big(m+\frac{3}{2}\big) |x|^{2m+2+\alpha}} \bigg(P_m^{\big(\frac{1}{2}, \frac{\alpha-1}{2}\big)}\bigg(1-\frac{2x_j^2}{|x|^2}\bigg) \\
		&\phantom{=} -\frac{2x_j^2}{|x|^2}\Big(\frac{\alpha}{2}+m+1\Big)\bigg[P_m^{\big(\frac{1}{2},\frac{\alpha-1}{2}\big)}\bigg(1-\frac{2x_j^2}{|x|^2}\bigg)+\bigg(1-\frac{x_j^2}{|x|^2}\bigg)P_{m-1}^{\big(\frac{3}{2}, \frac{\alpha+1}{2}\big)}\bigg(1-\frac{2x_j^2}{|x|^2}\bigg)  \bigg] \bigg)	
	\end{align*}	
	Further, identity \eqref{relJ} with $\beta= \frac{1}{2}$, $\gamma= \frac{\alpha-1}{2}$, and $y=1-\frac{2x_j^2}{|x|^2}$ yields
	\begin{align*}
		\nonumber
		\frac{\partial^{2m+2}}{\partial x_j^{2m+2}} \frac{1}{|x|^\alpha}&=\frac{(-1)^{m+1} \sqrt{\pi} \Gamma \big(m+\frac{\alpha}{2}+1\big) (2m+1)!}{\Gamma \big(\frac{\alpha}{2}\big) \Gamma \big(m+\frac{3}{2}\big) |x|^{2m+2+\alpha}} \bigg[P_m^{\big(\frac{1}{2}, \frac{\alpha-1}{2}\big)}\bigg(1-\frac{2x_j^2}{|x|^2}\bigg) \nonumber \\
		&\phantom{=}-\frac{2x_j^2}{|x|^2}\Big(\frac{\alpha}{2}+m+1\Big) P_m^{\big(\frac{3}{2}, \frac{\alpha-1}{2}\big)}\bigg(1-\frac{2x_j^2}{|x|^2}\bigg)\bigg].	
	\end{align*}
	Finally, applying \eqref{relJ3} with $y=1-\frac{2x_j^2}{|x|^2}$ to the expression within brackets, we obtain
	\[
	\frac{\partial^{2m+2}}{\partial x_j^{2m+2}} \frac{1}{|x|^\alpha}=\frac{(-1)^{m+1} \sqrt{\pi} \Gamma \big(m+\frac{\alpha}{2}+1\big) (2m+2)!}{\Gamma \big(\frac{\alpha}{2}\big) \Gamma \big(m+\frac{3}{2}\big) |x|^{2m+2+\alpha}}P_{m+1}^{\big(-\frac{1}{2}, \frac{\alpha-1}{2}\big)}\bigg(1-\frac{2x_j^2}{|x|^2}\bigg),
	\]
	which concludes the proof.
\end{proof}

\subsection{Appendix B}
We construct  a function in $\Xi_\frac{1}{2}(\R^n)$. To this end, we need some auxiliary results.
	For a radial function $f:\R^n\to \R$, we define its radial part $\mathcal{R}f$ as the function of one variable defined on $[0,\infty)$ and such that $f(x) = \mathcal{R}f(|x|)$. Recall that the Fourier transform of a radial function is also radial, and it is given by
\begin{equation}
	\label{EQ-fourier-radial-functions}
	\widehat{f}(y)=2\pi |y|^{-\frac{n-2}{2}} \int_{0}^\infty \rho^\frac{n}{2}\mathcal{R}f(\rho)J_{\frac{n}{2}-1}(2\pi \rho|y|)\,  d\rho,
\end{equation}
(see \cite[Ch. IV]{SWfourier}) where $J_{\frac{n}{2}-1}(x)$ is the Bessel function of order $\frac{n}{2}-1$ (we refer to \cite[Ch. VII]{EMOT} for the definition, the properties stated below, and further discussion on Bessel functions). Bessel functions satisfy the estimates
\begin{equation*}
	|J_{\frac{n}{2}-1}(x)|\leq \begin{cases}
		C_n x^{\frac{n}{2}-1},&\text{if }0<x\leq 1,\\
		C_n x^{-1/2}, &\text{if }x>1,
	\end{cases}
\end{equation*}
and hence the integral \eqref{EQ-fourier-radial-functions} converges absolutely whenever $f\in \mathcal{S}(\R^n)$.

\begin{lemma}
	\label{LEMradial}
	Let $\varphi\colon\R^n\to \R$ and
	\[
	f(\rho) = \begin{cases}
		0,&\text{if } \rho<0,\\
		\mathcal{R}\varphi(\rho),&\text{if }\rho\geq 0.
	\end{cases}
	\]
	Then
	\begin{enumerate}[label=\textnormal{(\roman{*})}]
		\item $\varphi\in \mathcal{S}(\R^n)$ if and only if $f\in \mathcal{S}(\R)$.
		\item $\varphi\in \Psi(\R^n)$ if and only if $f\in \Psi(\R)$.
	\end{enumerate}
\end{lemma}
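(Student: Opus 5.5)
The plan is to reduce everything to two elementary facts: derivatives of $f$ along $[0,\infty)$ are directional derivatives of $\varphi$ along a ray, and derivatives of $\varphi(x)=f(|x|)$ are computed from those of $f$ by the chain rule.

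\emph{Direction $\varphi\Rightarrow f$.} For $\rho>0$ we have $f(\rho)=\varphi(\rho e_1)$, so $f^{(j)}(\rho)=\partial_{x_1}^j\varphi(\rho e_1)$. Schwartz decay of $\varphi$ therefore transfers at once to $\rho^N|f^{(j)}(\rho)|\to0$ as $\rho\to+\infty$, and on $\rho<0$ there is nothing to check. The only delicate point is smoothness of the zero extension at $\rho=0$. We need $\lim_{\rho\to0^+}f^{(j)}(\rho)=\partial_{x_1}^j\varphi(0)=0$ for all $j$, and then the one-sided Taylor argument shows that $f$ is $C^\infty$ and flat at $0$. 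Under the hypothesis $\varphi\in\Psi(\R^n)$ of (ii) this holds, giving $f\in\Psi(\R)$.

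\emph{Direction $f\Rightarrow\varphi$.} Write $\varphi(x)=f(|x|)$. Away from the origin, $|x|$ is smooth, and by the multivariate chain rule $\partial^s\varphi(x)$ is a finite sum of terms $f^{(j)}(|x|)\,h_{j,s}(x)$ with $1\le j\le|s|$. Here each $h_{j,s}$ is a product of derivatives of $|x|$, hence is smooth on $\R^n\setminus\{0\}$ and homogeneous of degree $j-|s|$.
\begin{itemize}
\item For $|x|\ge1$ the $h_{j,s}$ are bounded, so $|x|^N|\partial^s\varphi(x)|$ is controlled by finitely many seminorms $\sup\rho^N|f^{(j)}(\rho)|$.
\item Near $0$ we have $|h_{j,s}(x)|\le C|x|^{-|s|}$. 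If $f\in\Psi(\R)$, or merely if the zero extension is smooth (which forces $f^{(j)}(0)=0$ for all $j$), then $f^{(j)}(\rho)=O(\rho^N)$ for every $N$.
\end{itemize}
Exactly as in Lemma~\ref{LEMproduct}, every $\partial^s\varphi(x)$ then tends to $0$ as $x\to0$. Hence $\varphi$ extends to a $C^\infty$ function flat at $0$, i.e.\ $\varphi\in\Psi(\R^n)\subset\mathcal{S}(\R^n)$.

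This proves (ii) completely, and also the implication $f\in\mathcal{S}(\R)\Rightarrow\varphi\in\mathcal{S}(\R^n)$ in (i).

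The main obstacle is the converse implication in (i). As literally stated it fails: for the Gaussian, the zero extension is discontinuous at $0$. I would therefore prove (i) in the form actually needed, namely with $f$ replaced by the \emph{even} extension of $\mathcal{R}\varphi$. For that version I would use Whitney's theorem that a smooth radial function has the form $\varphi(x)=h(|x|^2)$ with $h$ smooth, so the even extension $f(\rho)=h(\rho^2)$ is smooth and Schwartz. Alternatively, (i) can be stated only under the flatness hypothesis, in which case it reduces to (ii). In the construction of Appendix B only the $\Psi$-version (ii) and the implication ``$f\in\Psi(\R)\Rightarrow\varphi\in\Psi(\R^n)$'' seem to be used, so I would keep the statement restricted to those.
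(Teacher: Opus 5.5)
Your argument follows the paper's route. For $\varphi\Rightarrow f$ you restrict to the ray $\rho\mapsto\varphi(\rho e_1)$. For $f\Rightarrow\varphi$ you apply the chain rule (Fa\`a di Bruno) to $f(|x|)$, and flatness of $f$ at $0$ absorbs the singular factors. Your observation that the factors $h_{j,s}$ are homogeneous of degree $j-|s|$ is a cleaner justification than the paper's appeal to Proposition~\ref{CORestimatemodulus}. That proposition only covers pure derivatives of $|x|^{-\alpha}$ with $\alpha>0$, not the mixed derivatives of $|x|$ that Fa\`a di Bruno requires.

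Your objection to (i) is correct, and it identifies an error in the paper's proof. The identity $f(\rho)=\varphi(\rho,0,\ldots,0)$ used for the converse holds only for $\rho\geq 0$; for $\rho<0$ the right-hand side is the even extension, not the zero extension. Since a smooth zero extension is automatically flat at $0$, (i) and (ii) together would put every radial Schwartz function in $\Psi(\R^n)$, and $e^{-\pi|x|^2}$ refutes this. Nothing downstream breaks, because Lemma~\ref{LEMradialmomentsvanish} only uses $f\in\Psi(\R)\Rightarrow\varphi\in\Psi(\R^n)$.

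One small correction to your proposed repair with the even extension $\tilde f$. The direction $\varphi\Rightarrow\tilde f$ needs no Whitney theorem, since $\tilde f(\rho)=\varphi(\rho e_1)$ for all real $\rho$. Whitney's theorem, which writes $\tilde f(t)=h(t^2)$ with $h$ smooth, is what you need for $\tilde f\Rightarrow\varphi$. There it gives smoothness of $\tilde f(|x|)=h(|x|^2)$ at the origin once flatness is no longer available.
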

\begin{remark}
	Note that Lemma~\ref{LEMradial} does not contain any conclusion concerning the spaces $\Phi(\R^n)$. It is not clear whether in general one has $f\in \Phi(\R)$ whenever $\varphi\in \Phi(\R^n)$ (the converse is trivially true by integrating in spherical coordinates). In this respect, one can only deduce that certain moments of $f$ vanish if $\varphi\in \Phi(\R^n)$. Indeed, for a radial function $\varphi$ and $s=(s_1,\ldots , s_n)\in (\mathbb{N}\cup\{0\})^n$, the $s$th moment of $\varphi$ is automatically zero if at least one of the $s_j$'s is odd. On the other hand, if all $s_j$'s are even, integrating in spherical coordinates,
	\begin{align*}
		&\phantom{=}\int_{\R^n}x_1^{s_1}\cdots x_n^{s_n}\varphi(x)\, dx = \int_0^\pi \cdots \int_0^{\pi}\int_0^{2\pi}  \kappa(\theta_1,\ldots , \theta_{n-1}) \, d\theta_{n-1}\cdots d\theta_1 \\
		&\phantom{=}\times \int_0^\infty \rho^{n-1+s_1+\cdots +s_n} \mathcal{R}\varphi(\rho)\, d\rho=0,
	\end{align*}
	where $\kappa(\theta_1,\ldots , \theta_{n-1})$ is a positive function on its domain of integration. This implies that for all $m\in \mathbb{N}\cup\{0\}$,
	\[
	\int_0^\infty \rho^{n-1+2m} \mathcal{R}\varphi(\rho)\, d\rho=  \int_0^\infty \rho^{n-1+2m} f(\rho)\, d\rho=0,
	\]
	but nothing can be deduced about the moments $\int_0^\infty \rho^{n+2m} \mathcal{R}\varphi(\rho)\, d\rho$.
\end{remark}
\begin{proof}[Proof of Lemma~\ref{LEMradial}]
	Writing $\varphi(x) = f(|x|)$, the Schwarz estimates for $f$ together with  (combined with the estimates of Proposition~\ref{CORestimatemodulus}) together with Faà di Bruno's formula \cite[Ch. 1]{KS} yield the finiteness of the seminorms
	\[
	\sup_{x\in \R^n}|x^s \partial^{\beta}\varphi(x_1,\ldots , x_n)|
	\]
	for all multiindices $s$ and $\beta$. The same arguments show that all derivatives of $\varphi$ vanish at the origin whenever the derivatives of $f$ vanish at $0$, which yields $\varphi\in\Psi(\R^n)$ when $f\in\Psi(\R)$. The converse simply follows from the fact that
	\[
	f(\rho) = \varphi(\rho,0,\ldots  , 0),
	\]
	together with the Schwarz seminorm estimates for $\varphi$ (analogously, $f\in\Psi(\R)$ when $\varphi\in\Psi(\R^n)$).
\end{proof}

\begin{lemma}\label{LEMradialmomentsvanish}
	There exists a radial function $\varphi\in \Xi(\R^n)$ whose radial moments vanish, i.e.,
	\[
	\int_0^\infty \rho^{k} \mathcal{R}\varphi(\rho)\,d\rho=0
	\]
	for every $k\in  \mathbb{N}\cup\{0\}$.
\end{lemma}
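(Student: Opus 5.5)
We construct $\varphi$ from a one-variable profile: choose $f\in\mathcal{S}(\R)$ supported in $[0,\infty)$, with all derivatives vanishing at $0$ and all moments $\int_0^\infty\rho^k f(\rho)\,d\rho$ equal to zero, and set $\varphi(x)=f(|x|)$. By Lemma~\ref{LEMradial}, $\varphi\in\Psi(\R^n)$ as soon as $f\in\Psi(\R)$ and $f\in\mathcal{S}(\R)$. We also need $\varphi\in\Phi(\R^n)$. Integrating in spherical coordinates, as in the Remark following Lemma~\ref{LEMradial}, every moment $\int x^s\varphi$ is either zero by parity or a positive angular constant times $\int_0^\infty\rho^{n-1+|s|}f(\rho)\,d\rho$. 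The latter vanishes because all radial moments of $f$ vanish. Hence $\varphi\in\Phi(\R^n)\cap\Psi(\R^n)=\Xi(\R^n)$, and its radial moments vanish by construction. So the whole problem reduces to producing a nonzero $f$ with these properties.

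For the one-dimensional profile we use the classical Stieltjes-type example. Set
\[
g(\rho)=\exp\big(-\rho^{1/4}\big)\sin\big(\rho^{1/4}\big),\qquad \rho>0.
\]
The substitution $\rho=t^4$ together with the identity $\int_0^\infty t^{4k+3}e^{-t}\sin t\,dt=\mathrm{Im}\,\Gamma(4k+4)(1-i)^{-(4k+4)}=0$ (since $(1-i)^4=-4$ is real) shows that $\int_0^\infty\rho^k g(\rho)\,d\rho=0$ for all $k$. However, $g$ is neither smooth at $0$ nor Schwartz at infinity, because it decays only like $e^{-\rho^{1/4}}$. So we instead work on the Fourier side, which is cleaner.

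The better route is to take $h\in C_c^\infty(\R)$ supported in $[1,2]$, nonzero, and put $f=\mathcal{F}^{-1}$-type objects appropriately. Concretely, moments of $f$ on $\R$ are derivatives of $\widehat{f}$ at $0$. So choose $f$ with $\widehat f$ vanishing to infinite order at $0$, that is, $f\in\Phi(\R)$, and also with $f$ supported in $[0,\infty)$ and flat at $0$. A convenient choice is $f(\rho)=\partial_\rho^{N}$ of something, but infinitely many moments require more. The standard trick is to take $f=\widehat{\psi}$ with $\psi\in\Psi(\R)$ and impose $\operatorname{supp} f\subset[0,\infty)$. By Paley--Wiener, $\operatorname{supp} f\subset[0,\infty)$ means $\psi$ extends holomorphically to a half-plane. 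Such $\psi$ with all derivatives zero at $0$ can be taken as
\[
\psi(\xi)=\exp\big(-(\epsilon-i\xi)^{-1/2}-(\epsilon-i\xi)^{1/2}\big)
\]
up to adjustments: the factor $e^{-(-i\xi)^{-1/2}}$ flattens $\psi$ at the origin within the half-plane, while the other factor gives decay. We would verify smoothness and Schwartz decay of $\psi$ on $\R$ and holomorphy in the lower half-plane. Then $f=\widehat\psi$ is Schwartz, supported in $[0,\infty)$, and has all moments zero since $\psi^{(k)}(0)=0$. Flatness of $f$ at $0$ is automatic for a Schwartz function supported in $[0,\infty)$, because it is smooth and vanishes on $(-\infty,0)$. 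Hence $f\in\Psi(\R)$ as well.

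The main obstacle is this construction of $f$: simultaneously achieving one-sided support, Schwartz regularity, and vanishing of all moments, which means controlling the half-plane holomorphic function $\psi$ near $\xi=0$ and at infinity. If the explicit $\psi$ turns out to be troublesome, the fallback is to take $\psi=\chi\cdot e^{-1/(\ldots)}$-type flat cutoffs composed with an analytic-in-half-plane mollification. Another option is to subtract from a compactly supported $h\in C_c^\infty((1,2))$ a convergent series $\sum_j c_j h_j$, with $h_j\in C_c^\infty((j,j+1))$, chosen to kill the moments one at a time (a Borel-type argument with rapidly decaying $c_j$). Everything after $f$ is obtained follows from Lemma~\ref{LEMradial} and the spherical-coordinates computation.
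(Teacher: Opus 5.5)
Your reduction to a one-variable profile is exactly the paper's: take $f\in\mathcal{S}(\R)$ supported in $[0,\infty)$, flat at $0$, with all moments zero, set $\varphi(x)=f(|x|)$, and conclude with Lemma~\ref{LEMradial} and spherical coordinates. Your moment computation for the Stieltjes function is also correct. The gap is in the construction of $f$ itself. The $\psi$ you display, $\exp\bigl(-(\epsilon-i\xi)^{-1/2}-(\epsilon-i\xi)^{1/2}\bigr)$ with $\epsilon>0$, is smooth at $\xi=0$ with $\psi(0)=e^{-\epsilon^{-1/2}-\epsilon^{1/2}}\neq 0$. So $f=\widehat{\psi}$ has $\int_{\R}f\neq 0$, and the zeroth moment condition already fails. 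The flattening you describe only happens with $\epsilon=0$ in the first exponent. Beyond that, you announce the smoothness, flatness, decay and one-sided support (``we would verify'') rather than check them. The fallbacks are not arguments either: killing infinitely many moments with a series $\sum_j c_jh_j$ is an infinite linear system whose solvability you never address. As written, the key step of the proof is missing.

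Your route can be repaired. Take $\psi(\xi)=\exp\bigl(-(-i\xi)^{-1/2}-(-i\xi)^{1/2}\bigr)$ with the principal branch. For $\operatorname{Im}\zeta\ge 0$, $\zeta\neq 0$, one has $\operatorname{Re}(-i\zeta)\ge 0$, hence $\operatorname{Re}(-i\zeta)^{\pm 1/2}\ge |\zeta|^{\pm 1/2}/\sqrt{2}$ and $|\psi(\zeta)|\le \exp\bigl(-(|\zeta|^{-1/2}+|\zeta|^{1/2})/\sqrt{2}\bigr)$. This gives $\psi\in\mathcal{S}(\R)\cap\Psi(\R)$. Shifting the contour into the upper half-plane (not the lower one, with your convention $f=\widehat{\psi}$) shows $\widehat{\psi}(x)=0$ for $x<0$.

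However, your reason for abandoning the Stieltjes function is mistaken. The factor $e^{-\rho^{1/4}}$ decays faster than every power of $\rho$, and so do all derivatives of $e^{-\rho^{1/4}}\sin\rho^{1/4}$ on $[1,\infty)$. The only defect is non-smoothness at $\rho=0$. The paper removes exactly that defect by convolving with a smooth $\phi$ supported in $(0,\infty)$. Then $g=\phi*f$ is Schwartz and vanishes on $(-\infty,0]$, hence is flat at $0$. Its moments vanish: $\int_0^\infty x^m g(x)\,dx=\int_0^\infty \phi(y)\sum_{k=0}^m\binom{m}{k}y^{m-k}\int_0^\infty t^k f(t)\,dt\,dy=0$. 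That route is entirely elementary and needs no half-plane holomorphy or contour-shifting argument.
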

\begin{proof}
	Let us consider the well-known example due to Stieltjes \cite[\S 55]{stieltjes}
	\[
	f(\rho) =\begin{cases}
		0,&\rho <0,\\
		 e^{-\rho^\frac{1}{4}} \sin\big( \rho^\frac{1}{4}\big),& \rho\geq 0,\\
	\end{cases}
	\]
	for which one has
	\[
		\int_{\R}\rho^k f(\rho)\, d\rho =\int_0^\infty \rho^k f(\rho)\, d\rho = 0
	\]
	for all $k\in \mathbb{N}\cup\{0\}$. However, note that $f$ is not smooth on $\mathbb{R}$. To construct a smooth function $g$ with vanishing moments and that moreover satisfies $g\in \Psi(\R)$ (so that $g\in \Xi(\R)$), consider $\phi\in \Psi(\R)$  and define
	\[
	g(x)= (\phi *f)(x).
	\]
	It is clear that $g\in \mathcal{S}(\R)$. On the one hand, since both $\phi$ and $f$ are supported on $(0,\infty)$, so is $g$. Thus, all derivatives of $g$ vanish at zero. Indeed, from the relation $g^{(k)}(x)=(\phi^{(k)}*f)(x) $ we deduce
	\[
	g^{(k)}(0) =\int_0^\infty \phi^{(k)}(-y)f(y)\,dy=0,
	\]
	since for all $y\in\R$ either $\phi^{(k)}(y)=0$ or $f(y)=0$. Next, we show that the moments of $g$ vanish. For $m\in \mathbb{N}\cup\{0\}$,
\begin{align*}
	\int_0^\infty x^m g(x)\,dx
	&=\int_0^\infty x^m\int_0^\infty \phi(y)f(x-y)\,dy\,dx =\int_0^\infty \phi(y)\bigg(\int_0^\infty x^m f(x-y)\,dx\bigg)dy\\
\end{align*}
where in the inner integral we have used that $f(x-y)=0$ whenever $x<y$. The change of variables $t=x-y$ in this inner integral yields
\[
\int_y^\infty x^m f(x-y)\,dx = \int_0^\infty (t+y)^m f(t)\,dt=\sum_{k=0}^m \binom{m}{k} y^{m-k}\int_0^\infty t^k f(t)\,dt=0,
\]
since all moments of $f$ vanish. This proves $g\in \Xi(\R)$. Finally, the function $\varphi\in \Xi(\R^n)$ is chosen so that $\mathcal{R}\varphi (x) =g(|x|)$. Clearly $\varphi\in \Psi(\R^n)$ by Lemma~\ref{LEMradial}. By construction, the radial moments of $\varphi$ vanish (since they are precisely the moments of $g$ on $(0,\infty)$). Thus, $\varphi\in \Xi(\R^n)$, which concludes the proof.
\end{proof}	

\label{PAGproofxir}
\begin{proof}[Proof of Theorem~\ref{THMxirnonempty}]
	Let $\varphi\in \Xi(\R^n)$ be a radial function whose radial moments vanish (which exists, by Lemma~\ref{LEMradialmomentsvanish}). Since the Fourier transform is an isomorphism in $\Xi(\R^n)$, we have that $\psi:=\mathcal{F}^{-1}\varphi \in \Xi(\R^n)$. This function precisely satisfies $\psi\in \Xi_{\frac{1}{2}}(\mathbb{R}^n)$. Indeed, let us check that $(-\Delta)^\frac{1}{2}\psi\in \Xi(\R^n)$. First of all, observe that
	\[
		\mathcal{F}^{-1}\big( 2\pi |\xi| \mathcal{F} \psi\big) = \mathcal{F}^{-1}\big( 2\pi |\xi| \mathcal{F} \mathcal{F}^{-1}\varphi \big) = \mathcal{F}^{-1}\big( 2\pi |\xi| \varphi \big).
	\]
	Thus, it is equivalent to show that $|\xi| \varphi \in \Xi(\R^n)$. It is clear that $|\xi|\varphi\in \Psi( \R^n)$, by Lemma~\ref{LEMproduct}. On the other hand, also $\varphi\in \Phi(\R^n)$, since all its radial moments vanish. Indeed, integrating in spherical coordinates,
	\begin{align*}
		&\phantom{=}\int_{\R^n}x_1^{s_1}\cdots x_n^{s_n}\varphi(x)\, dx = C_{s_1,\ldots , s_n}\int_0^\infty \rho^{n-1+s_1+\cdots +s_n} \mathcal{R}\varphi(\rho)\, d\rho=0,
	\end{align*}
	which concludes the proof.
\end{proof}
\begin{proof}[Proof of Theorem~\ref{nonempty}]
Let $\eta\in \Xi(\R^n)$ be a radial function whose radial moments vanish, whose existence is guaranteed by Lemma~\ref{LEMradialmomentsvanish}. As shown in the proof of Theorem~\ref{THMxirnonempty}, we have
$$ \varphi(\xi):=2 \pi|\xi|\eta(\xi) \in \Xi(\R^n).$$
Since the Fourier transform is an isomorphism of $\Xi(\R^n)$ (see Proposition \ref{PROPinvertibility}), we can define
$$\psi(x):=\mathcal{F}^{-1}\varphi(x)\in\Xi(\R^n).$$
It remains to show that $(-\Delta)^{-\frac{1}{2}}\psi\in\Xi(\R^n)$. Indeed. by the definition of $\varphi$ we have
$$(-\Delta)^{-\frac12}\psi=\mathcal{F}^{-1}\big((2\pi|\xi|)^{-1}\mathcal{F}\psi\big)=\mathcal{F}^{-1}(\eta(\xi))(x).$$
Since $\eta\in\Xi(\R^n)$ and by Proposition \ref{PROPinvertibility}, we conclude that
$$(-\Delta)^{-\frac12}\psi\in\Xi(\R^n).$$
Hence
$$\psi\in\Xi_{-\frac12}(\R^n),$$
This proves the result.
\end{proof}

{\bf Conflict of Interest}. The authors declare that they have no competing interests regarding the publication of this paper.

{\bf Author contributions}. All authors contributed equally to the study, read and approved the final version of the submitted manuscript.

{\bf Availability of data}. There are no data associated with the research in this paper.

\end{document}